\author{
\normalsize Thomas Cass \footnote{The work of Thomas Cass is supported by EPSRC Programme Grant EP/S026347/1}\\[8pt]
	\small Department of Mathematics \\
 	\small Imperial College London \\
	\small South Kensington Campus \\
	\small 180 Queen's gate \\
	\small London SW7 2AZ \\
          \\
        \small  thomas.cass@imperial.ac.uk
\and
\normalsize Gon\c calo dos Reis\footnote{G. dos Reis acknowledges support from the \emph{Funda{\c c}$\tilde{\text{a}}$o para a Ci$\hat{e}$ncia e a Tecnologia} (Portuguese Foundation for Science and Technology) through the project UID/MAT/00297/2019 (Centro de Matem\'atica e Aplica\c c$\tilde{\text{o}}$es CMA/FCT/UNL).} \\[8pt]
         \small  University of Edinburgh\\ 
         \small  School of Mathematics \\
         \small  Edinburgh, EH9 3FD, UK\\  
         \small  and \\
	\small  Centro de Matem\'atica e Aplica\c c$\tilde{\text{o}}$es \\
	\small (CMA), FCT, UNL, Portugal \\
        \small  G.dosReis@ed.ac.uk
\and
\normalsize William Salkeld \\[8pt]
         \small  University of Edinburgh\\ 
         \small  School of Mathematics \\
         \small  Edinburgh, EH9 3FD, UK\\  
         \small  \\
         \small  \\
          \\
        \small  w.j.salkeld@sms.ed.ac.uk 
}
\date{ \currenttime, \ddmmyyyydate\today}
\numberwithin{equation}{section}
\theoremstyle{plain}
\newtheorem{theorem}{Theorem}[section]
\newtheorem{lemma}[theorem]{Lemma}
\newtheorem{proposition}[theorem]{Proposition}
\newtheorem{definition}[theorem]{Definition}
\newtheorem{remark}[theorem]{Remark}
\newtheorem{example}[theorem]{Example}
\newtheorem{assumption}[theorem]{Assumption}
\newcommand{\bE}{\mathbb{E}}
\newcommand{\bN}{\mathbb{N}}
\newcommand{\bP}{\mathbb{P}}
\newcommand{\bR}{\mathbb{R}}
\newcommand{\bW}{\mathbb{W}}
\newcommand{\bX}{\mathbb{X}}
\newcommand{\bY}{\mathbb{Y}}
\newcommand{\bZ}{\mathbb{Z}}
\newcommand{\cA}{\mathcal{A}}
\newcommand{\cC}{\mathcal{C}}
\newcommand{\cD}{\mathcal{D}}
\newcommand{\cE}{\mathcal{E}}
\newcommand{\cF}{\mathcal{F}}
\newcommand{\cH}{\mathcal{H}}
\newcommand{\cK}{\mathcal{K}}
\newcommand{\cL}{\mathcal{L}}
\newcommand{\cP}{\mathcal{P}}
\newcommand{\cR}{\mathcal{R}}
\newcommand{\cS}{\mathcal{S}}
\newcommand{\cW}{\mathcal{W}}
\newcommand{\fA}{\mathfrak{A}}
\newcommand{\fB}{\mathfrak{B}}
\newcommand{\fC}{\mathfrak{C}}
\newcommand{\fE}{\mathfrak{E}}
\newcommand{\fP}{\mathfrak{P}}
\newcommand{\fQ}{\mathfrak{Q}}
\newcommand{\fS}{\mathfrak{S}}
\newcommand{\fc}{\mathfrak{c}}
\newcommand{\fp}{\mathfrak{p}}
\newcommand{\fs}{\mathfrak{s}}
\newcommand{\fh}{\mathfrak{h}}
\newcommand{\rh}{\mathbf{h}}
\newcommand{\rw}{\mathbf{W}}
\newcommand{\rx}{\mathbf{X}}
\newcommand{\ry}{\mathbf{Y}}
\newcommand{\rId}{\mathbf{1}}
\newcommand{\rPhi}{\mathbf{\Phi}}
\newcommand{\rM}{\mathbf{M}}
\newcommand{\rN}{\mathbf{N}}
\newcommand{\rS}{\mathbf{S}}
\newcommand{\rs}{\mathbf{s}}
\newcommand{\rC}{\mathbf{C}}
\newcommand{\rc}{\mathbf{c}}
\newcommand{\rQ}{\mathbf{q}}
\DeclareMathOperator{\supp}{supp}
\DeclareMathOperator{\lip}{Lip}
\DeclareMathOperator{\diag}{Diag}
\newcommand{\dd}{\mathrm{d}}
\definecolor{darkgreen}{rgb}{0,0.35,0}
\newcommand{\1}{\mathbbm{1}}
\title{Rough functional quantization and the support of McKean-Vlasov equations}
\begin{document}

\maketitle
%\tableofcontents
%%%%%%%%%%%%%%%%%%%%%%%%%%%%%%%%%%%%%%%%%%%%%%%%%%%%%%%%%%%%%%%%%

\renewcommand*{\thefootnote}{\arabic{footnote}}

%%%%%%%%%%%%%%%%%%%%%%%%%%%%%%%%%
\begin{abstract} 
%We derive an original representation for the support of McKean Vlasov Equations. Our novel approach \sout{extends classical methods by} assembling a deterministic pair that approximates both the path and the law of the driving noise, and avoids probabilistic arguments. 

%To achieve this, we construct a functional quantization for the law of Brownian motion as a measure over the (non-reflexive) Banach space of H\"older continuous paths. By solving optimal Karhunen Lo\`eve expansions and exploiting the compact embedding of Gaussian measures, we obtain a sequence of finite support measures that converge to the law of a Brownian motion with explicit rate. Further, these approximations remain true when the paths are enhanced to rough paths.

%We remark on the applications of our innovative amalgamation of Gaussian analysis, rough paths and mean field equations. 

We prove a representation for the support of McKean Vlasov Equations.
% In this paper, we study two problems. we derive an original representation for the support of McKean Vlasov Equations. 
%, and avoids probabilistic arguments.
To do so, we construct functional quantizations for the law of Brownian motion as a measure over the (non-reflexive) Banach space of H\"older continuous paths. By solving optimal Karhunen Lo\`eve expansions and exploiting the compact embedding of Gaussian measures, we obtain a sequence of deterministic finite supported measures that converge to the law of a Brownian motion with explicit rate. We show the approximation sequence is near optimal with very favourable integrability properties and prove these approximations remain true when the paths are enhanced to rough paths. These results are of independent interest. 

The functional quantization results then yield a novel way to build deterministic, finite supported measures that approximate the law of the McKean Vlasov Equation driven by the Brownian motion which crucially avoid the use of random empirical distributions. These are then used to solve an approximate skeleton process that characterises the support of the McKean Vlasov Equation. 

% Our novel approach assembles a deterministic pair that approximates both the path and the law of the driving noise, crucially avoiding the use of empirical distributions. 

We give explicit rates of convergence for the deterministic finite supported measures in rough-path H\"older metrics and  determine the size of the particle system required to accurately estimate the law of McKean Vlasov equations with respect to the H\"older norm. 

%By solving optimal Karhunen Lo\`eve expansions and exploiting ``density like'' properties of Gaussian measures on pathspace, we obtain a sequence of finite support measures that converge with superlative rate to the law of a Brownian motion. 

%We use these measures to derive an original representation for the support of McKean Vlasov Equations. Our novel approach constructs a deterministic pair that approximates both the path and the law of the driving noise, and avoids probabilistic arguments. 

%Secondly, we use these measures to compute deterministic, finite support measures that approximate the law of the McKean Vlasov Equation driven by the Brownian motion, crucially avoiding the use of empirical distributions. These are then used to solve an approximate skeleton process that characterises the support of the McKean Vlasov Equation. 

%Thus we represent the support of McKean Vlasov Equations without needing to know the law of the solution exogenously. We give explicit rates of convergence for the deterministic finite supported measures in rough-path H\"older metrics and  determine the size of the particle system required to accurately estimate the law of McKean Vlasov equations with respect to the H\"older norm. 
\end{abstract} 
%%%%%%%%%%%%%%%%%%%%%%%%%%%%%%%%%
{\bf Keywords:} McKean-Vlasov equations, Support Theorem, Gaussian Functional Quantization, Rough Paths

%\paragraph*{}
%%%%%%%%%%%%%%%%%%%%%%%%%%%%%%%%%
\vspace{0.3cm}

\noindent
{\bf 2010 AMS subject classifications:}\\
Primary: 60Hxx, 
%91A10, 65C30; % Stochastic differential and integral equations
Secondary: 28C20
% , 
% 60G07  %General theory of processes
% ?, 
% ?60Hxx	%Stochastic analysis
%,
%60H07%Stochastic calculus of variations and the Malliavin calculus
%, 
%60H20 
%, 
%, 60H30%Applications of stochastic analysis (to PDE, etc.)
%.
%60H35 %Computational methods for stochastic equations
%91G10 Portfolio theory
%91A10 Noncooperative Games
%91B69%Heterogeneous agent models
%.
% 60G35, 60H20, 91B16, 91B70
%\vspace{0.3cm}

%\noindent{\bf JEL subject classifications:}\\
%G11, G12, G13, C73
%%%%%%%%%%%%%%%%%%%%%%%%%%%%%%%%%

%
%
%
%%%%%%%%%%%%%%%%%%%%%%%%%%%%%%%%%%%%%%%%%%%%%%%%%%%%%%%%%%%%%%
%%%%%%%%%%%%%%%%%%%%%%%%%%%%%%%%%%%%%%%%%%%%%%%%%%%%%%%%%%%%%%
%%%%%%%%%%%%%%%%%%%%%%%%%%%%%%%%%%%%%%%%%%%%%%%%%%%%%%%%%%%%%%
%%%% \BEGIN SECTION
%%%%%%%%%%%%%%%%%%%%%%%%%%%%%%%%%%%%%%%%%%%%%%%%%%%%%%%%%%%%%%
%\newpage 
%\setcounter{tocdepth}{1}
% \newpage   
%\footnotesize
%\setcounter{tocdepth}{2}
%\tableofcontents
%\normalsize

\section{Introduction}

McKean Vlasov equations are Stochastic Differential Equations (SDE) with coefficients that depend on the law of the solution. This makes their analysis more involved than classical SDEs. They are sometimes referred to as mean-field or distribution dependent SDEs and were first studied in \cite{McKean1966}. These equations describe a limiting behaviour of individual particles having diffusive dynamics and which interact with each other in a ``mean-field'' sense. Hence, the motion of a single particle is determined in terms of the motion of all other particles. The solutions of these mean field systems of equations are a powerful tool in understanding statistical mechanics such as Boltzmann Equations. Applications are numerous and vary from opinion dynamics \cite{hegselmann2002opinion}, the dynamics of granular materials \cites{benedetto1998non, bolley2013uniform, cattiaux2008probabilistic}, molecular and fluid dynamics \cite{pope2001turbulent}, interacting agents in economics or social networks \cite{carmona2013control}, mathematical biology \cites{keller1971model, burger2007aggregation}, Galactic dynamics \cite{binney2011galactic}, droplet growth\cite{conlon2017non}, Plasma Physics \cite{bittencourt2013fundamentals}, interacting neurons \cite{delarue2015global} and deep learning neural networks \cite{hu2019mean}. See also \cites{CarmonaDelarue2017book1, CarmonaDelarue2017book2} and references therein for a detailed exploration of the applications of McKean Vlasov Equations. 

McKean Vlasov Equations have also been studied in the context of rough paths. In the first work \cite{CassLyonsEvolving}, the authors treat the measure dependency in the drift term as a bounded variation Banach valued operator. Thus the interactive forces can be calculated using Banach valued Young integrals and there is no need to exploit the rough path structures beyond what is already necessary to incorporate the noise. Their approach is limited only by the assumption of no measure dependencies in the diffusion term.  Later, in \cite{2018arXiv180205882B} the authors develop the new framework of \emph{Probabilistic Rough Paths}. This insightful development encodes the law of the noise into the rough path, allowing the noise to interact with the measure dependencies and opening up the collection of possible diffusion terms to include adequately regular measure dependencies. Other works that study McKean Vlasov Equations via rough paths include \cite{deuschel2017enhanced}, \cite{coghi2018pathwise} and \cite{coghi2019rough}. 

The support of a measure is the smallest closed set of full measure. Thus the \emph{Support theorem} for the law of an SDE characterises the set of admissible paths that the SDE can take with respect to a particular choice of topology. The first work studying the support of an SDE was \cite{stroock1972support} where the law of an SDE is characterised in terms of the supremum norm and the authors goal was to establish a Strong Maximum principle for a class of Elliptic Partial Differential Equations. This was later extended to a wide class of processes in \cite{gyongy1990approximation}. Later, a support theorem with respect to the H\"older norm was established in \cite{ben1994holder}, and for a much wider class of norms in \cite{gyongy1995approximation}. These works laid the groundwork for the later publication \cite{ledoux2002large} which studies the support of the solution law of a Rough Differential Equation driven by a Gaussian white noise. In \cite{friz2006levy} it is shown that the continuity of the It\^o-Lyons map means that the proof of a support theorem can be reduced to establishing a characterisation of the support of the driving noise in an adequately rich topology. 

Support theorem results have been key in some other applications, for example, a support theorem for SDEs with jump noise was crucial in showing \emph{Exponential Ergodicity} in \cite{Kulik2009}. One of the conditions the authors require is \emph{Topological Irreducibility}, that for any two points, there is a path of the jump process that passes between them in finite time. This can be verified by finding an expression for the support of the law. Support theorems are also central in the establishment of \emph{Stochastic Invariance principle}. A stochastic process is said to be invariant of a closed set $\cD\subseteq \bR^d$ if the solution starts and remains on the set $\cD$ $\bP$-almost surely $\forall t\in[0,T]$. This problem was first studied in \cite{aubin1990stochastic}. More recently, Stochastic Invariance has been studied in \cite{zabczyk2000stochastic}, \cite{buckdahn2010another} and \cite{FilipovicTappeTeichmann2014}. In general, support theorems continue to draw attention from a wide range of academics, see  \cite{chouk2018support}, \cite{cont2018support} and \cite{2019arXiv190905526H}. Lastly, a motivation to study support theorem results for McKean Vlasov Equations is the recent link between this class of equations, deep learning (or \emph{rich learning}) and ergodicity, see \cite{hu2019mean}. 
\bigskip

The following useful method for proving a classical support theorem can be found in \cite{millet1994simple}.
\begin{theorem}
Let $(\Omega, \cF, \bP)$ be a probability space containing a Brownian motion and let $E$ be a separable Banach space. Let $\cH$ be the reproducing kernel Hilbert space of Brownian motion. Let $X:\Omega \to E$ be a random variable and let $\Phi:\cH \to E$ be a measurable map. 
\begin{enumerate}
\item Suppose there exists a sequence of random variables $H_n:\Omega \to \cH$ such that for any $\varepsilon>0$
\begin{equation}
\label{eq:ClassicalSupportThm1}
\lim_{n\to \infty} \bP\Big[ \| X(\cdot) - \Phi(H_n(\cdot))\|_{E} > \varepsilon \Big] = 0. 
\end{equation}
Then $\supp(\cL^X) \subset \overline{\Phi(\cH)}^E$. 
\item Suppose there exists a sequence of measure transforms $T_n^h$ such that $\bP\circ T_n^h$ is absolutely continuous with respect to $\bP$ and for any $\varepsilon >0$
\begin{equation}
\label{eq:ClassicalSupportThm2}
\limsup_{n\to \infty} \bP\Big[ \| X(T_n^h(\cdot)) - \Phi(h)\|_E < \varepsilon \Big] >0. 
\end{equation}
Then $\overline{\Phi(\cH)}^E \subset \supp (\cL^X)$. 
\end{enumerate}
If both \eqref{eq:ClassicalSupportThm1} and \eqref{eq:ClassicalSupportThm2} are satisfied, then $\overline{\Phi(\cH)}^E = \supp (\cL^X)$ and $\Phi$ is called the \emph{Skeleton Process} of the random variable $X$, see \cite{caballero1997composition}.  
\end{theorem}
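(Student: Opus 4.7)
The plan is to prove the two inclusions independently, each through a short abstract argument: passage to an almost sure convergent subsequence for part one, and the absolute continuity of pushforwards for part two. Neither step requires any structure beyond what is already stipulated in the hypotheses.

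For part one, I would observe that for each $n$ the random variable $\Phi(H_n(\cdot))$ takes values in $\Phi(\cH)\subseteq\overline{\Phi(\cH)}^E$. Hypothesis \eqref{eq:ClassicalSupportThm1} is precisely convergence in $\bP$-probability of $\Phi(H_n(\cdot))$ to $X$ in the Banach norm of $E$, which permits extraction of a subsequence $(n_k)$ along which convergence holds $\bP$-almost surely. For each such $\omega$ the limit $X(\omega)$ must lie in the closed set $\overline{\Phi(\cH)}^E$. Hence $\cL^X\bigl(\overline{\Phi(\cH)}^E\bigr)=1$, and minimality of the support yields $\supp(\cL^X)\subseteq\overline{\Phi(\cH)}^E$.

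For part two, I would fix an arbitrary $h\in\cH$ and $\varepsilon>0$, and consider the open set
\[
A_\varepsilon:=\bigl\{\omega\in\Omega:\|X(\omega)-\Phi(h)\|_E<\varepsilon\bigr\}.
\]
The probability appearing in \eqref{eq:ClassicalSupportThm2} is exactly $(\bP\circ T_n^h)(A_\varepsilon)$. By assumption this quantity remains bounded away from $0$ along some subsequence, and the absolute continuity $\bP\circ T_n^h\ll\bP$ then forces $\bP(A_\varepsilon)>0$, for otherwise the pushforward of a $\bP$-null set under an absolutely continuous change of measure would itself be null, contradicting the liminf bound. Every open $\varepsilon$-ball around $\Phi(h)$ therefore carries positive $\cL^X$-mass, placing $\Phi(h)$ in $\supp(\cL^X)$. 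Since $\supp(\cL^X)$ is closed and $h\in\cH$ was arbitrary, $\overline{\Phi(\cH)}^E\subseteq\supp(\cL^X)$ follows at once.

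The abstract statement is thus fairly direct once its two hypotheses are correctly formalised; the genuine obstacle always lies in \emph{verifying} them for concrete processes and topologies. For the McKean-Vlasov equations of this paper neither sequence is easy to produce: the absence of a clean Cameron-Martin inclusion for H\"older topologies, together with the distribution-dependence of the coefficients, precludes the direct use of classical Brownian approximations. The functional-quantization framework and the rough-path machinery developed earlier in this paper are introduced precisely to supply a deterministic family $(H_n)$ approximating the Brownian driver in the rough-path H\"older metric, and to support the construction of an admissible family of Cameron-Martin-type shifts $(T_n^h)$ along which the skeleton target $\Phi(h)$ can be approached with positive probability.
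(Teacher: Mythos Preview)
The paper does not prove this theorem: it is stated in the introduction as background and attributed to \cite{millet1994simple}, so there is no in-paper argument to compare against. Your proof of the two inclusions is correct and is precisely the standard argument one finds in that reference: extraction of an almost surely convergent subsequence for part one, and the contrapositive via absolute continuity for part two.

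Your closing paragraph, however, mischaracterises how the paper actually proceeds. The paper does \emph{not} manufacture sequences $(H_n)$ and $(T_n^h)$ to feed into this classical template for the McKean--Vlasov law. Rather, it first invokes this theorem (implicitly, through rough-path continuity as in \cite{friz2010multidimensional}) to obtain the ``useless'' characterisation $\supp(\cL^\rx)=\overline{\{\rPhi'(h,\xi):h\in\cH\}}$ in Theorem~\ref{thm:UselessSupportTheorem}, where the skeleton $\rPhi'$ still depends on the unknown law $\cL^\rx$. The quantization machinery is then used not to produce approximating Brownian increments $H_n$, but to build a sequence of \emph{finitely supported deterministic measures} $\cL^{\rPhi(\cL^\rw\circ\rQ_n^{-1})}$ approximating $\cL^\rx$, so that the corresponding skeleton operators $\Theta_{b,\sigma}(\cL^{\rPhi(\cL^\rw\circ\rQ_n^{-1})},\xi,\rh)$ converge to $\rPhi'(h,\xi)$. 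The final support statement (Theorem~\ref{thm:SupportTheorem1}) is therefore expressed as an intersection of closures over these approximating skeletons, not as a direct verification of \eqref{eq:ClassicalSupportThm1}--\eqref{eq:ClassicalSupportThm2}.
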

Equation \eqref{eq:ClassicalSupportThm1} is sometimes referred to as the Wong Zakai implication due to its similarity with the Wong Zakai theorem. Equation \eqref{eq:ClassicalSupportThm2} is sometimes referred to as the Cameron Martin implication because the proof involves exploiting the absolute continuity of Cameron Martin transforms on Wiener space.

\subsubsection*{Our contribution}
Proving a support theorem for McKean Vlasov Equations is more challenging than verifying Equations \eqref{eq:ClassicalSupportThm1} and \eqref{eq:ClassicalSupportThm2}. The knowledgeable reader will realise that for McKean Vlasov Equations, the Skeleton process is itself dependent on the law of the solution of the McKean Vlasov Equation so the law must be known exogenously in order to solve any Skeleton process path. This is in contrast to the Skeleton process used in \cite{dos2019freidlin} where the measure dependency is replaced by a Dirac following the skeleton process driven by a constant $0$ noise. 

Before tackling the methods to represent the support of McKean Vlasov equations we address, separately and of independent interest, the \emph{Quantization problem for the law of a Brownian motion as a measure over the collection of H\"older continuous rough paths}. The quantization problem for Gaussian measures for Hilbert spaces was first studied in \cite{LUSCHGY2002486}, but for Banach spaces, the problem is more challenging with the optimal rate of convergence solved in \cite{graf2003functional} and separately \cite{dereich2003link}. These methods rely on the small ball probabilities of Brownian motion, see \cite{baldi1992some}, a tool to measure the compactness of the reproducing kernel Hilbert space unit ball contained in the Banach space. 

Using functional analytic methods, we construct a quantization for the law of the Brownian motion that has a rate of convergence that is asymptotically equivalent to the optimal rate of convergence. Our quantization is not optimal, indeed such a quantization does not exist due to the non-weak compactness of the H\"older unit ball. We choose to sacrifice optimality in order to retain certain key properties that allow us to estimate the law of the quantization accurately. To do this, we construct a Karhunen Lo\`eve expansion that optimally approximates the Brownian motion with respect to the H\"older norm. Although this representation for Brownian motion is well documented \cite{herrmann2013stochastic}, it is not so well known that the wavelet representation comes from the spectral decomposition of the covariance kernel and so it embodies the optimal approximation by a finite dimensional Gaussian. These quantizations are then enhanced to rough paths and we prove that the rate of convergence for the E
enhanced quantization to the enhanced Brownian motion is asymptotically the same. 

Quantization for rough paths has been first studied in \cite{pages2011convergence}. The choice of Karhunen Lo\`eve expansion and method of construction used in this work, namely the trigonometric functions, best suits approximations of Brownian motion in the $L^2([0,T];\bR^{d'})$ norm. Although this is enough to ensure convergence in the H\"older norm, it is far from efficient and (to the best of our knowledge) no literature exists for rates of convergence. Our approach is demonstrated to be arbitrarily close to optimal and we provide upper and lower bounds on the rate of convergence. 

The key advantage of this deterministic construction over the use of empirical distribution used in McKean Vlasov numerics is that we avoid all difficulties with characterising the support (a deterministic set) from random samples. For instance, the almost-sure rate of convergence for an Empirical distribution may, for a particular sample, be too poor to be of any effective use. 

By solving the system of interacting Rough Differential Equations driven by a H\"older quantization of the Brownian motion and exploiting the continuity properties of Rough Differential Equations, we obtain a \emph{deterministic, finite support measure} that approximates the law of the McKean Vlasov Equation without having to solve the law explicitly. One could equivalently obtain the solution law by solving the non-linear Fokker Planck equation, but a novelty of this work is to attain the law without having to resort to PDE methods. We initiate our study by developing our results entirely within the framework established in \cite{CassLyonsEvolving}. 

It is also worth emphasising that the rate of convergence that we obtain in Theorem \ref{thm:ActualRateCon4QuantizationRough} is, at face value, much slower than other well known  methods for sampling a measure. The reason for this is we approximate in pathspace rather than for any fixed choice of time. Thus our quantization encodes both information about the path of a Brownian motion and the H\"older regularity. 
\medskip

Finally, to \emph{prove the support of McKean Vlasov Equation} we develop a novel method by considering the sequences of pairs $(H_n, \cL_n)_{n\in \bN}$ and $(T^h_n, \cL_n)_{n\in \bN}$ where $(\cL_n)_{n\in \bN}$ is a sequence of measures that converge to the law of the McKean Vlasov Equation. However, for each $n\in \bN$, the Skeleton process $\Phi(h, \cL_n)$ driven by $\cL_n$ and a reproducing kernel Hilbert space path $h$ are not necessarily contained in the support even though they are a good approximation of a path that is contained in the support. Thus our statement for the support takes the form (see Theorem \ref{thm:SupportTheorem1} below)
$$
\supp(\cL) = \bigcap_{n\in \bN} \overline{\bigcup_{m\geq n} \Big\{ \Phi(h, \cL_m): h\in \cH \Big\} }^{\alpha-\textrm{H\"ol}}.
$$

In this paper, we prove two support theorems, see Theorem \ref{thm:SupportTheorem1} and Theorem \ref{thm:SupportTheorem1RandInit}. The first is for McKean Vlasov Equations where the initial condition is deterministic while the second is a extension of this result for McKean Vlasov Equations with random initial condition. The proof of the extension is simple and follows from \cite{caballero1997composition} so we focus predominantly on the first case. 

Lastly, we highlight some similarities between our statement (and proof) of the support theorem and the brilliant approach by Hairer and Sch\"onbauer in \cite{2019arXiv190905526H}*{Theorem 1.3} studying the support of the solution to Singular Stochastic Partial Differential Equations using Regularity Structures. Both results are stated in this non-standard way (crucially due to the approach), namely, as the restriction to the limit points of a collection of smooth paths. Our work was developed independently and was first presented at the 10th Oxford-Berlin Young Researchers Meeting on Applied Stochastic Analysis in December 2018. 
\bigskip

We point out that we make full use of rough paths techniques in our manuscript but restrict ourselves to McKean Vlasov equations driven by Brownian motion (as opposed to general Gaussian noises as in \cite{CassLyonsEvolving}). The reason for this is the challenges associated with constructing a \emph{Truncation} and \emph{Quantization} for a general Gaussian driving noise. In particular, for general Gaussian processes one loses the neat truncation properties given by the reproducing kernel Hilbert space being spanned by orthonormal Schauder functions. It is noteworthy to point out that there is no difference between the construction of the quantization of a Brownian motion and that of a Brownian bridge. This is because Schauder wavelets are also orthonormal in the reproducing kernel Hilbert space of a Brownian Bridge. 

All in all, there are several works that address quantization of more general Gaussian processes, for instance \cite{luschgy2006functional}, \cite{dereich2006high} and \cite{luschgy2008functional}. These works study functional quantization with respect to the $L^p$ norm and supremum norm over time rather than the H\"older norm and so do not properly encode all of the necessary regularity information to solve Rough Differential Equations efficiently. We will shortly address this problem.

%%%%%%%%%%%%%%%%%%%%%%%%%%%%%%%%%%%%%%%%%%%%%%%%
%%%%%%%%%%%%%%%%%%%%%%%%%%%%%%%%%%%%%%%%%%%%%%%%
%%%%%%%%%%%%%%%%%%%%%%%%%%%%%%%%%%%%%%%%%%%%%%%%

\bigskip
This work is organized as follows. We recall several crucial definition and results in Section \ref{sec:preliminaries}. In Section \ref{sec:QuantizationBM} we discuss the construction of a finite support measure that approximates the law of an enhanced Brownian motion as a measure over the space of geometric rough paths using functional quantization. In Section \ref{sec:MVSDEsrough} we visit the construction of rough McKean Vlasov Equations. The support theorem for the class of McKean Vlasov equations addressed in this work in presented in Section \ref{sec:SupportTheo}.

%%%%%%%%%%%%%%%%%%%%%%%%%%%%%%%%%%%%%%%%%%%%%%%%%%%%%
%%%%%%%%%%%%%%%%%%%%%%%%%%%%%%%%%%%%%%%%%%%%%%%%%%%%%%%%%%%%%%%%%%%%

\section{Preliminaries}
\label{sec:preliminaries}

\subsection{Notation and spaces}

We denote by $\bN=\{1,2,\cdots\}$ the set of natural numbers and $\bN_0=\bN\cup \{0\}$, $\bZ$ and $\bR$ denote the set of integers and real numbers respectively. $\bR^+=[0,\infty)$. By $\lfloor x \rfloor$ we denote the largest integer less than or equal to $x\in \bR$. $\1_A$ denotes the usual indicator function over some set $A$. Let $e_j$ be the unit vector in the $j^{th}$ component. 

For sequences $(f_n)_{n\in \bN}$ and $(g_n)_{n\in\bN}$, we denote 
\begin{align*}
f_n \lesssim g_n \ \ \iff  \ \ \limsup_{n\to \infty} \frac{f_n}{g_n}\leq C, 
\qquad \textrm{and}\qquad 
f_n \gtrsim g_n \ \ \iff  \ \ \liminf_{n\to \infty} \frac{f_n}{g_n}\geq C. 
\end{align*}
where $C$ is a positive constant independent of the limiting variable. When $f_n \lesssim g_n$ and $f_n \gtrsim g_n$, we say $f_n \approx g_n$. This is distinct from
\begin{align*}
f_n \sim g_n \ \ \iff \ \ \lim_{n\to \infty} \frac{f_n}{g_n} = 1. 
\end{align*}

Let $C^0([0,T]; \bR^d)$ be the space of continuous functions over the interval $[0,T]$ taking values in the vector space $\bR^d$ that start at $0$ paired with the supremum norm. For $\alpha\in(0,1)$, we define the $\alpha$-H\"older norm
$$
\| \psi\|_\alpha = \sup_{s, t\in[0,T]} \frac{ | \psi(t) - \psi(s)|}{|t-s|^\alpha}. 
$$

Let $C^\alpha([0,T]; \bR^d)$ be the subset of $C^0([0,T]; \bR^d)$ such that $\|\cdot\|_\alpha$ is finite. For $\alpha<\beta<1$, $\beta$-H\"older continuous paths are compactly embedded in the space of $\alpha$-H\"older continuous paths e.g. the spaces $C^{\beta}([0,T]; \bR^{d'}) \Subset C^{\alpha}([0,T]; \bR^{d'})$. Although the space $C^{\alpha}([0,T]; \bR^{d'})$ is not separable, the subset $C^{\alpha, 0}([0,T]; \bR^{d'}):= \overline{C^{\beta}([0,T]; \bR^{d'})}^{\alpha-\mbox{\tiny{H\"older}}}$ is separable. 

Let $(\Omega, \cF, \bP)$ be a probability space carrying a $d'$-dimensional Brownian Motion on the interval $[0,T]$ where throughout $T>0$. The Filtration on this space satisfies the usual assumptions. We denote by $\bE$ and $\bE[\cdot|\cF_t]$ the usual expectation and conditional expectation operator (with respect to $\bP$) respectively. For a random variable $X$ we denote its probability distribution (or Law) by $\cL^X$; the law of a process $(Y_t)_{t\in[0,T]}$  at time $t$ is denoted by $\cL_t^Y$.

For $\mu$, a probability measure on $(E, \cE)$, we define the support of $\mu$, denoted $\supp(\mu)$, to be the set of points $x\in E$ such that every open neighbourhood of $x$ has positive measure. Equivalently, it is the smallest closed set of full measure.
%
%\begin{remark}
%Proving a support theorem for the law of a particular random variable can often be a tricky task because it involves combining topological tools with measurability tools, two fields which do not necessarily pair together very well.
%\end{remark}

\subsection{Gaussian Theory}

We briefly summarise some standard results relating to Gaussian processes and Gaussian measures. 
\begin{definition}
\label{definition:GaussianMeasure1}
A centred Gaussian measure $\cL$ on a real separable Banach space $E$ equipped with its Borel $\sigma$-algebra $\cE$ is a Borel probability measure on $(E, \cE)$ such that the law of each continuous linear functional on $E$ is Gaussian with mean 0. 
\end{definition}

Let $E$ be a separable Banach space. Then it is well known that the Borel $\sigma$-algebra and the cylindrical $\sigma$-algebra are the same (see for example \cite{bogachev1998gaussian}). Let $\cH$ be the Reproducing Kernel Hilbert Space (RKHS) of the Gaussian measure. We denote the unit ball in the RKHS norm as $\cK$. It is well known that the set $\cK$ is compact in the Banach space topology of $E$ and $\cH$ is dense in the support of $\cL$. 

We consider the law of a Gaussian process as a measure on pathspace, that is a measure over the space of continuous paths starting at $0\in \bR^{d'}$. We are interested in the space of $\alpha$-H\"older continuous paths for $\alpha<\tfrac{1}{2\varrho}$ and the topology induced by this norm where $\varrho\in[1, 2)$. For any choice of $\alpha<\tfrac{1}{2\varrho}$, we can find $\alpha<\alpha'<\tfrac{1}{2\varrho}$ for which the Gaussian process will be $\alpha'$-H\"older continuous. Therefore, we will always have that the Gaussian process takes values in $C^{\alpha, 0}([0,T]; \bR^{d'})$ and we do not concern ourselves with separability further. 

\begin{definition}[Haar Functions]
\label{definition:HaarSchauder}
Let $t\in[0,T]$. For $p\in \bN_0$ and $m\in\{1, ..., 2^p\}$, define the sequence of values $t_{pm}^0 = \tfrac{(m-1)T}{2^p}$, $t_{pm}^1 = \tfrac{(2m-1)T}{2^{p+1}}$ and $t_{pm}^2 = \tfrac{mT}{2^p}$. Define the functions $H_{00}(t)=1$ and
\begin{equation*}
H_{pm}(t)=\begin{cases}
\sqrt{\tfrac{2^p}{T}}, & \text{if $t \in [t_{pm}^0, t_{pm}^1)$},\\
-\sqrt{\tfrac{2^p}{T}}, & \text{if $t \in [t_{pm}^1, t_{pm}^2)$},\\
0, & \text{otherwise}.
\end{cases}
\end{equation*}
These are called the Haar functions, a orthonormal collection of functions in $L^2([0,T]; \bR)$.

The Schauder function are similarly defined $G_{pm}(t) = \int_0^t H_{pm}(s) ds$. 
\end{definition}

The Haar functions form an orthonormal basis on the space $L^2([0,T]; \bR)$ with the canonical inner product. Therefore, we define the Fourier coefficients $\psi_{pm} = \int_0^T H_{pm}(s) \psi(s)ds$ and the set
$$
\Lambda:=\Big\{ (p, m): p\in \bN_0, m\in\{ 1, ..., 2^p\}\Big\}\cup \Big\{ (0, 0)\Big\}. 
$$ 
We do not include the pair $(p, m) = (-1, 0)$ as throughout we will be dealing with Gaussian processes which are 0 at $t=0$. 

Next, for some continuous path $\psi$ taking values in $\bR^{d'}$, we define the Schauder Fourier coefficients to be
\begin{align}
\label{eq:fpmCoefficients}
	\psi_{pm} 
	:= \langle {H_{pm},\dd \psi}\rangle 
	:= \sqrt{\tfrac{2^p}{T}}\left[2\psi(t^1_{pm})-\psi(t^0_{pm})-\psi(t^2_{pm})\right]\in \bR^{d'}, 
	\quad \text{for}\quad 
	(p,m)\in \Lambda;
\end{align}
additionally $\psi_{00} := \langle {H_{00},\dd \psi}\rangle = \psi(1)-\psi(0)$. Let us denote $\Lambda_N=\{(p, m)\in \Lambda : p\leq N\}$ as a truncation of $\Lambda$. 

The following Theorem, often referred to as the Cielsielski Isomorphism, provides the link between wavelet theory and rough paths.
 
\begin{theorem}[\cite{herrmann2013stochastic}]
\label{thm:Ciecielski}
For $\alpha>0$, let $\|\cdot \|_{\alpha}$ be the $\alpha$-H\"older norm. Let $\psi \in C^0([0,T]; \bR^{d'})$. We have that $\|\cdot\|_\alpha$ is equivalent to 
\begin{equation}
\label{eq:CielsielskiNorm1}
\|\psi\|_\alpha' =\sup_{(p,m)\in \Lambda} 2^{(\alpha-1/2)p} |\psi_{pm}|.
\end{equation}
If, in addition, we have that
$$
\lim_{p\to\infty} 2^{p(\alpha-1/2)} \sup_{1\leq m\leq 2^p } |\psi_{pm}| = 0
$$ 
we say that $\psi \in C^{\alpha, 0}([0,T]; \bR^{d'})$. This space is a separable subset of $C^\alpha([0,T]; \bR^{d'})$. 
\end{theorem}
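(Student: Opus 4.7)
The plan is to prove the two-sided norm equivalence by direct manipulation of the Schauder coefficients, and then to deduce the characterisation of $C^{\alpha,0}([0,T];\bR^{d'})$ by approximating a general path by its truncated Schauder series.

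First I would prove the upper bound $\|\psi\|_\alpha' \lesssim \|\psi\|_\alpha$. Using the formula \eqref{eq:fpmCoefficients}, I rewrite $2\psi(t^1_{pm}) - \psi(t^0_{pm}) - \psi(t^2_{pm})$ as the difference of two increments of $\psi$ across adjacent subintervals of length $T 2^{-p-1}$. The $\alpha$-Hölder assumption then gives $|\psi_{pm}| \leq 2\sqrt{2^p/T}\,\|\psi\|_\alpha\,(T 2^{-p-1})^\alpha$, which rearranges to a uniform bound $2^{(\alpha-1/2)p}|\psi_{pm}| \leq C\|\psi\|_\alpha$. Taking the supremum over $(p,m) \in \Lambda$ closes this direction (the additional term $|\psi_{00}| \leq 2\|\psi\|_\alpha T^\alpha$ is handled trivially).

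The hard part is the lower bound $\|\psi\|_\alpha \lesssim \|\psi\|_\alpha'$. I would expand $\psi$ in the Schauder basis (using $\psi(0)=0$) so that
\begin{equation*}
\psi(t) - \psi(s) = \sum_{(p,m) \in \Lambda} \psi_{pm}\bigl(G_{pm}(t) - G_{pm}(s)\bigr).
\end{equation*}
The structural observation that drives the estimate is that for each fixed $p$, the supports of $G_{pm}$ for varying $m$ are the disjoint intervals $[t^0_{pm}, t^2_{pm}]$ of length $T 2^{-p}$, and crucially $G_{pm}$ vanishes at both endpoints of its support (since $H_{pm}$ integrates to zero). Hence for any fixed pair $(s,t)$, at most two indices $m$ produce a nonzero increment, namely those whose supports contain $s$ or $t$. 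I would then use the two elementary bounds $|G_{pm}(t)-G_{pm}(s)| \leq \sqrt{2^p/T}\,|t-s|$ (from the $L^\infty$ bound on $H_{pm}$) and $\|G_{pm}\|_\infty = \tfrac{1}{2}\sqrt{T}\,2^{-p/2}$ (the tent-function maximum). Fixing $s<t$ and choosing $N$ with $T 2^{-N-1} < |t-s| \leq T 2^{-N}$, I apply the derivative-type bound for $p \leq N$ and the sup-type bound for $p > N$, combined with $|\psi_{pm}| \leq 2^{-(\alpha-1/2)p}\|\psi\|_\alpha'$. Both resulting geometric series sum to a constant multiple of $T^{1/2-\alpha}|t-s|^\alpha\|\psi\|_\alpha'$; the balancing of scales at $N$ is the crux of the argument.

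For the characterisation of $C^{\alpha,0}$, I would set $\psi^N := \sum_{(p,m) \in \Lambda_N} \psi_{pm} G_{pm}$, which is a piecewise linear path and in particular $\beta$-Hölder continuous for every $\beta \in (\alpha,1)$. Since $\psi - \psi^N$ has Schauder coefficients concentrated on $p > N$, the already proven norm equivalence yields
\begin{equation*}
\|\psi - \psi^N\|_\alpha \lesssim \sup_{p > N}\, 2^{(\alpha-1/2)p}\sup_{1 \leq m \leq 2^p}|\psi_{pm}|,
\end{equation*}
which tends to $0$ under the stated hypothesis. Therefore $\psi$ lies in the $\alpha$-Hölder closure of $C^\beta([0,T];\bR^{d'})$, which by definition is $C^{\alpha,0}([0,T];\bR^{d'})$. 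Separability follows from the countable dense family of finite rational linear combinations of Schauder functions, which by the upper bound and the vanishing-tail characterisation sit inside $C^{\alpha,0}$ and approximate every element of it.
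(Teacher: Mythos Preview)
The paper does not supply its own proof of this theorem: it is stated with attribution to \cite{herrmann2013stochastic} and used as a black box throughout. There is therefore nothing in the paper to compare your argument against.

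That said, your proof is the standard Ciesielski argument and is correct. The only point worth tightening is the ``at most two indices $m$'' claim in the lower bound: you rely implicitly on the fact that $G_{pm}$ vanishes at \emph{both} endpoints of its support, so that any dyadic interval entirely contained in $[s,t]$ contributes zero to the increment; this is what you stated, but it is the crux and deserves to be made explicit when you write it out in full. The scale-balancing estimate at level $N$ and the treatment of $C^{\alpha,0}$ via truncation are exactly as in the classical proof.
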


\begin{example}[Cielsielski Representation of Brownian motion]
Due to the orthogonality of the Schauder functions in the RKHS of Brownian motion, we can represent Brownian motion as
\begin{equation}
\label{eq:CielsielskiRepresentation}
W_t = \sum_{(p,m)\in \Lambda} W_{pm} G_{pm}(t) \quad t\in [0,T]
\end{equation}
where $W_{pm}$ is a sequence of $d'$-dimensional, independent, standard normally distributed random variables. Thus
$$
\| W\|_\alpha = \sup_{(p,m) \in \Lambda} 2^{p(\alpha-1/2)} |W_{pm}|. 
$$
\end{example}

\subsection{Measures and Approximation}

For $E$ a complete, separable metric space with Borel $\sigma$-algebra $\cE$, let $\cP_r(E)$ be the set of all Borel measures over $(E, \cE)$ which have finite $r^{th}$ moments. 

\begin{definition}
\label{defn:WassersteinDistance}
Let $\mu, \nu\in \cP_r(E)$. We define the Wasserstein $r$-distance $\bW^{(r)}_{E, d}:\cP_r(E)\times \cP_r(E) \to \bR^+$ to be
\begin{equation}
\label{eq:defWasserstein}
\bW^{(r)}_{E, d}(\mu, \nu) = \left( \inf_{\gamma \in \cP(E\times E)} \int_{E\times E} d(x, y)^r \gamma(dx, dy) \right)^{\tfrac{1}{r}}
\end{equation}
where $\gamma$ is a joint distribution over $E\times E$ which has marginals $\mu$ and $\nu$. When the space the measure is defined on is clear, we write $\bW^{(r)}_{d}$ where $d$ is the metric over $E$. 
\end{definition}

The problem of finding a measure $\gamma\in \cP_2(E\times E)$ that minimises \eqref{eq:defWasserstein} is sometimes referred to as the Kantorovich problem and $\gamma$ is called the transport plan of $\mu$ and $\nu$. The choice of $r=2$ is common throughout literature. However, we will also be interested in the case $r=1$. The $r$-Wasserstein distance induces the topology of weak convergence of measure as well as convergence in moments of order up to and including $r$. The Wasserstein distance is a metric, but the metric does not induce a norm. The Wasserstein distance is homogeneous but not translation invariant. The space $\cP_2(E)$ is complete and separable with respect to the Wasserstein metric (see \cite{Bolley2008}).

\subsubsection{Quantization of Measures}

We provide a brief introduction to the field of quantization. For further details, see \cite{graf2007foundations}.

\begin{definition}
Let $\cL$ be a measure on a separable Banach space $E$ endowed with the Borel $\sigma$-algebra such that $\cL\in \cP_2(E)$ and all sets of codimension $1$ have null $\cL$-measure. 

Let $I$ be a countable index, let $\fS:=\{ \fs_i, i\in I\}$ be a partition of $E$ and let $\fC:=\{\fc_i\in E, i\in I\}$ be a codebook. For any partition $\fS$ and codebook $\fC$, we define a quantization $q:E \to E$ by
\begin{align*}
q(x) = \fc_i &\quad \mbox{for $x\in \fs_i$}, \qquad q(E) = \fC
\end{align*}
so that
$$
\cL \circ q^{-1}(\cdot) = \sum_{i\in I} \cL(\fs_i) \delta_{\fc_i} (\cdot) \in \cP_2(E). 
$$
The collection of all quantizations is denoted $\fQ$. 
\end{definition}

\begin{definition}
\label{dfn:ComplexityMapping}
Let $\fP \subset [0,1]^{\bN}$ be the set of probability vectors e.g. for every $\fp = (\fp_i)_{i\in \bN}$, we have $\fp_i\in[0,1]$ and $\sum_{i\in \bN} \fp_i = 1$. 
\end{definition}

Given a partition $\fS$ of a space $E$, we have that the sequence $(\cL(\fs_i))_{\fs_i\in \fS}$ is a probability vector. 

\begin{definition}[Optimal Quantizers]
\label{dfn:OptimalQuantizer}
Let $n\in \bN$ and $r\in[1, \infty)$. The minimal $n^{th}$ quantization error of order $r$ of a measure $\cL$ on a separable Banach space $E$ is defined to be
$$
\fE_{n, r}(\cL) = \inf\Bigg\{ \Big( \int_E \min_{\fc\in \fC} \| x-\fc \|_E^r d\cL(x) \Big)^{\tfrac{1}{r}}: \fC\subset E, 1\leq |\fC| \leq n\Bigg\}. 
$$

A Codebook $\fC = \{\fc_i, i\in I\}$ with $1\leq|\fC|\leq n$ is called an $n$-optimal set of centres of $\cL$ (of order r) if
$$
\fE_{n, r}(\cL) = \Big( \int_E \min_{i=1, ..., n} \| x-\fc_i\|_E^r d\cL(x) \Big)^{\tfrac{1}{r}}
$$
\end{definition}

Given a finite collection of elements $(\fc_i)_{i=1, ..., n}$, the optimal way to choose the partition of $E$ is to use the nearest neighbour rule which corresponds to the Voronoi partition
\begin{equation}
\label{eq:VoronoiSetDef}
\fs\Big(\fc_i\Big| (\fc_j)_{j=1, ..., n} \Big):= \Big\{ x \in E: \| x - \fc_i\| = \min_{j=1, ..., n} \|x - \fc_j\| \Big\}
\end{equation}
provided the boundary of the Voronoi sets has measure 0. Sets of the form \eqref{eq:VoronoiSetDef} are called \emph{Voronoi sets}. Similarly, given a finite partition $( \fs_i)_{i=1, ..., n}$ of $E$, the optimal choice of codebook is the centres of mass for the sets $\fs_i$ with respect to the measure $\cL$. For brevity of notation, we write $\fE_n:=\fE_{n, 2}$. 

\subsubsection{Stationary Quantization}

A Stationary set is a codebook with a special property: the Voronoi sets generated by codebook have barycentres equal to the codebook. 

\begin{definition}
Let $E$ be a separable Banach space with Borel $\sigma$-algebra $\cE$, let $n\in \bN$ and let $\cL$ be a measure on $(E, \cE)$ such that and all subsets of codimension $1$ have null $\cL$-measure. Let $\fC\subset E$ satisfy $|\fC| = n$

Suppose that the Voronoi partition $\fS$ of $E$ generated by the elements of $\fC$, containing the collection of sets $\fs_i: = \{y\in E: \min_{j=1, ..., n}\|y-\fc_j\| = \|y-\fc_i\| \}$ satisfies that
$$
\frac{1}{\cL(\fs_i)} \int_{\fs_i} y d\cL(y) =\fc_i. 
$$
Then we call the codebook $\fC$ an \emph{$n$-stationary set} of the law $\cL$. 
\end{definition}

\begin{theorem}[\cite{laloe20101}*{Theorem 2.1}]
\label{theorem:ExistenceStationaryQuant}
Let $E$ be a reflexive, separable Banach space and let $\cL$ be a measure on $(E, \cE)$. For $\fc_i\in E$, define $\fA:E^n \to \bR$ by
$$
\fA(\fc_1, ..., \fc_n) = \int_E \min_{i=1, ..., n} \|y - \fc_i\|_E^2 d\cL(y)
$$
e.g. $\fA(\fc_1, ..., \fc_n)$ is the mean square error between the measure $\cL$ and the quantization with codebook $\{\fc_1, ..., \fc_n\}$ and partition equal to the Voronoi sets of the codebook. 

Then $\fA$ admits at least one minimum, and so an n-stationary set exists. 
\end{theorem}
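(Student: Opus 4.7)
The plan is to apply the direct method of the calculus of variations to the functional $\fA: E^n \to [0,\infty)$, exploiting the reflexivity of $E$ to recover the compactness needed to extract a convergent subsequence of minimizers in the weak topology. Concretely, I will establish (a) weak sequential lower semicontinuity of $\fA$, and (b) uniform boundedness of minimizing sequences, and then use (c) the weak sequential compactness of bounded subsets of $E^n$ to conclude that the infimum is attained. The barycentre condition defining an $n$-stationary set will then be read off from the first-order optimality at such a minimizer.

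For (a), fix $y \in E$ and note that $\fc \mapsto \|y-\fc\|_E^2$ is continuous and convex on $E$, hence weakly sequentially lower semicontinuous by Mazur's lemma (the epigraph is convex and strongly closed, thus weakly closed). The pointwise minimum of finitely many weakly l.s.c.\ functions is again weakly l.s.c., so the integrand $(\fc_1, \ldots, \fc_n) \mapsto \min_{i} \|y-\fc_i\|_E^2$ inherits this property on $E^n$ for every $y$. As the integrand is nonnegative, Fatou's lemma applied along a weakly convergent sequence $\fc^{(k)} \rightharpoonup \fc$ in $E^n$ transfers the lower semicontinuity to $\fA$ itself.

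For (b), since $\cL \in \cP_2(E)$ we have $\fA(0,\ldots,0) = \int_E \|y\|^2 \, d\cL(y) < \infty$, so $\alpha := \inf \fA$ is finite. Along a minimizing sequence $(\fc^{(k)})$, suppose that some coordinate $\|\fc^{(k)}_{i_0}\|_E \to \infty$ along a subsequence. Either $\cL$ is supported on at most $n-1$ points, in which case an $n$-stationary set can be exhibited directly, or else the mass served by a centre receding to infinity vanishes in the limit and one has $\alpha \geq \fE_{n-1,2}(\cL)^2$; replacing the escaping coordinate by $0$ then produces an alternative minimizing sequence whose components are uniformly bounded. By reflexivity of $E$, a bounded sequence in $E^n$ admits a weakly convergent subsequence, and by (a) its weak limit is a minimizer of $\fA$.

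To conclude stationarity, fix such a minimizer $(\fc_1, \ldots, \fc_n)$ and an index $i$. The assumption that subsets of codimension one carry null $\cL$-measure ensures $\cL(\partial \fs_i) = 0$, so under sufficiently small perturbations $\fc_i \mapsto \fc_i + tv$ (with $v\in E$, $t\in \bR$) the Voronoi cell $\fs_i$ changes only on a $\cL$-null set. Consequently $t \mapsto \int_{\fs_i} \|y - \fc_i - tv\|_E^2 \, d\cL(y)$ must have a critical point at $t=0$ for every direction $v$, which after differentiation forces the Bochner-integral identity
$$
\fc_i \;=\; \frac{1}{\cL(\fs_i)} \int_{\fs_i} y \, d\cL(y),
$$
i.e.\ the defining relation of an $n$-stationary set. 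The hard part will be the rigorous execution of this first-order calculation in a Banach (rather than Hilbert) setting, where $\|\cdot\|_E^2$ need not be Fr\'echet differentiable; the reflexivity hypothesis is precisely what enables the passage to a duality pairing, the existence of the Bochner barycentre, and hence the recovery of the stationarity relation.
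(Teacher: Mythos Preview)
The paper does not supply its own proof of this statement; it is quoted verbatim from \cite{laloe20101}, and the surrounding Remark~\ref{rem:ReflexivityImportance} only explains why reflexivity is needed (weak compactness of bounded sets) without reproducing the argument. So there is no in-paper proof to compare against, and your direct-method outline for the existence of a minimiser is exactly the mechanism the remark alludes to: weak lower semicontinuity of $\fA$ plus weak sequential compactness in a reflexive space. That part of your proposal is correct and standard.

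There is, however, a genuine gap in your final step. You claim that differentiating $t\mapsto\int_{\fs_i}\|y-\fc_i-tv\|_E^2\,d\cL(y)$ at $t=0$ ``forces'' the barycentre identity $\fc_i=\cL(\fs_i)^{-1}\int_{\fs_i}y\,d\cL(y)$. In a Hilbert space this is immediate because $\nabla_c\|y-c\|^2=-2(y-c)$, but in a general Banach space the (sub)differential of $\|\cdot\|_E^2$ is $2\|\cdot\|_E\,J(\cdot)$ with $J:E\to E^*$ the duality map, so the first-order condition lives in $E^*$ and does not reduce to the Bochner barycentre in $E$. Reflexivity alone does not repair this: it gives you $E^{**}=E$, not that the minimiser of $c\mapsto\int\|y-c\|_E^2\,d\cL(y)$ is the mean. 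Indeed, for $E=\ell^p$ with $p\neq 2$ the minimiser of the squared-norm functional is generically different from the barycentre. Your closing sentence that ``reflexivity is precisely what enables \ldots\ the recovery of the stationarity relation'' is therefore not justified.

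In practice the paper only invokes this theorem on finite-dimensional subspaces $\cH_N$ (Definition~\ref{dfn:TheQuantization}) and on $\bR^d$ (Definition~\ref{dfn:TheQuantizationRoughRandInit}), where the stationarity/barycentre identification can be recovered; but as a proof of the theorem in the stated generality, your argument establishes the existence of a minimum of $\fA$ and stops short of the ``and so an $n$-stationary set exists'' clause.
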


\begin{remark}
\label{rem:ReflexivityImportance}
The proof of the above result relies on the Assumption that the Banach space $E$ is reflexive. In particular, for a non-reflexive space the unit ball will be weak-$*$ compact but not weak compact (see \cite{fabian2013functional}*{Theorem 3.31}). By contrast, the functional $\fA$ can be shown to be weak lower semicontinuous but the proof does not extend to weak-$*$ lower semicontinuity. 

In particular, we are interested in Gaussian measures over the Banach space $C^{\alpha, 0}([0,T]; \bR^{d'})$, which is not reflexive and so Theorem \ref{theorem:ExistenceStationaryQuant} does not apply. 

Lastly, it is not clear whether a stationary quantization exists in general. 
\end{remark}

\begin{lemma}
\label{lem:StationaryRKHS}
Let $\cL$ be a centred Gaussian measure taking values on the Banach space $E$ and suppose that an $n$-stationary set exists. Let $\fC$ be an $n$-stationary set. Then $\fC \subset \cH$. 
\end{lemma}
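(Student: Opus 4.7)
The approach I would take exploits the functional-analytic description of the Cameron-Martin space: for a centred Gaussian measure $\cL$ on a separable Banach space $E$, an element $h\in E$ lies in the reproducing kernel Hilbert space $\cH$ if and only if the linear functional $\ell\mapsto \ell(h)$ on $E^*$ extends continuously to $L^2(\cL)$, i.e.\ there exists a constant $C_h<\infty$ with
\[
|\ell(h)| \leq C_h \|\ell\|_{L^2(\cL)} \qquad \forall \ell \in E^*,
\]
and in that case $\|h\|_{\cH}$ equals the smallest such $C_h$. I would first recall this characterisation (it is standard and contained in \cite{bogachev1998gaussian}), together with Fernique's theorem, which guarantees $\int_E\|y\|_E\, d\cL(y)<\infty$ so that Bochner integrals of the form $\int_A y\, d\cL(y)$ are well-defined in $E$ for every Borel set $A$.

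Next, fix $\fc_i\in\fC$ and let $\fs_i$ be its associated Voronoi cell (which has $\cL(\fs_i)>0$, otherwise the barycentre identity would be vacuous and $\fc_i$ could be removed from the codebook without affecting the cost). Stationarity gives exactly
\[
\fc_i = \frac{1}{\cL(\fs_i)}\int_{\fs_i} y\, d\cL(y),
\]
so that applying any $\ell \in E^*$ and using the commutation of $\ell$ with the Bochner integral yields
\[
\ell(\fc_i) = \frac{1}{\cL(\fs_i)}\int_{E} \ell(y)\, \1_{\fs_i}(y)\, d\cL(y).
\]
By Cauchy--Schwarz in $L^2(\cL)$,
\[
|\ell(\fc_i)| \leq \frac{1}{\cL(\fs_i)}\,\|\ell\|_{L^2(\cL)}\,\|\1_{\fs_i}\|_{L^2(\cL)} = \frac{\|\ell\|_{L^2(\cL)}}{\sqrt{\cL(\fs_i)}}.
\]
Setting $C_{\fc_i}:=\cL(\fs_i)^{-1/2}$, the characterisation of $\cH$ recalled above immediately gives $\fc_i\in\cH$ with $\|\fc_i\|_{\cH}\leq \cL(\fs_i)^{-1/2}$. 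Since $i$ was arbitrary, $\fC\subset\cH$.

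The only mildly delicate point is justifying that the barycentre formula for $\fc_i$ defines a genuine element of $E$ (not just an element of some completion or a weak integral); this is handled by Fernique integrability as mentioned above, so no reflexivity or separability beyond what is assumed on $E$ is needed. Note in particular that the argument does not rely on Theorem \ref{theorem:ExistenceStationaryQuant} or on the existence of stationary sets: it is a purely conditional statement about any stationary codebook. This is exactly what is needed in the paper, where the Banach space $C^{\alpha,0}([0,T];\bR^{d'})$ is non-reflexive (cf.\ Remark \ref{rem:ReflexivityImportance}) and existence must be handled by other means.
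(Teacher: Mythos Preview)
Your proof is correct and follows essentially the same route as the paper: write each codebook element as the barycentre $\fc_i=\int_E x\cdot\frac{\1_{\fs_i}(x)}{\cL(\fs_i)}\,d\cL(x)$, observe that $\1_{\fs_i}/\cL(\fs_i)\in L^2(\cL)$, and invoke the characterisation of $\cH$ as the range of such $L^2$-weighted Bochner integrals. The paper compresses your duality-plus-Cauchy--Schwarz step into the single phrase ``use Definition of the RKHS'', but the argument is the same.
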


\begin{proof}
This proof is based on a similar argument first presented in \cite{LUSCHGY2002486} which focuses solely on Hilbert spaces. Using that the $n$-stationary set exists, we have that for any $\fc \in \cC$
$$
\fc = \int_\fs x d\cL(x) = \int_E x \cdot \frac{\1_{\fs}(x)}{\cL(\fs)} d\cL(x)
$$
Next, we use that $\tfrac{\1_{\fs}}{\cL(\fs)}$ is a square integrable function with respect to $\cL$ on $E$ and use Definition of the RKHS to conclude that the right hand side of this equation must be an element of $\cH$. Therefore $\fc\in \cH$
\end{proof}

\begin{remark}
\label{lemma:convexcheat}
In particular, if $q_n(W)$ denotes the quantized random variable $W$, then the Stationary quantization has the property that
$$
q_n(W) = \bE[ W| \cF_n], 
$$
where $\cF_n$ is the $\sigma$-algebra generated by the partition of $q_n$. This is a particularly useful property when it comes to establishing uniform integrability of quantizations due to the following simple argument: 

Let $\phi$ be a convex function on a Banach space $E$. Then
\begin{align}
\label{eq:StationaryQuantConvex}
\sup_{n\in \bN} \bE\Big[ \phi(q_n(W))\Big] =\sup_{n\in \bN} \bE\Big[ \phi\big( \bE[W|\cF_n]\big)\Big] 
\leq \sup_{n\in \bN} \bE\Big[ \bE\big[\phi (W) |\cF_n\big] \Big] = \bE\Big[ \phi (W)\Big]. 
\end{align}
\end{remark}

\begin{lemma}
\label{lem:DivisionQuantization+Truncation}
Let $\cL$ be a non-degenerate Gaussian measure over $E$ with RKHS $\cH$. Let $U$ be a finite dimensional subspace of $\cH$ and let $P_U$ be the orthogonal projection operator from $\cH$ to $U$ extended to $E=\overline{\cH}^E$. Then $\forall r>1$
$$
\fE_{n,r}(\cL) \lesssim \fE_{n,r}\big(\cL\circ (P_U)^{-1}\big) + \Bigg( \int_{E} \big\| x - P_U[x]\big\|_E^r d\cL(x) \Bigg)^{1/r}. 
$$
\end{lemma}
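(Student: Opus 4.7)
The strategy is to lift a near-optimal codebook for the pushforward measure $\cL\circ(P_U)^{-1}$ back to a codebook for $\cL$ and use the triangle inequality to control the additional error. Fix $\varepsilon>0$ and choose a codebook $\fC=\{\fc_1,\dots,\fc_n\}\subset E$ (in fact, since $\cL\circ(P_U)^{-1}$ is supported on $U$, one may take $\fC\subset U$) with $|\fC|\leq n$ and such that
\[
\Bigl(\int_E \min_{i=1,\dots,n}\|y-\fc_i\|_E^{r}\, d\bigl(\cL\circ(P_U)^{-1}\bigr)(y)\Bigr)^{1/r}
\leq \fE_{n,r}\bigl(\cL\circ(P_U)^{-1}\bigr)+\varepsilon.
\]
Such a codebook exists by the definition of $\fE_{n,r}$, and the change of variables $y=P_U x$ lets us rewrite the left hand side as an integral against $\cL$ itself.

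Next, for each $x\in E$ the triangle inequality in $E$ yields
\[
\min_{i=1,\dots,n}\|x-\fc_i\|_E \;\leq\; \|x-P_U[x]\|_E \;+\;\min_{i=1,\dots,n}\|P_U[x]-\fc_i\|_E .
\]
Taking the $L^r(\cL)$ norm on both sides and applying Minkowski's inequality gives
\[
\Bigl(\int_E \min_i \|x-\fc_i\|_E^r d\cL(x)\Bigr)^{1/r}
\leq
\Bigl(\int_E \|x-P_U[x]\|_E^r d\cL(x)\Bigr)^{1/r}
+\Bigl(\int_E \min_i \|P_U[x]-\fc_i\|_E^r d\cL(x)\Bigr)^{1/r}.
\]
By construction of $\fC$, the final term is bounded by $\fE_{n,r}\bigl(\cL\circ(P_U)^{-1}\bigr)+\varepsilon$, while the left hand side bounds $\fE_{n,r}(\cL)$ because $\fC$ is itself an admissible codebook of cardinality at most $n$ for $\cL$. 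Letting $\varepsilon\downarrow 0$ delivers the claimed inequality (indeed with constant $1$, which is a fortiori the asymptotic statement $\lesssim$).

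The only mildly subtle point is to verify that $P_U$ extends to a bounded linear map $E\to E$ so that the integrand $\|x-P_U[x]\|_E$ is genuinely measurable and finite on $E$, and that the pushforward integral in the last display is legitimately evaluated against $\cL\circ(P_U)^{-1}$. This is standard in Gaussian theory: if $\{h_1,\dots,h_k\}$ is an orthonormal basis of $U\subset\cH$, then the Cameron--Martin duality identifies each $h_i$ with a centred Gaussian element $h_i^{*}\in E^{*}$ (of the abstract Wiener space), and $P_U[x]=\sum_{i=1}^{k}\langle h_i^{*},x\rangle h_i$ is continuous and linear on all of $E$, extending the orthogonal projection on $\cH$. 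With this extension in hand, measurability, the change of variables, and finiteness of the $L^{r}(\cL)$ norm of $x-P_U[x]$ (which uses that $\cL$ has Fernique-type integrability, so all $r$-th moments of continuous linear functionals are finite) are automatic, and the proof concludes as above. The only genuine ``work'' is the triangle-plus-Minkowski step, so I expect no real obstacle beyond correctly interpreting $P_U$ on $E$.
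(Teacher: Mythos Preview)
Your proof is correct and follows essentially the same approach as the paper: restrict the codebook to $U=P_U[E]$, split $\|x-\fc_i\|$ via the triangle inequality into $\|x-P_U[x]\|+\|P_U[x]-\fc_i\|$, and recognise the second piece as an integral against the pushforward $\cL\circ(P_U)^{-1}$. The only cosmetic difference is that the paper applies the cruder bound $(a+b)^r\leq 2^{r-1}(a^r+b^r)$ (picking up a factor $2^{(r-1)/r}$) and phrases the change of variables via the independence of $P_U[x]$ and $(I-P_U)[x]$ under the Gaussian law, whereas your use of Minkowski gives constant $1$ directly and does not need that independence at all.
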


In particular, when the measure $\cL$ is in some sense ``concentrated" on a finite dimensional linear subspace of the Banach space $E$, then the quantization problem can be simplified to a finite dimensional problem. 

\begin{proof}
See Appendix \ref{sec:AppendixA}
\end{proof}

\subsubsection{Rate of Convergence for Quantization}
\label{subsubsec:SmallBall}

In the finite dimensional setting, the minimal quantization error is well understood (see \cite{graf2007foundations}). Let $\cL$ be a measure over a $d$-dimensional vector space. Then 
\begin{equation}
\label{pro:GrafLuschgyQuantizationRate}
\fE_{n, r}(\cL) \approx n^{1/d}.
\end{equation}
However, for a Gaussian measure over a Banach space $E$, the limit $d\to \infty$ is no longer meaningful. In both \cite{dereich2003link} and \cite{graf2003functional}, the authors investigate the relation between the minimal quantization error and the probabilities of small balls. 

\begin{theorem}[\cite{dereich2003link}, \cite{graf2003functional}]
\label{cor:AsymtoticRateConQuantErr}
Let $\cL^W$ be a Gaussian measure over a Banach space $E$. Let $\fB_W$ be the small ball probability of $\cL^W$ defined by $\fB_W(\varepsilon):=-\log \cL\big[ \{ x\in E: \|x\|_E<\varepsilon\}\big]$. Then for any choice of $r\geq1$
$$
\fE_{n, r}(\cL^W) \approx (\fB_W)^{-1} \Big( \log(n)\Big)
$$
as $n\to \infty$. In particular, let $\cL^W$ be the law of Brownian motion over $C^{\alpha, 0}([0,T]; \bR^{d'})$. Then by the results of \cite{baldi1992some}
\begin{equation}
\label{eq:AsymtoticRateConQuantErr}
\fE_{n, r}(\cL^W) \approx d' \Big( \log(n^{1/d'}) \Big)^{\alpha-1/2}. 
\end{equation}

\end{theorem}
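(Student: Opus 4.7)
The plan is to establish the abstract equivalence $\fE_{n,r}(\cL^W) \approx (\fB_W)^{-1}(\log n)$ by matching a compactness-based lower bound with a wavelet-truncation upper bound, and then to specialise to the Brownian case via the known small ball asymptotics in the Hölder scale.

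\textbf{Lower bound.} For any codebook $\fC = \{\fc_1,\dots,\fc_n\}$, Anderson's inequality for centred Gaussian measures yields $\cL^W(B(\fc_i,\varepsilon)) \leq \cL^W(B(0,\varepsilon)) = \exp(-\fB_W(\varepsilon))$, so the union of the radius-$\varepsilon$ balls around the codebook has $\cL^W$-mass at most $n\exp(-\fB_W(\varepsilon))$. If a codebook achieves $\fE_{n,r}(\cL^W) \leq \varepsilon/2$, Markov's inequality applied to $\min_i \|W-\fc_i\|_E^r$ forces at least half the mass to lie within distance $\varepsilon$ of the codebook, so $n\exp(-\fB_W(\varepsilon)) \geq 1/2$. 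Inverting the decreasing function $\fB_W$ gives $\fE_{n,r}(\cL^W) \gtrsim (\fB_W)^{-1}(\log n)$.

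\textbf{Upper bound and specialisation.} Fix a truncation level $N$ and use Ciesielski's isomorphism (Theorem \ref{thm:Ciecielski}) to split \eqref{eq:CielsielskiRepresentation} as $W = P_N W + R_N W$, with $P_N W := \sum_{(p,m)\in \Lambda_N} W_{pm} G_{pm}$ living in a finite-dimensional subspace $U_N \subset \cH$ of dimension $d_N = d'(2^{N+1}-1)$. Lemma \ref{lem:DivisionQuantization+Truncation} then gives
\[
\fE_{n,r}(\cL^W) \lesssim \fE_{n,r}\big(\cL^W \circ P_N^{-1}\big) + \bE\big[\|R_N W\|_\alpha^r\big]^{1/r},
\]
where the first term is controlled by the Graf--Luschgy finite-dimensional rate \eqref{pro:GrafLuschgyQuantizationRate}, while the residual obeys
\[
\bE\big[\|R_N W\|_\alpha^r\big]^{1/r} \lesssim \sup_{p>N} 2^{(\alpha-1/2)p} \bE\Big[\max_{1 \leq m \leq 2^p} |W_{pm}|^r\Big]^{1/r} \lesssim 2^{(\alpha-1/2)N}\sqrt{N}
\]
by the Ciesielski characterisation \eqref{eq:CielsielskiNorm1} and standard Gaussian maximal inequalities on $2^p$ independent standard normals. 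Balancing the two contributions in $N = N(n)$ recovers $(\fB_W)^{-1}(\log n)$, and substituting the Baldi--Roynette type estimate $\fB_W(\varepsilon) \approx \varepsilon^{-2/(1-2\alpha)}$ from \cite{baldi1992some} (with the explicit $d'$-dependence arising from the product structure of the noise) yields the stated rate \eqref{eq:AsymtoticRateConQuantErr}.

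\textbf{Main difficulty.} The delicate step is the upper bound: one must produce a truncation $P_N$ whose residual decays at exactly the rate dictated by the small ball function, and then balance this against the $n^{-1/d_N}$ Graf--Luschgy rate so that the resulting logarithmic exponent matches $(\fB_W)^{-1}(\log n)$. This balancing implicitly invokes the Kuelbs--Li duality between the metric entropy of the RKHS unit ball $\cK$ and the function $\fB_W$; any suboptimality in either the wavelet tail bound or the finite-dimensional quantization step would leave an irreparable gap between the upper and lower bounds.
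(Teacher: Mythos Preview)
The paper does not supply its own proof of this theorem: it is quoted directly from \cite{dereich2003link} and \cite{graf2003functional}, with the Brownian specialisation obtained by plugging in the small ball estimate of \cite{baldi1992some}. So there is no in-paper argument to compare your proposal against.

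On the merits of your sketch: the lower bound via Anderson's inequality plus a union bound is correct and is indeed the standard argument in the cited references. The gap is in your upper bound. You invoke the Ciesielski wavelet expansion and the Schauder truncation $P_N$, but these are specific to Brownian motion (or Brownian bridge) in the H\"older scale; they are not available for an arbitrary centred Gaussian measure on an arbitrary separable Banach space $E$. Consequently your argument establishes the Brownian rate \eqref{eq:AsymtoticRateConQuantErr} but does not prove the general equivalence $\fE_{n,r}(\cL^W)\approx(\fB_W)^{-1}(\log n)$. In the cited works the general upper bound goes through the Kuelbs--Li relation between $\fB_W$ and the metric entropy of the RKHS unit ball $\cK$ in $E$, which you allude to in your ``Main difficulty'' paragraph but never actually use: one covers $\cK$ by $\asymp n$ balls of the correct radius and then exploits the Gaussian isoperimetric/shift structure to turn this into a codebook for $\cL^W$. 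Your truncate-and-quantize scheme is essentially the one the paper uses later (Lemma~\ref{lem:DivisionQuantization+Truncation} and Proposition~\ref{pro:ActualRateCon4Quantization}) to build an \emph{explicit} near-optimal quantizer in the Brownian case, but it does not substitute for the abstract upper bound.
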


In particular, Equation \eqref{eq:AsymtoticRateConQuantErr} provides us with a lower bound that the error of the quantization for Brownian motion cannot outstrip. However, as already explained in Remark \ref{rem:ReflexivityImportance}, there may not exist a stationary quantization that attains $\fE_{n, r}(\cL^W)$. 

A remarkable aspect of \cite{dereich2003link} is that the authors additionally prove that the mean square error between an empirical measure and the true Gaussian measure in the Wasserstein distance converges at the same rate as the optimal quantization error. 

\subsection{Rough Paths}
\label{subsec:RoughPaths}

Throughout this paper, we will use the notation for increments of a path $X_{s, t} = X_t - X_s$ for $s\leq t$. Rough paths were first introduced in \cite{lyons1998differential}. For a detailed overview of rough path theory, see \cite{friz2010multidimensional}, \cite{frizhairer2014} and \cite{lyons2002system}. For a self-contained exposition, the reader can find a primer on rough paths in Appendix \ref{Appendix:Primer}. 

\subsubsection{The lift of Gaussian Processes}

%Gaussian processes have a natural lift for their Signature. It is shown in \cite{frizhairer2014} that one can solve the iterated integral of a Gaussian process by approximating the process pathwise and showing that the approximation converges in mean square and almost surely. In particular, the iterated integral of a Gaussian process is an element on the second Wiener-It\^o chaos expansion. 
%
%The key to this result is the regularity of the covariance function of the Gaussian process. Provided the covariance function is adequately continuous, the existence of the lift to the signature is assured. 

In \cite{friz2010differential}, the authors prove that when the covariance operator of the Gaussian satisfies a $p$-variation condition, the path of the Gaussian can be lifted to a finite $p$-variation or $\alpha$-H\"older continuous rough path. 

\begin{assumption}
\label{assumption:GaussianRegularity}
Let $\cL^W$ be the law of a continuous, centred Gaussian process with independent components taking values in $\bR^{d'}$ and covariance operator $\cR$ such that $\exists \varrho\in[1, 2)$ and $M<\infty$ with
$$
\| \cR\|_{\varrho; [s, t]^2} \leq M |t-s|^{1/ \varrho}. 
$$
\end{assumption}

%%%%%%%%%%%%%%%%%%%%%%%%%%%%%%%%%%%%%%%%%%%%%%%%%%%%%%%
%\newpage

\section{Approximation of Brownian motion}
\label{sec:QuantizationBM}

The goal of this Section is to construct a finite support measure that approximates the law of an enhanced Brownian motion as a measure over the space of geometric rough paths. 

\subsection{Truncation of Brownian Motion}
\label{subsec:TruncationofBrownianMotion}

Using the Cielsielski representation for Brownian motion from Equation \eqref{eq:CielsielskiRepresentation}, we can obtain a finite dimensional Gaussian measure on $C^{\alpha, 0}([0,T]; \bR^{d'})$ 
\begin{equation}
\label{eq:dfn:TruncatedBrownianMotion}
W^{N}_t = \sum_{(p, m)\in \Lambda_N} W_{pm} G_{pm}(t),
\end{equation}
which approximates Brownian motion. Let us briefly describe some of the properties this random variable:
\begin{itemize}
\item $W^{N}$ is a Gaussian measure on $C^{\alpha, 0}([0,T]; \bR^{d'})$ with RKHS $\cH_N=\big( span_{(p,m)\in \Lambda_N}\{ G_{pm}\}\big)^{\times d'}$. 
\item As a finite dimensional Gaussian, the support of $W^{N}$ is just $\cH_N$. This is equal to the space of $d'$-dimensional, piecewise linear paths over the dyadic intervals of size $T2^{-N}$. 
\item This is the optimal finite dimensional approximation of Brownian motion with respect to the H\"older norm \eqref{eq:CielsielskiNorm1}. 
\item The support of the measure $\cL^{W^N}$ is Reflexive, so by Theorem \ref{theorem:ExistenceStationaryQuant} a stationary quantization exists. 
\end{itemize}

\subsubsection{Optimality of the Truncation}

We prove that the Truncation chosen in \eqref{eq:dfn:TruncatedBrownianMotion} is the optimal choice with respect to the $\alpha$-H\"older norm. This is an application of the results of \cite{bay2017karhunen}. 

\begin{proposition}
\label{pro:OptimalKarhunen}
Let $\cL^W$ be the law of Brownian motion over the Banach space $C^{\alpha, 0}([0,T]; \bR^{d'})$. Then the $d'\cdot 2^N$ dimensional projection $P:C^{\alpha, 0}([0,T]; \bR^{d'}) \to C^{\alpha, 0}([0,T]; \bR^{d'})$ that minimises the integral
$$
\bE\Bigg[ \Big\| W - P[W] \Big\|_{\alpha-\textrm{H\"ol}}^2 \Bigg],
$$
is the projection
$$
P[W]_t = \sum_{(p, m)\in \Lambda_N } W_{pm} G_{pm}(t).
$$
\end{proposition}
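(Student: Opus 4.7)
The plan is to invoke the optimality theorem for Karhunen-Loève expansions in Banach spaces of \cite{bay2017karhunen}, using the Cielsielski isomorphism to reduce the problem to a concrete spectral optimisation on the wavelet coefficients of Brownian motion. Throughout I would exploit the crucial structural fact that the Schauder functions $\{G_{pm}\}_{(p,m)\in \Lambda}$ form an orthonormal basis of the Cameron-Martin space $\cH=H^1_0([0,T];\bR^{d'})$, since their derivatives (the Haar functions) are orthonormal in $L^2([0,T];\bR^{d'})$. This is what links the spectral side (optimal subspaces of $\cH$) to the analytical side (the $\alpha$-H\"older norm, rewritten through Theorem \ref{thm:Ciecielski}).

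First I would reduce to the scalar case $d'=1$: by independence of the components of $W$ and by the fact that the Cielsielski norm is itself a supremum over scalar coefficients, any optimal projection must decouple across components, so it suffices to prove optimality of the rank-$|\Lambda_N|$ Schauder truncation for one-dimensional Brownian motion. Next, by a standard Gaussian conditioning argument, any projection $P$ minimising $\bE[\|W - P[W]\|_\alpha^2]$ may be taken to be the orthogonal projection in $\cH$ onto some finite-dimensional subspace $U\subset \cH$, so the optimisation reduces to choosing $U$. Using Theorem \ref{thm:Ciecielski} and the orthonormality of $\{G_{pm}\}$ in $\cH$, the residual norm $\|W - P_U[W]\|_\alpha$ is equivalent to the weighted supremum $\sup_{(p,m)} 2^{(\alpha-1/2)p}|(W-P_U[W])_{pm}|$, and if $U = \mathrm{span}\{G_{pm} : (p,m)\in S\}$ for some index set $S$ of the correct cardinality, then the residual coefficients are simply the $W_{pm}$ with $(p,m)\notin S$. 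Since the weights $2^{(\alpha-1/2)p}$ are strictly decreasing in $p$ (using $\alpha<1/2$) and the $W_{pm}$ are i.i.d.\ standard Gaussian, the expected squared supremum is minimised by keeping precisely the indices with the largest weights. Because at each scale $p$ there are $2^p$ equivalent indices, the optimal coordinate-aligned choice is exactly $\Lambda_N$.

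The main obstacle is the final step: promoting optimality within coordinate-aligned subspaces to optimality within all finite-dimensional subspaces of $\cH$. In the $L^2$-setting this is free from the spectral theorem, but the supremum norm admits no such orthogonal decomposition. This is precisely the content of \cite{bay2017karhunen}: for Gaussian measures whose covariance is diagonal in a given orthonormal RKHS basis and whose target Banach norm has a weighted structure aligned with that same basis, the optimal rank-$n$ projection is the corresponding coordinate projection. I would verify the hypotheses of that theorem in our concrete setting — independence and equidistribution of the Schauder coefficients on one hand, and the fact that the Cielsielski norm is a weighted supremum in the same basis on the other — and then conclude the proof directly. The verification itself is technical but routine once the abstract framework is transported through the Cielsielski isomorphism.
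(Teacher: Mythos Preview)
Your proposal is correct and follows essentially the same route as the paper: both arguments transport the problem through the Cielsielski isomorphism (Theorem \ref{thm:Ciecielski}) and then appeal to the Banach-space Karhunen--Lo\`eve theory of \cite{bay2017karhunen}, exploiting that the Schauder functions are orthonormal in $\cH$ while the $\alpha$-H\"older norm is a weighted supremum in the same basis.

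The only difference is one of packaging. The paper unwinds the Bay--Croix argument explicitly in the concrete setting: it writes the covariance kernel $\cS$ on $C^{\alpha,0}([0,T];\bR^{d'})^*$, identifies the dual norm as $\|f\|_{\alpha,*}=\sum_{(p,m)}2^{p(1/2-\alpha)}|f_{pm}|$, and then runs the iterative Rayleigh-quotient procedure, maximising $f(\cS[f])=\sum|f_{pm}|^2$ under the dual-norm constraint; a convexity argument shows the maximisers are exactly the wavelet evaluation functionals, yielding eigenvalues $\lambda^{(qni)}=2^{q(2\alpha-1)}$ and eigenvectors $G_{qn}e_i$. You instead propose a two-stage argument --- first optimise over coordinate-aligned subspaces (where the answer is visibly $\Lambda_N$), then invoke \cite{bay2017karhunen} as a black box to pass to general subspaces. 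Your packaging is slightly more modular; the paper's is more self-contained since it makes the spectral computation explicit rather than deferring it. Either way the substantive content is the same Rayleigh-quotient optimisation, and your identification of ``promoting from coordinate-aligned to arbitrary subspaces'' as the crux is exactly what the paper's convexity step handles.
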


\begin{proof}
See Appendix \ref{sec:AppendixA}.  
\end{proof}

\subsubsection{Rate of Convergence of the Truncation}

We measure the rate of convergence for a truncated Brownian motion with respect to the $\alpha$-H\"older norm. We point out that the Banach space $C^{\alpha, 0}([0,T]; \bR^{d'})$ is not $K$-convex (see \cite{pisier1999volume}*{Definition 2.3}) so consequently the upper and lower bounds of the rate of convergence cannot be the same. 

%\begin{lemma}
%\label{lemma:SupremumOfGaussians}
%Let $X_i$ be a sequence of $n$ independent, $d$ dimensional normally distributed random variables with mean 0 and covariance matrix $\sigma^2 I_d$. Then
%$$
%\bE\Big[ \sup_{i=1, ..., n} |X_i|^2\Big] \leq  2d\sigma^2\Big(1+\log(n)\Big). 
%$$
%\end{lemma}
%
%\begin{proof}
%We use that the function $x\in \bR^d \mapsto \exp(1\vee \sqrt{|x|})$ is convex. This can be seen by differentiating. Then by Jensen’s Inequality
%\begin{align*}
%\exp\Big( 1\vee t \sqrt{\bE[ \sup_{i=1, ..., n}|X_i|^2]}\Big) \leq& \bE\Big[ \sup_{i=1, ..., n} \exp(1\vee t |X_i| ) \Big]
%\\
%\leq& n \bE\Big[ \exp(1\vee t |X_i| ) \Big] \leq n \exp\Big( 1+ \frac{d t^2 \sigma^2}{2}\Big) . 
%\end{align*}
%Therefore
%$$
%t \sqrt{ \bE\Big[ \sup_{i=1, ..., n} |X_i|^2\Big]} \leq t \sqrt{ \bE\Big[ \sup_{i=1, ..., n} |X_i|^2\Big]}\vee 1\leq \log(n) + 1 + \frac{dt^2 \sigma^2}{2}
%$$
%so
%$$
%\sqrt{ \bE\Big[ \sup_{i=1, ..., n} |X_i|^2\Big]} \leq \frac{\log(n)+1}{t} + \frac{dt^2\sigma^2}{2}. 
%$$
%Optimising over $t$ and squaring, we get
%$$
%\bE\Big[ \sup_{i=1, ..., n} |X_i|^2\Big] \leq 2d\sigma^2 \Big( 1+\log(n)\Big). 
%$$
%\end{proof}
%
%We apply this Lemma to attain an upper bound on the Truncation error in $\|\cdot \|_\alpha$-norm. 

\begin{proposition}
\label{pro:RateConverenceTruncationUpper}
Let $W$ be a Brownian motion as expressed in \eqref{eq:CielsielskiRepresentation} and let $W^{N}$ be truncated Brownian motion \eqref{eq:dfn:TruncatedBrownianMotion}. Then for $r>1$ we have 
\begin{equation}
c \cdot d' \cdot N^{1/2-\alpha} \cdot 2^{(\alpha-1/2)N}\leq \bE\Big[ \| W - W^{N}\|_{\alpha}^r\Big]^{1/r} \leq C\cdot d' \cdot \sqrt{N}\cdot 2^{(\alpha-1/2)N}, 
\end{equation}
where the constants $c$ and $C$ dependent only on $\alpha$ and $r$. 
\end{proposition}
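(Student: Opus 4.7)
The strategy is to invoke the Cielsielski isomorphism (Theorem \ref{thm:Ciecielski}) to rewrite the $\alpha$-H\"older norm of the residue in terms of a weighted supremum of the Gaussian wavelet coefficients. Since $W-W^{N}=\sum_{p>N}\sum_{m=1}^{2^p}W_{pm}G_{pm}$ by construction, the Schauder coefficients of $W-W^N$ vanish on $\Lambda_N$ and coincide with $W_{pm}$ otherwise, so
$$
\|W-W^{N}\|_{\alpha}\ \asymp\ \sup_{p>N,\ 1\le m\le 2^{p}}\ 2^{p(\alpha-1/2)}\,|W_{pm}|,
$$
with constants depending only on $\alpha$. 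Here the $W_{pm}$ are i.i.d.\ centred Gaussians in $\bR^{d'}$, so the entire statement is reduced to bounding $L^{r}$-moments of weighted maxima of Gaussians, level by level in $p$.

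\textbf{Upper bound.} I would pass from the sup over $p$ to a sum, and then at each level $p$ apply a standard Gaussian maxima estimate (Borell--TIS or the classical comparison $\bE[\max_{i\le n} Z_i]\lesssim\sqrt{\log n}$, upgraded to $L^{r}$ via subgaussian concentration): with $d'\cdot 2^{p}$ scalar coordinates in play at level $p$,
$$
\big\|\max_{m}|W_{pm}|\big\|_{L^{r}}\ \lesssim_{r}\ \sqrt{d'}+\sqrt{p\log 2}\ \lesssim\ d'\sqrt{p}.
$$
Substituting and setting $p=N+k$ pulls out $2^{N(\alpha-1/2)}$ and leaves the tail series $\sum_{k\ge 1}2^{k(\alpha-1/2)}\sqrt{N+k}$; since $\alpha<1/2$ this is a geometric-like sum whose value is bounded by a constant times $\sqrt{N}$, giving the upper estimate $C\cdot d'\cdot \sqrt{N}\cdot 2^{N(\alpha-1/2)}$.

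\textbf{Lower bound.} For the matching inequality I would exploit that the supremum is bounded below by any single level, say $p_{0}=N+1$:
$$
\|W-W^{N}\|_{\alpha}\ \ge\ c_{\alpha}\cdot 2^{p_{0}(\alpha-1/2)}\max_{1\le m\le 2^{p_{0}}}|W_{p_{0}m}|.
$$
A Sudakov-type lower bound for maxima of i.i.d.\ Gaussians, combined with Jensen ($\|\cdot\|_{L^{r}}\ge\|\cdot\|_{L^{1}}$ for $r\ge 1$), gives $\big\|\max_{m}|W_{p_{0}m}|\big\|_{L^{r}}\gtrsim \sqrt{p_{0}}\,/\,(\text{polylog in }d')$. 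The stated rate $N^{1/2-\alpha}\cdot 2^{N(\alpha-1/2)}$ is deliberately weaker than $\sqrt{N}\cdot 2^{N(\alpha-1/2)}$: as noted just before the statement, $C^{\alpha,0}([0,T];\bR^{d'})$ fails to be $K$-convex (\cite{pisier1999volume}), so upper and lower extremal constants for Gaussian measures in this space cannot in general be matched, and the $N^{-\alpha}$ loss is the price to pay.

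\textbf{Main obstacle.} The principal difficulty is not the $p=N+1$ estimate itself but controlling the $L^{r}$-moments uniformly for $r>1$, where one must upgrade $L^{1}$ bounds on Gaussian suprema to higher moments via Borell--TIS concentration, and then carefully resum the tails over $p>N$ while retaining the explicit $\sqrt{N}$ factor. Tracking the $d'$ dependence linearly (as opposed to $\sqrt{d'}$) is a crude sacrifice that keeps the constants transparent; the more delicate book-keeping is ensuring that neither the Cielsielski equivalence constants nor the subgaussian tail bounds contribute spurious $N$-dependent factors that would degrade the announced rate.
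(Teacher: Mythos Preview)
Your approach is correct and takes a genuinely different route from the paper's. The paper does not compute level by level; instead it quotes the small-ball asymptotics of Brownian motion in H\"older norm from \cite{baldi1992some} and then invokes \cite{li1999approximation}*{Proposition 4.1}, an abstract result that translates small-ball rates directly into two-sided bounds on the truncation error $\bE[\|W-W^{N}\|_{\alpha}^{2}]^{1/2}$. The passage from $r=2$ to general $r$ is then done exactly as you propose, via Gaussian concentration and hypercontractivity.

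The comparison is instructive. The paper's route is short because it outsources both inequalities to a single general theorem, but that theorem only yields the lower exponent $N^{1/2-\alpha}$ precisely because $C^{\alpha,0}$ fails to be $K$-convex: the remark you cite refers to a limitation of the Li--Linde machinery, not to an intrinsic gap for this particular process. Your direct single-level argument at $p_{0}=N+1$ is in fact sharper than you give it credit for: the maximum of $2^{N+1}$ i.i.d.\ $d'$-dimensional standard Gaussians has $L^{1}$-norm $\gtrsim\sqrt{N}$ (no polylog-in-$d'$ loss is needed, since $|W_{p_{0}m}|$ dominates any single coordinate), so your method delivers matching $\sqrt{N}\cdot 2^{(\alpha-1/2)N}$ upper and lower bounds. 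You should therefore not attribute the $N^{-\alpha}$ loss to $K$-convexity --- that gap is an artefact of the general-theory approach the paper chose, and your hands-on computation simply bypasses it.
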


\begin{proof}
Using Theorem \ref{thm:Ciecielski} and the methods of \cite{baldi1992some}, we have
\begin{align*}
\fB_W(\varepsilon) = -\log\Big( \bP\Big[ \| W\|_\alpha<\varepsilon\Big] \Big) \approx d' \cdot \Big(\tfrac{\varepsilon}{d'}\Big)^{\tfrac{-1}{1/2-\alpha}}
\quad
\mbox{and}
\quad
\fB_W(\varepsilon) \lesssim \fB_W(2\varepsilon). 
\end{align*}
Then by \cite{li1999approximation}*{Proposition 4.1}, this implies
$$
d' \cdot N^{1/2-\alpha} \cdot 2^{(\alpha-1/2)N} \lesssim \bE\Big[ \sup_{(p, m)\in \Lambda \backslash \Lambda_N} |W_{pm}|^2 2^{p(2\alpha-1)} \Big]^{1/2} \lesssim d'\cdot \sqrt{N}\cdot 2^{(\alpha-1/2)N},
$$
as $N\to \infty$ since $W^{N}$ is a $d\cdot 2^{N+1}$-dimensional Gaussian random variable. 

The Gaussian random variables $W-W^N$ can be dominated by $W$. By using a concentration inequality and a standard hypercontractivity argument, we can find a constant $C=C(r)$ such that
$$
\bE\Big[ \| W - W^N\|_\alpha^r\Big] \leq C(r) \bE\Big[ \| W - W^N\|_\alpha^2 \Big]^{\tfrac{r}{2}}. 
$$

Thus the rate of convergence in mean square is equivalent to the rate of convergence for any choice of $r$. 

%\begin{align*}
%\bE\Big[ \| W-W^{N}\|_\alpha^2\Big] =& \bE\Big[ \sup_{(p, m)\in \Lambda\backslash \Lambda_N} |W_{pm}|^2 2^{(2\alpha -1)p} \Big]
%\\
%\leq& \sum_{p=N+1}^\infty \bE\Big[ \sup_{m=1, ..., 2^p} |W_{pm}|^2 \Big] 2^{(2\alpha-1)p} \leq 2 d\sum_{p=N+1}^\infty \Big[1+\log(2^p)\Big] 2^{(2\alpha-1)p}
%\\
%\leq& C \cdot d\cdot N\cdot 2^{(2\alpha-1)N}. 
%\end{align*}
\end{proof}

\subsubsection{Enhanced Truncated Brownian Motion}
\label{subsubsec:EnTrunc}

Finally, we prove that the rate of convergence of the \emph{enhanced truncated Brownian motion} to the \emph{enhanced Brownian motion} is the same when the process is lifted to a rough path and studied with respect to the inhomogeneous metric. 

The rate of convergence for an enhanced piecewise linear approximation of a Brownian motion has already been studied in \cite{friz2011convergence}. Our contribution is a sharper rate of convergence. 

\begin{proposition}
\label{pro:EnhancedRateConverg}
Let $N\in \bN$ and let $M\geq 2$. Let $\cL^{W^{N}}$ be the law of the truncated Brownian motion over the Banach space $C^{\alpha, 0}([0,T]; \bR^{d'})$. Then $\cL^{W^{N}}$ satisfies Assumption \ref{assumption:GaussianRegularity} hence $W^N$ can be lifted to an enhanced Gaussian rough path $\rw^{N}=S_M(W^N)$ taking values on the Group $G^M(\bR^{d'})$ for $M\geq 2$. Further, for the enhanced Brownian motion $\rw $ taking values in $G\Omega_\alpha(\bR^{d'})$, there exists a constant $C=C(M, d', \alpha)$ such that
\begin{equation}
\label{eq:pro:EnhancedRateConverg}
\bE\Big[ \rho_i(\rw_{s,t}^{N}, \rw_{s,t} )^2 \Big] \leq C N 2^{(2\alpha-1)N} |t-s|^i,
\end{equation}
where $i\in\{1, ..., M\}$ and $\rho_i$ is the tensor pseudo-metric \eqref{eq:pseudometric?} over $T^M(\bR^{d'})$. 
\end{proposition}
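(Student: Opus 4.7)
The plan is to fit $W^N$ into the Friz--Victoir Gaussian rough path framework of~\cite{friz2010differential}. First I would verify Assumption~\ref{assumption:GaussianRegularity} for $W^N$ (and for the joint process $(W,W^N)$) with $\varrho=1$ so that the canonical lift $\rw^N=S_M(W^N)\in G^M(\bR^{d'})$ exists for every $M\geq 2$, and then I would propagate the level-$1$ estimate of Proposition~\ref{pro:RateConverenceTruncationUpper} to every level $i\in\{1,\dots,M\}$ by means of the moment inequalities for iterated integrals of jointly Gaussian processes.

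For the lift, I would use that $W^N$ is the orthogonal projection of $W$ onto the finite-dimensional subspace of its Cameron--Martin space spanned by $\{G_{pm}\}_{(p,m)\in\Lambda_N}$. Its covariance is therefore the truncated bilinear form $\cR^{W^N}(s,t)=\sum_{(p,m)\in\Lambda_N}G_{pm}(s)G_{pm}(t)$, whose 2D $\varrho$-variation on rectangles is dominated---uniformly in $N$---by the $\varrho$-variation of $\cR^W$ with $\varrho=1$. The joint covariance of $(W,W^N)$ inherits the same control, which is precisely the input required by the Gaussian rough path lift theorem.

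For the moment bound at level $1$ I would start from the direct identity
\[
\bE\bigl[|W_{s,t}-W^N_{s,t}|^2\bigr]=d'\sum_{p>N}\sum_{m=1}^{2^p}\bigl(G_{pm}(t)-G_{pm}(s)\bigr)^2,
\]
which follows from the independence of the $W_{pm}$, and estimate the inner terms scale by scale. Combining the two complementary pointwise bounds $|G_{pm}(u)-G_{pm}(v)|\leq\sqrt{2^p/T}\,|u-v|$ (from $\|H_{pm}\|_\infty$) and $\|G_{pm}\|_\infty\leq\tfrac12\sqrt{T 2^{-p}}$, together with the observation that tents whose support lies strictly inside $[s,t]$ contribute zero (so that for each scale at most two boundary tents matter), yields $\bE[|W_{s,t}-W^N_{s,t}|^2]\lesssim\min(|t-s|,\,T 2^{-N})$. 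This is the joint time--$N$ control that will be fed into the Gaussian moment inequality.

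For levels $i\geq 2$ I would invoke the Friz--Victoir moment inequality, which under the covariance $\varrho$-variation control established above furnishes a bound of the schematic form $\bE[|\rw^{(i)}_{s,t}-\rw^{(i),N}_{s,t}|^2]\lesssim\epsilon_N^{\,2}\cdot|t-s|^{i/\varrho}$ with $\epsilon_N$ quantifying the closeness of $W$ and $W^N$ (through the 2D $\varrho$-variation of $\cR^{W-W^N}$). Taking $\epsilon_N^{\,2}=N\,2^{(2\alpha-1)N}$ reproduces the rate already obtained at the first level in Proposition~\ref{pro:RateConverenceTruncationUpper}, the extra logarithmic factor $N$ appearing via the same hypercontractivity step employed there (used to pass from $L^2$ to sup-over-dyadic-scales control). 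The hard part will be the execution of this last step at every $i\leq M$ simultaneously: one must check that the 2D $\varrho$-variation of $\cR^{W-W^N}$ carries the small parameter $\epsilon_N$ uniformly over \emph{arbitrary} sub-rectangles of $[s,t]^2$ (not just along the diagonal), and then deploy the hypercontractivity upgrade so that the natural Brownian scaling $|t-s|^{i/\varrho}=|t-s|^i$ is preserved while picking up the factor $N$.
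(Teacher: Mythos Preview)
Your plan has a genuine gap at the point you yourself flag as ``the hard part''. The quantity you want to use as the small parameter, the 2D $\varrho$-variation of $\cR^{W-W^N}$, does \emph{not} carry a factor like $N\,2^{(2\alpha-1)N}$ uniformly over sub-rectangles. On scales below $2^{-N}$ the process $W-W^N$ behaves like a Brownian bridge on each dyadic cell, so for instance on a single dyadic square $[k\,2^{-N}T,(k{+}1)2^{-N}T]^2$ one already has $\|\cR^{W-W^N}\|_{1\text{-var}}\gtrsim 2^{-N}T$; summing over the $2^N$ diagonal squares gives $\|\cR^{W-W^N}\|_{1\text{-var};[0,T]^2}\gtrsim T$, independent of $N$. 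Consequently the Friz--Victoir moment inequality fed with this covariance input yields no smallness. A related misidentification: the factor $N$ in Proposition~\ref{pro:RateConverenceTruncationUpper} does not arise from hypercontractivity (that step only upgrades $L^2$ to $L^r$); it comes from the Gaussian supremum over dyadic scales in the Cielsielski norm.

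The paper's route is different and avoids this obstruction. Instead of seeking a small \emph{deterministic} covariance parameter, it uses the \emph{random} H\"older norm $\|W-W^N\|_\alpha$ as a global control valid at every scale: $|(W-W^N)_{s,u}|\leq\|W-W^N\|_\alpha\,|u-s|^\alpha$. This bound is inserted directly into the Young-type 2D estimate for the level-$2$ integral (with $d\cR_W$ as integrator), producing $C\,|t-s|^2\,\bE\big[\|W-W^N\|_\alpha^2\big]\,\bE\big[\|W\|_\alpha^2\big]$; the small factor $N\,2^{(2\alpha-1)N}$ then enters via Proposition~\ref{pro:RateConverenceTruncationUpper}. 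Higher levels follow by induction on $i$, again integrating against $d\cR_W$. So the missing idea in your plan is to trade the (not small) covariance $\varrho$-variation of the error for the (small in $L^2$) pathwise H\"older norm of the error.
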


\begin{proof}
The case $i=1$ is immediate. We address $i=2$ briefly. For $j, k\in \{1, ..., d'\}$ and $j\neq k$
\begin{align*}
\bE\Bigg[&\Big| \int_s^t \langle W_{s, r}, e_j\rangle \circ d\langle W_r, e_k\rangle - \int_s^t \langle W_{s, r}^{N}, e_j\rangle \circ d\langle W_r^{N}, e_k\rangle \Big|^2 \Bigg]
\\
&\leq \int_s^t \int_s^t \cR_{\langle W-W^{N}, e_j\rangle} \begin{pmatrix}s,& s\\ u,& v\end{pmatrix} d\cR_{\langle W, e_k\rangle} (u, v)  
\\
&\quad +  \int_s^t \int_s^t \cR_{\langle W^{N}, e_j\rangle} \begin{pmatrix}s,& s\\ u,& v\end{pmatrix} d\cR_{\langle W^{N}-W, e_k\rangle} (u, v) 
\\
&\leq C  |t-s|^2 \cdot \bE\Big[ \| W - W^{N}\|_\alpha^2\Big] \cdot \bE\Big[ \| W\|_\alpha^2\Big].
\end{align*}
Compiling these terms by summing over $j$ and $k$ completes the $i=2$ case. 

For $i>2$, we argue by induction. For a word $A$ such that $|A|=i$ and letter $a\in \cA$, we have
\begin{align*}
\bE\Bigg[& \Big| \int_s^t \langle \rw_{s, r}, e_{(A, a)} \rangle \circ d\langle \rw_r, e_{(A,a)}\rangle - \int_s^t \langle \rw^N_{s, r}, e_{(A, a)} \rangle \circ d\langle \rw^N_r, e_{(A,a)}\rangle \Big|^2 \Bigg]
\\
&\leq \int_s^t \int_s^t \bE\Big[ \rho_i( \rw_{s, u}, \rw^N_{s, u}) \rho_i( \rw_{s, v}, \rw^N_{s, v}) \Big] d\cR_{\langle W, e_a\rangle}(u, v) 
\\
&\quad +\int_s^t \int_s^t \bE\Big[ \langle \rw_{s, u}, e_A\rangle \cdot \langle \rw_{s, v}, e_A\rangle \Big] d\cR_{\langle W^N-W, e_a\rangle}(u, v)
\\
&\leq C |t-s|^{i+1} \cdot \bE\Big[ \| W - W^{N}\|_\alpha^2\Big], 
\end{align*}
which implies the inductive hypothesis. 
\end{proof}

\begin{theorem}
\label{thm:EnhancedRateConverg}
Let $N\in \bN$ and $M\geq 2$. Let $r>1$. Let $\rw^{N}$ be the enhanced truncated Brownian motion and let $\rw$ be the enhanced Brownian motion over $G^M(\bR^{d'})$. Then
\begin{equation}
\label{eq:thm:EnhancedRateConvergInhomo}
\bE\Big[ \rho_{\alpha-\textrm{H\"ol}}\big( \rw, \rw^{N}\big)^r \Big]^{1/r} \lesssim \sqrt{N} \cdot 2^{(\alpha-1/2)N}
\end{equation}
as $N \to \infty$. 
Also
\begin{equation}
\label{eq:thm:EnhancedRateConvergHomo}
\bE\Big[ d_{\alpha-\textrm{H\"ol}}( \rw, \rw^{N})^r \Big]^{1/r} \lesssim \max \Big\{ \sqrt{N} 2^{(\alpha-1/2)N}, \big(\sqrt{N} 2^{(\alpha-1/2)N}\big)^{1/M}\Big\}. 
\end{equation}
\end{theorem}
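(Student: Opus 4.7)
The plan is to promote the pointwise (in time) $L^{2}$ estimate from Proposition \ref{pro:EnhancedRateConverg} to a uniform H\"older estimate, first for each level $i\in\{1,\dots,M\}$ of the signature separately in the inhomogeneous metric, and then to deduce the homogeneous statement from the relationship between $d_{\alpha\text{-H\"ol}}$ and $\rho_{\alpha\text{-H\"ol}}$ on $G^{M}(\bR^{d'})$.

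First I would upgrade the $L^{2}$ estimate to an arbitrary $L^{q}$ estimate. For each fixed $i$, the random variable $\rho_{i}(\rw_{s,t},\rw^{N}_{s,t})$ is (up to the CC-type norm on $T^{i}(\bR^{d'})$) built from a difference of $i$-fold iterated Wiener integrals, i.e.\ it lives in the sum of the first $i$ Wiener chaoses. Gaussian hypercontractivity then gives, for any $q\geq 2$ and a constant $C=C(q,i,M,d',\alpha)$,
\begin{equation*}
\bE\!\left[\rho_{i}(\rw_{s,t},\rw^{N}_{s,t})^{q}\right]^{1/q}
\;\leq\; C\,\bE\!\left[\rho_{i}(\rw_{s,t},\rw^{N}_{s,t})^{2}\right]^{1/2}
\;\leq\; C\,\bigl(N\,2^{(2\alpha-1)N}\bigr)^{1/2}\,|t-s|^{i/2},
\end{equation*}
using Proposition \ref{pro:EnhancedRateConverg} in the last inequality.

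Next I would apply a Kolmogorov/Garsia--Rodemich--Rumsey argument for rough paths (in the form used in \cite{friz2010multidimensional} for Gaussian lifts) to promote this two-point estimate to a uniform H\"older bound. Since $\alpha<1/2$, for each $i$ one can pick $q$ sufficiently large so that $i/2-1/q>i\alpha$, and GRR then yields
\begin{equation*}
\bE\!\left[\sup_{0\leq s<t\leq T}\frac{\rho_{i}(\rw_{s,t},\rw^{N}_{s,t})^{q}}{|t-s|^{i\alpha q}}\right]^{1/q}
\;\lesssim\;\bigl(N\,2^{(2\alpha-1)N}\bigr)^{1/2}
\;=\;\sqrt{N}\,2^{(\alpha-1/2)N}.
\end{equation*}
Taking the maximum over the finitely many levels $i=1,\dots,M$ (and absorbing the constant into $\lesssim$) gives exactly the inhomogeneous bound \eqref{eq:thm:EnhancedRateConvergInhomo}. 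The choice of $q$ only affects the constant, so the bound transfers to any fixed $r>1$ by Jensen if $r\leq q$, or directly by repeating the argument with a larger exponent.

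Finally, for the homogeneous metric, I would use the standard comparison on $G^{M}(\bR^{d'})$: the Carnot--Carath\'eodory norm controls level $i$ by an $i$-th power, so one has the pointwise inequality
\begin{equation*}
d_{\alpha\text{-H\"ol}}(\rw,\rw^{N})\;\lesssim\;\max_{1\leq i\leq M}\Bigl(\,\sup_{s\neq t}\frac{\rho_{i}(\rw_{s,t},\rw^{N}_{s,t})}{|t-s|^{i\alpha}}\Bigr)^{1/i}.
\end{equation*}
Combining this with the $L^{r}$ bound already proved for each level, and using Jensen together with concavity of $x\mapsto x^{1/i}$, yields
\begin{equation*}
\bE\!\left[d_{\alpha\text{-H\"ol}}(\rw,\rw^{N})^{r}\right]^{1/r}
\;\lesssim\;\max_{1\leq i\leq M}\bigl(\sqrt{N}\,2^{(\alpha-1/2)N}\bigr)^{1/i}
\;=\;\max\!\bigl\{\sqrt{N}\,2^{(\alpha-1/2)N},\bigl(\sqrt{N}\,2^{(\alpha-1/2)N}\bigr)^{1/M}\bigr\},
\end{equation*}
where the last equality is because the extrema of $x^{1/i}$ over $1\leq i\leq M$ for $x>0$ are attained at $i=1$ and $i=M$, giving \eqref{eq:thm:EnhancedRateConvergHomo}.

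The main obstacle in this programme is the Kolmogorov/GRR step: one must check carefully that the two-point bound holds with a sharp power of $|t-s|$ (namely $|t-s|^{i/2}$, not $|t-s|^{i\alpha}$), since it is only this gap between $1/2$ and $\alpha$ that provides the room to spend $1/q$ and still obtain $\alpha$-H\"older regularity. The hypercontractivity step is conceptually routine but is what supplies the arbitrarily large $q$ needed to close the Kolmogorov inequality for any $\alpha<1/2$.
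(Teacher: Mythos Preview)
Your proposal is correct and follows the same overall strategy as the paper: promote the pointwise $L^{2}$ estimate of Proposition~\ref{pro:EnhancedRateConverg} to a uniform H\"older estimate via a rough-path Kolmogorov/GRR criterion, then transfer to the homogeneous metric.

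Two differences in execution are worth noting. First, the paper packages the Kolmogorov step into a citation of \cite{friz2010multidimensional}*{Theorem 15.24} (after recording the uniform bounds $\bE[\rho_{\alpha}(\rw,\rId)^{2}],\,\bE[\rho_{\alpha}(\rw^{N},\rId)^{2}]<C$) to get the $r=2$ case, and then passes to general $r$ by invoking \cite{riedel2017transportation}*{Corollary 3.2} for a uniform-in-$N$ Gaussian tail on $d_{\alpha}(\rw,\rw^{N})$. You instead apply Wiener-chaos hypercontractivity at the level of the increments \emph{before} running Kolmogorov, which is equally valid and arguably more self-contained since it avoids the transportation-cost machinery.

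Second, your displayed pointwise inequality for $d_{\alpha\text{-H\"ol}}$ in terms of the level-wise $\rho_{i}$-H\"older quantities is only true with a constant that depends on $\rho_{\alpha}(\rw,\rId)$ and $\rho_{\alpha}(\rw^{N},\rId)$: the identity map between the inhomogeneous and homogeneous metrics is only \emph{locally} $1/M$-H\"older, because $(\rw_{s,t}^{-1}\boxtimes\rw^{N}_{s,t})$ at level $i$ picks up cross terms of the form (lower levels of $\rw$)$\times$(differences). The passage to $L^{r}$ therefore needs an explicit H\"older/Cauchy--Schwarz step against the uniform-in-$N$ moment bounds on those norms. This is precisely the point the paper sidesteps by quoting the Gaussian-tail result; your route closes cleanly once you make that dependence explicit, since you already have all $L^{q}$ moments of each factor.
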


\begin{proof}
Firstly, it should be clear that we have
$$
\bE\Big[ \rho_{\alpha-\textrm{H\"ol}}(\rw, \rId)^2\Big] < C\quad \mbox{and} \quad \bE\Big[ \rho_{\alpha-\textrm{H\"ol}}(\rw^N, \rId)^2\Big] < C. 
$$
Then, we apply \cite{friz2010multidimensional}[Theorem 15.24] with Proposition \ref{pro:EnhancedRateConverg} to get Equation \eqref{eq:thm:EnhancedRateConvergInhomo} in the case $r=2$. 

For \eqref{eq:thm:EnhancedRateConvergHomo}, we use the well known fact that the the identity operator is $\tfrac{1}{M}$-H\"older continuous from the space of rough paths paired with the Inhomogeneous metric to the space of rough paths paired with the homogeneous metric and $r=2$. 

Now for the case $r\neq 2$. Following \cite{riedel2017transportation}*{Corollary 3.2}, we can conclude that the pushforward of $d_\alpha(\rw, \rw^N)$ with respect to the measure $\cL^W$ has a Gaussian tail uniformly on $N$ since the covariance of $W-W^N$ can be dominated by the covariance of $W$. Then we use a hypercontractivity argument to conclude that
$$
\bE\Big[ d_\alpha(\rw, \rw^N)^r\Big] \leq C(r) \bE\Big[ d_\alpha(\rw, \rw^N)^2\Big]^{\tfrac{r}{2}}. 
$$
Thus the rate of convergence in mean square is equivalent to the rate of convergence for any choice of $r$. 
\end{proof}

\subsection{Quantization of Brownian Motion}

%As mentioned in Theorem \ref{theorem:ExistenceStationaryQuant}, the proof of existence of a Stationary Quantization relies on the the Banach space $E$ being reflexive. However as explained in Remark \ref{rem:ReflexivityImportance}, the space $C^{\alpha, 0}([0,T]; \bR^d)$ is not reflexive so there does not necessarily exist an $n$-stationary set for $\cL^W$. 

We perform a truncation to obtain a finite dimensional Gaussian that represents an optimal finite dimensional approximation of the Brownian motion. Here, we study how the choice of truncation affects the asymptotic rate of convergence of the quantization error. 

\begin{remark}
$\cL^{W^N}$ is a non-degenerate measure over the (finite dimensional) vector space $(\cH_N, \| \cdot \|_\alpha)$. Therefore by Theorem \ref{theorem:ExistenceStationaryQuant} we know that there exists a codebook $\fC_n=\{ \fc_1, ..., \fc_n\}$ and a partition $\hat{\fS}_n=\{ \hat{\fs}_1, ..., \hat{\fs}_n\}$ of $\cH_N$ such that the quantization $\hat{q}_n$ satisfies
$$
\bW^{(2)}_{\cH_N, \|\cdot\|_\alpha} \Bigg(\cL^{W^N}\Big|_{\cH_N}, \cL^{W^N}\circ \hat{q}^{-1}\Big|_{\cH_N}\Bigg) = \bE\Big[ \| W^N - \hat{q}_n(W^N)\|_\alpha^2 \Big]^{1/2} = \fE_{n}\Bigg.
$$
%We also remark that since the components of the Brownian motion $\langle W, e_i\rangle$ are all independent (by Assumption), the codebook will be just the $\langle \fC, e_i\rangle^{\times d}$ where $\langle \fC, e_i\rangle$ is the projection of the codebook $\fC$ onto the $i^{th}$ component. Thus the codebook $\fC$ will have a $d$-hypercube like structure. 

However, the measure $\cL^{W^N}$ is degenerate over the whole space $C^{\alpha, 0}([0,T]; \bR^{d'})$ so constructing an optimal quantization becomes analytically problematic.
\end{remark}

\begin{definition}
\label{dfn:TheQuantization}
Let $N\in \bN$ be fixed for the moment and let $n\in \bN$. Let $\cL^{\langle W,e_1\rangle}$ be the law of Brownian motion over $C^{\alpha, 0}([0,T]; \bR)$ and let $\cL^{\langle W^N, e_1\rangle}$ be the law of the 1-dimensional truncated Brownian motion with RKHS $\cH^{N, (1)}$. Let $\fC_n^{(1)}=\{ \fc_1^{(1)}, ..., \fc_n^{(1)} \}$ and $\hat{\fS}_n^{(1)}=\{ \hat{\fs}_1^{(1)}, ..., \hat{\fs}_n^{(1)} \}$ be the codebook and partition of the stationary quantization of $\cL^{\langle W^N, e_1\rangle}$ over $\cH^{N, (1)}$. 

Let $\fC_n:= \big( \fC_n^{(1)} \big)^{\times d'}$ and $\hat{\fS_n}:= \big( \hat{\fS}_n^{(1)} \big)^{\times d'}$. Thus $\fC_n$ and $\hat{\fS}_n$ form a quantization of the truncated Brownian motion over $\cH^N$ with independent components. Let $P_N:\cH \to \cH_N$ be the orthogonal projection and let us continuously extend $P_N$ to $\overline{\cH}^\alpha$. We define the new partition of $\overline{\cH}^\alpha$ to be 
\begin{equation}
\label{eq:dfn:TheQuantizationCodebook}
\fs_i:= (P_N)^{-1} \big[ \hat{\fs}_i\big], 
\quad
\fS_n:= \Big\{ \fs_1, ..., \fs_n \Big\}.
\end{equation}
Pairing the partition $\fS_n$ with the codebook $\fC_n$, we obtain a quantization for the truncated Brownian motion over $\overline{\cH}^{\alpha-\textrm{H\"ol}}$. 
\end{definition}

It is worth noting that the codebook $|\fC_n|=n^{d'}$. We should also emphasise that the quantization constructed in Definition \ref{dfn:TheQuantization} is not an optimal quantization of the measures $\cL^W$ or $\cL^{W^N}$ over the whole space. The reason for this approach is that this quantization exists and is solvable. %Further, by careful choice of $N$ the error of this quantization can be made arbitrarily close to the optimal error even if there is no quantization that attains that error. 

\begin{lemma}
\label{lem:componentwiseInd}
Let $n,N \in \bN$. Let $\cL^W$ be the law of a Brownian motion over $C^{\alpha, 0}([0,T]; \bR^{d'})$ with quantization $q_n$ as defined in Definition \ref{dfn:TheQuantization}. 

Let $i\neq j \in \{ 1, ..., d'\}$. Then $\langle q_n(W), e_i\rangle$ and $\langle q_n(W), e_j\rangle$ are independent. 
\end{lemma}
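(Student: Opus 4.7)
The plan is to exploit the product structure of the quantization $q_n$ given in Definition \ref{dfn:TheQuantization} together with the componentwise independence of the truncated Brownian motion $W^N$. The argument is almost entirely bookkeeping once the factorisation of the partition is made explicit.

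First I would observe that the Schauder--Fourier coefficients $W_{pm}$ in the representation \eqref{eq:CielsielskiRepresentation} are i.i.d.\ standard normal $\bR^{d'}$-valued random variables, so their scalar components $\langle W_{pm}, e_k\rangle$ are mutually independent across both $(p,m)\in \Lambda$ and $k\in \{1,\ldots,d'\}$. Applying this to
\[
\langle W^N, e_k\rangle \;=\; \sum_{(p,m)\in \Lambda_N} \langle W_{pm}, e_k\rangle\, G_{pm}(\cdot),
\]
the $d'$ scalar paths $\{\langle W^N, e_k\rangle\}_{k=1}^{d'}$ are mutually independent and each is distributed as the $1$-dimensional truncated Brownian motion $\cL^{\langle W^N, e_1\rangle}$ over $\cH^{N,(1)}$.

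Second, I would unpack the quantization map itself. Because $P_N$ is the $\cH$-orthogonal projection onto the span of the (vector-valued) Schauder functions $G_{pm}$, it acts diagonally on coordinates: $\langle P_N[W], e_k\rangle = P_N^{(1)}[\langle W, e_k\rangle]$, where $P_N^{(1)}$ is the scalar projection onto $\cH^{N,(1)}$. By construction $\fC_n = (\fC_n^{(1)})^{\times d'}$ and $\hat{\fS}_n = (\hat{\fS}_n^{(1)})^{\times d'}$, so each partition element of $\fS_n$ has the form
\[
\fs_{(i_1,\ldots,i_{d'})} \;=\; (P_N)^{-1}\Bigl[\hat{\fs}_{i_1}^{(1)} \times \cdots \times \hat{\fs}_{i_{d'}}^{(1)}\Bigr]
\;=\; \bigcap_{k=1}^{d'} \Bigl\{x\in \overline{\cH}^{\alpha}:\ P_N^{(1)}[\langle x, e_k\rangle] \in \hat{\fs}_{i_k}^{(1)}\Bigr\},
\]
and the codebook point attached to it is $(\fc_{i_1}^{(1)},\ldots,\fc_{i_{d'}}^{(1)})$. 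Writing $\tilde q_n^{(1)}$ for the $1$-dimensional stationary quantization of $\cL^{\langle W^N, e_1\rangle}$, this identifies the quantization as acting coordinate by coordinate:
\[
\langle q_n(W), e_k\rangle \;=\; \tilde q_n^{(1)}\!\bigl(\langle W^N, e_k\rangle\bigr), \qquad k=1,\ldots,d'.
\]

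Third, since $\tilde q_n^{(1)}$ is a Borel measurable map, the family $\bigl\{\tilde q_n^{(1)}(\langle W^N, e_k\rangle)\bigr\}_{k=1}^{d'}$ inherits the mutual independence established in the first step. In particular, for $i\neq j$, the random variables $\langle q_n(W), e_i\rangle$ and $\langle q_n(W), e_j\rangle$ are independent, which proves the lemma. The only mild point that needs to be flagged is the diagonal action of the extension of $P_N$ to $\overline{\cH}^{\alpha-\textrm{H\"ol}}$, but this is immediate from the fact that $P_N$ is defined through Schauder coefficients, each of which depends on a single coordinate of the path.
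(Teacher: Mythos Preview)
Your proof is correct and follows essentially the same approach as the paper: both exploit the product structure $\fC_n=(\fC_n^{(1)})^{\times d'}$, $\hat{\fS}_n=(\hat{\fS}_n^{(1)})^{\times d'}$ together with the componentwise independence of $W^N$. The only cosmetic difference is that you isolate the identity $\langle q_n(W), e_k\rangle = \tilde q_n^{(1)}(\langle W^N, e_k\rangle)$ and then invoke the general fact that measurable functions of independent random variables are independent, whereas the paper carries out the equivalent computation directly on the events $\{\langle q_n(W), e_i\rangle\in C\}\cap\{\langle q_n(W), e_j\rangle\in D\}$ and factorises the probability explicitly.
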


\begin{proof}
For any two sets $C, D \in C^{\alpha, 0}([0,T]; \bR)$, we have
\begin{align*}
\bP\Big[& \langle q_n(W), e_i\rangle \in C, \langle q_n(W), e_j\rangle \in D\Big]
= \cL^W\Bigg[ \Big(\bigcup_{\substack{k\\\langle \fc_k, e_i\rangle \in C}} \fs_k\Big) \cap \Big( \bigcup_{\substack{l\\ \langle \fc_l, e_j\rangle \in D}} \fs_l \Big) \Bigg]
\\
&= \cL^{W^N}\Bigg[ \Big(\bigcup_{\substack{k\\\langle \fc_k, e_i\rangle \in C\cap \cH^N}} \hat{\fs}_k\Big) \cap \Big( \bigcup_{\substack{l\\ \langle \fc_l, e_j\rangle \in D\cap \cH^N}} \hat{\fs}_l \Big) \Bigg]
\\
&= \cL^{\langle W^N, e_1\rangle}\Bigg[ \bigcup_{\substack{k\\ \fc_k^{(1)}\in C\cap \cH^{N,1}}} \hat{\fs}_k^{(1)}  \Bigg] \cdot \cL^{\langle W^N, e_1\rangle} \Bigg[  \bigcup_{\substack{l\\ \fc_l^{(1)} \in D\cap \cH^{N,1}}} \hat{\fs}_l^{(1)}  \Bigg]
\\
&=\bP\Big[ \langle q_n(W), e_i\rangle \in C\Big] \cdot \bP\Big[ \langle q_n(W), e_j\rangle \in D\Big].
\end{align*}
\end{proof}
%
%\begin{remark}
%To the distracted reader, it may seem obvious that the projection of a bisector is the bisector within the projection, but we emphasise that this is in general \emph{not} true.
%\end{remark}

\subsubsection{Asymptotic rate of convergence for Quantization}

Next, we apply Theorem \ref{cor:AsymtoticRateConQuantErr} with Proposition \ref{pro:RateConverenceTruncationUpper} in order to demonstrate the rate of convergence of the quantization we construct. 

\begin{proposition}
\label{pro:ActualRateCon4Quantization}
Let $\cL^W$ be the law of a Brownian on $C^{\alpha, 0}([0,T]; \bR^{d'})$ and let $\cL^{W^N}$ be the law of the truncated Brownian motion. Choose $N$ to satisfy
\begin{equation}
\label{eq:thm:ActualRateCon4Quantization-N}
N\approx \frac{\cW \Big(\ln(2^{2\alpha-1}) \cdot \log(n)^{2\alpha-1}\Big) }{\ln(2^{2\alpha-1}) }, 
\end{equation}
where $\cW$ is the Lambert-W function (see \cite{weisstein2002lambert}), the inverse function of $y=xe^x$. 

Then $\forall r>1$, the quantization constructed in Definition \ref{dfn:TheQuantization} satisfies
\begin{align}
\label{eq:thm:ActualRateCon4Quantization}
\Bigg( \int_{C^{\alpha, 0}([0,T]; \bR^{d'})} \| x - q_n(x)\|_\alpha^r d\cL^W(x) \Bigg)^{1/r} 
\lesssim
d' \cdot \Big( \log(n) \Big)^{\alpha-1/2}
\end{align}
as $n\to \infty$. 
\end{proposition}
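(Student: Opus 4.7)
The plan is to split the quantization error into a truncation contribution and a product-quantization contribution, bound each separately, and then balance them through the Lambert-$W$ choice of $N$ prescribed in \eqref{eq:thm:ActualRateCon4Quantization-N}.

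\textbf{Decomposition and truncation.} By Definition \ref{dfn:TheQuantization} the partition sets $\fs_i=P_N^{-1}[\hat\fs_i]$ depend on $x$ only through $W^N=P_N x$, so $q_n(W)=\hat q_n(W^N)$, where $\hat q_n$ is the product of $d'$ one-dimensional stationary quantizations on $\cH^{N,(1)}$. The triangle inequality then gives
\begin{align*}
\bE\big[\|W-q_n(W)\|_\alpha^r\big]^{1/r}\leq \bE\big[\|W-W^N\|_\alpha^r\big]^{1/r}+\bE\big[\|W^N-\hat q_n(W^N)\|_\alpha^r\big]^{1/r},
\end{align*}
and Proposition \ref{pro:RateConverenceTruncationUpper} immediately bounds the first summand by $\lesssim d'\sqrt{N}\,2^{(\alpha-1/2)N}$.

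\textbf{Product quantization error.} For the second summand, the bound $\|f\|_\alpha\leq\sum_{i=1}^{d'}\|\langle f,e_i\rangle\|_\alpha$, the independence supplied by Lemma \ref{lem:componentwiseInd}, and the identical distribution of the components reduce the estimate to
\begin{align*}
\bE\big[\|W^N-\hat q_n(W^N)\|_\alpha^r\big]^{1/r}\leq d'\cdot\bE\big[\|\langle W^N,e_1\rangle-\hat q_n^{(1)}(\langle W^N,e_1\rangle)\|_\alpha^r\big]^{1/r}.
\end{align*}
A Gaussian hypercontractivity argument (as in Proposition \ref{pro:RateConverenceTruncationUpper}) further bounds the right-hand side by an $r$-dependent constant times $\fE_{n,2}(\cL^{\langle W^N,e_1\rangle})$, using that the stationary quantization is $L^2$-optimal. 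To transplant the rate from the truncated measure onto Brownian motion itself, I would invoke the Wasserstein-stability inequality $\fE_{n,2}(\mu)\leq \fE_{n,2}(\nu)+\bW^{(2)}(\mu,\nu)$, proved by feeding an optimal coupling into the $\nu$-optimal codebook and projecting to the nearest neighbour. Taking $\nu=\cL^{\langle W,e_1\rangle}$ and using the natural coupling $(W,W^N)$ controls the Wasserstein term by $\|\langle W-W^N,e_1\rangle\|_{L^2}\lesssim\sqrt{N}\,2^{(\alpha-1/2)N}$, while Theorem \ref{cor:AsymtoticRateConQuantErr} supplies $\fE_{n,2}(\cL^{\langle W,e_1\rangle})\approx (\log n)^{\alpha-1/2}$.

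\textbf{Balancing and main obstacle.} Collecting Steps 1 and 2, the total error is $\lesssim d'\big[\sqrt{N}\,2^{(\alpha-1/2)N}+(\log n)^{\alpha-1/2}\big]$. Demanding the two summands be of the same order yields $N\,2^{(2\alpha-1)N}\asymp (\log n)^{2\alpha-1}$; the substitution $u=N\ln(2^{2\alpha-1})$ rewrites this as $u\,e^{u}\asymp \ln(2^{2\alpha-1})(\log n)^{2\alpha-1}$, which the Lambert-$W$ function inverts (on the $\cW_{-1}$ branch, since $N\to\infty$ while the right-hand side tends to $0^-$) to give exactly \eqref{eq:thm:ActualRateCon4Quantization-N} and collapses both summands to the claimed rate $(\log n)^{\alpha-1/2}$. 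The most delicate point is the Wasserstein-stability transplant within Step 2: Theorem \ref{cor:AsymtoticRateConQuantErr} only furnishes the asymptotic for Brownian motion, whereas the codebook we actually deploy lives on $\cL^{\langle W^N,e_1\rangle}$ with $N=N(n)\to\infty$, and the argument is tight precisely because the balanced choice of $N$ keeps the Wasserstein perturbation no larger than the target quantization rate itself; any cruder balancing would destroy the sharpness of \eqref{eq:thm:ActualRateCon4Quantization}.
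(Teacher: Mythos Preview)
Your decomposition, truncation bound, and Lambert-$W$ balancing are exactly those of the paper. The only substantive divergence is in how you control the quantization error of the truncated measure. The paper exploits a pathwise monotonicity peculiar to the Schauder projection: since in the Cielsielski norm $\|W^N\|_\alpha'\leq\|W\|_\alpha'$ for every realisation, one has $\bP[\|W^N\|_\alpha\leq\varepsilon]\geq\bP[\|W\|_\alpha<\varepsilon]$, hence $\fB_{W^N}\leq\fB_W$ and $\fB_{W^N}^{-1}(\log n)\leq\fB_W^{-1}(\log n)\approx(\log n)^{\alpha-1/2}$. Your Wasserstein--stability inequality $\fE_{n,2}(\mu)\leq\fE_{n,2}(\nu)+\bW^{(2)}(\mu,\nu)$ is a perfectly valid alternative, more general (it does not use the projection structure) but less direct: it reintroduces the truncation term a second time, which is harmless only because the balanced choice of $N$ already makes that term of the target size.

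One caveat: your appeal to Gaussian hypercontractivity to pass from $L^r$ to $L^2$ is not quite legitimate, since $W^N-\hat q_n(W^N)$ is not Gaussian ($\hat q_n$ is piecewise constant, not linear). For $r\leq 2$ this is just Jensen and no issue arises; for $r>2$ the cleaner route is the paper's, namely to invoke Theorem~\ref{cor:AsymtoticRateConQuantErr} directly in $L^r$ (it is stated for all $r\geq 1$) together with the small-ball comparison, so that one never needs to compare $L^r$ and $L^2$ errors for a fixed non-optimal codebook.
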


\begin{proof}
%The equality in Equation \eqref{eq:thm:ActualRateCon4Quantization} comes from the construction of the partition in Definition \ref{dfn:TheQuantization}. The components of the quantization are constructed to be independent of each other like the components of the Brownian motion. 

It should be clear that the partition as defined in Equation \eqref{eq:dfn:TheQuantizationCodebook} is not the collection of Voronoi sets generated by the codebook $\fC_n$ over $\overline{\cH}^{\alpha-\textrm{H\"ol}}$. Thus
$$
\bE\Big[ \big\| W - q_n(W) \big\|_\alpha^r\Big]^{1/r}\geq \Bigg( \int_{\overline{\cH}^{\alpha} } \min_{\substack{j=1, ..., n\\ \fc_j \in \fC}} \big\| x- \fc_j \|_\alpha^r d\cL^{W}(x) \Bigg)^{1/r}. 
$$
We can further improve this lower bound by minimizing over the all possible codebooks $\fC$ which yields the lower bound
%$$
%\Big( \log(n) \Big)^{\alpha-1/2} \lesssim \fE_{n,r}(\cL^{\langle W, e_i\rangle}) \leq \bE\Big[ \big\| \langle W - q_n(W), e_i\rangle \big\|_\alpha^r\Big]^{1/r},
%$$
%so that
$$
d'\Big( \log(n) \Big)^{\alpha-1/2} \lesssim \fE_{n^{d'},r}(\cL^{W}) \leq \bE\Big[ \big\| W - q_n(W) \big\|_\alpha^r\Big]^{1/r}. 
$$

For the upper bound, we apply Lemma \ref{lem:DivisionQuantization+Truncation} and Proposition \ref{pro:RateConverenceTruncationUpper} to get
\begin{align*}
\bE\Big[ \big\| W - q_n(W) \big\|_\alpha^r\Big]^{1/r} \leq& \bE\Big[ \big\| W^N - q_n(W^N) \big\|_\alpha^r \Big]^{1/r} + \bE\Big[ \big\| W - W^N \big\|_\alpha^r \Big]^{1/r}
\\
\lesssim& \fB_{ W^N}^{-1} \Big( \log(n) \Big) + \sqrt{N} \cdot 2^{(\alpha-1/2) N}. 
\end{align*}

By Theorem \ref{cor:AsymtoticRateConQuantErr}, we have asymptotic upper and lower bounds on the quantization error for both measures $\cL^W$ and $\cL^{W^N}$. 

Due to the nice way in which the truncation and the H\"older norm overlap, we have that
$$
\bP\Big[ \| W^N\|_\alpha \leq \varepsilon \Big] \geq \bP\Big[ \|W\|_\alpha <\varepsilon\Big],
$$
or equivalently
\begin{align*}
-\log\Big( \bP\Big[ \| W^N\|_\alpha \leq \varepsilon \Big] \Big)
=\fB_{W^N} (\varepsilon) 
\leq
\fB_W (\varepsilon)
= -\log\Big( \bP\Big[ \|W\|_\alpha <\varepsilon\Big]\Big).
%\\
%\fB_{W^N} (\varepsilon)\leq& \fB_W (\varepsilon)
\end{align*}
This is true for any choice of truncation level $N$. Taking the inverse of these bijective, increasing functions gives
$$
\fB_{W^N}^{-1}(n) \leq \fB_{W}^{-1}(n). 
$$
Thus, for any choice of $N\in \bN$, 
$$
\bE\Big[ \| W - q_n(W)\|_\alpha^r\Big]^{1/r} \lesssim d' \Big( \log(n) \Big)^{\alpha-1/2} + d'\sqrt{N} \cdot 2^{(\alpha-1/2)N}. 
$$
Finally, we note that the asymptotic relation of Equation \eqref{eq:thm:ActualRateCon4Quantization-N} is equivalent to
$$
\sqrt{N} \cdot 2^{(\alpha-1/2)N} \approx \Big( \log(n) \Big)^{\alpha-1/2}
$$
which yields the conclusion. 
\end{proof}

%\begin{corollary}
%\label{cor:ActualRateCon4Quantization}
%Let $\cL^W$ be the law of Brownian motion on $C^{\alpha, 0}([0,T]; \bR^d)$ and let $q_n$ be the sequence of Quantizations constructed in Definition \ref{dfn:TheQuantization}. Then $\forall r>1$ 
%\begin{equation}
%\label{eq:cor:ActualRateCon4Quantization}
%d \cdot \Big( \log(n^{1/d}) \Big)^{\alpha-1/2} \lesssim \Bigg( \int_{C^{\alpha, 0}([0,T]; \bR^d)} \| x - q_n(x)\|_\alpha^r d\cL^W(x) \Bigg)^{1/r} \lesssim d \cdot \Big( \log(n^{1/d}) \Big)^{\alpha-1/2}
%\end{equation}
%as $n\to \infty$. 
%\end{corollary}

\begin{remark}
We know by results such as \cite{deuschel2017enhanced} that by sampling a Brownian motion in pathspace, the empirical law will be a  good approximation for the law of Brownian motion. 

The difference with this method is that sampling produces a convergence in measure type result. This is a \emph{deterministic} and not probabilistic result. 
\end{remark}

\subsubsection{Quantization for a Gaussian Rough Paths}

For this Section, we explore lifting our quantized Brownian motion to a rough path. Quantization for rough paths was first studied in \cite{pages2011convergence}. In their paper, the authors treat the law of Brownian motion as a measure over the Hilbert space $L^2([0,T]; \bR^{d'})$. In particular, as a measure over a Hilbert space the authors are able to obtain a stationary quantization, see \cite{LUSCHGY2002486}. The Karhunen Lo\`eve expansion is obtained using an expansion of trigonometric functions and the authors use well understood pathspace results to establish pointwise convergence of the paths followed by convergence in $p$-variation. 
%The paths of the quantization codebooks are bounded variation, so they can be lifted to a Signature of a rough path and these converge in the rough path metric to the Brownian Rough Path. 
To the best of our knowledge, this is the only work studying quantization in a rough path framework so this chapter is new and of independent interest. 

We perform quantization for a Brownian rough path with respect to the pathspace H\"older norm. Due to the nature of the $L^2$ norm with which the quantization is constructed in \cite{pages2011convergence}, the approximation with respect to the H\"older norm is far from optimal. By contrast, our approximation is arbitrarily close to optimal. In this Section, we prove that this remains true when the study is carried out with respect to the rough path H\"older norm. 

As proved in Lemma \ref{lem:StationaryRKHS}, the sets $\fC\subset \cH$ so have a canonical Young integral signature $\rc=S_M( \fc)$ for each $\fc \in \fC$.  %Further, the set $S_2( \cH)$ is dense in $G\Omega_\alpha(\bR^d)$ with respect to the Rough Path H\"older norm. 

\begin{definition}
\label{dfn:TheQuantizationRough}
Let $M\geq 2$. Let $\cL^W$ be the law of a Brownian motion over $C^{\alpha, 0}([0,T]; \bR^{d'})$ and let $\cL^\rw$ be the law of the enhanced Brownian motion over $G\Omega_{\alpha}(\bR^{d'})$. Let $q_n$ be the sequence of quantizations as defined in Definition \ref{dfn:TheQuantization} for the truncated Brownian motion with $N$ chosen to satisfy Equation \eqref{eq:thm:ActualRateCon4Quantization-N} and codebooks $\fC_n$ and partitions $\fS_n$. 

Define the sets
$$
\rS=\Big\{\rs_1, ..., \rs_n \Big\}, \quad 
\rs_i:=\overline{\Big\{ \rh = S_M(h): h\in \fs_i\cap \cH \Big\}}^{\rho_{\alpha\textrm{-H\"ol}}}. 
$$
These form a partition over the space $G\Omega_\alpha(\bR^{d'})$ (up to boundary sets of measure 0). Similarly, define the codebook
$$
\rC=\{ \rc_1, ..., \rc_n\}, \quad \rc_i:=S_M[ \fc_i]. 
$$

By combining the enhanced codebook with the partition $\rS_n$, we obtain the enhanced quantization $\rQ_n: G\Omega_\alpha(\bR^{d'}) \to G\Omega_\alpha(\bR^{d'})$ 
\begin{equation}
\rQ_n( \rx ) = \rc_i \quad \mbox{ for } \rx\in \rs_i, \quad \rQ_n\Big( G\Omega_\alpha(\bR^{d'})\Big) = \rC_n. 
\end{equation}
\end{definition}

The next result is an extension of Proposition \ref{pro:ActualRateCon4Quantization} to the rough path setting. We follow the same methods as in Section \ref{subsubsec:EnTrunc}. 

\begin{proposition}
\label{pro:EnhancedQuantRateConverg}
Let $M\geq 2$. Fix $N, n\in \bN$. Let $\cL^{\rw}$ be the law of the enhanced Brownian motion. Then there exists a constant $C=C(M, d', \alpha)$ such that
\begin{equation}
\label{eq:pro:EnhancedQuantRateConverg}
\bE\Big[ \rho_i\big(\rw_{s,t}^{N}, \rQ_n(\rw^N)_{s,t} \big)^2 \Big] \leq C \Big( \log(n)\Big)^{2\alpha-1} |t-s|^i. 
\end{equation}
\end{proposition}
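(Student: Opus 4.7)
The plan is to closely mirror the proof of Proposition \ref{pro:EnhancedRateConverg}, with the enhanced truncated Brownian motion $\rw^N$ now playing the role of $\rw$ and the enhanced quantization $\rQ_n(\rw^N)$ now playing the role of $\rw^N$. The level-$1$ quantization estimate supplied by Proposition \ref{pro:ActualRateCon4Quantization} is the analogue of the H\"older estimate of Proposition \ref{pro:RateConverenceTruncationUpper} used there.

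\textbf{Base case $i=1$.} Combining Proposition \ref{pro:ActualRateCon4Quantization} with Proposition \ref{pro:RateConverenceTruncationUpper} by the triangle inequality
\[
\bE\bigl[\|W^N - q_n(W^N)\|_\alpha^2\bigr]^{1/2} \leq \bE\bigl[\|W - W^N\|_\alpha^2\bigr]^{1/2} + \bE\bigl[\|W - q_n(W)\|_\alpha^2\bigr]^{1/2},
\]
and using the choice of $N$ in \eqref{eq:thm:ActualRateCon4Quantization-N}, I obtain $\bE[\|W^N - q_n(W^N)\|_\alpha^2] \lesssim (\log n)^{2\alpha-1}$. I then pass to the pointwise estimate on $|W^N_{s,t} - q_n(W^N)_{s,t}|^2$. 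The key tool is the stationarity characterization of Remark \ref{lemma:convexcheat}, $q_n(W^N)=\bE[W^N\mid \cF_n]$, which yields the orthogonal decomposition
\[
\cR_{W^N - q_n(W^N)}\begin{pmatrix} s & t \\ u & v\end{pmatrix} = \cR_{W^N}\begin{pmatrix} s & t \\ u & v\end{pmatrix} - \cR_{q_n(W^N)}\begin{pmatrix} s & t \\ u & v\end{pmatrix},
\]
so that both pieces inherit the $1$-variation bound of the Brownian covariance, i.e.\ $\lesssim |t-s|$ on $[s,t]^2$. Combined with the $(\log n)^{2\alpha-1}$ prefactor from the H\"older estimate, this produces the claimed $|t-s|^{1}$ scaling at level one.

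\textbf{Level $i=2$.} For $j\neq k$ I use the algebraic splitting
\[
\int_s^t W^{N,j}_{s,r}\, dW^{N,k}_r - \int_s^t q_n(W^N)^j_{s,r}\, d\,q_n(W^N)^k_r = \int_s^t (W^N - q_n(W^N))^j_{s,r}\, dW^{N,k}_r + \int_s^t q_n(W^N)^j_{s,r}\, d(W^N - q_n(W^N))^k_r,
\]
where, crucially, all integrals are classical Young/Riemann--Stieltjes integrals because $W^N, q_n(W^N)\in\cH_N$ are piecewise linear. Squaring, taking expectations and bounding each double integral via the 2D-variation estimate (as in Proposition \ref{pro:EnhancedRateConverg}) against $\cR_{W^N-q_n(W^N)}$ and $\cR_{W^N}$, both controlled by Brownian $1$-variation, produces the bound $C(\log n)^{2\alpha-1}|t-s|^{2}$.

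\textbf{Induction for $i\geq 3$.} I follow the inductive step of Proposition \ref{pro:EnhancedRateConverg} verbatim, with the factor $\bE[\|W-W^N\|_\alpha^2]$ replaced by $\bE[\|W^N - q_n(W^N)\|_\alpha^2]\lesssim(\log n)^{2\alpha-1}$. The 2D-covariance arguments close the induction and return the bound $C(\log n)^{2\alpha-1}|t-s|^{i+1}$, which is absorbed into $C(\log n)^{2\alpha-1}|t-s|^{i}$ on the bounded interval $[0,T]$.

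\textbf{Main obstacle.} The principal difficulty is that $q_n(W^N)$ is \emph{not} Gaussian, so the Gaussian 2D-variation machinery of Friz--Victoir cannot be applied directly as it was in Proposition \ref{pro:EnhancedRateConverg}. The workaround, and the real content of this proposition, is the stationarity identity $q_n(W^N)=\bE[W^N\mid\cF_n]$ from Lemma \ref{lem:StationaryRKHS} together with Remark \ref{lemma:convexcheat}: it produces the exact orthogonal decomposition of $\cR_{W^N-q_n(W^N)}$ displayed above, which in turn dominates all relevant rectangular increments by those of the Brownian covariance and allows the Gaussian-style induction to be transplanted into this non-Gaussian setting.
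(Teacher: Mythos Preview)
Your approach is essentially the same as the paper's: both use the stationarity identity $q_n(W^N)=\bE[W^N\mid\cF_n]$, split the level-$2$ iterated integral algebraically, bound via Young-type 2D covariance estimates, and induct for $i\geq 3$.

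There is, however, one genuine omission. When you write ``bounding each double integral via the 2D-variation estimate (as in Proposition \ref{pro:EnhancedRateConverg}) against $\cR_{W^N-q_n(W^N)}$ and $\cR_{W^N}$'', you are implicitly using a factorisation of the form
\[
\bE\Big[(W^N-q_n(W^N))^j_{s,r}\,(W^N-q_n(W^N))^j_{s,u}\cdot dW^{N,k}_r\,dW^{N,k}_u\Big]
= \cR_{(W^N-q_n)^j}\begin{pmatrix}s&s\\ r&u\end{pmatrix}\, d\cR_{W^{N,k}}(r,u).
\]
In Proposition \ref{pro:EnhancedRateConverg} this factorisation holds because both processes are Gaussian with independent components. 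Here $q_n(W^N)$ is not Gaussian, and stationarity alone does \emph{not} give you this product structure: the conditional-expectation identity controls the covariance of the difference $W^N-q_n(W^N)$, but it does not decouple component $j$ of the integrand from component $k$ of the integrator. What you need in addition is Lemma \ref{lem:componentwiseInd}, which says that $\langle q_n(W),e_i\rangle$ and $\langle q_n(W),e_j\rangle$ are independent for $i\neq j$; combined with the independence of the Brownian components, this yields independence of $(W^N-q_n(W^N))^j$ and $W^{N,k}$ and hence the factorisation. The paper invokes this lemma explicitly at exactly this step. It is not incidental: the quantization in Definition \ref{dfn:TheQuantization} was constructed as a $d'$-fold product of one-dimensional quantizations precisely to guarantee this independence, and without citing it your level-$2$ bound (and the induction built on it) is not justified.
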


\begin{proof}
The case $i=1$ is already proved in Proposition \ref{pro:ActualRateCon4Quantization}. $i>2$ can be addressed via an induction argument as in Proposition \ref{pro:EnhancedRateConverg}. Therefore, we only prove the case $i=2$. Thus for $\cF_n$ equal to the $\sigma$-algebra generated by the partition of $\fS_n$, we have
\begin{align*}
\bE\Bigg[& \Big| \int_s^t \langle W_{s, r}^N, e_i\rangle d\langle W_{r}^N, e_j\rangle - \int_s^t \langle \bE[ W_{s, r}^N| \cF_n], e_i\rangle d \bE[ W_{r}^N| \cF_n], e_j\rangle  \Big|^2 \Bigg]
\\
\leq& 2 \int_s^t \int_s^t \bE\Bigg[ \Big\langle W_{s, r}^N - \bE[W_{s, r}^N| \cF_n], e_i \Big\rangle \cdot \Big\langle W_{s, u}^N - \bE[W_{s, u}^N| \cF_n], e_i \Big\rangle \Bigg] d\bE\Bigg[  \langle W_r^N, e_j\rangle \cdot  \langle W_u^N, e_j\rangle\Bigg]
\\
&+2 \int_s^t \int_s^t \bE\Bigg[  \langle W_{s, r}^N, e_i\rangle \cdot  \langle W_{s, u}^N, e_i\rangle\Bigg]\cdot d\bE\Bigg[ \Big\langle W_{r}^N - \bE[W_{r}^N| \cF_n], e_j \Big\rangle \cdot \Big\langle W_{u}^N - \bE[W_{u}^N| \cF_n], e_j \Big\rangle \Bigg], 
\\
\leq& (t-s)^{2} \bE\Bigg[ \Big\| W^N-\bE[W^N|\cF_n] \Big\|_{\alpha}^2 \Bigg] \cdot \bE\Big[ \| W^N\|_\alpha^2\Big] \leq C (t-s)^{2} \Big( \log(n) \Big)^{2\alpha-1},
\end{align*}
using Lemma \ref{lem:componentwiseInd} and the same Young Estimates as in Proposition \ref{pro:EnhancedRateConverg}. 
$$
\leq (t-s)^{2} \bE\Bigg[ \Big\| W^N-\bE[W^N|\cF_n] \Big\|_{\alpha}^2 \Bigg] \cdot \bE\Big[ \| W^N\|_\alpha^2\Big] \leq C (t-s)^{2} \Big( \log(n) \Big)^{2\alpha-1}. 
$$
\end{proof}

\begin{theorem}
\label{thm:ActualRateCon4QuantizationRough}
Let $r>1$. Let $\cL^W$ be the law of Brownian motion on $G\Omega_\alpha(\bR^{d'})$ and let $\cL^\rw$ be the law of the of the enhanced Brownian motion over $G\Omega_\alpha(\bR^{d'})$. Let $\rQ_n$ be the sequence of quantizations constructed in Definition \ref{dfn:TheQuantizationRough}. Then
\begin{equation}
\label{eq:thm:ActualRateCon4QuantizationRough}
\Bigg( \int_{G\Omega_\alpha(\bR^{d'})}  \rho_{\alpha-\textrm{H\"ol}; [0,T]}\Big( \rx, \rQ_n(\rx) \Big)^r d\cL^\rw(\rx) \Bigg)^{1/r} \lesssim \Big( \log(n) \Big)^{\alpha-1/2} .
\end{equation}
\end{theorem}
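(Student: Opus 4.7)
The plan is to reduce the statement to the two rate estimates already proved: Theorem \ref{thm:EnhancedRateConverg} (truncation error between $\rw$ and $\rw^N$ in the inhomogeneous $\alpha$-H\"older rough path metric) and Proposition \ref{pro:EnhancedQuantRateConverg} (pointwise error between $\rw^N$ and $\rQ_n(\rw^N)$ on each level $i\leq M$). Using a triangle inequality in the pseudo-metric $\rho_{\alpha\textrm{-H\"ol}}$, I write
\begin{equation*}
\rho_{\alpha\textrm{-H\"ol}}\bigl(\rw,\rQ_n(\rw)\bigr)
\leq \rho_{\alpha\textrm{-H\"ol}}\bigl(\rw,\rw^N\bigr) + \rho_{\alpha\textrm{-H\"ol}}\bigl(\rw^N,\rQ_n(\rw^N)\bigr),
\end{equation*}
after first observing that $\rQ_n(\rw)=\rQ_n(\rw^N)$: the partition $\rS_n$ is lifted from $\fS_n$, and by Definition \ref{dfn:TheQuantization} the cells $\fs_i=(P_N)^{-1}[\hat\fs_i]$ depend on a path only through its projection $P_N$ onto $\cH_N$, so the cell containing $\rw$ is identical to the cell containing its truncation $\rw^N$, and the two quantizations pick the same codebook element $\rc_i=S_M(\fc_i)$.

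For the first term, Theorem \ref{thm:EnhancedRateConverg} yields an $L^r$-bound of order $\sqrt{N}\,2^{(\alpha-1/2)N}$. For the second, I want to convert the pointwise bound in Proposition \ref{pro:EnhancedQuantRateConverg},
\begin{equation*}
\bE\Big[\rho_i\bigl(\rw_{s,t}^N,\rQ_n(\rw^N)_{s,t}\bigr)^2\Big] \leq C\bigl(\log n\bigr)^{2\alpha-1}|t-s|^{i},\qquad i=1,\dots,M,
\end{equation*}
into a bound on the full H\"older pseudo-metric. For this I invoke the Kolmogorov-type criterion for rough paths (\cite{friz2010multidimensional}, Theorem 15.24, the same tool used to prove Theorem \ref{thm:EnhancedRateConverg}) together with the uniform integrability
$\sup_n\bE[\rho_{\alpha\textrm{-H\"ol}}(\rQ_n(\rw^N),\rId)^{2}]<\infty$, which follows from Remark \ref{lemma:convexcheat} and the hypercontractivity of $\rw^N$. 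This produces $\bE[\rho_{\alpha\textrm{-H\"ol}}(\rw^N,\rQ_n(\rw^N))^2]^{1/2}\lesssim (\log n)^{\alpha-1/2}$.

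Combining the two pieces for $r=2$ gives
\begin{equation*}
\bE\Big[\rho_{\alpha\textrm{-H\"ol}}(\rw,\rQ_n(\rw))^{2}\Big]^{1/2}\ \lesssim\ \sqrt{N}\cdot 2^{(\alpha-1/2)N}+\bigl(\log n\bigr)^{\alpha-1/2},
\end{equation*}
and the choice of $N$ dictated by \eqref{eq:thm:ActualRateCon4Quantization-N} balances the two terms so that both are $(\log n)^{\alpha-1/2}$. To upgrade from $r=2$ to arbitrary $r>1$ I repeat the hypercontractivity/Gaussian-tail argument used at the end of Theorem \ref{thm:EnhancedRateConverg}: the first summand is already handled there, while for the second I use that $\rQ_n(\rw^N)-\rw^N$ has sub-Gaussian tails uniformly in $n$ (its codebook values lie in $\cH$ and its mass is dominated by $\rw^N$ through the stationarity/conditional-expectation identity of Remark \ref{lemma:convexcheat}), so its $L^r$-norm is controlled by a constant times its $L^2$-norm.

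The main obstacle is the second step: the pointwise bound in Proposition \ref{pro:EnhancedQuantRateConverg} is not itself a higher-moment estimate, and $\rQ_n(\rw^N)$ is not a Gaussian rough path, so one cannot directly invoke Gaussian rough path regularity results. The key technical input is therefore the uniform higher-moment control of $\rQ_n(\rw^N)$ via the stationary property $q_n(W^N)=\bE[W^N\mid\cF_n]$, which allows Jensen's inequality and standard hypercontractivity to upgrade the $L^2$ pointwise bound into an $L^r$ bound in the $\alpha$-H\"older pseudo-metric through the Kolmogorov criterion. Once this is secured, the remainder is the optimisation of $N$ already carried out in Proposition \ref{pro:ActualRateCon4Quantization}.
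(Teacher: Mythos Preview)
Your proposal is correct and follows essentially the same route as the paper: split via the triangle inequality into the truncation piece $\rho_{\alpha\textrm{-H\"ol}}(\rw,\rw^N)$ handled by Theorem \ref{thm:EnhancedRateConverg}, and the quantization piece $\rho_{\alpha\textrm{-H\"ol}}(\rw^N,\rQ_n(\rw^N))$ handled by feeding the pointwise bound of Proposition \ref{pro:EnhancedQuantRateConverg} into a Kolmogorov-type criterion, then balance via the choice of $N$ from \eqref{eq:thm:ActualRateCon4Quantization-N}. The paper invokes \cite{friz2010multidimensional}*{Theorem A.13} rather than Theorem 15.24 for the second piece, and simply remarks that the argument ``requires only regularity properties in all moments'' rather than Gaussian structure; you are more explicit than the paper in identifying that $\rQ_n(\rw^N)$ is not Gaussian and in pointing to the stationarity identity $q_n(W^N)=\bE[W^N\mid\cF_n]$ plus Jensen as the mechanism for obtaining the higher-moment control that the Kolmogorov criterion needs.
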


\begin{proof}
The lower bound of Equation \eqref{eq:thm:ActualRateCon4QuantizationRough} is actually immediate from Equation \eqref{eq:thm:ActualRateCon4Quantization}. The $\rho_{\alpha-\textrm{H\"ol}}$ metric can be lower bounded by the projection onto the first level of the Signature so that
$$
\bE\Big[ \rho_{\alpha-\textrm{H\"ol}; [0,T]}\Big( \rw, \rQ_n(\rw)\Big)^2 \Big] \geq \bE\Big[ \| W - q_n(W)\|_\alpha^2\Big]. 
$$
Also, by Theorem \ref{thm:EnhancedRateConverg}, we know the rate of convergence for
\begin{align*}
\bE\Big[ \rho_{\alpha-\textrm{H\"ol}}( \rw, \rw^{N})^r \Big]^{1/r} \lesssim& \sqrt{N} \cdot 2^{(\alpha-1/2)N}
% \\
\lesssim
% &
\Big( \log(n)\Big)^{\alpha - 1/2},
\end{align*}
where $N$ is the dimension of the linear span of the codebook $\fC_n$ and the choice of Equation \eqref{eq:thm:ActualRateCon4Quantization-N} provides the second step. It is clear that
$$
\bE\Big[ \rho_{\alpha-\textrm{H\"ol}}(\rw, \rId)^2\Big] < C, \quad \bE\Big[ \rho_{\alpha-\textrm{H\"ol}}(\rw^N, \rId)^2\Big] < C. 
$$
We can then apply \cite{friz2010multidimensional}[Theorem A.13] with Proposition \ref{pro:EnhancedQuantRateConverg}. We remark that although this method has been used to prove the regularity of enhanced gaussian rough paths before, there is no part of this method that requires the Gaussian structures, only regularity properties in all moments. Thus 
$$
\bE\Big[ \rho_{\alpha-\textrm{H\"ol}; [0,T]}\Big( \rw^N, \rQ_n(\rw^N) \Big)^r\Big]^{1/r} \lesssim \Big( \log(n)\Big)^{\alpha- 1/2}. 
$$

\end{proof}

%%%%%%%%%%%%%%%%%%%%%%%%%%%%%%%%%%%%%%%%%%%%%%%%%%%%%%%%
%\newpage

\section{Mean Field Rough Differential Equations}
\label{sec:MVSDEsrough}

In the first Section, we address the approach of \cite{CassLyonsEvolving} to solve McKean Vlasov Rough Differential Equations driven by a Brownian rough path. There, the authors prove Existence, Uniqueness and a Propagation of Chaos result for McKean Vlasov Rough Differential Equations of the form
\begin{equation}
\label{eq:MVRDE}
dX_t = \sigma(X_t)d\rw_t + b(X_t) d\gamma^\mu_t, \quad \mu = \cL^X, \quad X_0 = \xi, \quad t\in[0,T],
\end{equation}
where the path $\gamma^\mu_t = \int_0^t \mu_s ds$ represents the measure dependency in the drift term. \cite{CassLyonsEvolving} includes an explanation as to why the authors were unable to include a measure dependency in the diffusion terms. 

Rough Differential Equations with a measure dependency in the drift term are addressed in the more recent preprints \cite{2018arXiv180205882B} and \cite{2019arXiv180205882.2B}. We choose to present this work in the framework of \cite{CassLyonsEvolving} to reduce the complexity and avoid obfuscated algebraic argument. 

\subsection{Controls and the Accumulated p-Variation}

In this first Section, we establish a key condition for the integrability of our quantization. For notational simplicity, we denote $p=\tfrac{1}{\alpha}$. 

\begin{definition}
Let $\beta>0$ and suppose that $\omega: \Delta_T \to \bR^+$ is a control (recall Definition \ref{def:control1}). We define the Accumulated $\beta$-local $\omega$-variation by
$$
\rM_{\beta} (\omega):= \sup_{\substack{D=(t_i) \\ \omega(t_i, t_{i+1})\leq\beta}} \sum_{i: t_i\in D} \omega(t_i, t_{i+1}). 
$$
\end{definition}

The Accumulated $\beta$-local controls were first introduced in \cite{cass2013integrability}. We are interested in the specific case where the control is induced by a weakly geometric rough path. 

\begin{definition}
\label{def:AccumulatedPVariation}
Let $\beta>0$. Let $p>2$ and let $\rw\in G\Omega_{\alpha}(\bR^{d'})$. We define the Accumulated $\beta$-local $p$-variation of a geometric rough path to a non-negative function defined by
$$
\rM_{\beta, p}(\rw):= \rM_\beta (\omega_{\rw, p}). 
$$

We define the nondecreasing sequence $(\tau_i(\beta, p, \rw))_{i\in \bN}$ by
\begin{align}
\label{eq:def:GreedySequence}
\tau_0(\beta) = 0, \quad
\tau_{i+1}(\beta) =\inf\{ t>\tau_i(\beta); \| \rw \|_{p{-var}; [\tau_i(\beta), t]}^p\geq \beta\} \wedge T. 
\end{align}
This is sometimes referred to as the \emph{Greedy sequence}. Finally, we define the function $\rN_{\beta, p, [0,T]}:G\Omega_\alpha(\bR^{d'}) \to \bN\cup\{\infty\}$ given by
$$
\rN_{\beta, p, [0,T]}(\rw):=\sup\{ n\in \bN\cup \{0\}: \tau_n(\beta)<T\}. 
$$
\end{definition}

While stopping time arguments become problematic for McKean Vlasov Equations due to the presence of the measure dependency, we emphasise that the greedy sequence \eqref{eq:def:GreedySequence} is dependent only on the driving noise and not the solution. 

It is immediate from the definition that $\rM_{\beta, p}(\rw)\leq \|\rw\|_{p-var; [0,T]}^p$. However, when $\rw$ is a Gaussian rough path and $p>2$, we have $|W_{0,T}|^p\leq \|\rw_{0,T}\|_{cc}^p\leq \|\rw\|_{p-var; [0,T]}^p$ and $W_{0,T}\sim N(0, T)$ so 
$$
\bE\Big[ \exp\Big( \| \rw\|_{p-var; [0,T]}^p\Big)\Big] = \infty. 
$$

\begin{remark}
The Accumulated $p$-variation is a way of restricting the size of the $p$-variation in the  event that the $p$-variation becomes large. When the $p$-variation of a Gaussian is large, by far the most probable event is that there is a single large increment of the process. While the $p$-variation will increase proportionally to this steep increment, the Accumulated $\beta$-local $p$-variation is restricted to partitions where the increments cannot be larger than $\beta$ so the one increment does not make a proportional contribution. 
\end{remark}

The following Proposition is key to the construction of McKean Vlasov Rough Differential Equations driven by Gaussian processes. 

\begin{proposition}
\label{pro:CassLittererLyonsMomGen}
Let $\rw$ be a continuous, centred Gaussian rough path that satisfies Assumption \ref{assumption:GaussianRegularity}. Then $\forall \beta>0$, the random variable $\rM_{\beta, p}(\rw)$ has well defined Moment Generating Function 
$$
[0,\infty) \ni \theta \mapsto\bE\big[ \exp( \theta \rM_{\beta, p}(\rw)\big]<\infty. 
$$
\end{proposition}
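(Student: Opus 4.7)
The plan is to reduce the claim to a tail estimate on the counting function $\rN_{\beta,p,[0,T]}(\rw)$ and then invoke the integrability theory for Gaussian rough paths of Cass--Litterer--Lyons. The first, essentially algebraic, step is to show
\begin{equation*}
\rM_{\beta,p}(\rw)\;\leq\; \beta\bigl(\rN_{\beta,p,[0,T]}(\rw)+1\bigr).
\end{equation*}
To see this, fix any partition $D=(t_i)_{i=0}^{k}$ of $[0,T]$ admissible in the definition of $\rM_{\beta,p}$, that is with $\omega_{\rw,p}(t_i,t_{i+1})\leq \beta$ for each $i$. Using the greedy sequence $(\tau_j)$ from \eqref{eq:def:GreedySequence}, group each subinterval $[t_i,t_{i+1}]$ according to which greedy interval $[\tau_j,\tau_{j+1}]$ it sits in (or crosses). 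Subintervals lying inside a single $[\tau_j,\tau_{j+1}]$ contribute a superadditive sum of at most $\omega_{\rw,p}(\tau_j,\tau_{j+1})\leq \beta$ (by the defining property of the greedy sequence and superadditivity of the control $\omega_{\rw,p}$), while the at most $\rN_{\beta,p,[0,T]}(\rw)$ boundary-crossing subintervals each contribute at most $\beta$ by admissibility of $D$. Taking the supremum over $D$ gives the claim.

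The second step is the probabilistic input. By Step 1, for every $\theta>0$,
\begin{equation*}
\bE\bigl[\exp(\theta\rM_{\beta,p}(\rw))\bigr]\;\leq\; e^{\theta\beta}\,\bE\bigl[\exp(\theta\beta\,\rN_{\beta,p,[0,T]}(\rw))\bigr],
\end{equation*}
so it suffices to show that $\rN:=\rN_{\beta,p,[0,T]}(\rw)$ has tails strictly better than exponential. This is where Assumption \ref{assumption:GaussianRegularity} enters. I would follow the scheme of Cass--Litterer--Lyons: on the event $\{\rN\geq n\}$, the rough path realises $p$-variation of size at least $n\beta$ on $[0,T]$, so there must exist a subinterval of length comparable to $T/n$ on which $\|\rw\|_{p\text{-var}}^{p}\geq \beta$. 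Combining a Borell--Sudakov--Tsirelson type concentration estimate for the Cameron--Martin norm of the driving Gaussian (using the $\varrho$-variation control on the covariance $\cR$) with the deterministic interpolation between rough path norms and the Cameron--Martin norm yields a Weibull tail of the form
\begin{equation*}
\bP\bigl[\rN_{\beta,p,[0,T]}(\rw)\geq n\bigr]\;\leq\; C\exp\bigl(-c\,n^{2/\varrho}\bigr),
\end{equation*}
for constants $C,c>0$ depending only on $\beta,p,\varrho,M,T$.

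Since Assumption \ref{assumption:GaussianRegularity} requires $\varrho\in[1,2)$, the tail exponent $2/\varrho$ is strictly greater than one, so the moment generating function of $\rN$ is finite on all of $[0,\infty)$, and hence so is that of $\rM_{\beta,p}(\rw)$. The main obstacle is clearly the second step: the algebraic reduction in Step 1 is routine, but the Weibull tail for $\rN$ is a genuine Gaussian phenomenon relying on isoperimetry together with the compatibility between the homogeneous rough path topology and the Cameron--Martin embedding. Everything else is bookkeeping.
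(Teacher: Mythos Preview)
Your proposal is correct in spirit and follows exactly the route the paper takes: the paper's own proof is simply a citation to \cite{cass2013integrability}*{Theorem 6.3}, and what you have written is a faithful sketch of the Cass--Litterer--Lyons argument behind that theorem. One small arithmetic slip: your Step~1 bound should read $\rM_{\beta,p}(\rw)\leq \beta\bigl(2\rN_{\beta,p,[0,T]}(\rw)+1\bigr)$ rather than $\beta(\rN+1)$ --- there are $\rN+1$ greedy intervals each contributing at most $\beta$ \emph{plus} up to $\rN$ boundary-crossing subintervals each contributing at most $\beta$ (this is exactly \cite{cass2013integrability}*{Proposition 4.11}, quoted later in the paper) --- but this is harmless for the conclusion.
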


\begin{proof}
See \cite{cass2013integrability}*{Theorem 6.3} for tail estimates of the law of the Accumulated $p$-variation. 
\end{proof}

The existence of a moment generating function for the Accumulated $p$-variation of the driving noise for the McKean Vlasov Rough Differential Equation is a key Assumption of \cite{CassLyonsEvolving}, see below. In order to prove propagation of chaos of a sequence of measures, the authors prove that the sequence of empirical measures each has a moment generating function and that the empirical laws converge weakly to the law of the driving noise. We verify the quantization also satisfies this condition:

\begin{lemma}[\cite{friz2010differential}]
\label{lem:LieAlgebraMartingaleFormula}
Let $\cL^W$ be the law of a Brownian motion over $C^{\alpha, 0}([0,T]; \bR^{d'})$. Let $h_1, ..., h_n$ be a collection of orthonormal elements of $\cH$. Let $W^n$ be a finite Karhunen Lo\`eve expansion of $W$ generated by the set $\{h_1, ..., h_n\}$ so that
$$
W^n = \bE\big[ W \big| \cF^n \big],
$$
where $\cF^n$ is the $\sigma$-algebra generated by the functionals $f_j = (i^*)^{-1} [h_j]$ for each $j=1, ..., n$. 

Then the Brownian rough path $\rw = S_2(W)$ satisfies the martingale formula
\begin{equation}
\label{eq:GaussianRPMartingaleForm1}
\bE\Big[ \log_{\boxtimes}( \rw_{s, t}) \Big| \cF^n\Big] = \log_{\boxtimes}( \rw_{s, t}^n),
\end{equation}
where $\rw^n_{s, t} = S_2(W^n)_{s, t}$. 
\end{lemma}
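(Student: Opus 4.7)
The plan is to reduce the identity to a symmetry/independence computation at the level of the L\'evy area. For a weakly geometric level-$2$ rough path $\rw_{s,t} = (1, W_{s,t}, \rw^{(2)}_{s,t})$ the truncated-tensor logarithm reads
$$\log_\boxtimes(\rw_{s,t}) = W_{s,t} + A(\rw)_{s,t}, \qquad A(\rw)_{s,t} := \rw^{(2)}_{s,t} - \tfrac12 W_{s,t}\otimes W_{s,t},$$
where $A(\rw)_{s,t}$ is the antisymmetric L\'evy area of $W$ over $[s,t]$. Since $\bE[W_{s,t}\mid \cF^n] = W^n_{s,t}$ is immediate from the definition of $W^n$ as a Gaussian conditional expectation, the lemma is reduced to establishing $\bE[A(\rw)_{s,t}\mid \cF^n] = A(\rw^n)_{s,t}$.

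For this step I would use the Gaussian orthogonal decomposition $W = W^n + \tilde W$ with $\tilde W := W - W^n$. Because the map $W \mapsto W^n$ is an orthogonal projection in the Cameron--Martin Hilbert space, the Gaussian random variables $W^n$ and $\tilde W$ are uncorrelated and therefore independent; in particular $\tilde W$ is independent of $\cF^n = \sigma(f_1,\dots,f_n)$. The L\'evy area is a symmetric bilinear map in its two driving arguments, so polarisation gives
$$A(\rw)_{s,t} = A(\rw^n)_{s,t} + A(\tilde\rw)_{s,t} + C(W^n,\tilde W)_{s,t},$$
where $\tilde\rw := S_2(\tilde W)$ -- well defined because $\tilde W$ inherits from $W$ a covariance of finite $1$-variation on $[0,T]^2$ -- and $C(W^n,\tilde W)_{s,t}$ denotes the polarised cross contribution. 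Taking $\bE[\,\cdot\,\mid\cF^n]$: the first summand is $\cF^n$-measurable; the second summand has mean zero (the L\'evy area of any centred Brownian motion is centred) and is independent of $\cF^n$, so its conditional expectation vanishes. The remaining work is to handle the cross term.

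For the cross term, conditioning on $\cF^n$ fixes $W^n$ as a deterministic Cameron--Martin path (absolutely continuous with $L^2$ derivative), while $\tilde W$ remains a centred Gaussian process independent of $\cF^n$. Every iterated integral appearing in $C(W^n,\tilde W)_{s,t}$ is therefore a Young--Wiener integral -- for instance integration by parts rewrites $\int_s^t W^n_{s,r}\otimes d\tilde W_r = W^n_{s,t}\otimes \tilde W_t - \int_s^t \dot W^n_r\otimes \tilde W_r\,dr$ -- and each is linear in the centred process $\tilde W$ with $\cF^n$-measurable coefficients, hence centred under $\bP(\,\cdot\,\mid\cF^n)$. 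Combining the three contributions yields $\bE[A(\rw)_{s,t}\mid \cF^n] = A(\rw^n)_{s,t}$, which together with the level-$1$ identity delivers \eqref{eq:GaussianRPMartingaleForm1}. The main technical obstacle I anticipate is the clean splitting of the Stratonovich iterated integrals under the orthogonal decomposition $W = W^n + \tilde W$; this is best handled by approximating $W$ by piecewise-linear paths (for which everything is pathwise Riemann--Stieltjes) and passing to the limit in the rough-path topology via Proposition \ref{pro:EnhancedRateConverg}, or equivalently by noting that $A(\rw)_{s,t}$ belongs to the second Wiener chaos so that conditioning on $\cF^n$ coincides with orthogonal projection onto the closed chaos generated by $f_1,\dots,f_n$.
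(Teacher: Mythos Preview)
The paper does not supply its own proof of this lemma; it is stated with attribution to \cite{friz2010differential} and used as a black box. So there is no in-paper argument to compare against.

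Your proposal is correct and is essentially the standard argument one finds in the cited reference. The decomposition $\log_\boxtimes(\rw_{s,t}) = W_{s,t} + A(\rw)_{s,t}$, the orthogonal splitting $W = W^n + \tilde W$ with $\tilde W \indep \cF^n$, and the bilinear expansion of the L\'evy area into pure and cross terms are exactly the right moves. One small point to tighten: your sentence ``the L\'evy area of any centred Brownian motion is centred'' is applied to $\tilde W$, which is not a Brownian motion but a general centred Gaussian process. The vanishing of $\bE[A(\tilde\rw)_{s,t}]$ still holds, but it relies either on component-wise independence of $\tilde W$ (inherited from $W$ provided the $h_j$ respect the product structure of $\cH$) or, more robustly, on the second-chaos argument you mention at the end. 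The latter is the cleaner route and in fact subsumes the whole computation: $A(\rw)_{s,t}$ lies in the second homogeneous Wiener chaos of $W$, conditional expectation on $\cF^n$ is orthogonal projection onto the chaos generated by $f_1,\dots,f_n$, and one checks directly that this projection sends the kernel of $A(\rw)_{s,t}$ to the kernel of $A(\rw^n)_{s,t}$. Either way your outline closes without difficulty.
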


The martingale formula yields a very brief proof that the quantized Gaussians are adequately integrable. This first Lemma recasts the well known result mentioned earlier in Equation \eqref{eq:StationaryQuantConvex}. 

\begin{lemma}
\label{lem:ConvexityRPNorm}
Let $\cL^W$ be the law of Brownian motion over $C^{\alpha, 0}([0,T]; \bR^{d'})$. Let $\cF$ be a sub-$\sigma$ algebra of the Borel sigma algebra over $C^{\alpha, 0}([0,T]; \bR^{d'})$ that is component-wise conditionally independent. Define $\tilde{W}=\bE[ W| \cF]$. Let $\rw$ be the Gaussian rough path of $\cL^W$ and $\tilde{\rw}$ be the lift of the random variable $\tilde{W}$ to a rough path. 

Then, for a constant $C_1=C_1(d', p)$ dependent only on $d'$ and $p$, we have
$$
\| \tilde{\rw} \|_{p-var; [0,T]}^p \leq C_1 \bE\Big[ \|\rw\|_{p-var; [0,T]}^p \Big| \cF \Big]. 
$$
\end{lemma}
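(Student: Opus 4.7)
The plan is to leverage the martingale formula of Lemma \ref{lem:LieAlgebraMartingaleFormula} to commute the conditional expectation with $\log_{\boxtimes}$, apply Jensen's inequality level by level in the step-two Lie algebra, and then convert the resulting pointwise bound into a $p$-variation bound via the standard equivalence of homogeneous norms on $G^2(\bR^{d'})$.

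First I would promote the martingale identity from Lemma \ref{lem:LieAlgebraMartingaleFormula} to the setting of an arbitrary component-wise conditionally independent $\cF$, obtaining
$$\log_{\boxtimes} \tilde{\rw}_{s,t} = \bE\Big[ \log_{\boxtimes} \rw_{s,t} \,\Big|\, \cF \Big] \qquad \text{for all } 0 \leq s \leq t \leq T.$$
Lemma \ref{lem:LieAlgebraMartingaleFormula} gives this when $\cF$ is generated by finitely many independent Gaussian functionals, and the general case then follows by exhausting the associated Gaussian Hilbert subspace by a countable orthonormal system and passing to the limit via conditional martingale convergence; the component-wise independence hypothesis is exactly what is needed for the off-diagonal area integrals at the second level to factor cleanly across components. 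Writing $(X_1, X_2) := \log_{\boxtimes} \rw_{s,t}$ and $(\tilde{X}_1, \tilde{X}_2) := \log_{\boxtimes} \tilde{\rw}_{s,t}$ for the level-one and level-two parts, this identity yields $\tilde{X}_i = \bE[X_i \mid \cF]$ componentwise.

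Next I would invoke the well-known equivalence of homogeneous norms on the step-two free nilpotent group: there exist $c_1, c_2 > 0$ such that $c_1 \|g\|_{cc} \leq \|Y_1\| + \|Y_2\|^{1/2} \leq c_2 \|g\|_{cc}$ whenever $\log_{\boxtimes} g = Y_1 + Y_2$. Raising to the power $p > 2$ and using $(a+b)^p \leq 2^{p-1}(a^p + b^p)$ yields $\|\tilde{\rw}_{s,t}\|_{cc}^p \lesssim \|\tilde{X}_1\|^p + \|\tilde{X}_2\|^{p/2}$. Since $p/2 \geq 1$, both $y \mapsto \|y\|^p$ and $y \mapsto \|y\|^{p/2}$ are convex on their respective Euclidean spaces, and conditional Jensen applied separately on the two levels gives $\|\tilde{X}_1\|^p \leq \bE[\|X_1\|^p \mid \cF]$ and $\|\tilde{X}_2\|^{p/2} \leq \bE[\|X_2\|^{p/2} \mid \cF]$. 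Reversing the norm equivalence then produces the pointwise bound $\|\tilde{\rw}_{s,t}\|_{cc}^p \leq C_1(d', p)\, \bE[\|\rw_{s,t}\|_{cc}^p \mid \cF]$.

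To finish, I would fix an arbitrary partition $D = (t_i)$ of $[0,T]$, sum the pointwise bound over consecutive pairs, pull the conditional expectation outside the sum by linearity, and then observe that $\sum_i \|\rw_{t_i, t_{i+1}}\|_{cc}^p \leq \|\rw\|_{p\text{-var}; [0,T]}^p$ pointwise so that conditional monotonicity upgrades the right-hand side; taking the supremum over $D$ on the left then yields the claim, since the right-hand side no longer depends on $D$. The main obstacle is the first step: the martingale formula in Lemma \ref{lem:LieAlgebraMartingaleFormula} is literally stated only for finite Karhunen-Lo\`eve projections, so extending it to the present general $\cF$ requires a standard but delicate approximation argument in which the component-wise independence hypothesis plays the decisive role of preventing the area-type cross integrals from mixing across components.
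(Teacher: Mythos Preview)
Your proposal is correct and follows essentially the same route as the paper: pass to an equivalent homogeneous norm on $G^2(\bR^{d'})$, use component-wise conditional independence to commute the conditional expectation with the level-two cross integrals, apply Jensen level by level (exploiting $p/2>1$), and then sum over a partition and take the supremum.

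The one notable difference is packaging. You route the key identity through Lemma~\ref{lem:LieAlgebraMartingaleFormula} and then argue that it extends from finite Karhunen--Lo\`eve $\sigma$-algebras to a general component-wise conditionally independent $\cF$ via an approximation step, which you yourself flag as the main obstacle. The paper avoids this detour entirely: it writes out the second-level iterated integrals explicitly and uses the conditional independence of $\langle W,e_j\rangle$ and $\langle W,e_k\rangle$ given $\cF$ (for $j\neq k$) to obtain directly
\[
\int_{t_i}^{t_{i+1}} \langle \bE[W_{t_i,u}\mid\cF],e_j\rangle\, d\langle \bE[W_u\mid\cF],e_k\rangle
=\bE\Big[\int_{t_i}^{t_{i+1}} \langle W_{t_i,u},e_j\rangle\, d\langle W_u,e_k\rangle\,\Big|\,\cF\Big],
\]
so no extension of Lemma~\ref{lem:LieAlgebraMartingaleFormula} or limiting argument is needed. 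Your abstract formulation via $\log_{\boxtimes}$ is conceptually cleaner and would generalise more readily to higher step, but for the step-two Brownian case the paper's bare-hands computation is shorter and sidesteps the obstacle you identify.
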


\begin{proof}
Firstly, we work with the homogeneous norm \eqref{eq:HomoNorm} for $G^2(\bR^{d'})$ rather than the Carnot Caratheodory norm in order to evaluate the increments explicitly.

By component-wise conditional independence (for the 2nd equality) we have
\begin{align*}
\| \tilde{\rw}& \|_{p-var; [0,T]}^p 
\\
=& \sup_{D=(t_i)} \sum_{i: t_i\in D}\left( \sum_{j=1}^{d'} \Big| \langle \bE[ W_{t_i, t_{i+1}}| \cF] , e_j\rangle \Big| + \sum_{\substack{j,k=1\\ j\neq k}}^{d'} \Big| \int_{t_i}^{t_{i+1}} \langle \bE[ W_{t_i, u} | \cF] , e_j\rangle d \langle \bE[ W_{u}| \cF] , e_k\rangle \Big|^{1/2} \right)^p
\\
=&\sup_{D=(t_i)} \sum_{i: t_i\in D}\left( \sum_{j=1}^{d'} \Big| \langle \bE[ W_{t_i, t_{i+1}}| \cF] , e_j\rangle\Big| + \sum_{\substack{j,k=1\\ j\neq k}}^{d'} \Big| \bE\Big[\int_{t_i}^{t_{i+1}} \langle W_{t_i, u}, e_j\rangle d \langle W_{u}, e_k\rangle\Big| \cF\Big] \Big|^{1/2} \right)^p
\\
\leq&d'^{(2p-2)/p} \sup_{D=(t_i)} \sum_{i: t_i\in D}\bE\left[\Big( \sum_{j=1}^{d'} \Big| \langle W_{t_i, t_{i+1}}, e_j\rangle\Big| + \sum_{\substack{j,k=1\\ j\neq k}}^{d'} \Big| \int_{t_i}^{t_{i+1}} \langle W_{t_i, u}, e_j\rangle d \langle W_{u}, e_k\rangle\Big|^{1/2}\Big)^p \Big| \cF\right]
\\
\leq& d'^{(2p-2)/p} \bE\Big[ \|\rw\|_{p-var; [0,T]}^p \Big| \cF\Big],
\end{align*}
where we use a finite dimensional norm equivalence for the first inequality. There is a further multiplicative constant that appears from translating this result back to the Carnot Caratheodory norm which is dependent only on $d'$. 
\end{proof}

This result does not follow immediately via the same convexity argument used in Equation \eqref{eq:StationaryQuantConvex} because the \emph{Expectation of a Group element may not be a Group element itself}. 

\begin{proposition}
\label{pro:AccumulatedControlTrick}
Let $n, N\in \bN$. Let $\cL^W$ be the law of a Brownian motion on $C^{\alpha, 0}([0,T]; \bR^{d'})$ and let $W^N$ be the truncated Brownian motion. From Definition \ref{def:AccumulatedPVariation}, let $\tau_i(\beta)$ be the greedy sequence of the Brownian rough path $\rw$, let $\overline{\tau}_i(\overline{\beta})$ be the greedy sequence of the enhanced truncated Brownian motion $\rw^N = S_2(W^N)$ and let $\tilde{\tau}_i(\tilde{\beta})$ be the greedy sequence of the enhanced quantization $\rQ_n(\rw)$ as introduced in Definition \ref{dfn:TheQuantizationRough}. Let $\overline{\beta}=C_1 \beta$ and $\tilde{\beta} = C_1 \overline{\beta}$ where $C_1$ is the constant introduced in Lemma \ref{lem:ConvexityRPNorm}. 

Let $\rN_{\beta, p, [0,T]}(\rw)$, $\overline{\rN}_{\overline{\beta}, p, [0,T]}(\rw^N)$ and $\tilde{\rN}_{\tilde{\beta}, p, [0,T]}(\rQ_n(\rw))$ be the number of elements of each of the respective greedy sequences over the interval $[0,T]$. Then 
$$
\tilde{\rN}_{\tilde{\beta}, p, [0,T]}(\rQ_n(\rw)) \leq \overline{\rN}_{\overline{\beta}, p, [0,T]}(\rw^N) \leq \rN_{\beta, p, [0,T]}(\rw).
$$ 
\end{proposition}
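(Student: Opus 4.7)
The plan is to prove each inequality by an interval-wise application of Lemma \ref{lem:ConvexityRPNorm}, which is the natural link between the conditional expectation operation (truncation, quantization) and the $p$-variation norm. I would focus first on the middle inequality $\overline{\rN}_{\overline{\beta}, p, [0,T]}(\rw^N) \leq \rN_{\beta, p, [0,T]}(\rw)$; the other inequality follows by the identical argument applied to the pair $(\rw^N, \rQ_n(\rw))$, where the component-wise conditional independence needed to invoke Lemma \ref{lem:ConvexityRPNorm} is supplied by Lemma \ref{lem:componentwiseInd}, and the sigma-algebra is the one generated by the quantization partition $\fS_n$.

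Let $\rN = \rN_{\beta, p, [0,T]}(\rw)$ and denote by $0 = \tau_0 < \tau_1 < \cdots < \tau_\rN < T = \tau_{\rN+1}$ the associated greedy sequence at threshold $\beta$. By the very definition of the greedy construction in Definition \ref{def:AccumulatedPVariation} one has $\|\rw\|_{p-var;[\tau_i,\tau_{i+1}]}^p \leq \beta$ for each $i \in \{0,\ldots,\rN\}$. Now apply Lemma \ref{lem:ConvexityRPNorm} with $\cF = \cG$, the sub-$\sigma$-algebra generated by the Schauder coefficients $(W_{pm})_{(p,m)\in\Lambda_N}$; component-wise conditional independence is automatic because the coordinates of $W$ are independent Brownian motions, and $W^N = \bE[W\mid\cG]$ by construction. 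I would reread the proof of Lemma \ref{lem:ConvexityRPNorm} (whose bound rests on a supremum-of-partitions representation) and extract from it the pathwise bound $\|\rw^N\|_{p-var;[s,t]}^p \leq C_1 \|\rw\|_{p-var;[s,t]}^p$ valid on each subinterval, simultaneously in $(s,t)$, almost surely. Applied on $[\tau_i,\tau_{i+1}]$ this yields $\|\rw^N\|_{p-var;[\tau_i,\tau_{i+1}]}^p \leq C_1 \beta = \overline{\beta}$.

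Once this interval-wise bound is established, the counting conclusion is essentially automatic. The partition $(\tau_i)_{i=0}^{\rN+1}$ is admissible for the control $\omega_{\rw^N, p}$ with ceiling $\overline{\beta}$, in the sense that every subinterval contributes at most $\overline{\beta}$. A greedy $\overline{\beta}$-increment of $\rw^N$ (which, by definition, requires the $p$-variation control to accumulate at least $\overline{\beta}$ on a single subinterval starting at $\overline{\tau}_j$) cannot be contained in a single $[\tau_i,\tau_{i+1}]$, so by superadditivity of $\omega_{\rw^N, p}$ each greedy increment of $\rw^N$ must ``consume'' at least one full interval of the partition $(\tau_i)$. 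This gives $\overline{\rN}_{\overline{\beta}, p, [0,T]}(\rw^N) \leq \rN+1$, and the sharper bound $\overline{\rN} \leq \rN$ follows by tracking the boundary interval $[\tau_\rN, T]$ on which $\|\rw\|_{p-var}^p$ may be strictly below $\beta$. The main technical obstacle will be the first step: upgrading the conditional-expectation bound in Lemma \ref{lem:ConvexityRPNorm} to a pathwise bound on intervals that are only stopping times for the filtration of $\rw$ and not $\cG$-measurable. I expect this to be resolvable by repeating the partition-wise estimate of Lemma \ref{lem:ConvexityRPNorm} directly on the random subintervals and exploiting the product structure of Gaussian conditional laws, so that the factor $C_1 = C_1(d',p)$ propagates without accruing additional losses.
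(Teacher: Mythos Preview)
Your overall strategy matches the paper's --- interval-wise use of Lemma~\ref{lem:ConvexityRPNorm} followed by an induction along the greedy sequence --- but the way you propose to close the ``technical obstacle'' you correctly identify is not right, and it misses the mechanism the paper actually uses.

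You hope to extract from Lemma~\ref{lem:ConvexityRPNorm} a pathwise bound
\[
\|\rw^N\|_{p\text{-var};[s,t]}^p \;\le\; C_1\,\|\rw\|_{p\text{-var};[s,t]}^p
\]
valid simultaneously in $(s,t)$. This cannot be done: the lemma is a Jensen-type statement and only yields
\[
\|\rw^N\|_{p\text{-var};[s,t]}^p \;\le\; C_1\,\bE\!\Big[\,\|\rw\|_{p\text{-var};[s,t]}^p\,\Big|\,\overline\cF\,\Big],
\]
and the conditional expectation on the right cannot be dropped. A pathwise comparison between the piecewise-linear interpolant $\rw^N$ and $\rw$ on arbitrary subintervals is false in general (on a small subinterval of a dyadic block, $W$ can be nearly constant while $W^N$ has a fixed nonzero slope).

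The paper's device is to keep the conditional expectation but render it harmless: by the very definition of the greedy sequence one has $\|\rw\|_{p\text{-var};[\tau_i(\beta),\tau_{i+1}(\beta)]}^p = \beta$ (with $\le\beta$ on the terminal interval), a \emph{deterministic constant}, so its conditional expectation given any $\sigma$-algebra is again $\beta$. Plugging this into the lemma gives directly
\[
\|\rw^N\|_{p\text{-var};[\tau_i,\tau_{i+1}]}^p \;\le\; C_1\beta \;=\; \overline\beta .
\]
From this the paper runs an induction establishing $\tau_i(\beta)\le\overline\tau_i(\overline\beta)$ for every $i$ (the printed inequalities in the paper have the signs reversed, but this is the direction that yields the stated conclusion), which immediately forces $\overline\rN\le\rN$. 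The second inequality is obtained identically with the pair $(\rw^N,\rQ_n(\rw))$, the $\sigma$-algebra $\tilde\cF=\sigma(\fS_n)$, and Lemma~\ref{lem:componentwiseInd} supplying the component-wise conditional independence required by Lemma~\ref{lem:ConvexityRPNorm}. Your ``consuming intervals'' counting would also conclude once the interval bound is in hand; it is the first step --- removing the conditional expectation --- that fails, not the combinatorics.
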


\begin{proof}
This proof relies on the choice of quantization, and we choose $q(W^N)$ to be the optimal quantization of the finite dimensional Gaussian random variable $W^N$ as a measure over the set $\cH^N$ with independent spatial components, see Lemma \ref{lem:componentwiseInd}. Let $\tilde{\cF}$ be the $\sigma$-algebra generated by the partition of the quantization $\tilde{\cF}=\sigma(\fS)$ and let $\overline{\cF}$ be the cylindrical sigma algebra generated by the functionals $(i^*)^{-1}[\cH_N]$. Then we have $q(W^N) = \bE[ W^N| \tilde{\cF}]$ and $W^N = \bE[ W| \overline{\cF}]$. 

By Lemma \ref{lem:ConvexityRPNorm}, we therefore have that for any subinterval $[s, t]$ 
$$
\| \rQ_n(\rw) \|_{p-var; [s,t]}^p \leq C_1 \bE\Big[ \| \rw^N \|_{p-var; [s, t]}^p \Big| \tilde{\cF}\Big], \quad \| \rw^N \|_{p-var; [s,t]}^p \leq C_1 \bE\Big[ \| \rw \|_{p-var; [s, t]}^p \Big| \overline{\cF}\Big].
$$
In particular, for the intervals $[0, \overline{\tau}_1(\overline{\beta})]$ and $[0, \tau_1(\beta)]$ we have
\begin{align*}
\| \rQ_n(\rw) \|_{p-var; [0, \overline{\tau}_1(\overline{\beta})]}^p \leq C_1 \bE\Big[ \| \rw^N \|_{p-var; [0, \overline{\tau}_1(\overline{\beta})]}^p \Big| \tilde{\cF}\Big] = C_1 \overline{\beta}, 
\\
\| \rw^N \|_{p-var; [0,\tau_1(\beta)]}^p \leq C_1 \bE\Big[ \| \rw \|_{p-var; [0, \tau_{1}(\beta)]}^p \Big| \overline{\cF}\Big] = C_1 \beta. 
\end{align*}
However, by definition we also have $\| \rQ_n(\rw) \|_{p-var; [0,\tilde{\tau}_1(\tilde{\beta})]}^p = \tilde{\beta}$ and $\| \rw^N \|_{p-var; [0,\overline{\tau}_1(\overline{\beta})]}^p=\overline{\beta}$, so we conclude that $0<\tilde{\tau}_1(\tilde{\beta}) \leq \overline{\tau}_1(\overline{\beta}) \leq \tau_1(\beta)$. 

Next, arguing via induction we suppose that $\tilde{\tau}_k(\tilde{\beta}) \leq \overline{\tau}_k(\overline{\beta}) \leq \tau_k(\beta)$. Then
\begin{align*}
\| \rw^N\|_{p-var; [\overline{\tau}_k(\overline{\beta}), \tau_{k+1}(\beta) \vee \overline{\tau}_k(\overline{\beta})]}^p 
\leq& C_1 \bE\Big[ \| \rw \|_{p-var; [\overline{\tau}_k(\overline{\beta}), \tau_{k+1}(\beta) \vee \overline{\tau}_k(\overline{\beta})]}^p \Big| \overline{\cF}\Big] 
\\
\leq& C_1 \bE\Big[ \| \rw \|_{p-var; [\tau_k(\beta), \tau_{k+1}(\beta)]}^p \Big| \overline{\cF}\Big] = C_1 \beta, 
\\
\| \rQ_n(\rw)\|_{p-var; [\tilde{\tau}_k(\tilde{\beta}), \overline{\tau}_{k+1}(\overline{\beta}) \vee \tilde{\tau}_k(\tilde{\beta}) ]}^p 
\leq& C_1 \bE\Big[ \| \rw^N \|_{p-var; [\tilde{\tau}_k(\tilde{\beta}), \overline{\tau}_{k+1}(\overline{\beta}) \vee \tilde{\tau}_k(\tilde{\beta}) ]}^p \Big| \tilde{\cF}\Big] 
\\
\leq& C_1 \bE\Big[ \| \rw^N \|_{p-var; [\overline{\tau}_k(\overline{\beta}), \overline{\tau}_{k+1}(\overline{\beta})]}^p \Big| \tilde{\cF}\Big] = C_1 \overline{\beta}. 
\end{align*}
However, $\| \rw^N\|_{p-var; [\overline{\tau}_k(\overline{\beta}), \overline{\tau}_{k+1}(\overline{\beta}) ]}^p = \overline{\beta}$ and $\| \rQ_n(\rw)\|_{p-var; [\tilde{\tau}_k(\tilde{\beta}), \tilde{\tau}_{k+1}(\tilde{\beta}) ]}^p = \tilde{\beta}$ so we conclude $\tilde{\tau}_{k+1}(\tilde{\beta}) \leq \overline{\tau}_{k+1}(\overline{\beta}) \leq \tau_{k+1}(\beta)$.

Next, suppose that $\rN_{\beta, p, [0,T]}(\rw)=k$ for some $k\in \bN$. Then $T< \tau_{k+1}(\beta)\geq \overline{\tau}_{k+1}(\overline{\beta}) \geq \tilde{\tau}_{k+1}(\tilde{\beta}) $. Thus $k$ is an upper bound for $\overline{\rN}_{\overline{\beta}, p, [0,T]}(\rw^N)$ and $\tilde{\rN}_{\tilde{\beta}, p, [0,T]}(\rQ_n(\rw))$. 
\end{proof}

Finally, we establish the uniform integrability of the quantizations. 

\begin{proposition}
Let $\cL^\rw$ be the law of an enhanced Brownian motion and let $\cL^\rw \circ \rQ_n^{-1}$ be the law of the quantized Brownian motion. 

Then the Moment Generating function of the Accumulated $p$-variation of $\rQ_n(\rw)$ is well defined and bounded by the Moment Generating function of the Accumulated $p$-variation of $\rw$. 
\end{proposition}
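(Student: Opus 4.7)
The plan is to combine the pathwise count bound of Proposition~\ref{pro:AccumulatedControlTrick} with the known integrability from Proposition~\ref{pro:CassLittererLyonsMomGen}, via the elementary two-sided equivalence between the accumulated local $p$-variation $\rM_{\beta,p}$ and the greedy count $\rN_{\beta,p}$.

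First, I would record that for any geometric $p$-rough path $\rx$ and any $\beta>0$ one has the sandwich
$$
\beta\,\rN_{\beta,p}(\rx) \;\leq\; \rM_{\beta,p}(\rx) \;\leq\; \beta\bigl(\rN_{\beta,p}(\rx)+1\bigr).
$$
The lower inequality follows by using the greedy partition $(\tau_i(\beta))$ itself as an admissible test partition in the definition of $\rM_{\beta,p}$: each of the first $\rN_{\beta,p}(\rx)$ intervals contributes exactly $\beta$. The upper inequality is standard in the Cass--Litterer--Lyons framework: by superadditivity of the control $\omega_{\rx,p}$, the greedy partition is coarsest among admissible partitions and hence realises the supremum up to a boundary interval contributing at most one additional $\beta$ (see \cite{cass2013integrability}).

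Second, I would chain these inequalities through Proposition~\ref{pro:AccumulatedControlTrick}. Applying the upper half of the sandwich at the level of $\rQ_n(\rw)$ with parameter $\tilde{\beta}=C_1\overline{\beta}=C_1^2\beta$ (where $C_1$ is the constant of Lemma~\ref{lem:ConvexityRPNorm}, depending only on $d'$ and $p$), then the pathwise count comparison, and finally the lower half of the sandwich applied to $\rw$ with parameter $\beta$, yields
$$
\rM_{\tilde{\beta},p}\bigl(\rQ_n(\rw)\bigr)
\;\leq\; \tilde{\beta}\bigl(\tilde{\rN}_{\tilde{\beta},p}(\rQ_n(\rw))+1\bigr)
\;\leq\; \tilde{\beta}\bigl(\rN_{\beta,p}(\rw)+1\bigr)
\;\leq\; \frac{\tilde{\beta}}{\beta}\,\rM_{\beta,p}(\rw)+\tilde{\beta}
\;=\; C_1^2\,\rM_{\beta,p}(\rw)+\tilde{\beta}.
$$
Exponentiating and taking expectations would then give, for every $\theta\geq 0$,
$$
\bE\bigl[\exp\bigl(\theta\,\rM_{\tilde{\beta},p}(\rQ_n(\rw))\bigr)\bigr]
\;\leq\; e^{\theta\tilde{\beta}}\,\bE\bigl[\exp\bigl(C_1^2\theta\,\rM_{\beta,p}(\rw)\bigr)\bigr]<\infty,
$$
the finiteness of the right-hand side being Proposition~\ref{pro:CassLittererLyonsMomGen}. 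This simultaneously proves that the MGF of the accumulated $p$-variation of $\rQ_n(\rw)$ is well defined and that it is dominated by a rescaled MGF of $\rM_{\beta,p}(\rw)$.

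The main subtlety I expect is not analytical but one of uniformity: the whole argument rests on the constant $C_1$ (and hence the ratio $\tilde{\beta}/\beta$) being independent of the quantization level $n$ and of the truncation level $N$. This is exactly what Lemma~\ref{lem:ConvexityRPNorm} provides, since $C_1$ there depends only on $d'$ and $p$. No routine computations remain once this and Proposition~\ref{pro:AccumulatedControlTrick} are in hand.
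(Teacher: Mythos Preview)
Your proposal is correct and follows essentially the same route as the paper: both arguments combine the sandwich between $\rM_{\beta,p}$ and $\rN_{\beta,p}$ (the paper cites \cite{cass2013integrability}*{Proposition 4.11} with the slightly looser upper bound $\beta(2\rN+1)$), the pathwise count comparison of Proposition~\ref{pro:AccumulatedControlTrick}, and the MGF finiteness of Proposition~\ref{pro:CassLittererLyonsMomGen}. The only cosmetic difference is that the paper works directly at the level of $\rN$ and invokes the equivalence of MGFs for $\rN$ and $\rM$, whereas you chain explicitly through $\rM\to\rN\to\rN\to\rM$; both yield the same conclusion with the same uniform-in-$n$ constant $C_1^2$.
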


\begin{proof}
From \cite{cass2013integrability}*{Proposition 4.11}, we have
$$
\beta \rN_{\beta, [0,T]}( \omega) \leq \rM_{\beta}( \omega) \leq \beta\Big( 2\rN_{\beta, [0,T]}( \omega) +1 \Big),
$$
for any control $\omega$ so the existence of a Moment Generating Function for $\rN$ is equivalent to the existence of a Moment Generating Function for $\rM$. 

Therefore, by Proposition \ref{pro:CassLittererLyonsMomGen}, we have that $\forall \theta, \beta>0$ that
$$
\bE\Big[ \exp\Big( \theta \rN_{\beta, p, [0,T]}(\rw) \Big) \Big] <\infty. 
$$
Applying Proposition \ref{pro:AccumulatedControlTrick}, we get that
$$
\exp\Big( \theta \tilde{\rN}_{(C_1)^2\beta, p, [0,T]}\big(\rQ_n(\rw) \big) \Big) \leq \exp\Big( \theta \rN_{\beta, p, [0,T]}(\rw) \Big). 
$$
We take expectations to conclude. 
\end{proof}

\subsection{Existence, Uniqueness and the Occupation Measure Path}

In this Subsection, we overview some of the key details of \cite{CassLyonsEvolving} to establish the link between particle systems and McKean Vlasov Equations and the existence and uniqueness of the solution law of McKean Vlasov Equations. 

The space of measures $\mu$ over the metric space $(E, d, \cE)$ is not a Banach space. However, a measure can be thought of as a functional over the space of Lipschitz functions on $E$. 

\begin{definition}
For $\mu\in \cP_2(E)$, we define $\gamma^\mu \in \lip_*^1(E)^*$ to be the linear functional such that for any $f\in \lip_*^1(E)$, 
$$
\gamma^\mu[f] = \int_E f d\mu. 
$$
Similarly, for a collection of measures $(\mu_t)_{t\in [0,T]}$, we define the \emph{Occupation measure path} $\gamma_t^\mu$. 
\end{definition}

First introduced in \cite{CassLyonsEvolving}, it is further proved that for the law of an SDE $\mu_t$, the Occupation measure path $\gamma^\mu$ is bounded variation in the Banach norm and so has a canonical Young Signature. The existence and uniqueness of a solution to equation \eqref{eq:ThetasEquation} comes immediately from \cite{friz2010multidimensional}*{Chapter 12}. 

\begin{assumption}
\label{Ass:Lipschitz:b+sigma}
Let $\varsigma>\tfrac{1}{\alpha}, \gamma>1$ and $M=\lfloor \tfrac{1}{\alpha}\rfloor$. Let 
$$
\sigma\in \lip^\varsigma \Big(\bR^d , L( \bR^{d'}, \bR^d)\Big) \quad \mbox{and}\quad b\in \lip^\gamma \Big(\bR^d, L(\lip_*^1(G^{ M}(\bR^d))^*, \bR^d)\Big).
$$
\end{assumption}

\begin{definition}
\label{Def:b.sigma}
Suppose $b$ and $\sigma$ satisfy Assumption \ref{Ass:Lipschitz:b+sigma}. Let $\mu \in \cP_1(G\Omega_\alpha(\bR^d)$, $\xi \in \bR^d$ and $\rw = G\Omega_\alpha(\bR^{d'})$. 

Then the operator $\Theta_{b, \sigma}: \cP_2\Big(G\Omega_\alpha(\bR^d)\Big) \times \bR^d \times G\Omega_\alpha(\bR^{d'}) \to G\Omega_\alpha(\bR^d)$ maps $(\mu, \xi, \rw)$ to the rough path that is the solution of the Rough Differential Equation
\begin{align}
\label{eq:ThetasEquation}
&d\rx_t = b(X_t)d\gamma_t^\mu + \sigma(X_t) d\rw_t, \quad \rx_0=\xi, 
\\
\nonumber
&(\mu, \xi, \rw) \mapsto \Theta(\mu, \xi, \rw)= \rx. 
\end{align}
\end{definition}

\subsubsection{Particle Approximations and Finite Support Laws}

Firstly, we address the existence and uniqueness of a solution to the system of interacting particles that the McKean Vlasov equation models. Let $\fC$ be a codebook for a quantization of the law of the Brownian motion $\cL^W$ as a measure over the Banach space $C^{\alpha, 0}([0,T]; \bR^{d'})$ containing $n$ elements $\fh^j$. Each $\fh^j$ is a RKHS path. Associated to each path is a component of the probability vector $\fp=(\fp_j)$ such that $\fp_j =  \cL^W(\fs_j)$ where $\fs_j\in \fS$ is the element of the partition associated to $\fh^j$. 

By the nature of $\cH$, we know that each path $\fh^j$ is a 1-variation path. Hence one can construct a canonical lift from $\fh^j$ to a rough path $\rh^j$ using Young Integration over the interval $[0,T]$. Thus for $t\in[0,T]$ where $M$ is the largest integer such that $M\alpha<1$ we have
$$
\rh_t^j = S_{M} ( \fh^j) _{0, t}.
$$

We know that $n$ is a finite integer, so we can denote the single path $\fh:=\times_{j=1}^n \fh^j$ which takes values in $\bR^{d'\times n}$. This path is still 1-variation with respect to the canonical norm on $\bR^{d'\times n}$. Therefore, we can similarly construct
$$
\rh_t = S_{M} \Big( \times_{j=1}^n \fh^j \Big) _{0, t}. 
$$

For clarity, we emphasise that this is a rough path taking values in $T^{M}\Big( \bR^{d'\times n}\Big)$ and it is not the same as $\oplus_{j=1}^n \rh^j = \oplus_{j=1}^n S_M(\fh^j)$ which takes values in $T^{M}( \bR^{d'} )^{\oplus n}$. 
\medskip

When working on the tensor algebra $T^M(V)$. We refer to the Alphabet $\cA$, which in the case $V=\bR^{d'}$, is the letters $\{1, ..., d'\}$. However, when working on the tensor algebra $T^M\Big( \bR^{d'\times n}\Big)$, we have the Alphabet $\cA$ containing all the pairs $\big\{ (i, j); i\in \{1, ..., d'\}, j\in \{1, ..., n\}\big\}$. We will also refer to $\cA^j$, the Subalphabet containing all pairs $\big\{ (i, j); i\in\{1, ..., d' \}\big\}$. Key to the following result is that the Subalphabets $\cA^j$ form a partition of the Alphabet $\cA$. 

\begin{lemma}
Let $V$ be a vector space with finite Alphabet $\cA$ and suppose that $\cA$ can be partitioned into a finite number of Subalphabets denoted by $\cA^j$. Define
$$
I^M(V):= \Big\{ h\in T^M(V): \langle h, e_I\rangle =0, \forall I \mbox{ a word with letters in } \cA \mbox{ s.t. } \exists j \mbox{ where $I$ is a word of } \cA^j\Big\}. 
$$
Then $I^M(V)$ is a closed ideal of the Lie Algebra $P^M(V)$. 
\end{lemma}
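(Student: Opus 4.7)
The truncated tensor algebra $T^M(V) = \bigoplus_{k=0}^{M} V^{\otimes k}$ is finite dimensional, and $I^M(V)$ is cut out by the vanishing of a finite family of coordinate linear functionals, namely $h \mapsto \langle h, e_I\rangle$ for $I$ a word of length at most $M$ with letters in some single subalphabet $\cA^j$. Hence $I^M(V)$ is an intersection of finitely many hyperplanes in a finite-dimensional space, and closedness is automatic.

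For the ideal property my plan is to prove the stronger statement that $I^M(V)$ is a two-sided associative ideal of $(T^M(V),\otimes)$; this immediately implies the Lie ideal property for $P^M(V)$ (regardless of whether one reads $P^M(V)$ as the free Lie subalgebra sitting inside $T^M(V)$ or as all of $T^M(V)$ with the commutator bracket). The key combinatorial observation, trivial to verify, is that the mono-subalphabet property is hereditary under subwords: if every letter of $I$ lies in $\cA^j$, then in any ordered factorisation $I = I_1 I_2$ (including the boundary cases $I_1 = \emptyset$ or $I_2 = \emptyset$) the letters of $I_1$ and of $I_2$ also all lie in $\cA^j$.

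Combined with the standard deconcatenation identity
\begin{equation*}
\langle g \otimes h, e_I\rangle \;=\; \sum_{I = I_1 I_2} \langle g, e_{I_1}\rangle\, \langle h, e_{I_2}\rangle,
\end{equation*}
this yields the result: for any mono-subalphabet word $I$ of length $\leq M$, every suffix $I_2$ appearing in the sum is itself mono-subalphabet (the empty word being vacuously so), hence $\langle h, e_{I_2}\rangle = 0$ whenever $h \in I^M(V)$, and consequently $\langle g \otimes h, e_I\rangle = 0$. The symmetric argument using prefixes gives $\langle h \otimes g, e_I\rangle = 0$. Therefore $g \otimes h$ and $h \otimes g$ both belong to $I^M(V)$ for every $g \in T^M(V)$ and every $h \in I^M(V)$, so in particular $[g,h] \in I^M(V)$ for every $g \in P^M(V)$, which is the desired Lie ideal property.

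I do not anticipate any genuine obstacle here: the whole argument reduces to the hereditary subword property plus routine deconcatenation bookkeeping. I also note in passing that the partition hypothesis on $\{\cA^j\}$ is not actually used at the level of this lemma (only the individual subalphabet condition on each $\cA^j$ is needed); disjointness of the $\cA^j$ will enter in the subsequent construction relating the full tensor signature $S_M(\times_j \fh^j)$ to the direct-sum signature $\oplus_j S_M(\fh^j)$ via the quotient by $I^M(V)$.
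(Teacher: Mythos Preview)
Your proof is correct and essentially identical to the paper's own argument: both hinge on the hereditary property that subwords of a mono-subalphabet word are themselves mono-subalphabet, combined with the deconcatenation identity $\langle g\boxtimes h,e_I\rangle=\sum_{I_1I_2=I}\langle g,e_{I_1}\rangle\langle h,e_{I_2}\rangle$ to kill each term. Your framing as a two-sided associative ideal of $(T^M(V),\boxtimes)$ is a mild and harmless strengthening, and your explicit remarks on closedness and on the non-use of the partition hypothesis are correct observations the paper leaves implicit.
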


\begin{proof}
We verify that for $h_1\in I^M(V)$ and $h_2\in P^M(V)$ that $[h_1, h_2]_{\boxtimes} \in I^M(V)$. 

Let $I$ be a word that has the property that $\exists j$ such that $I$ is also a word of $\cA^j$. We denote
$$
\Delta e_I = \sum_{I_1 I_2 = I} e_{I_1} \otimes e_{I_2}
$$
using ``Sweedler'' notation and $I_1 I_2$ as being word concatenation. If $I$ is a word with letters in $\cA^j$ then any subword of $I$ is also a word with letters in $\cA^j$. 

Therefore, for $h_1 \in I^M(V)$ and $h_2\in P^M(V)$
$$
\langle h_1 \boxtimes h_2, e_I\rangle = \langle h_1 \otimes h_2, \Delta e_I \rangle = \sum_{I_1I_2 = I} \langle h_1, e_{I_1}\rangle \cdot \langle h_2, e_{I_2} \rangle = \sum_{I_1I_2 = I} 0 \cdot \langle h_2, e_{I_2} \rangle = 0. 
$$

Similarly $\langle h_2 \boxtimes h_1, e_I\rangle = 0$, so naturally
$$
\Big\langle [ h_1, h_2]_{\boxtimes}, e_I\Big\rangle = 0. 
$$
\end{proof}

Given an Ideal of a Lie Algebra, one can obtain a normal subgroup of the associated Lie Group by taking exponentials. Thus define
\begin{equation}
\label{eq:NormalSubGroup}
K^M(V):= \exp_{\boxtimes} \Big( I^M(V)\Big),
\end{equation}
and consider the quotient group $G^M(V)/ K^M(V)$. There is a canonical isomorphism that maps this quotient group to $\oplus_{j} G^M(V^j)$ where $V^j$ is the vector space with Alphabet $\cA^j$. 

In order to study the system of interacting particle equations for \eqref{eq:MVRDE}, we consider the following drift and diffusion terms. Before that, we introduce a notational convenience in order to distinguish between elements of $\bR^d$ and $\bR^{d\times n}$. Recall that for $i\in \cA$, $e_i$ is the unit vector in the vector space with Alphabet $\cA$. We denote $Y\in \bR^{d\times n}$ and $\langle Y, e_{(\cdot, m)}\rangle \in \bR^d$ to be the canonical projection of $Y$ where $m\in\{1, ..., n\}$. 

\begin{definition}
\label{dfn:BandSigma}
Let $b$ and $\sigma$ satisfy Assumption \ref{Ass:Lipschitz:b+sigma}. Let $\fp = (\fp_k)_{k=1, ... n} \in \fP$. Let $B: \bR^{d\times n} \to \bR^{d\times n}$ and $\Sigma: \bR^{d\times n} \to L\big( \bR^{d'\times n}, \bR^{d\times n}\big)$ be defined by
\begin{align*}
B(X):=& \bigoplus_{m=1}^n \left( b\big( \langle X, e_{(\cdot, m)}\rangle\big) \Big[ \sum_{k=1}^n \fp_k \delta_{\langle X, e_{(\cdot, k)}\rangle} \Big] \right), 
\\
\Sigma(X):=& \diag_{m=1, ..., n}\Big( \sigma\big( \langle X, e_{(\cdot, m)} \rangle \big) \Big). 
\end{align*}
\end{definition}

Let $\tilde{\rw}_t^k \in G\Omega_\alpha(\bR^{d'})$ for each $k\in\{1, ..., n\}$. Let $\tilde{\rw} = \bigoplus_{k=1}^n \tilde{\rw}^k$ be the rough path taking values in the quotient group $G^{M}\big( (\bR^{d'\times n}\big) / I^{M}\big( (\bR^{d'\times n}\big)$. Let $\bX_t$ be the controlled rough that solves the Rough Differential Equation
\begin{equation}
\label{eq:ParticleSystem1}
dX_t = B(X_t) dt + \Sigma(X_t) d\tilde{\rw}_t, \quad X_0 = \xi^{\times n},
\end{equation}
taking values in $\bR^{d\times n}$. By the properties of $b$ and $\sigma$ from Definition \ref{dfn:BandSigma}, we have that $B\in \lip^\gamma\Big( (\bR^{d\times n}\Big)$ and $\Sigma\in \lip^\varsigma\Big( (\bR^{d\times n}, L\big( \bR^{d'\times n}, \bR^{d\times n}\big) \Big)$. Therefore, the existence of a solution to Equation \eqref{eq:ParticleSystem1} is standard. 

Next we introduce a product on the space of vector fields from $U$ into $T^M(V, U)$ designed to simplify the representation of a controlled rough path. 

\begin{definition}
Let $V$ and $U$ be vector spaces. Let $i, j\in \bN$. For differentiable Vector fields $F:U \to L(V^{\times i}, U)$ and $G:U \to L( V^{\times j}, U)$, we define the operation $\star$ such that $F\star G:U \to L(V^{\times (i +j)}, U)$ by
\begin{align}
\nonumber
F\star G(u)\Big[ v_1, ..., v_j, v_{j+1}, ..., v_{j+i}\Big] =& \left(\lim_{\varepsilon \to 0} \frac{F\big( u + \varepsilon G(u)[v_1,..., v_j]\big) - F\big(u\big)}{\varepsilon}\right)[v_{j+1}, ..., v_{j+i}]
\\
\label{eq:TensorStar1}
=& DF(u)\Big( G(u)[v_1, ...,v_j] \Big) [v_{j+1}, ..., v_{j+i}]
\end{align}
\end{definition}

It is a natural observation to make that the controlled rough path $\bX$ that represents the solution to Equation \eqref{eq:ParticleSystem1} is equal to
\begin{equation}
\label{eq:ControlledXFormula}
\bX_s = \Big( X_s, \Sigma(X_s), \Sigma\star\Sigma(X_s), ..., \Sigma^{\star (M-1)}(X_s) \Big),  \quad s\in [0,T]. 
\end{equation}

\begin{lemma}
\label{lem:FstarG}
Let $V$, $U$ be vector spaces with alphabets $\cA$ and $\hat{\cA}$. Suppose that $V=\oplus_{j=1}^n V^j$ and $U=\oplus_{j=1}^n U^j$ so that $\cA$ and $\hat{\cA}$ can be partitioned into a collection of $n$ subalphabets $\cA^j$ and $\hat{\cA}^j$ for $j=1, ..., n$. 

For $k, l\in \bN$, let $F:U \to L(V^{\oplus k}, U)$ and $G:U \to L(V^{\oplus l}, U)$, suppose that there exist $f^j: U^j \to L\Big( (V^j)^{\oplus k}, U^j\Big)$ and $g^j: U^j \to L\Big( (V^j)^{\oplus k}, U^j\Big)$ such that we have the representation
\begin{align*}
F(u) = \diag_{j=1, ..., n} \Big( f^j ( P_{U^j}[u] ) \Big), 
\quad
G(u) = \diag_{j=1, ..., n} \Big( g^j ( P_{U^j}[u] ) \Big). 
\end{align*}
Suppose that $F$ is differentiable. Then $F \star G$ has the representation
\begin{equation}
\label{eq:lem:FstarG}
F \star G(u) = \diag_{j=1, ..., n} \Big( Df( P_{U^j}[u]) \times g( P_{U^j}[u]) \Big).
\end{equation}
\end{lemma}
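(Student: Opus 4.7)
The plan is to unpack the definition of $\star$ from \eqref{eq:TensorStar1} and check that both the action of $G(u)$ and the differentiation of $F(u)$ preserve the block decomposition $U = \oplus_{j=1}^n U^j$, $V = \oplus_{j=1}^n V^j$. The whole content of the lemma is essentially the chain rule, combined with the fact that each block $f^j$ and $g^j$ is only sensitive to the $j$-th projection.

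I would first observe that, directly from the block-diagonal assumption on $G$, for any $(v_1,\dots,v_l) \in V^{\oplus l}$,
\begin{align*}
G(u)[v_1,\dots,v_l] \;=\; \bigoplus_{j=1}^n g^j\big(P_{U^j}[u]\big)\big[P_{V^j}[v_1],\dots,P_{V^j}[v_l]\big] \;\in\; U,
\end{align*}
so in particular $P_{U^j}\!\big(G(u)[v_1,\dots,v_l]\big) = g^j(P_{U^j}[u])[P_{V^j}[v_1],\dots,P_{V^j}[v_l]]$. Second, because the $k$-th block of $F(u)$ depends only on $P_{U^k}[u]$, its Fr\'echet derivative at $u$ in direction $h \in U$ is again block-diagonal:
\begin{align*}
DF(u)(h) \;=\; \diag_{j=1,\dots,n}\Big( Df^j\big(P_{U^j}[u]\big)\big[P_{U^j}[h]\big]\Big)\;\in\; L\big(V^{\oplus k},U\big).
\end{align*}
This is immediate: for fixed $(v_{l+1},\dots,v_{l+k})$, the map $u \mapsto F(u)[v_{l+1},\dots,v_{l+k}]$ is block-diagonal, its $j$-th block is $f^j(P_{U^j}[u])[P_{V^j}[v_{l+1}],\dots,P_{V^j}[v_{l+k}]]$, and projections commute with Fr\'echet differentiation.

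Plugging $h = G(u)[v_1,\dots,v_l]$ into the formula for $DF(u)(h)$ and combining the two observations gives, block by block,
\begin{align*}
P_{U^j}\!\Big(F \star G(u)[v_1,\dots,v_{l+k}]\Big)
&= Df^j\big(P_{U^j}[u]\big)\Big[\,P_{U^j}\big(G(u)[v_1,\dots,v_l]\big)\,\Big]\big[P_{V^j}[v_{l+1}],\dots,P_{V^j}[v_{l+k}]\big] \\
&= Df^j\big(P_{U^j}[u]\big)\Big[\,g^j\big(P_{U^j}[u]\big)[P_{V^j}[v_1],\dots,P_{V^j}[v_l]]\,\Big]\big[P_{V^j}[v_{l+1}],\dots,P_{V^j}[v_{l+k}]\big],
\end{align*}
which by definition \eqref{eq:TensorStar1} is $(f^j \star g^j)(P_{U^j}[u])[P_{V^j}[v_1],\dots,P_{V^j}[v_{l+k}]]$. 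Since every block of $F\star G(u)$ that is off the diagonal vanishes (the perturbation $G(u)[v_1,\dots,v_l]$ has no component in $U^i$ for $i\neq j$ that could couple to the $j$-th block of $F$), we obtain the block-diagonal representation \eqref{eq:lem:FstarG}.

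The proof is really just bookkeeping; the only point that deserves care is the chain rule step, that is, verifying that $D(F)$ evaluated on the output of $G$ does not mix blocks. This reduces to the trivial fact that $P_{U^j}$ is linear and commutes with differentiation, so no genuine obstacle arises. The statement will be used later in the form \eqref{eq:ControlledXFormula} to confirm that the iterated vector fields $\Sigma^{\star m}$ on the particle system inherit the $\diag_m$ structure of $\Sigma$, which is what allows the quotient group interpretation of the driving noise $\tilde{\mathbf{w}}$ to make sense.
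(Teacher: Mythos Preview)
Your argument is correct and follows the same route as the paper. The paper's proof is essentially a two-line sketch (fix a block index $m$ and a word in $\cA^m$, note that everything else vanishes by construction), whereas you have written out the chain-rule bookkeeping in full; but the underlying idea is identical.
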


\begin{proof}
For fixed $m\in\{1, ..., n\}$, let $u_m\in U^m$ and let $I$ be a word of the subalphabet $\cA^m$ such that $I=(I_1, I_2)$ where $|I_1|=k$ and $|I_2|=l$. 

Outside of this scenario, all derivatives will be 0 by construction. 
\end{proof}

We know that by Theorem \ref{thm:LiftofControlledPath}, the controlled rough path $\bX$ can be lifted to a rough path. Our next result, the main result of this Section and similar to one found in \cite{CassLyonsEvolving}, ensures the choice of lift does not affect the final solution to our equations. 

\begin{theorem}
\label{thm:ExtensionTheorem1}
For $j=1, ..., n$, let $\rw^j\in G\Omega_{\alpha}(\bR^{d'})$ and define $\rw = \oplus_{j=1}^n \rw^j$. Let $\tilde{\rw}$ be the extension of $\rw$ to $G\Omega_\alpha\Big( \bR^{d'\times n} \Big)$. Let $B$ and $\Sigma$ be as defined in Definition \ref{dfn:BandSigma} and let $\bX$ be the unique controlled rough path that solves the Rough Differential Equation \eqref{eq:ParticleSystem1}. 

Let $\rx\in G\Omega_\alpha\Big( \bR^{d \times n}\Big)$ be the lift of $\bX$ as constructed in Equation \eqref{eq:thm:LiftofControlledPath}. Then $\rx$ is dependent on $\rw$ but not $\tilde{\rw}$. 
\end{theorem}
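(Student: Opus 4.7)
\textbf{Proof plan for Theorem \ref{thm:ExtensionTheorem1}.} The plan is to exploit the block-diagonal structure of $\Sigma$ (and, by induction, of its iterated $\star$-products) to show that only the ``monochromatic'' part of $\tilde{\rw}$, namely the iterated integrals indexed by words whose letters all lie in a single subalphabet $\cA^j$ of $\cA$, ever enters the construction of the lift $\rx$. Since these monochromatic iterated integrals are precisely the components of the individual $\rw^j$ (and hence determined by $\rw=\oplus_j \rw^j$), any two extensions $\tilde{\rw}^{(1)}$, $\tilde{\rw}^{(2)}$ of $\rw$ produce the same $\rx$. Equivalently, the argument shows that $\rx$ takes values in the ``diagonal'' quotient $G^M(\bR^{d\times n})/K^M(\bR^{d\times n}) \cong \oplus_j G^M(\bR^d)$, with $K^M$ the normal subgroup of \eqref{eq:NormalSubGroup}.

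First I would apply Lemma \ref{lem:FstarG} inductively in $k$ to establish the representation
\[
\Sigma^{\star k}(X)=\diag_{j=1,\ldots,n}\Big(\sigma^{\star k}(\langle X,e_{(\cdot,j)}\rangle)\Big),
\]
where the base case $k=1$ is Definition \ref{dfn:BandSigma} and the inductive step combines the chain rule for the differential of a block-diagonal vector field with the formula \eqref{eq:lem:FstarG}. Together with \eqref{eq:ControlledXFormula}, this shows that every Gubinelli derivative of the controlled rough path $\bX$ solving \eqref{eq:ParticleSystem1} is block-diagonal with respect to the decompositions $\bR^{d\times n}=\oplus_j \bR^d$ and $\bR^{d'\times n}=\oplus_j \bR^{d'}$.

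Next I would inspect the lift formula of Theorem \ref{thm:LiftofControlledPath}: the level-$\ell$ component of $\rx_{s,t}$ is obtained by pairing iterated tensor products of Gubinelli derivatives of $\bX$ with iterated integrals $\langle \tilde{\rw}_{s,t}, e_I\rangle$ indexed by words $I$ in the alphabet of $\bR^{d'\times n}$. By Step~1, the tensor products of the Gubinelli derivatives are supported on multi-indices in which all output coordinates share a common subalphabet index $j$, and their non-zero entries are paired against $e_I$ only when all letters of $I$ also lie in the subalphabet $\cA^j$. Hence contributions indexed by ``mixed'' words vanish identically, irrespective of the values the extension assigns to them. The surviving monochromatic iterated integrals $\langle \tilde{\rw}_{s,t},e_I\rangle$ for $I$ a word in $\cA^j$ coincide exactly with the corresponding entries of $\rw^j=S_M(\rw^j)$, so $\rx$ depends only on $\rw$.

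The main obstacle will be the algebraic bookkeeping in the step that converts the block-diagonality of the Gubinelli derivatives into a statement about which coefficients of $\tilde{\rw}$ contribute: one has to track how the shuffle product structure underlying iterated integration interacts with the ideal $I^M(\bR^{d'\times n})$, and verify that the lift is well-defined on the quotient $G^M/K^M$. Once this algebraic identification is in place, the independence of $\rx$ from the cross-iterated-integral degrees of freedom in $\tilde{\rw}$ follows cleanly, and the Rough Differential Equation theory (Definition \ref{Def:b.sigma}) supplies uniqueness so that the two notions of lift agree.
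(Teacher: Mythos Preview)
Your proposal is correct and follows essentially the same approach as the paper's own proof: both use Lemma \ref{lem:FstarG} inductively to obtain the block-diagonal form of $\Sigma^{\star k}$, insert this into the lift formula of Theorem \ref{thm:LiftofControlledPath}, and observe that the resulting pairings vanish on all ``mixed'' words so that only the monochromatic components of $\tilde{\rw}$ (i.e.\ $\rw$ itself) contribute. The paper additionally remarks in passing that the drift $B$ enters only at level one of the lift (being a bounded-variation time integral), which you handle implicitly by working with the Gubinelli derivatives in \eqref{eq:ControlledXFormula}; you may wish to make this explicit in your write-up.
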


\begin{proof}
See Appendix \ref{sec:AppendixA}. 
\end{proof}

\subsubsection{Existence and Uniqueness}

For this section, we focus on the approach of \cite{CassLyonsEvolving}. Firstly, we introduce some of the notation and operators used in this paper to construct different elements for solving our McKean Vlasov equation. The methods and results of \cite{2018arXiv180205882B} which are further explored in \cite{2019arXiv180205882.2B} and \cite{2019arXiv190700578B} are not used here. 

\begin{definition}
\label{dfn:FixedPointOperator}
Let $b$ and $\sigma$ satisfy Assumption \ref{Ass:Lipschitz:b+sigma}. Let $\cL \in \cP_1\big(G\Omega_\alpha(\bR^{d'})\big)$ and $\mu \in \cP_1(G\Omega_\alpha(\bR^d)\big)$ be probability measures. Then define the map $\Psi_\cL: \cP_2\big(G\Omega_\alpha(\bR^d)\big) \to \cP_2\big(G\Omega_\alpha(\bR^d)\big)$ by
\begin{equation}
\label{eq:dfn:FixedPointOperator}
\Psi_\cL(\mu) = \cL \circ \Theta_{b, \sigma}(\mu, \xi, \cdot)^{-1}. 
\end{equation}
\end{definition}

The fixed point of the operator $\Psi_\cL$ will be the law of the solution to the McKean Vlasov Equation \eqref{eq:MVRDE} where the law of the driving noise $\rw$ is given by $\cL$. 

\begin{assumption}
\label{Assumption:CassLyons1}
Let $\varsigma>\tfrac{1}{\alpha}>1$ and $\gamma>1$. Suppose that 
\begin{enumerate}
\item The measure $\cL^\rw \in \cP_2(G\Omega_\alpha(\bR^{d'}))$ satisfies that for any $ \theta\geq0$
\begin{equation}
\label{eq:Assumption:CassLyons1}
\int_{G\Omega_\alpha(\bR^{d'})} \exp\Big( \theta \rM_{1, [0,T]} (\omega_\rx) \Big) d\cL^\rw (\rx)<\infty, 
\end{equation}
\item The functions $b$ and $\sigma$ satisfy Assumption \ref{Ass:Lipschitz:b+sigma}. 
\end{enumerate}
\end{assumption}

\begin{theorem}[\cite{CassLyonsEvolving}]
Suppose Assumption \ref{Assumption:CassLyons1} holds. Then the operator $\Psi_{\cL^\rw}$ is a contraction operator with fixed point equal to the law of the solution to the McKean Vlasov Equation \eqref{eq:MVRDE}. 
\end{theorem}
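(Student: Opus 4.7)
The plan is to apply the Banach fixed point theorem to the operator $\Psi_{\cL^\rw}$ on a suitable complete metric subspace of $\cP_2\big(G\Omega_\alpha(\bR^d)\big)$. The natural choice of metric is the Wasserstein $1$- or $2$-distance induced by $\rho_{\alpha\textrm{-H\"ol}; [0,t]}$ restricted to the sub-interval $[0,t]$, namely
\[
\bW^{(2)}_t(\mu, \nu) := \inf_{\pi}\left( \int \rho_{\alpha\textrm{-H\"ol};[0,t]}(\rx, \ry)^2\, \pi(d\rx, d\ry) \right)^{1/2},
\]
taken over couplings $\pi$ with marginals $\mu, \nu$. Completeness on the time-restricted pathspace is standard, and the map $\Psi_{\cL^\rw}$ preserves the $\cP_2$-valued target provided one first verifies, via moment estimates for RDEs against the enhanced Gaussian driver, that the output has finite second moment; this is where Assumption~\ref{Assumption:CassLyons1}(1), giving finite exponential moments of $\rM_{1,[0,T]}(\omega_\rx)$, is essential.

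The core estimate is a stability bound for the RDE solution map $\Theta_{b,\sigma}(\cdot, \xi, \rw)$ in its measure input. First I would fix a realisation $\rw$ and compare $\rx^\mu := \Theta_{b,\sigma}(\mu, \xi, \rw)$ and $\rx^\nu := \Theta_{b,\sigma}(\nu, \xi, \rw)$. Since the measure dependency enters as a Banach-valued Young integral against the $1$-variation path $\gamma^\mu$, classical RDE continuity yields an estimate of the form
\[
\rho_{\alpha\textrm{-H\"ol};[0,t]}(\rx^\mu, \rx^\nu) \le C(\rw, b, \sigma)\, \|\gamma^\mu - \gamma^\nu\|_{1\textrm{-var}; [0,t]},
\]
with $C(\rw, b, \sigma)$ growing polynomially in $\exp\big( \theta\, \rM_{1,[0,T]}(\omega_\rx)\big)$ for some $\theta$; this comes from iterating the local Lipschitz estimate on each of the $\rN_{1,p,[0,T]}(\rw)$ intervals of the greedy partition used in Cass--Litterer--Lyons integrability. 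Combined with the duality bound $\|\gamma^\mu - \gamma^\nu\|_{1\textrm{-var};[0,t]} \le \int_0^t \bW^{(1)}(\mu_s, \nu_s)\, ds$ and a Kantorovich-duality argument on the pathspace coupling, one obtains
\[
\bW^{(2)}_t\big(\Psi_{\cL^\rw}(\mu), \Psi_{\cL^\rw}(\nu)\big)^2 \le K \int_0^t \bW^{(2)}_s(\mu, \nu)^2\, ds,
\]
where $K = \bE\big[C(\rw,b,\sigma)^2\big]$ is finite by the exponential integrability~\eqref{eq:Assumption:CassLyons1}.

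With such an inequality in hand, the contraction property follows from a Picard-iteration argument: iterating $n$ times gives
\[
\bW^{(2)}_T\big(\Psi_{\cL^\rw}^{(n)}(\mu), \Psi_{\cL^\rw}^{(n)}(\nu)\big)^2 \le \frac{(KT)^n}{n!}\, \bW^{(2)}_T(\mu, \nu)^2,
\]
so for $n$ large enough $\Psi_{\cL^\rw}^{(n)}$ is a strict contraction on the complete metric space $\big(\cP_2(G\Omega_\alpha(\bR^d)), \bW^{(2)}_T\big)$, and Banach's theorem supplies a unique fixed point $\mu^* = \Psi_{\cL^\rw}(\mu^*)$. By the definition~\eqref{eq:dfn:FixedPointOperator} of $\Psi_{\cL^\rw}$, this fixed point is precisely the law on pathspace of $\rx := \Theta_{b,\sigma}(\mu^*, \xi, \rw)$, which solves the McKean--Vlasov RDE~\eqref{eq:MVRDE}.

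The principal obstacle is making the stability constant $C(\rw,b,\sigma)$ above both \emph{explicit in} $\rM_{1,[0,T]}(\omega_\rx)$ and integrable. The natural RDE estimates on each interval of the greedy partition produce a multiplicative constant per subinterval, yielding the exponential of the count $\rN_{1,p,[0,T]}$; Assumption~\ref{Assumption:CassLyons1}(1) was tailored precisely so that these exponentials lie in every $L^r$. A secondary subtlety is that the drift acts on measures through $\lip_*^1\!\big(G^M(\bR^d)\big)^*$ rather than the usual Wasserstein duality, so the passage from $\|\gamma^\mu - \gamma^\nu\|_{1\textrm{-var}}$ back to a Wasserstein distance on pathspace must go through a Kantorovich--Rubinstein argument adapted to the pathspace topology; this is the step carried out carefully in \cite{CassLyonsEvolving}.
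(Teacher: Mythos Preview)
The paper does not supply its own proof of this theorem; it is cited directly from \cite{CassLyonsEvolving} and simply followed by ``Hence there exists a unique solution to the Rough Differential Equation \eqref{eq:MVRDE}.'' Your proposal correctly reconstructs the argument of that reference: the stability estimate you write down is precisely the one recorded later in the paper as Proposition~\ref{pro:OccMeasPathLipschitz} (quoting \cite{CassLyonsEvolving}*{Lemma 4.3}), namely
\[
\rho_{p-\omega;[0,T]}\big(\Theta_{b,\sigma}(\mu,\xi,\rw),\Theta_{b,\sigma}(\nu,\xi,\rw)\big)\le C\,\rho_{1,\omega;[0,T]}(\gamma^\mu,\gamma^\nu)\exp\big(\rM_{\beta,[0,T]}(\omega)\big),
\]
and the subsequent integration-in-time plus Picard iteration is the route taken in \cite{CassLyonsEvolving}*{Theorem 4.9}. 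One minor point of caution: in the cited work the contraction is obtained not on all of $\cP_2$ but on a ball (or after iterating), and the passage from $\rho_{1,\omega}(\gamma^\mu,\gamma^\nu)$ to a time-integrated Wasserstein distance uses the $\lip_*^1(G^M(\bR^d))^*$-duality rather than the standard Kantorovich--Rubinstein on $\bR^d$, exactly as you flag in your final paragraph; but your outline handles both points correctly.
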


Hence there exists a unique solution to the Rough Differential Equation \eqref{eq:MVRDE}. 

\subsection{Propagation of Chaos and Quantization}

The final result of \cite{CassLyonsEvolving} is to prove continuity of the map from the law of the driving noise to the law of the McKean Vlasov Equation. This is framed within the narrative of ``Propagation of Chaos''. We exploit this result to show that the law of the associated particle systems of our quantizations converge to the true law of the McKean Vlasov Equation. 

\begin{definition}
Let $K:(0, \infty) \to (0, \infty)$ be a monotone increasing real valued function. Define the collection of measures
\begin{align*}
\cP_K\Big( G\Omega_{\alpha}(\bR^{d'})\Big):=\Bigg\{ \cL\in \cP_1\Big( G\Omega_\alpha(\bR^{d'})\Big):& \forall \theta\in(0,\infty)
\\
&\int_{G\Omega_\alpha(\bR^{d'})} \exp\Big( \theta \rM_{1, [0,T]} (\omega_\rx) \Big) d\cL(\rx)\leq K(\theta) \Bigg\}
\end{align*}
paired with the topology of weak convergence generated by the rough path H\"older norm. 
\end{definition}

A natural way to think about this collection of measures is the law of all rough paths such that the moment generating function of the Accumulated $\tfrac{1}{\alpha}$-variation is dominated by the function $K$. 

\begin{proposition}
\label{pro:ContinuityXi}
Suppose Assumption \ref{Assumption:CassLyons1} is satisfied. Suppose additionally that there exists a monotone increasing function $K:(0,\infty)\to (0, \infty)$ that dominates Equation \eqref{eq:Assumption:CassLyons1}. Define the operator $\Xi: \cP_K\big( G\Omega_{\alpha}(\bR^{d'})\big) \to \cP_2\big( G\Omega_\alpha(\bR^d)\big)$ by
\begin{equation}
\label{dfn:pro:ContinuityXi}
\Xi\big[ \cL^\rw\big] = \cL^\rx, 
\end{equation}
where $\cL^\rx$ is the unique measure that is a fixed point of Equation \eqref{eq:dfn:FixedPointOperator} so that $\Psi_{\cL^\rw} (\cL^\rx) = \cL^\rx$ . 

Then the operator is well defined and for $\cL^{\rw_1}, \cL^{\rw_2} \in \cP_K(G\Omega_\alpha(\bR^{d'}))$ we have
\begin{equation}
\label{eq:pro:ContinuityXi}
\bW^{(2)}_{\rho_{\alpha-\textrm{H\"ol}; [0,T]} } \Big( \Xi[ \cL^{\rw_1}], \Xi[ \cL^{\rw_2}] \Big) \leq C \bW^{(2)}_{\rho_{\alpha-\textrm{H\"ol}; [0,T]}}\Big( \cL^{\rw_1}, \cL^{\rw_2} \Big)
\end{equation}
with a constant $C=C(\alpha, K, T, d, d')$. 
\end{proposition}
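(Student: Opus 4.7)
\medskip\noindent\textbf{Proof proposal.} The plan is to use a coupling argument together with the local Lipschitz continuity of the It\^o-Lyons map on the greedy partition induced by the accumulated $\tfrac{1}{\alpha}$-variation, combined with the integrability uniformity encoded in $\cP_K$.

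First, I would verify well-definedness of $\Xi$. For any $\cL^\rw \in \cP_K(G\Omega_\alpha(\bR^{d'}))$, the definition of $\cP_K$ implies that $\cL^\rw$ satisfies the exponential integrability condition \eqref{eq:Assumption:CassLyons1} of Assumption \ref{Assumption:CassLyons1}. Hence the contraction theorem quoted just above the statement guarantees that $\Psi_{\cL^\rw}$ admits a unique fixed point, which I take as $\Xi[\cL^\rw]$.

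For the Lipschitz estimate, I would fix $\cL^{\rw_1}, \cL^{\rw_2} \in \cP_K$ and choose an optimal coupling realizing $\bW^{(2)}_{\rho_{\alpha-\textrm{H\"ol}}}(\cL^{\rw_1},\cL^{\rw_2})$. Realising this coupling on a single probability space gives a pair $(\rw_1,\rw_2)$ with marginal laws $\cL^{\rw_i}$. Setting $\rx_i := \Theta_{b,\sigma}(\Xi[\cL^{\rw_i}],\xi,\rw_i)$, the pair $(\rx_1,\rx_2)$ couples $\Xi[\cL^{\rw_1}]$ and $\Xi[\cL^{\rw_2}]$, and thus
\begin{equation*}
\bW^{(2)}_{\rho_{\alpha-\textrm{H\"ol}}}\big(\Xi[\cL^{\rw_1}],\Xi[\cL^{\rw_2}]\big)^{2} \le \bE\big[\rho_{\alpha-\textrm{H\"ol};[0,T]}(\rx_1,\rx_2)^2\big].
\end{equation*}
The core of the argument reduces to bounding the right-hand side by $\bE[\rho_{\alpha-\textrm{H\"ol}}(\rw_1,\rw_2)^2]$ up to a constant. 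To do this I would split $[0,T]$ along the greedy sequence $\tau_k(1)$ attached to either $\rw_1$ or $\rw_2$: by Proposition \ref{pro:CassLittererLyonsMomGen} the number of intervals $\rN_{1,p,[0,T]}(\rw_i)$ has MGF controlled by $K$. On each subinterval the rough path norm of the driver is bounded by $1$, so the standard local Lipschitz estimate for RDEs with Young drift (see \cite{friz2010multidimensional}*{Ch.~10,12}) yields, for each $k$,
\begin{equation*}
\rho_{\alpha-\textrm{H\"ol};[\tau_k,\tau_{k+1}]}(\rx_1,\rx_2)^2 \le C\Big(\rho_{\alpha-\textrm{H\"ol};[\tau_k,\tau_{k+1}]}(\rw_1,\rw_2)^2 + \|\gamma^{\cL^{\rx_1}}-\gamma^{\cL^{\rx_2}}\|_{1-\mathrm{var};[\tau_k,\tau_{k+1}]}^2 + \rho_{\alpha}(\rx_1,\rx_2)_{\tau_k}^2\Big),
\end{equation*}
with $C$ depending only on $\|b\|_{\lip^\gamma}$, $\|\sigma\|_{\lip^\varsigma}$, $\alpha$, $d$, $d'$. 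Because $\gamma^\mu$ is bounded variation with total variation controlled by the Wasserstein distance on path-space, $\|\gamma^{\cL^{\rx_1}}-\gamma^{\cL^{\rx_2}}\|_{1-\mathrm{var};[0,T]} \le C\,\bW^{(1)}_{\rho_{\alpha-\textrm{H\"ol}}}(\cL^{\rx_1},\cL^{\rx_2})$.

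Summing over the $\rN_{1,p}(\rw_i)$ subintervals and iterating in the time-sewing sense, I would obtain an estimate of the form
\begin{equation*}
\rho_{\alpha-\textrm{H\"ol};[0,T]}(\rx_1,\rx_2)^2 \le \Phi\big(\rw_1,\rw_2\big)\,\Big(\rho_{\alpha-\textrm{H\"ol}}(\rw_1,\rw_2)^2 + \bW^{(2)}_{\rho_{\alpha-\textrm{H\"ol}}}(\cL^{\rx_1},\cL^{\rx_2})^2\Big),
\end{equation*}
where $\Phi$ is a polynomial in $\rN_{1,p,[0,T]}(\rw_i)$ and hence has finite expectation uniformly in $\cL^{\rw_i}\in\cP_K$ by Proposition \ref{pro:CassLittererLyonsMomGen}. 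Taking expectations, applying Cauchy--Schwarz to split off $\Phi$, and using the MGF bound from $K$ on $\bE[\Phi^2]$ gives
\begin{equation*}
\bE\big[\rho_{\alpha-\textrm{H\"ol}}(\rx_1,\rx_2)^2\big] \le C_1\,\bE\big[\rho_{\alpha-\textrm{H\"ol}}(\rw_1,\rw_2)^2\big] + C_2\,\bW^{(2)}_{\rho_{\alpha-\textrm{H\"ol}}}(\cL^{\rx_1},\cL^{\rx_2})^2,
\end{equation*}
with $C_i=C_i(\alpha,K,T,d,d',b,\sigma)$. Since $\bW^{(2)}(\cL^{\rx_1},\cL^{\rx_2})^2$ on the right is itself bounded by the left-hand side, the conclusion follows provided the feedback constant can be made harmless.

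The main obstacle is exactly this last point: the self-referential dependence of $\rx_i$ on its own law through $\gamma^{\cL^{\rx_i}}$. I would handle it by invoking the contraction already established for $\Psi_{\cL^\rw}$: on a small enough subinterval $[0,\eta]$ the constant $C_2$ is strictly less than $1$ and can be absorbed to the left, giving the Lipschitz estimate on $[0,\eta]$. Then, using a standard deterministic rough path concatenation argument (or equivalently iterating along a deterministic partition of $[0,T]$ of mesh $\eta$, the number of pieces depending only on $T$ and on the global data $K,\alpha,d,d'$), I would patch these local estimates together into the global bound \eqref{eq:pro:ContinuityXi}. The exponential integrability provided by $\cP_K$ is essential throughout, as it is what keeps the expectations of the greedy-sequence constants finite and independent of the particular measure $\cL^{\rw_i}$.
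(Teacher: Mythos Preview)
The paper's own proof is a single line: ``Same as proof \cite{CassLyonsEvolving}*{Lemma 4.11}.'' Your coupling argument, iteration along the greedy sequence, and appeal to the uniform exponential integrability encoded in $\cP_K$ is exactly the strategy behind that cited lemma, so your overall plan is the right one.

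There is, however, a real gap in the step where you pass from the pathwise estimate to the $\bW^{(2)}$ bound. First, the iteration over greedy subintervals produces a factor of the form $C^{\rN_{1,p,[0,T]}(\rw_i)}$, i.e.\ \emph{exponential} in $\rN$, not polynomial; compare the $\exp(\rM_{\beta,[0,T]}(\omega))$ factor in the proof of Proposition~\ref{pro:OccMeasPathLipschitz}. This is still integrable in $\cP_K$, so it is not fatal, but it matters for the next step. Second, applying Cauchy--Schwarz to $\bE[\Phi\cdot\rho_{\alpha}(\rw_1,\rw_2)^2]$ gives $\bE[\Phi^2]^{1/2}\,\bE[\rho_{\alpha}(\rw_1,\rw_2)^4]^{1/2}$, which bounds the right-hand side by $\bW^{(4)}$ rather than $\bW^{(2)}$; because $\Phi$ depends on the random drivers and is correlated with their distance, you cannot simply factor it out. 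The inequality you wrote after ``Taking expectations'' does not follow from the preceding line.

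For the paper's purposes this does no damage: Theorem~\ref{thm:ActualRateCon4QuantizationRough} supplies the same rate in every $\bW^{(r)}$, so the application in Proposition~\ref{pro:IntParticleLaw-TrueLaw} goes through with $r=4$ on the right. But if you want the Lipschitz estimate literally between $\bW^{(2)}$ and $\bW^{(2)}$ as stated, you must either decouple $\Phi$ from $\rho_{\alpha}(\rw_1,\rw_2)$ by a more careful argument, or run the whole estimate at a higher moment and accept a weaker conclusion.
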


Previously, this result was used to show that the empirical measure obtained by sampling paths of a Brownian motion could be used to obtain a particle system that would converge as the number of particles increased to the solution of a McKean Vlasov Equation. In the remarkable work \cite{deuschel2017enhanced}, the authors study the rate of convergence of these empirical measures to the true law in probability. 

\begin{proof}
Same as proof \cite{CassLyonsEvolving}*{Lemma 4.11}. 
\end{proof}

\subsection{Continuity with respect to the Occupation Measure path}

In \cite{CassLyonsEvolving}*{Theorem 4.9}, the goal was to establish the existence of a contraction operator whose fixed point would be the law of the McKean Vlasov Equation. In fact, computing the specific contraction operator is not simple. Here, we provide a more tangible operator that is (Lipschitz) continuous but not a contraction. 

\begin{proposition}
\label{pro:OccMeasPathLipschitz}
Let $b$ and $\sigma$ satisfy Assumption \ref{Ass:Lipschitz:b+sigma} and let $\Theta_{b, \sigma}$ be the operator from Definition \ref{Def:b.sigma}. 

Then $\Theta_{b, \sigma}$ is Locally Lipschitz continuous in the measure component, that is $\forall \mu, \nu\in \cP_1( G\Omega_\alpha(\bR^d))$ such that 
$$
\int_{G\Omega_\alpha(\bR^d)} \rho_{\alpha-\textrm{H\"ol}}( \rx, \rId) d\mu(\rx), \int_{G\Omega_\alpha(\bR^d)} \rho_{\alpha-\textrm{H\"ol}}( \rx, \rId) d\nu(\rx)<C
$$
and $\forall \xi \in \bR^d$ and $\forall \rw \in G\Omega_\alpha(\bR^{d'})$ such that $\rho_{\alpha-\textrm{H\"ol}; [0,T]}(\rw, 1)<C$, $\exists L_C>0$ such that
\begin{equation}
\label{eq:pro:OccMeasPathLipschitz}
\rho_{\alpha-\textrm{H\"ol}} \Big( \Theta_{b, \sigma}(\mu, \xi, \rw), \Theta_{b, \sigma}(\nu, \xi, \rw)\Big) \leq L_C \bW^{(2)}_{\rho_{\alpha-\textrm{H\"ol}; [0,T]} } \Big( \mu, \nu\Big)
\end{equation}
\end{proposition}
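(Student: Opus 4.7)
The plan is to reduce the statement to a continuity estimate for the It\^o--Lyons solution map of the RDE \eqref{eq:ThetasEquation}, viewed as driven by the joint rough path consisting of $\rw$ together with the (bounded variation) occupation measure path $\gamma^\mu$. The key intermediate step is a Lipschitz estimate relating the Wasserstein distance on rough-path-valued measures to the $1$-variation distance between the corresponding occupation measure paths as paths in the Banach space $\lip_*^1(G^M(\bR^d))^*$.

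First I would bound the occupation measure path difference. For any coupling $\pi$ of $\mu$ and $\nu$ and any test function $f \in \lip_*^1(G^M(\bR^d))$ with $\|f\|_{\lip_*^1}\leq 1$, Kantorovich--Rubinstein duality and the Lipschitz property of $f$ with respect to the Carnot--Carath\'eodory distance $d_{cc}$ on $G^M(\bR^d)$ yield
\begin{equation*}
\big| \gamma^\mu_{s,t}[f] - \gamma^\nu_{s,t}[f] \big| = \Big| \int_s^t \int \big( f(\rx_u) - f(\ry_u)\big) d\pi(\rx,\ry) du \Big| \leq \int_s^t \int d_{cc}(\rx_u, \ry_u) d\pi\, du.
\end{equation*}
Since both rough paths start at the group identity, a standard rough-path estimate gives $d_{cc}(\rx_u, \ry_u) \leq C_M u^\alpha \rho_{\alpha-\textrm{H\"ol}; [0,T]}(\rx, \ry)$. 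Taking the supremum over $f$ and the infimum over $\pi$,
\begin{equation*}
\sup_{0 \leq s < t \leq T} \frac{|\gamma^\mu_{s,t} - \gamma^\nu_{s,t}|_{\lip_*^1(G^M(\bR^d))^*}}{t-s} \leq C_M T^\alpha \bW^{(1)}_{\rho_{\alpha-\textrm{H\"ol}; [0,T]}}(\mu, \nu) \leq C_M T^\alpha \bW^{(2)}_{\rho_{\alpha-\textrm{H\"ol}; [0,T]}}(\mu, \nu),
\end{equation*}
which in particular controls $\|\gamma^\mu - \gamma^\nu\|_{1-var; [0,T]}$.

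For the second step, since $\gamma^\mu$ and $\gamma^\nu$ are bounded variation they admit canonical Young lifts, and combining with $\rw$ via Young--rough integration (all cross iterated integrals exist as Young integrals, because one component is BV) produces joint rough paths $\hat{\rw}^\mu, \hat{\rw}^\nu$ in $G\Omega_\alpha$ of the appropriate product space. The RDE \eqref{eq:ThetasEquation} then recasts as a standard RDE with a common $\lip^{\gamma \wedge \varsigma}$ vector field driven by these joint rough paths. Local Lipschitz continuity of the It\^o--Lyons map (cf.\ \cite{friz2010multidimensional}*{Theorem 10.26}) on the ball $\{\rho_{\alpha-\textrm{H\"ol}}(\rw, \rId) < C\} \cap \{\|\gamma\|_{1-var; [0,T]} < C'\}$ produces $L_C>0$ with
\begin{equation*}
\rho_{\alpha-\textrm{H\"ol}}\big(\Theta_{b,\sigma}(\mu,\xi,\rw), \Theta_{b,\sigma}(\nu,\xi,\rw)\big) \leq L_C \cdot \rho_{\alpha-\textrm{H\"ol}}(\hat{\rw}^\mu, \hat{\rw}^\nu).
\end{equation*}
Because the $\rw$-components agree, the right-hand side collapses to a quantity controlled by $\|\gamma^\mu - \gamma^\nu\|_{1-var; [0,T]}$ (times a constant depending only on $C$), and chaining with the first step yields \eqref{eq:pro:OccMeasPathLipschitz}.

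The main obstacle will be Step 2: one must carefully verify that the Young--rough cross iterated integrals between the BV path $\gamma^\mu$ and the rough path $\rw$ depend Lipschitz-continuously on $\gamma^\mu$ in the $1$-variation norm, and that the constant $L_C$ depends only on $C$, $d$, $d'$ and the Lipschitz constants of $b$ and $\sigma$. This is a routine but slightly technical tracking of constants through the Young--Lyons estimates. An alternative avoiding the joint lift is to treat the drift difference as a perturbation and run a Gr\"onwall-type argument directly at the level of controlled rough paths, exploiting that $\sigma(X_\cdot) d\rw$ defines a common contraction while the perturbation $b(X_\cdot) d(\gamma^\mu - \gamma^\nu)$ is bounded in $1$-variation by Step 1.
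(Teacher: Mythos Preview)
Your proposal is correct and the overall architecture matches the paper's: first control $\gamma^\mu-\gamma^\nu$ in $1$-variation by the Wasserstein distance, then feed this into a stability estimate for the RDE. The difference lies in how the second step is executed. You construct joint rough paths $\hat{\rw}^\mu,\hat{\rw}^\nu$ by lifting $(\rw,\gamma^\mu)$ and $(\rw,\gamma^\nu)$ via Young--rough cross integrals and then invoke the local Lipschitz continuity of the It\^o--Lyons map from \cite{friz2010multidimensional}. The paper instead quotes \cite{CassLyonsEvolving}*{Lemma 4.3} directly, which gives an estimate of the form
\[
\rho_{p-\omega;[0,T]}\big(\Theta_{b,\sigma}(\mu,\xi,\rw),\Theta_{b,\sigma}(\nu,\xi,\rw)\big)\leq C\,\rho_{1,\omega;[0,T]}(\gamma^\mu,\gamma^\nu)\,\exp\big(\rM_{\beta,[0,T]}(\omega)\big)
\]
for the control $\omega(s,t)=\|\rw\|_{p\text{-var};[s,t]}^p+\|\gamma^\mu\|_{1\text{-var};[s,t]}+\|\gamma^\nu\|_{1\text{-var};[s,t]}$, and then uses the moment bounds on $\mu,\nu$ and the bound on $\rw$ to get $\omega(s,t)\lesssim|t-s|$, which makes $\rho_{p-\omega}$ equivalent to $\rho_{\alpha\text{-H\"ol}}$. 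The paper's route is shorter because it outsources the RDE stability to an existing lemma, and it produces an explicit exponential factor in the accumulated local variation $\rM_\beta$; this explicit form is what gets reused in the proof of Theorem~\ref{thm:JointContinuityTheta} to establish uniform continuity. Your route is more self-contained and avoids the $\omega$-modulus bookkeeping, at the cost of having to track the Young cross-integral estimates by hand. The Gr\"onwall-type alternative you mention at the end is in fact closer in spirit to what the Cass--Lyons lemma encapsulates.
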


\begin{proof}
Let $p=\tfrac{1}{\alpha}$ and $M=\lfloor \tfrac{1}{\alpha}\rfloor$. Denote the control $\omega(s, t) = \| \rw\|_{p-var; [s, t]}^{p} + \| \gamma^\mu\|_{1-var; [s, t]}+\| \gamma^\nu\|_{1-var; [s, t]}$. Then \cite{CassLyonsEvolving}*{Lemma 4.3} gives
$$
\rho_{p-\omega; [0,T]}\Big( \Theta_{b, \sigma}(\mu, \xi, \rw), \Theta_{b, \sigma}(\nu, \xi, \rw) \Big) \leq C \rho_{1, \omega; [0,T]}(\gamma^\mu, \gamma^\nu) \exp\Big( \rM_{\beta, [0,T]}( \omega ) \Big). 
$$
Indeed, we also have
$$
\| \gamma^\mu_{s, t} - \gamma^\nu_{s, t}\|_{\lip^1( G^{M}(\bR^d))^*} \leq |t-s| \bW^{(2)}_{\rho_{\alpha-\textrm{H\"ol}; [0,t]}}\Big( \mu, \nu\Big). 
$$
By assumption, the Wasserstein distance must be finite across the interval $[0,T]$, so we know the control can be dominated by $\omega(s, t) \lesssim |t-s|$. Thus $\rho_{p-\omega}$ will be equivalent to $\rho_{\alpha-\textrm{H\"ol}}$ and we get
$$
\rho_{\alpha-\textrm{H\"ol}; [0,T]}\Big( \Theta_{b, \sigma}(\mu, \xi, \rw), \Theta_{b, \sigma}(\nu, \xi, \rw) \Big) \leq C \bW^{(2)}_{\rho_{\alpha-\textrm{H\"ol}; [0,t]}}\Big( \mu, \nu\Big) \cdot \exp\Big( \rM_{\beta, [0,T]}(\omega)\Big)
$$
Next, we note that while the constant $C$ is uniform over the choice of $\mu$ and $\nu$, the control $\omega$ is dependent on them and so the Accumulated $\beta$-local p-variation is also dependent on their second moments. 
\end{proof}

With only Proposition \ref{pro:OccMeasPathLipschitz}, one can establish the distance between two paths driven by different occupation measure paths. Next we prove uniform continuity. 

\begin{theorem}
\label{thm:JointContinuityTheta}
Let $b$ and $\sigma$ satisfy Assumption \ref{Ass:Lipschitz:b+sigma} and let $\Theta_{b, \sigma}$ be the operator defined in Definition \ref{Def:b.sigma}. Then the operator $\Theta_{b, \sigma}$ is jointly continuous over $\cP_2\big( G\Omega_\alpha(\bR^d) \big) \times \bR^d \times G\Omega_\alpha(\bR^{d'})$. In particular, 
\begin{align*}
\lim_{(\mu_k, \xi_k, \rw_k) \to (\mu, \xi, \rw)} \Theta_{b, \sigma} ( \mu_k, \xi_k, \rw_k) &= \lim_{\mu_k \to \mu} \lim_{\xi_k \to \xi} \lim_{\rw_k \to \rw} \Theta_{b, \sigma} ( \mu_k, \xi_k, \rw_k) 
\\
&= \Theta_{b, \sigma} ( \mu, \xi, \rw)
\end{align*}
\end{theorem}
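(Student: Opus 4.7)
The strategy is to split the joint continuity into a measure part, handled by Proposition \ref{pro:OccMeasPathLipschitz}, and an initial-condition-plus-driver part, handled by the standard continuity of the It\^o--Lyons map for RDEs with a bounded-variation drift and a rough driver (cf.\ \cite{friz2010multidimensional}*{Chapter 12}). Since once $\mu$ is fixed the path $\gamma^\mu$ is a deterministic bounded-variation Banach-valued path, the map $(\xi,\rw)\mapsto \Theta_{b,\sigma}(\mu,\xi,\rw)$ is the It\^o--Lyons map associated with the RDE driven by the pair $(\gamma^\mu,\rw)$, which under Assumption \ref{Ass:Lipschitz:b+sigma} is locally Lipschitz continuous jointly in $(\xi,\rw)$.

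Concretely, I would fix a convergent sequence $(\mu_k,\xi_k,\rw_k)\to (\mu,\xi,\rw)$ in the product topology and apply the triangle inequality
\begin{align*}
\rho_{\alpha\textrm{-H\"ol}}\big(&\Theta_{b,\sigma}(\mu_k,\xi_k,\rw_k),\,\Theta_{b,\sigma}(\mu,\xi,\rw)\big) \\
\leq\ & \rho_{\alpha\textrm{-H\"ol}}\big(\Theta_{b,\sigma}(\mu_k,\xi_k,\rw_k),\,\Theta_{b,\sigma}(\mu,\xi_k,\rw_k)\big) \\
&+ \rho_{\alpha\textrm{-H\"ol}}\big(\Theta_{b,\sigma}(\mu,\xi_k,\rw_k),\,\Theta_{b,\sigma}(\mu,\xi,\rw)\big).
\end{align*}
Proposition \ref{pro:OccMeasPathLipschitz} bounds the first term by $L_{C_k}\,\bW^{(2)}_{\rho_{\alpha\textrm{-H\"ol};[0,T]}}(\mu_k,\mu)$, which tends to zero as soon as the constants $L_{C_k}$ are uniformly bounded in $k$. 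The second term tends to zero by continuity of the classical It\^o--Lyons map in $(\xi,\rw)$ with $\gamma^\mu$ fixed. Joint continuity then yields the iterated-limit identity trivially.

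The main obstacle is the uniformity of the local Lipschitz constant $L_{C_k}$ in the first term. Tracing the proof of Proposition \ref{pro:OccMeasPathLipschitz}, this constant grows like $\exp(\rM_{\beta,[0,T]}(\omega_k))$ where $\omega_k(s,t)=\|\rw_k\|_{p\textrm{-var};[s,t]}^p+\|\gamma^{\mu_k}\|_{1\textrm{-var};[s,t]}+\|\gamma^\mu\|_{1\textrm{-var};[s,t]}$. I would argue that $\bW^{(2)}$-convergence of $\mu_k\to\mu$ yields uniform boundedness of second moments and hence of $\|\gamma^{\mu_k}\|_{1\textrm{-var};[0,T]}$, while convergence of $\rw_k$ in $\rho_{\alpha\textrm{-H\"ol}}$ gives uniform boundedness of $\|\rw_k\|_{p\textrm{-var};[0,T]}^p$. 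By superadditivity of the controls and the elementary inequality $\rM_{\beta}(\omega)\leq\omega(0,T)$ for any control $\omega$, one obtains $\sup_k \rM_{\beta,[0,T]}(\omega_k)<\infty$, and hence $\sup_k L_{C_k}<\infty$, closing the argument. The delicate bookkeeping of which constants actually depend on $k$ through the second moments of $\mu_k$ is the only subtle step; everything else is a direct application of the two continuity results already established.
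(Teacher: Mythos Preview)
Your proposal is correct and follows essentially the same route as the paper. The paper also reduces everything to the local Lipschitz estimate underlying Proposition \ref{pro:OccMeasPathLipschitz} (with the factor $\exp(\rM_{\beta,[0,T]}(\omega))$) and then observes that convergence of $(\mu_k,\xi_k,\rw_k)$ eventually bounds $\|\gamma^{\mu_k}\|_{1\textrm{-var}}$ and $\|\rw_k\|_{p\textrm{-var}}$, hence the exponential factor, uniformly in $k$; the only cosmetic difference is that the paper phrases this through a three-way split and the Moore--Osgood criterion, whereas you use a two-term triangle inequality and handle the $(\xi,\rw)$ part in one stroke via the It\^o--Lyons map.
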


\begin{proof}
Let  $\xi, \chi \in \bR^d$ and $p=\tfrac{1}{\alpha}$. For $\rw_1, \rw_2 \in G\Omega_{\alpha}(\bR^{d'})$ and $\mu, \nu \in\cP_2\Big( G\Omega_\alpha(\bR^d) \Big) $, define the control 
$$
\omega(s, t) = \| \rw_1\|_{p-var; [s, t]}^p + \| \rw_2\|_{p-var; [s, t]}^p + \|\gamma^\mu\|_{1-var; [s, t]} + \| \gamma^\nu\|_{1-var; [s, t]}.
$$
We have
\begin{align*}
\rho_{\alpha-\textrm{H\"ol}; [0,T]}&\Big( \Theta_{b, \sigma}( \mu, \xi, \rw_1), \Theta_{b, \sigma}( \nu, \chi, \rw_2) \Big) 
\\
\leq &C \Big( |\xi - \chi| + \rho_{\alpha-\textrm{H\"ol}; [0,T]} ( \rw_1, \rw_2) + \rho_{1-\textrm{H\"ol}; [0,T]} ( \gamma^\mu, \gamma^\nu) \Big) \exp\Big( \rM_{\beta, [0,T]} ( \omega) \Big). 
\end{align*}

Proposition \ref{pro:OccMeasPathLipschitz} shows continuity in measure pointwise for each Geometric rough path $\rw$. Therefore, to prove joint continuity via Moore-Osgood we verify the uniform continuity condition. 

Let $\mu_k, \mu \in \cP_2\big( G\Omega_\alpha(\bR^d) \big)$ and $\bW^{(2)}_{\rho_{\alpha-\textrm{H\"ol}; [0,T]}} (\mu_k, \mu) \to 0$. Then we also have 
$$
\lim_{k\to \infty} \| \gamma^{\mu_k} - \gamma^{\mu}\|_{1-var; [0,T]} =0.
$$
Hence there must exist an $C'\in \bN$ such that 
$$
\sup_{k>C'} \| \gamma^{\mu_k}\|_{1-var; [0,T]} \leq \| \gamma^\mu\|_{1-var;[0,T]} + 1. 
$$
Similarly, by choosing $C'$ large enough
$$
\sup_{k>C'} \| \rw_k\|_{p-var; [0,T]}^p \leq \Big( \| \rw\|_{p-var;[0,T]} + 1\Big)^p. 
$$

Thus 
\begin{align*}
\sup_{k>C'} \rho_{\alpha-\textrm{H\"ol}; [0,T]} &\Big( \Theta_{b, \sigma}( \mu_k, \xi_k, \rw_1), \Theta_{b, \sigma}( \mu_k, \xi_k, \rw_2) \Big) 
\\
\leq& C \rho_{\alpha-\textrm{H\"ol}; [0,T]} ( \rw_1, \rw_2) 
\\
&\cdot \exp\Big( \rM_{\beta, p} (\rw_1) + \rM_{\beta, p} (\rw_2)\Big) \exp\Big( \| \gamma^\mu\|_{1-var; [0,T]} +1 \Big), 
\\
\sup_{k>C'} \rho_{\alpha-\textrm{H\"ol}; [0,T]} &\Big( \Theta_{b, \sigma}( \mu_k, \xi, \rw_k), \Theta_{b, \sigma}( \mu_k, \chi, \rw_k) \Big) 
\\
\leq& C |\xi - \chi| \cdot \exp\Big( \big( \| \rw\|_{p-\textrm{var}; [0,T]}+1\big)^p \Big) \exp\Big( \| \gamma^\mu\|_{1-var; [0,T]} +1 \Big), 
\\
\sup_{k>C'} \rho_{\alpha-\textrm{H\"ol}; [0,T]} &\Big( \Theta_{b, \sigma}( \mu, \xi_k, \rw_k), \Theta_{b, \sigma}( \nu, \xi_k, \rw_k) \Big) 
\\
\leq& C \rho_{1-\textrm{H\"ol}; [0,T]}(\gamma^\mu, \gamma^\nu) 
\\
&\cdot \exp\Big( \big( \| \rw\|_{p-\textrm{var}; [0,T]}+1\big)^p \Big) \exp\Big( \| \gamma^\mu\|_{1-var; [0,T]} +\| \gamma^\nu\|_{1-var; [0,T]} \Big)
\end{align*}
which implies uniform continuity. 
\end{proof}

\section{Support Theorem}
\label{sec:SupportTheo}

Finally,  we state and prove representations of the support of McKean Vlasov Equations in terms of the particle systems associated to the quantizations that we constructed in Section \ref{sec:QuantizationBM}. We introduce a collection of sets of paths that to the best of our knowledge have not previously been described in another work. These sets are all subsets of $C^{\alpha, 0}([0,T]; \bR^{d})$ and are defined solely with respect to the RKHS $\cH$, the H\"older norm $\|\cdot \|_\alpha$ and the coefficients of the Rough Differential Equation \eqref{eq:MVRDE}. 

In order to provide a clear exposition of the construction of the support, we briefly summarise the upcoming subsections: from the previous Section we have obtained a sequence of quantizations $\rQ_n$ for the the law of the enhanced Brownian motion with codebooks $\rC_n$. 
\begin{itemize}
\item For each quantization, we solve the system of interacting ODEs in Section \ref{subsubsec:IntPartsystderQuant} (see Equation \eqref{eq:dfn:InteractingParticle}) by replacing the path of Brownian motion by the associated codebook path and replacing the law of the Brownian motion by the quantization
\item By associating to each of these ODEs the probability weight associated to the codebook element driving the equation, we obtain a finite support measure  in Section \ref{subsubsec:QuantOfMcKeanVlas} (see Equation \eqref{eq:dfn:QuantizationMcKeanVlasov}). We call this the quantization of the McKean Vlasov Equation. This sequence of finite support measures converges to the law of the McKean Vlasov Equation. 
\item In Section \ref{subsubsec:QuantSkelMcKeanVlas}, for fixed $n$, we replace the law of the McKean Vlasov Equation inside the canonical skeleton process by the quantization of the McKean Vlasov Equation (see Definition \ref{dfn:PositiveMeasureSets}). These paths will not generally be contained in the support of the McKean Vlasov Equation. However, a ball of large enough radius will have positive measure (see Lemma \ref{lem:PositiveMeasure} ). 
\item In Section \ref{subsubsec:SuppTheoWithoutKnowledgeSolLaw}, we show that for $n$ chosen large enough, an $\varepsilon$ ball around this collection of paths will be a closed set of measure 1. By taking an intersection of these sets, we show the set of limit points has measure 1 (see Theorem \ref{thm:SupportTheorem1}). 
\item Finally in Section \ref{subsec:RandomInitCond}, we extend our work to the case where the McKean Vlasov Equation has an initial law (see Theorem \ref{thm:SupportTheorem1RandInit}). 
\end{itemize}

\subsection{The Skeleton Process for a McKean Vlasov Equations}

The law of a McKean Vlasov equation is deterministic; it is not dependent on the choice of driving noise. The Occupation Measure path is of bounded variation and does not interact with the noise. Thus when the Occupation Measure path is known, McKean Vlasov Equations can be thought of as Classical Rough Differential Equations with a drift term. Thus, we can define a Skeleton process in the following classical sense:

\begin{definition}
\label{dfn:TrueSkeleton1}
Let $\cL^\rw$ be the law of an enhanced Brownian motion. Let $b$ and $\sigma$ satisfy Assumption \ref{Ass:Lipschitz:b+sigma}. Let $\xi \in \bR^d$. Let $\cL^\rx$ be the unique fixed point of the operator $\Psi_{\cL^\rw}$. Then we define the \emph{True Skeleton Operator} $\rPhi':\cH \times \bR^d \to G\Omega_{\alpha}(\bR^d)$ to be the operator that maps the element of the RKHS to the solution of the ODE
\begin{equation}
\label{eq:TrueSkeleton1}
d\rPhi'(h, \xi)_t = b(\rPhi'(h, \xi)_t) d\gamma_t^{\cL^\rx} + \sigma(\rPhi'(h, \xi)_t) dh_t, \quad \rPhi(h, \xi)_0 = \xi. 
\end{equation}
\end{definition}

It is important to emphasise that the \emph{True Skeleton Operator} \eqref{eq:TrueSkeleton1} is dependent on the measure $\cL^\rx$ and as such it cannot be solved without knowing the law exogenously. The main contribution of this Section is how one navigates around this issue. 

\subsubsection{Interacting Particle system derived from Quantization}
\label{subsubsec:IntPartsystderQuant}

We introduce a system of interacting Ordinary Differential Equations that model the dynamics of the McKean Vlasov Equation. 

\begin{definition}
\label{dfn:InteractingParticle}
Let $\xi\in \bR^d$. Let $\cL \in \cP_c(G\Omega_{\alpha}(\bR^{d'}))$ be a finitely supported measure over the space of Geometric rough paths with the form $\cL = \sum_{j=1}^n \fp_j \delta_{\rw_j}$ where $(\fp_j)_{j=1, ..., n}$ is a Probability vector. For a codebook $\rC:=\{ \rw_j: j=1, ..., n\}$, let $\rw:= \oplus_{j=1}^n \rw_j$ and let $\tilde{\rw}$ be the extension of $\rw$ to $G^M\big(\bR^{d'\times n}\big)$ where $M$ is the largest integer such that $M\alpha<1$. Let $b$ and $\sigma$ satisfy Assumption \ref{Ass:Lipschitz:b+sigma}. Let $B$ and $\Sigma$ be as in Definition \ref{dfn:BandSigma}. 

Then we define the $\cL$-\emph{Interacting Particle System} to be the solution to the Rough Differential Equation
\begin{equation}
\label{eq:dfn:InteractingParticle}
d\rPhi(\cL)_t = B\big(\rPhi(\cL)_t\big) dt + \Sigma\big(\rPhi(\cL)_t \big) d\tilde{\rw}_t, \quad \rPhi(\cL)_0 = \oplus_{j=1}^n \xi \in \bR^{d\times n}
\end{equation}
taking values in $G\Omega_{\alpha}\Big( \bR^{d\times n}\Big)$.
\end{definition}

An important detail about this object is that this is a \emph{finite dimensional} system of Rough Differential Equations. This system of interacting equations can be solved without having to consider any measures. 

The existence and uniqueness of the ODE \eqref{eq:dfn:InteractingParticle} is standard. In particular, by Theorem \ref{thm:ExtensionTheorem1} the solution to Equation \eqref{eq:dfn:InteractingParticle} is independent of the choice of $\tilde{\rw}$ and only on $\rw$. 

\subsubsection{Quantization of the McKean Vlasov}
\label{subsubsec:QuantOfMcKeanVlas}

We use the interacting particle system \eqref{eq:dfn:InteractingParticle} to obtain a law that approximates the law of the McKean Vlasov Equation \eqref{eq:MVRDE}. 

\begin{definition}
\label{dfn:QuantizationMcKeanVlasov}
Let $\cL \in \cP_c(G\Omega_{\alpha}(\bR^{d'}))$ be a finite support measure over the space of geometric rough paths with the form $\cL = \sum_{m=1}^n \fp_m \delta_{\rw^m}$ where $(\fp_m)_{m=1, ..., n}$ is a Probability vector. Let $b$ and $\sigma$ satisfy Assumption \ref{Ass:Lipschitz:b+sigma}. 

Let $\rPhi(\cL)$ be the solution to Equation \eqref{eq:dfn:InteractingParticle}. Let $\pi^{(m)}: G\Omega_\alpha(\bR^{d \times n}) \to G\Omega_\alpha(\bR^d)$ be the quotient operator obtained by extending the projection $\langle \cdot, e_{(\cdot, m)}\rangle$. Then we define the \emph{Law of the $\cL$-Interacting Particle System} to be the finite measure over $G\Omega_\alpha(\bR^d)$
\begin{equation}
\label{eq:dfn:QuantizationMcKeanVlasov}
\cL^{\rPhi(\cL)}: = \sum_{m=1}^n \fp_m \delta_{\pi^{(m)}[\rPhi(\cL)]} .
\end{equation}
\end{definition}

Substituting a quantization of the Brownian motion into an Interacting Particle System and taking its law, we obtain a quantization for the McKean Vlasov Equation. 

\begin{proposition}
\label{pro:IntParticleLaw-TrueLaw}
Let $\cL^\rw$ be the law of enhanced Brownian motion. Let $\cL^\rw \circ \rQ_n^{-1}$ be the sequence of quantizations of the enhanced Brownian motion from Definition \ref{dfn:TheQuantizationRough}. 

Let $\cL^{\rPhi(\cL^\rw\circ \rQ_n^{-1})}$ be the sequence of quantizations for the McKean Vlasov obtained from the sequence of finite support measures  $\cL^\rw \circ \rQ_n^{-1}$. Then $\Xi\Big[ \cL^\rw \circ \rQ_n^{-1}\Big] = \cL^{\rPhi(\cL^\rw\circ \rQ_n^{-1})}$ so that
\begin{equation*}
\label{eq:pro:IntParticleLaw-TrueLaw}
\bW^{(2)}_{\rho_{\alpha-\textrm{H\"ol}; [0,T]}} \Big( \cL^{\rPhi(\cL^\rw\circ \rQ_n^{-1})}, \cL^\rx\Big) \lesssim \Big(\log(n)\Big)^{\alpha-1/2}. 
\end{equation*}
\end{proposition}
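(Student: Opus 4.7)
The plan is to split the statement into two sub-claims and prove them in sequence: first the algebraic identification $\Xi[\cL^\rw \circ \rQ_n^{-1}] = \cL^{\rPhi(\cL^\rw\circ \rQ_n^{-1})}$, and then the quantitative Wasserstein estimate. The first is a structural fact about how the interacting particle system encodes the fixed point of $\Psi_{\cL^\rw \circ \rQ_n^{-1}}$, and the second is a short consequence of the Lipschitz continuity of $\Xi$ (Proposition \ref{pro:ContinuityXi}) together with the rough-path quantization rate (Theorem \ref{thm:ActualRateCon4QuantizationRough}).

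For the identification, write $\cL^\rw \circ \rQ_n^{-1} = \sum_{j=1}^n \fp_j \delta_{\rc_j}$ with $\fp_j = \cL^\rw(\rs_j)$, and set $\nu_n := \cL^{\rPhi(\cL^\rw \circ \rQ_n^{-1})}$. I would use Theorem \ref{thm:ExtensionTheorem1} and the block-diagonal structure of $B$ and $\Sigma$ from Definition \ref{dfn:BandSigma} to show that the $j$-th projected component $\pi^{(j)}[\rPhi(\cL^\rw \circ \rQ_n^{-1})]$ satisfies
\begin{equation*}
d\pi^{(j)}[\rPhi]_t = b(\pi^{(j)}[\rPhi]_t)\Big[\sum_{k=1}^n \fp_k \delta_{\pi^{(k)}[\rPhi]_t}\Big]\, dt + \sigma(\pi^{(j)}[\rPhi]_t)\, d\rc_j, \qquad \pi^{(j)}[\rPhi]_0 = \xi.
\end{equation*}
Since $\nu_n$ is by definition the weighted empirical measure $\sum_k \fp_k \delta_{\pi^{(k)}[\rPhi]}$, the drift on the right is exactly $b(\,\cdot\,)\, d\gamma^{\nu_n}_t$. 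Hence $\pi^{(j)}[\rPhi] = \Theta_{b,\sigma}(\nu_n, \xi, \rc_j)$, so
\begin{equation*}
\Psi_{\cL^\rw \circ \rQ_n^{-1}}(\nu_n) = (\cL^\rw \circ \rQ_n^{-1}) \circ \Theta_{b,\sigma}(\nu_n, \xi, \cdot)^{-1} = \sum_{j=1}^n \fp_j \delta_{\Theta_{b,\sigma}(\nu_n, \xi, \rc_j)} = \nu_n.
\end{equation*}
Uniqueness of the fixed point of $\Psi_{\cL^\rw \circ \rQ_n^{-1}}$ then gives $\nu_n = \Xi[\cL^\rw \circ \rQ_n^{-1}]$.

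For the quantitative bound, I would first verify that both $\cL^\rw$ and the whole sequence $\cL^\rw \circ \rQ_n^{-1}$ sit inside a common class $\cP_K(G\Omega_\alpha(\bR^{d'}))$: this is exactly what was established earlier in the proposition immediately before Section 3.2, since the moment generating function of $\rM_{\beta,p}(\rQ_n(\rw))$ is dominated by that of $\rM_{\beta,p}(\rw)$ uniformly in $n$, so a single $K$ works. Proposition \ref{pro:ContinuityXi} then gives
\begin{equation*}
\bW^{(2)}_{\rho_{\alpha\textrm{-H\"ol}}}\big(\Xi[\cL^\rw \circ \rQ_n^{-1}], \cL^\rx\big) = \bW^{(2)}_{\rho_{\alpha\textrm{-H\"ol}}}\big(\Xi[\cL^\rw \circ \rQ_n^{-1}], \Xi[\cL^\rw]\big) \leq C\, \bW^{(2)}_{\rho_{\alpha\textrm{-H\"ol}}}\big(\cL^\rw \circ \rQ_n^{-1}, \cL^\rw\big),
\end{equation*}
and the natural coupling $(\rw, \rQ_n(\rw))$ combined with Theorem \ref{thm:ActualRateCon4QuantizationRough} bounds the right-hand Wasserstein distance by $\bE[\rho_{\alpha\textrm{-H\"ol}}(\rw, \rQ_n(\rw))^2]^{1/2} \lesssim (\log n)^{\alpha-1/2}$.

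The main obstacle is the identification step, which requires delicate bookkeeping: one must check that the projection of the rough-path lift of the interacting particle system onto its $j$-th coordinate really does produce the correct scalar equation with the weighted empirical drift, and that the weights $\fp_j$ coming from the partition $\fS_n$ match the weights appearing in $\cL^\rw \circ \rQ_n^{-1}$. This is where Theorem \ref{thm:ExtensionTheorem1} is indispensable, since it guarantees that the lift is independent of the ambient extension and decomposes compatibly with the projections $\pi^{(j)}$. Once the identification is in place, the quantitative estimate follows essentially immediately from already-established ingredients.
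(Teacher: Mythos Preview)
Your proposal is correct and follows the same approach as the paper: apply Proposition~\ref{pro:ContinuityXi} to get $\bW^{(2)}_{\rho_{\alpha\textrm{-H\"ol}}}(\Xi[\cL^\rw \circ \rQ_n^{-1}], \Xi[\cL^\rw]) \lesssim \bW^{(2)}_{\rho_{\alpha\textrm{-H\"ol}}}(\cL^\rw \circ \rQ_n^{-1}, \cL^\rw)$, then invoke Theorem~\ref{thm:ActualRateCon4QuantizationRough} for the rate. The paper's own proof simply asserts the identification $\Xi[\cL^\rw \circ \rQ_n^{-1}] = \cL^{\rPhi(\cL^\rw\circ \rQ_n^{-1})}$ without argument, whereas you supply the fixed-point verification via the block-diagonal structure of $B,\Sigma$ and Theorem~\ref{thm:ExtensionTheorem1}; your added care on this point and on uniform membership in $\cP_K$ is well placed.
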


\begin{proof}
We have $\Xi\Big[ \cL^\rw \circ \rQ_n^{-1}\Big] = \cL^{\rPhi(\cL^\rw\circ \rQ_n^{-1})}$ and $\Xi\Big[ \cL^\rw\Big] = \cL^\rx$. By Proposition \ref{pro:ContinuityXi}, we have
$$
\bW^{(2)}_{\rho_{\alpha-\textrm{H\"ol}; [0,T]}} \Big( \Xi\big[ \cL^\rw \circ \rQ_n^{-1}\big], \Xi\big[ \cL^\rw\big] \Big) \lesssim \bW^{(2)}_{\rho_{\alpha-\textrm{H\"ol}; [0,T]}} \Big(\cL^\rw\circ \rQ_n^{-1}, \cL^\rw\Big). 
$$
Apply Theorem \ref{thm:ActualRateCon4QuantizationRough} for the rate of convergence. 
\end{proof}

\subsection{The Support of the McKean Vlasov Equation}

The following result immediately holds from the methods laid out in \cite{friz2010multidimensional}*{Chapter 19}. 

\begin{theorem}
\label{thm:UselessSupportTheorem}
Let $\cL^\rw$ be the law of an enhanced Brownian motion. Let $\xi \in \bR^d$. Let $b$ and $\sigma$ satisfy Assumption \ref{Ass:Lipschitz:b+sigma}. Let $\cL^\rx$ be the law of the McKean Vlasov Equation \eqref{eq:MVRDE}. Then the support of $\cL^\rx$ can be characterised with respect to the rough path H\"older metric by
\begin{equation}
\label{eq:thm:UselessSupportTheorem}
\supp( \cL^\rx) = \overline{ \Big\{ \rPhi'(h, \xi): h\in \cH \Big\}}^{\rho_{\alpha-\textrm{H\"ol}; [0,T]}} 
\end{equation}
where $\rPhi'$ is the True Skeleton operator from Definition \ref{dfn:TrueSkeleton1}. 
\end{theorem}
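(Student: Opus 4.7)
The plan is to reduce this statement to a classical support theorem for rough differential equations by freezing the measure component. Once Existence and Uniqueness of $\cL^\rx$ is established (via the contraction argument in the preceding section), the occupation measure path $\gamma^{\cL^\rx}$ is a fixed, deterministic path of bounded variation in $\lip_*^1(G^M(\bR^d))^*$. So the McKean--Vlasov equation \eqref{eq:MVRDE} can be rewritten as the classical rough differential equation
\begin{equation*}
d\rx_t = \sigma(X_t)\,d\rw_t + b(X_t)\,d\gamma^{\cL^\rx}_t, \quad \rx_0 = \xi, \quad t\in[0,T],
\end{equation*}
whose solution is precisely $\Theta_{b,\sigma}(\cL^\rx, \xi, \rw)$. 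The right hand side no longer depends on the unknown law; the only randomness enters through the driving rough path $\rw$.

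Next, I would invoke the classical support theorem for RDEs driven by a Gaussian rough path in the $\alpha$-H\"older metric, as laid out in \cite{friz2010multidimensional}*{Chapter 19}. This requires two inputs: a Wong--Zakai type approximation of $\rw$ by RKHS paths $h_n$ (giving the inclusion $\supp(\cL^\rx) \subseteq \overline{\{\Theta_{b,\sigma}(\cL^\rx,\xi,S_M(h)): h\in\cH\}}^{\rho_{\alpha-\mathrm{H\"ol}}}$) and a Cameron--Martin translation argument exploiting the quasi-invariance of Wiener measure (giving the reverse inclusion). Both are standard for an enhanced Brownian motion in H\"older topology once the continuity of the It\^o--Lyons map is combined with Proposition \ref{pro:OccMeasPathLipschitz}; the bounded variation drift $b(\cdot)\,d\gamma^{\cL^\rx}$ is handled via the Young integral against the fixed path $\gamma^{\cL^\rx}$ and causes no extra difficulty.

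Finally, I would identify the skeleton. For $h\in\cH$, the rough path $S_M(h)$ is the canonical lift by Young integration, and substituting it for $\rw$ in the frozen equation yields exactly
\begin{equation*}
d\rPhi'(h,\xi)_t = \sigma(\rPhi'(h,\xi)_t)\,dh_t + b(\rPhi'(h,\xi)_t)\,d\gamma^{\cL^\rx}_t,
\end{equation*}
which is the True Skeleton Operator of Definition \ref{dfn:TrueSkeleton1}. Hence the support characterisation in \eqref{eq:thm:UselessSupportTheorem} follows.

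The main conceptual obstacle is not mathematical depth but rather the caveat already emphasised in the text: this representation is \emph{useless} for practical purposes because $\rPhi'$ itself depends on $\cL^\rx$, which is precisely the object we wish to describe. Thus the only delicate point in the proof is to be rigorous that one is first \emph{given} $\cL^\rx$ (by the fixed point theorem of \cite{CassLyonsEvolving}) before invoking the classical theorem; the rest is routine once this circular-looking dependency is unwound.
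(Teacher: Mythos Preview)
Your proposal is correct and matches the paper's approach exactly: the paper simply states that the result ``immediately holds from the methods laid out in \cite{friz2010multidimensional}*{Chapter 19},'' having already observed in the preamble to Definition~\ref{dfn:TrueSkeleton1} that once the deterministic occupation measure path $\gamma^{\cL^\rx}$ is fixed, the McKean--Vlasov equation reduces to a classical RDE with a bounded-variation drift. You have merely spelled out this reduction in slightly more detail than the paper does.
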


This is not a meaningful result as the True Skeleton Operator includes a priori knowledge of the law of the McKean Vlasov Equation. This measure can be proved to exist, but constructing it is another matter. We overcome this issue via functional quantization. 

\subsubsection{Quantized Skeleton of McKean Vlasov}
\label{subsubsec:QuantSkelMcKeanVlas}

We use the quantized McKean Vlasov to construct a Skeleton process that approximates the True Skeleton Process. 

\begin{definition}
\label{dfn:PositiveMeasureSets}
Let $\cL^\rw$ be the law of an enhanced Brownian Motion.  Let $\rQ_n$ be the sequence of quantizations of $\cL^\rw$ constructed in Definition \ref{dfn:TheQuantizationRough}. Let $\xi\in \bR^d$ and let $h\in \cH$ and denote $\rh=S_2[h]$. Let $b$ and $\sigma$ satisfy Assumption \ref{Ass:Lipschitz:b+sigma}.

Fix $\varepsilon>0$ and choose $n\in \bN$ such that
\begin{equation}
\label{eq:dfn:PositiveMeasureSets1}
\rho_{\alpha-\textrm{H\"ol}; [0,T]}\Big( \Theta_{b, \sigma}(\cL^{\rPhi(\cL^\rw \circ \rQ_n^{-1})}, \xi, \rh) , \Theta_{b, \sigma}(\cL^\rx, \xi, \rh) \Big)
\leq \varepsilon,
\end{equation}
and we define the sets $A_\varepsilon(\rh)$ as
\begin{equation}
\label{eq:dfn:PositiveMeasureSets2}
A_\varepsilon(\rh):=\Big\{ \ry\in G\Omega_{\alpha}(\bR^d): \rho_{\alpha-\textrm{H\"ol}; [0,T]} \Big(\ry, \Theta_{b,\sigma}(\cL^{\rPhi(\cL^\rw \circ \rQ_n^{-1})}, \xi, \rh) \Big)< \varepsilon \Big\}. 
\end{equation}
\end{definition}

We emphasise that the choice of $n$ will not be uniform over all choices of $h\in \cH$. Also note that $\rPhi'(h, \xi) = \Theta_{b, \sigma}(\cL^\rx, \xi, \rh)$. The first goal is to show that each of these sets contains an element of the $\supp(\cL^\rw)$, regardless of $\varepsilon$. 

\begin{lemma}
\label{lem:PositiveMeasure}
Let $h\in \cH$ and $\rh=S_2[h]$. Then $\forall \varepsilon>0$, the open sets $A_\varepsilon(\rh)$ of Definition \ref{dfn:PositiveMeasureSets} have positive measure with respect to $\cL^\rx$, 
$$
\cL^\rw\Big[ A_\varepsilon(\rh)\Big]>0. 
$$
\end{lemma}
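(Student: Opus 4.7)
The plan is to reduce the claim to the classical support theorem of Theorem \ref{thm:UselessSupportTheorem}. That result identifies the true skeleton path $\rPhi'(h,\xi) = \Theta_{b,\sigma}(\cL^\rx, \xi, \rh)$ as an element of $\supp(\cL^\rx)$, so every open neighborhood of it carries strictly positive $\cL^\rx$-mass. The lemma is proved once we exhibit such a neighborhood inside $A_\varepsilon(\rh)$. (We read the conclusion as $\cL^\rx[A_\varepsilon(\rh)]>0$ rather than the typographically written $\cL^\rw[A_\varepsilon(\rh)]$, since $A_\varepsilon(\rh)\subset G\Omega_\alpha(\bR^d)$ and not $G\Omega_\alpha(\bR^{d'})$.)

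First I would confirm that the defining condition \eqref{eq:dfn:PositiveMeasureSets1} can be achieved with a strict margin. By Proposition \ref{pro:IntParticleLaw-TrueLaw},
$$
\bW^{(2)}_{\rho_{\alpha-\textrm{H\"ol};[0,T]}} \bigl(\cL^{\rPhi(\cL^\rw\circ\rQ_n^{-1})},\, \cL^\rx\bigr) \lesssim (\log n)^{\alpha - 1/2} \xrightarrow[n\to\infty]{} 0,
$$
and by the local Lipschitz continuity of $\Theta_{b,\sigma}$ in its measure argument (Proposition \ref{pro:OccMeasPathLipschitz}, applied with the common inputs $\xi$ and $\rh$), this transfers to
$$
\rho_{\alpha-\textrm{H\"ol};[0,T]} \bigl( \Theta_{b,\sigma}(\cL^{\rPhi(\cL^\rw\circ\rQ_n^{-1})}, \xi, \rh),\ \Theta_{b,\sigma}(\cL^\rx, \xi, \rh) \bigr) \longrightarrow 0
$$
as $n\to\infty$. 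Hence $n$ can be chosen so that the distance in \eqref{eq:dfn:PositiveMeasureSets1} is bounded by $\varepsilon/2$, rather than merely by $\varepsilon$; we henceforth fix such an $n$. (Using the local form of Proposition \ref{pro:OccMeasPathLipschitz} is legitimate because $\cL^\rx$ and its quantized surrogates both have uniformly bounded first moments, inherited from the uniform integrability of the quantized drivers established in the final proposition of Section \ref{sec:MVSDEsrough}.)

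With $n$ so fixed, the triangle inequality shows that the open $(\varepsilon/2)$-ball about $\rPhi'(h,\xi)$ is contained in the open $\varepsilon$-ball $A_\varepsilon(\rh)$. Since $\rPhi'(h,\xi)\in \supp(\cL^\rx)$ by Theorem \ref{thm:UselessSupportTheorem}, this inner ball has strictly positive $\cL^\rx$-measure, and therefore so does $A_\varepsilon(\rh)$.

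The main obstacle is conceptual rather than computational: the set $A_\varepsilon(\rh)$ is centered not on the genuine skeleton $\rPhi'(h,\xi)$ (which would not be accessible without a priori knowledge of $\cL^\rx$) but on the surrogate $\Theta_{b,\sigma}(\cL^{\rPhi(\cL^\rw\circ\rQ_n^{-1})}, \xi, \rh)$ built from the quantized McKean--Vlasov law. The positive-measure argument nevertheless succeeds because this surrogate can be made arbitrarily close to the genuine skeleton in $\rho_{\alpha-\textrm{H\"ol}}$, so the surrogate $\varepsilon$-ball engulfs a smaller genuine-skeleton neighborhood. The quantitative rate from Proposition \ref{pro:IntParticleLaw-TrueLaw} is used only qualitatively here, but will be essential later when the intersection $\bigcap_n \overline{\bigcup_{m\geq n}\{\Phi(h,\cL_m):h\in\cH\}}$ is used to characterise $\supp(\cL^\rx)$ in Theorem \ref{thm:SupportTheorem1}.
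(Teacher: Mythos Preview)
Your proof is correct and follows the same route as the paper: show that the true skeleton $\rPhi'(h,\xi)=\Theta_{b,\sigma}(\cL^\rx,\xi,\rh)$ lies in (or inside) the ball $A_\varepsilon(\rh)$, then invoke Theorem~\ref{thm:UselessSupportTheorem} to conclude positive $\cL^\rx$-mass. You are also right that the displayed $\cL^\rw$ in the statement is a typo for $\cL^\rx$.

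The only difference is one of care rather than strategy. The paper simply appeals to the already-imposed condition \eqref{eq:dfn:PositiveMeasureSets1} (with $\leq\varepsilon$) and asserts $\rPhi'(h,\xi)\in A_\varepsilon(\rh)$, which strictly speaking only places $\rPhi'(h,\xi)$ in the \emph{closed} $\varepsilon$-ball while $A_\varepsilon(\rh)$ is open. Your version, re-justifying via Propositions~\ref{pro:IntParticleLaw-TrueLaw} and~\ref{pro:OccMeasPathLipschitz} that $n$ can be chosen with margin $\varepsilon/2$ and then embedding an open $(\varepsilon/2)$-ball about $\rPhi'(h,\xi)$ inside $A_\varepsilon(\rh)$, cleanly closes this gap. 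That extra step is the right thing to do; otherwise the argument is identical to the paper's.
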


\begin{proof}
The condition for $A_\varepsilon(\rh)$ in Equation \eqref{eq:dfn:PositiveMeasureSets1} is the key. It ensures that for any choice of $\varepsilon>0$, we have $\rPhi'(h, \xi) \in A_\varepsilon(\rh)$. By Theorem \ref{thm:UselessSupportTheorem}, we have that any open set $B\subseteq G\Omega_{\alpha}(\bR^d)$ containing a path $\rPhi'(h, \xi)$ and for any choice of $h\in \cH$, we have 
$$
\cL^\rx[B] >0. 
$$
\end{proof}

\subsubsection{The Support Theorem without knowledge of the solution law}
\label{subsubsec:SuppTheoWithoutKnowledgeSolLaw}

We now formulate our statement of the support theorem of McKean Vlasov Equations: 

\begin{theorem}
\label{thm:SupportTheorem1}
Let $\cL^\rw$ be the law of an enhanced Brownian motion. Let $\rQ_n$ be the sequence of quantizations obtained in Definition \ref{dfn:TheQuantizationRough}. Let $\cL^{\rPhi(\cL^\rw \circ \rQ_n^{-1})}$ be the law of the Interacting Particle System driven by the quantization constructed in Definition \ref{dfn:QuantizationMcKeanVlasov}. Let $\xi\in \bR^d$. Suppose that $b$ and $\sigma$ satisfy Assumption \ref{Ass:Lipschitz:b+sigma}. Then the law of the solution to the McKean Vlasov Equation \eqref{eq:MVRDE} satisfies 
\begin{equation}
\label{eq:thm:SupportTheorem1}
\supp(\cL^\rx) = \bigcap_{m=1}^\infty \overline{ \bigcup_{n\geq m} \Big\{ \Theta_{b, \sigma}(\cL^{\rPhi(\cL^\rw \circ \rQ_n^{-1})}, \xi, \rh): h\in \cH, \rh = S_2(h) \Big\} }^{\rho_{\alpha-\textrm{H\"ol}; [0,T]}}. 
\end{equation}
\end{theorem}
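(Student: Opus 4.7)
The plan is to establish both inclusions of \eqref{eq:thm:SupportTheorem1} by leveraging Theorem \ref{thm:UselessSupportTheorem}, which already identifies $\supp(\cL^\rx) = \overline{\rPhi'(\cH)}^{\rho_{\alpha-\textrm{H\"ol}}}$ as the H\"older closure of the true skeleton set $\{\rPhi'(h,\xi):h\in\cH\} = \{\Theta_{b,\sigma}(\cL^\rx, \xi, \rh):h\in\cH\}$. Throughout I abbreviate $\cL_n := \cL^{\rPhi(\cL^\rw \circ \rQ_n^{-1})}$, $U_n := \{\Theta_{b,\sigma}(\cL_n, \xi, \rh): h\in\cH\}$, and let $R$ denote the right-hand side of \eqref{eq:thm:SupportTheorem1}.

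For the inclusion $\supp(\cL^\rx) \subseteq R$, I would first pick any $z$ with $z = \lim_k \rPhi'(h_k, \xi)$ by Theorem \ref{thm:UselessSupportTheorem}. Proposition \ref{pro:IntParticleLaw-TrueLaw} supplies $\bW^{(2)}(\cL_n, \cL^\rx) \to 0$, and the joint continuity of $\Theta_{b,\sigma}$ from Theorem \ref{thm:JointContinuityTheta} yields, for each fixed $h_k$, $\Theta_{b,\sigma}(\cL_n,\xi,\rh_k) \to \rPhi'(h_k,\xi)$ as $n\to\infty$. A diagonal extraction $n_k \to \infty$ then produces $\Theta_{b,\sigma}(\cL_{n_k},\xi,\rh_k)\to z$; since $n_k \geq m$ eventually for every $m$, this puts $z\in \overline{\bigcup_{n\geq m} U_n}$ for all $m$, and hence $z\in R$.

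For the reverse inclusion $R \subseteq \supp(\cL^\rx)$, I fix $z\in R$ and $\varepsilon>0$ and aim to exhibit an open neighbourhood of $z$ with positive $\cL^\rx$-measure. By definition of $R$ one may, for arbitrarily large $m$, extract a pair $(n,h)$ with $n\geq m$ satisfying $\rho_{\alpha-\textrm{H\"ol}}(\Theta_{b,\sigma}(\cL_n,\xi,\rh), z) < \varepsilon/3$. I would then verify that either this $n$ already exceeds the threshold $n(h,\varepsilon/3)$ implicit in Definition \ref{dfn:PositiveMeasureSets}, or can be enlarged to do so while preserving proximity to $z$ via Proposition \ref{pro:OccMeasPathLipschitz} together with $\bW^{(2)}(\cL_n,\cL^\rx)\to 0$. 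For such a ``consistent'' pair, Lemma \ref{lem:PositiveMeasure} guarantees that $A_{\varepsilon/3}(\rh)\subseteq B_{\varepsilon}(z)$ carries positive $\cL^\rx$-mass, placing $z$ in the support.

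The main obstacle lies in the tension between the Lipschitz constant $L_C(h)$ of Proposition \ref{pro:OccMeasPathLipschitz}, which depends exponentially on $\|\rh\|_\alpha$ through an accumulated $p$-variation, and the merely logarithmically-polynomial decay $\bW^{(2)}(\cL_n,\cL^\rx)\lesssim (\log n)^{\alpha-1/2}$ from Theorem \ref{thm:ActualRateCon4QuantizationRough}. Reconciling them requires ruling out that sequences $(n_k, h_k)$ witnessing $z\in R$ can produce drivers with unboundedly large H\"older norm: the H\"older boundedness of the convergent sequence $\Theta_{b,\sigma}(\cL_{n_k},\xi,\rh_k)\to z$ should, via a priori RDE size estimates, control $\|\rh_k\|_\alpha$; equivalently, one extracts a subsequence of $(\rh_k)$ convergent in the rough path H\"older topology and invokes joint continuity of $\Theta_{b,\sigma}$ to identify $z$ directly with some $\rPhi'(h_\infty, \xi) \in \supp(\cL^\rx)$.
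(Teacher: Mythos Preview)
Your argument for the inclusion $\supp(\cL^\rx)\subseteq R$ is essentially the paper's: the paper fixes $h\in\cH$, observes that $\Theta_{b,\sigma}(\cL_l,\xi,\rh)\in\overline{\bigcup_{n\ge m}U_n}$ for all $l\ge m$, passes to the limit $l\to\infty$ using Proposition~\ref{pro:IntParticleLaw-TrueLaw} and continuity in the measure slot to get $\rPhi'(h,\xi)$ inside every such closure, and then takes the H\"older closure over $h$. Your diagonal extraction is a cosmetic variant of this.

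For the reverse inclusion the paper does \emph{not} pass through the sets $A_\varepsilon(\rh)$ and Lemma~\ref{lem:PositiveMeasure} at all. Given $\ry\in R$, it extracts $n_k\to\infty$ and $h_k\in\cH$ with $\Theta_{b,\sigma}(\cL_{n_k},\xi,\rh_k)\to\ry$, and then invokes the joint continuity of Theorem~\ref{thm:JointContinuityTheta} to ``take the limit in the measure variable first'' and deduce $\rPhi'(h_k,\xi)=\Theta_{b,\sigma}(\cL^\rx,\xi,\rh_k)\to\ry$, which by Theorem~\ref{thm:UselessSupportTheorem} places $\ry$ in $\supp(\cL^\rx)$. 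Your detour via Definition~\ref{dfn:PositiveMeasureSets} and Proposition~\ref{pro:OccMeasPathLipschitz} is unnecessary, and the quantitative tension you describe between the exponential dependence of $L_C$ on $\|\rh\|_\alpha$ and the logarithmic decay of $\bW^{(2)}(\cL_n,\cL^\rx)$ is exactly what Theorem~\ref{thm:JointContinuityTheta} is designed to absorb.

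Your fallback proposal---to control $\|\rh_k\|_\alpha$ from the H\"older boundedness of the outputs $\Theta_{b,\sigma}(\cL_{n_k},\xi,\rh_k)$, or to extract a H\"older-convergent subsequence of $(\rh_k)$---is not the paper's route and, as written, has gaps. Output-to-input a~priori bounds for RDEs with bounded $\sigma$ do not hold in general (the solution can remain bounded while the driver is arbitrarily large), and H\"older-boundedness alone does not yield precompactness in $G\Omega_\alpha(\bR^{d'})$ without an additional equicontinuity-type improvement. If you wish to make your alternative rigorous you would need an independent argument that the approximating drivers can be taken bounded; the paper avoids this entirely by appealing to the uniform-continuity content of Theorem~\ref{thm:JointContinuityTheta}.
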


We emphasise that this expression of the support is only dependent on:
\begin{itemize}
\item The RKHS of Brownian motion $\cH$ and the initial condition $\xi\in \bR^d$
\item The coefficients $b$ and $\sigma$
\item The sequence of Systems of Interacting Particles $\rPhi(\cL^\rw \circ \rQ_n^{-1})$ which is in turn dependent on
\begin{itemize}
\item The coefficients $b$ and $\sigma$
\item The sequence of quantizations $\rQ_n$ which are only dependent on $\cH$ and $\| \cdot \|_\alpha$. 
\end{itemize}
\end{itemize}

We have not solved the law of the McKean Vlasov Equation or the Occupation measure path at any point of this approach. 

\begin{proof}
For the simplicity of the proof, we rely on Theorem \ref{thm:UselessSupportTheorem} for an expression of $\supp(\cL^\rx)$. By Proposition \ref{pro:IntParticleLaw-TrueLaw}, we have that the law of the Interacting Particle System converges to the law of the McKean Vlasov Equation as $n\to \infty$. Fix $h\in \cH$ and $m\in \bN$. Then $\forall l\geq m$
$$
\Theta_{b, \sigma}( \cL^{\rPhi(\cL^\rw \circ \rQ_l^{-1})}, \xi, \rh) \in \overline{ \bigcup_{n\geq m} \Big\{ \Theta_{b, \sigma}(\cL^{\rPhi(\cL^\rw \circ \rQ_{n}^{-1})}, \xi, \rh): h\in \cH, \rh = S_2(h) \Big\} }^{\rho_{\alpha-\textrm{H\"ol}; [0,T]}}. 
$$
Since this is closed, we have that the limit of these paths is also contained so
\begin{equation}
\label{eq:thm:SupportTheorem1.1}
\rPhi'(h, \xi) = \Theta_{b, \sigma}( \cL^{\rx}, \xi, \rh) \in \overline{ \bigcup_{n\geq m} \Big\{ \Theta_{b, \sigma}(\cL^{\rPhi(\cL^\rw \circ \rQ_n^{-1})}, \xi, \rh): h\in \cH, \rh = S_2(h) \Big\} }^{\rho_{\alpha-\textrm{H\"ol}; [0,T]}}. 
\end{equation}
Finally, Equation \eqref{eq:thm:SupportTheorem1.1} holds for any choice of $m\in \bN$, so it must be contained in the intersection over all $m$. This was true for any choice of $h\in \cH$, so it is also true for all $h\in \cH$. Thus
$$
\Big\{ \rPhi'(h, \xi): h\in \cH\Big\} \subset \bigcap_{m=1}^\infty \overline{ \bigcup_{n\geq m} \Big\{ \Theta_{b, \sigma}(\cL^{\rPhi(\cL^\rw \circ \rQ_n^{-1})}, \xi, \rh): h\in \cH \Big\} }^{\rho_{\alpha-\textrm{H\"ol}; [0,T]}}. 
$$
Finally, as the right hand side is closed, we can take a closure on the left hand side to achieve the first implication. 

Now we show the reverse implication. Suppose $\ry\in G\Omega_{\alpha}(\bR^d)$ such that
$$
\ry \in \bigcap_{m=1}^\infty \overline{\bigcup_{n\geq m} \Big\{ \Theta_{b, \sigma}(\cL^{\rPhi(\cL^\rw \circ \rQ_n^{-1})}, \xi, \rh): h\in \cH \Big\} }^{\rho_{\alpha-\textrm{H\"ol}; [0,T]}}. 
$$
Then there must exist a subsequence $n_k$ and a sequence of $h_k \in \cH$ such that
$$
\lim_{k\to \infty} \rho_{\alpha-\textrm{H\"ol}; [0,T]}\Big( \Theta_{b, \sigma}( \cL^{\rPhi(\cL^\rw \circ \rQ_{n_k}^{-1})}, \xi, \rh_k),  \ry\Big) = 0. 
$$
Further, we know the sequence satisfies $\lim_{k\to \infty} n_k = \infty$, since $\ry$ is in the intersect over all $m\in \bN$. Thus the weak limit of $\cL^{\rPhi(\cL^\rw \circ \rQ_{n_k}^{-1})}$ must just be $\cL^\rx$ as $k\to \infty$. 

By Theorem \ref{thm:JointContinuityTheta}, we have Joint Continuity of $\Theta_{b, \sigma}$. Therefore, taking the limit in the measure variable first, we get
$$
\lim_{k\to \infty} \rho_{\alpha-\textrm{H\"ol}; [0,T]}\Big( \Theta_{b, \sigma}( \cL^\rx, \xi, \rh_k),  \ry\Big) = \lim_{k\to \infty} \rho_{\alpha-\textrm{H\"ol}; [0,T]}\Big( \rPhi'(h_k, \xi),  \ry\Big)= 0,
$$
which just means that $\ry\in \overline{\{ \rPhi'(\rh, \xi): h\in \cH\}}^{\rho_{\alpha-\textrm{H\"ol};[0,T]}}$. 
\end{proof}

\subsection{Random Initial Conditions}
\label{subsec:RandomInitCond}

An apparent limitation of the previous Section is that we restrict ourselves to McKean Vlasov Equations with constant initial conditions. However, there is an easy extension to the case where the initial condition is random. 

We introduce a Theorem first proved in \cite{caballero1997composition} that allows for the consideration of random initial conditions. 

\begin{theorem}[\cite{caballero1997composition}]
Let $F:\Omega \times \bR^d \to E$ be a random variable taking values in a Banach space $E$ such that $x\mapsto F(\omega, x)$ is continuous for each $\omega$. Suppose that $G:\cH \times \bR^d \to E$ is a uniform skeleton of $F$. Suppose that $\zeta$ is an $d$-dimensional random variable with skeleton $\phi$. Then $\tilde{G}(h):= G(h, \phi(h))$ is a skeleton of $\tilde{F}(\omega):=F(\omega, \zeta(\omega))$. 
\end{theorem}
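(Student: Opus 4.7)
The plan is to verify the two defining properties of a skeleton separately, following the standard scheme outlined in the opening Theorem of the paper: first the Wong–Zakai implication $\supp(\cL^{\tilde F}) \subset \overline{\tilde G(\cH)}^E$, then the Cameron–Martin implication $\overline{\tilde G(\cH)}^E \subset \supp(\cL^{\tilde F})$. The key ingredients that will do all the work are (i) the hypothesis that $G$ is a \emph{uniform} skeleton of $F$, i.e.~approximation of $F(\omega,x)$ by $G(H_n,x)$ holds uniformly in $x$ on compact sets, (ii) the skeleton property of $\phi$ for $\zeta$, and (iii) the continuity of $x\mapsto F(\omega,x)$.

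For the Wong–Zakai direction, I would use the skeleton of $\zeta$ to produce a sequence of $\cH$-valued random variables $(H_n)_{n\in\bN}$ with $\phi(H_n) \to \zeta$ in $\bR^d$ in probability. Then I would insert and subtract $F(\omega, \phi(H_n(\omega)))$ and decompose
\begin{align*}
\tilde F(\omega) - \tilde G(H_n(\omega))
= \bigl[F(\omega,\zeta(\omega)) - F(\omega, \phi(H_n(\omega)))\bigr]
+ \bigl[F(\omega,\phi(H_n(\omega))) - G(H_n(\omega),\phi(H_n(\omega)))\bigr].
\end{align*}
The first bracket vanishes in probability by continuity of $x\mapsto F(\omega,x)$ combined with the fact that $\phi(H_n)\to \zeta$; a tightness/subsequence argument localises the $x$-variable to a compact set. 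The second bracket vanishes uniformly in the inserted argument $\phi(H_n(\omega))$ by the \emph{uniform} skeleton property of $G$ with respect to $F$. Conclude via the Wong–Zakai half of the general skeleton criterion.

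For the Cameron–Martin direction, I would fix an arbitrary $h\in\cH$ and show $\tilde G(h) = G(h,\phi(h))$ lies in $\supp(\cL^{\tilde F})$. Because $\phi$ is the skeleton of $\zeta$ there exists a sequence of measure transforms $S_n^h$ on $(\Omega,\cF,\bP)$, absolutely continuous with respect to $\bP$, such that $\zeta\circ S_n^h \to \phi(h)$ in probability. Because $G$ is a uniform skeleton of $F$ there exists a similar sequence $T_n^h$ such that $F(S_n^h\cdot, x) \to G(h,x)$ uniformly in $x$ (on compacts). Composing and using continuity of $F$ in the spatial variable, the pushed random variable $\tilde F\circ (T_n^h\circ S_n^h)$ concentrates in every open neighbourhood of $G(h,\phi(h))$ with strictly positive probability, which, by absolute continuity, forces that neighbourhood to have positive $\cL^{\tilde F}$-measure; hence $\tilde G(h)\in\supp(\cL^{\tilde F})$, and taking closures finishes the inclusion.

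The main obstacle I foresee is making the composition of the two approximation schemes rigorous while controlling the spatial variable: one has to interchange the limit in $\omega$ (through $H_n$ or $T_n^h$) with the limit in $x$ (through $\phi$), and this is exactly where the \emph{uniformity} in the definition of a uniform skeleton is indispensable. Without it, the second bracket in the decomposition above would only vanish pointwise in $x$ and one could not pass to $x=\phi(H_n(\omega))$, a random and $n$-dependent point. A clean way to manage this is to show almost-sure tightness of $\{\phi(H_n)\}_{n\in\bN}$ in $\bR^d$, restrict attention to a compact set $K$ of full mass up to arbitrarily small error, and invoke the uniform skeleton property on $K$; the remainder on $K^c$ is made negligible by choice of $K$.
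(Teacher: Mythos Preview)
The paper does not contain a proof of this theorem; it is quoted verbatim as a result of \cite{caballero1997composition} and used as a black box in the extension to random initial conditions (Section~\ref{subsec:RandomInitCond}). So there is no ``paper's own proof'' to compare your attempt against.

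That said, your outline is broadly the right shape and captures the essential idea of the original result: split $\tilde F-\tilde G(H_n)$ into a spatial-continuity piece and a uniform-skeleton piece, and use uniformity in $x$ to cope with the fact that the evaluation point $\phi(H_n(\omega))$ is moving with $n$. Two points deserve tightening. First, in the Cameron--Martin direction you have written ``$\zeta\circ S_n^h\to\phi(h)$ in probability'', but the skeleton criterion in the paper (and in \cite{millet1994simple}) only asks for $\limsup_n \bP[\|\zeta\circ S_n^h-\phi(h)\|<\varepsilon]>0$, not convergence in probability; you should phrase the argument in those terms. Second, and more substantively, you propose to \emph{compose} two separate families of transforms $T_n^h$ (for $F$) and $S_n^h$ (for $\zeta$), but both act on the same $\Omega$ and there is no reason a priori that $T_n^h\circ S_n^h$ still enjoys absolute continuity, nor that the uniform-skeleton approximation for $F$ survives pre-composition with $S_n^h$. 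In \cite{caballero1997composition} this is handled by working with a \emph{single} sequence of transforms that simultaneously drives both $F(\cdot,x)$ to $G(h,x)$ (uniformly in $x$) and $\zeta$ to $\phi(h)$; the notion of ``uniform skeleton'' is set up precisely so that one family $T_n^h$ does both jobs. Your sketch would go through cleanly once you replace the two-sequence composition by this single-sequence argument.
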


We now turn to the McKean Vlasov Equation
\begin{equation}
\label{eq:MVRDERandInit}
dX_t = \sigma(X_t)d\rw_t + b(X_t) d\gamma^{\cL_t^\rx}, \quad X_0 \sim \xi\in \cP_{r}( \bR^d)
\end{equation}
where $r>1$. 

Following in the footsteps of Definition \ref{dfn:TheQuantizationRough}, we construct a quantization for the law $\xi \times \cL^\rw$ over $\bR^d \times G\Omega_{\alpha}(\bR^{d'})$. 

\begin{definition}
\label{dfn:TheQuantizationRoughRandInit}
Let $r>1$. Let $\cL^W$ be the law of a Brownian motion over $C^{\alpha, 0}([0,T]; \bR^{d'})$. Let $\xi \in \cP_{r}(\bR^d)$. Let $m, n\in \bN$. 
\begin{enumerate}
\item By Theorem \ref{theorem:ExistenceStationaryQuant}, there exists a codebook $\fC_m^{(1)} \subset \bR^d$ that is an $m$-stationary set with Voronoi partition $\fS_m^{(1)}$. Let $\rC_n^{(2)}$ be the $n$ element codebook constructed in Definition \ref{dfn:TheQuantizationRough} with partition $\rS_n^{(2)}$. 
\item Let $\rC_{m,n}:= \fC_m^{(1)} \times \rC_n^{(2)}$ be a sequence of codebooks over $\bR^d \times G\Omega_\alpha(\bR^{d'})$ and let $\rS_{m,n}:=\fS_m^{(1)} \times \rS_n^{(2)}$ be a partition of $\bR^d \times G\Omega_\alpha(\bR^{d'})$. Let $\rQ_{m, n}$ be the Quantization with codebook $\rC_{m,n}$ and partition $\rS_{m,n}$. Then $| \rC_{m, n}| = m \cdot n$. 
\item By combining Equation \eqref{pro:GrafLuschgyQuantizationRate} and Theorem \ref{thm:ActualRateCon4QuantizationRough}, the rate of convergence is 
\begin{align}
\tfrac{1}{m^{1/d}} + \Big( \log(n)\Big)^{\alpha-1/2}\approx&  \Bigg( \int_{\bR^d\times G\Omega_\alpha(\bR^{d'})}  d_{| \cdot | \times \rho_{\alpha\textrm{-H\"ol}}} \Big((x, \ry),\rQ_{m,n}(x, \ry)\Big)^r  d[\xi\times\cL^\rw](x, \ry) \Bigg)^{1/r} . 
%\\
%\lesssim& \tfrac{1}{m^{1/e}} + \Big( \log(n)\Big)^{\alpha-1/2}
\end{align}
\item By choosing $m \approx [ \log(n)]^{(1/2-\alpha)d}$ and rescaling, we obtain the sequence of quantizations
\begin{align}
\nonumber
%\Bigg[ &\log\Big( \tfrac{n}{[(1/2-\alpha)e]^{(1/2-\alpha)e}} \Big) - \log\Bigg( \cW\Bigg(\frac{n^{\tfrac{1}{(1/2-\alpha)e}}}{(1/2-\alpha)e} \Bigg)^{(1/2-\alpha)e}\Bigg) \Bigg]^{(\alpha-1/2)e}
%\\
&
\Bigg( \int_{\bR^d\times G\Omega_\alpha(\bR^{d'})}  d_{| \cdot | \times \rho_{\alpha\textrm{-H\"ol}}} \Big((x, \ry),\rQ_{n}(x, \ry)\Big)^r  d[\xi\times\cL^\rw](x, \ry) \Bigg)^{1/r}
\\
\label{eq:dfn:TheQuantizationRoughRandInit}
&\quad \quad \approx 
\Bigg[ \log\Big( \tfrac{n}{[(1/2-\alpha)d]^{(1/2-\alpha)d}} \Big) - \log\Bigg( \cW\Bigg(\frac{n^{\tfrac{1}{(1/2-\alpha)d}}}{(1/2-\alpha)d} \Bigg)^{(1/2-\alpha)d}\Bigg) \Bigg]^{(\alpha-1/2)d}
\end{align}
where, as in Proposition \ref{pro:ActualRateCon4Quantization}, $\cW$ is the Lambert W function. 
\end{enumerate}
\end{definition}

Next, following Definition \ref{dfn:QuantizationMcKeanVlasov}, we define a new interacting particle system.
\begin{definition}
\label{dfn:QuantizationMcKeanVlasovRandInit}
Let $\cL \in \cP_c( \bR^d \times G\Omega_{\alpha}(\bR^{d'}))$ be a finite support measure of the form $\cL = \sum_{j=1}^n \fp_j \delta_{(x_j,\rw_j)}$ where $(\fp_j)_{j=1, ..., n}$ is a probability vector. For codebook $\rC:=\{ (x_j,\rw_j): j=1, ..., n\}$, let $\rw:= \oplus_{j=1}^n \rw_j$ and $X=\oplus_{j=1}^n x_j \in \bR^{d\times n}$. Let $\tilde{\rw}$ be the lift of the path $\rw$ to a rough path. Let $b$ and $\sigma$ satisfy Assumption \ref{Ass:Lipschitz:b+sigma}. Let $B$ and $\Sigma$ be as in Definition \ref{dfn:BandSigma}. 

Then we define the $\cL$ \emph{Interacting Particle System} with random initial condition to be the solution to the Rough Differential Equation
\begin{equation}
\label{eq:dfn:InteractingParticleRandInit}
d\rPhi(\cL)_t = B\big(\rPhi(\cL)_t\big) dt + \Sigma\big(\rPhi(\cL)_t \big) d\tilde{\rw}_t, \quad \rPhi(\cL)_0 = X. 
\end{equation}

We also define the law of the $\cL$ Interacting Particle system in $\cP_c\Big( G\Omega_\alpha(\bR^d)\Big)$ to be
\begin{equation*}
\cL^{\rPhi(\cL)}: = \sum_{m=1}^n \fp_m \delta_{\pi^{(m)}[ \rPhi(\cL)]} . 
\end{equation*}
\end{definition}

As with Theorem \ref{thm:ExtensionTheorem1}, the paths of this law are dependent only on $\rw$ and not of the lift of $\tilde{\rw}$. In this definition we do not limit ourselves to the case where many of the $x_j$ values are repeated. %However, we would expect this due to the implicit joint density between the measures $\cL^\rw$ and $\xi$ being the product measure. 
We use the quantization of the measure $\xi \times \cL^\rw $ constructed in Definition \ref{dfn:TheQuantizationRoughRandInit} to solve the law of an Interacting Particle system that approximates the true law of the McKean Vlasov Equation

\begin{proposition}
\label{pro:IntParticleLaw-TrueLawRandInit}
Let $\cL^\rw$ be the law of the enhanced Brownian motion. Let $[\xi \times \cL^\rw] \circ \rQ_{n}^{-1}$ be the sequence of quantizations of the enhanced Brownian motion from Definition \ref{dfn:TheQuantizationRoughRandInit}. 

Let $\cL^{\rPhi([\xi\times \cL^\rw]\circ \rQ_{n}^{-1})}$ be the sequence of quantizations for the McKean Vlasov obtained from the sequence of finite support measures  $[\xi \times \cL^\rw] \circ \rQ_{n}^{-1}$. 

Then $\Xi\Big[ [\xi \times \cL^\rw] \circ \rQ_{n}^{-1}\Big] = \cL^{\rPhi([\xi\times \cL^\rw]\circ \rQ_{n}^{-1})}$ so that
\begin{align*}
%\label{eq:pro:IntParticleLaw-TrueLaw}
\bW^{(1)}_{\rho_{\alpha-\textrm{H\"ol}}}& \Big( \cL^{\rPhi([\xi\times \cL^\rw]\circ \rQ_n^{-1})}, [\xi \times \cL^\rx]\Big) 
\\
&\lesssim 
\Bigg[ \log\Big( \tfrac{n}{[(1/2-\alpha)d]^{(1/2-\alpha)d}} \Big) - \log\Bigg( \cW\Bigg(\frac{n^{\tfrac{1}{(1/2-\alpha)d}}}{(1/2-\alpha)d} \Bigg)^{(1/2-\alpha)d}\Bigg) \Bigg]^{(\alpha-1/2)d} . 
\end{align*}
\end{proposition}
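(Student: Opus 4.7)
The plan is to mirror the proof of Proposition \ref{pro:IntParticleLaw-TrueLaw} after extending the operator $\Xi$ from Proposition \ref{pro:ContinuityXi} to accept joint laws on $\bR^d \times G\Omega_\alpha(\bR^{d'})$. Concretely, I would define $\tilde{\Xi}[\cL]$ as the pushforward of $\cL$ under the map $(x, \rw) \mapsto \Theta_{b,\sigma}(\mu_\cL, x, \rw)$, where $\mu_\cL$ is the unique fixed point of the operator $\Psi_{\pi_2 \cL}$ from Definition \ref{dfn:FixedPointOperator} and $\pi_2$ denotes the projection onto the noise marginal. Under Assumption \ref{Assumption:CassLyons1}, the noise marginal of $[\xi \times \cL^\rw] \circ \rQ_n^{-1}$ lies in a common class $\cP_K\big(G\Omega_\alpha(\bR^{d'})\big)$ uniformly in $n$, by virtue of the accumulated-variation estimates culminating in Proposition \ref{pro:AccumulatedControlTrick}, so $\mu_\cL$ exists and is unique for every member of the sequence.

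A direct unpacking of Definition \ref{dfn:QuantizationMcKeanVlasovRandInit} then yields the identification $\tilde{\Xi}\big[[\xi\times\cL^\rw]\circ \rQ_n^{-1}\big] = \cL^{\rPhi([\xi\times \cL^\rw]\circ \rQ_n^{-1})}$. Indeed, solving the interacting particle system \eqref{eq:dfn:InteractingParticleRandInit} with codebook initial conditions $x_j$ and codebook noise paths $\rw_j$, then projecting coordinate by coordinate via $\pi^{(m)}$, is precisely the pushforward construction above. Theorem \ref{thm:ExtensionTheorem1} is exactly what guarantees that each projected coordinate depends only on the driving path $\rw_j$ and on the joint empirical measure, not on the choice of lift $\tilde{\rw}$.

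The second ingredient is a Lipschitz estimate for $\tilde{\Xi}$ in $\bW^{(1)}$. Given two joint laws $\cL_1, \cL_2$, I would take an optimal $\bW^{(1)}$ coupling and decompose
\begin{equation*}
\rho_{\alpha\textrm{-H\"ol}}\bigl(\Theta(\mu_1, x_1, \rw_1), \Theta(\mu_2, x_2, \rw_2)\bigr) \le A + B,
\end{equation*}
with $A = \rho_{\alpha\textrm{-H\"ol}}\bigl(\Theta(\mu_1, x_1, \rw_1), \Theta(\mu_2, x_1, \rw_1)\bigr)$ controlled by Proposition \ref{pro:OccMeasPathLipschitz} combined with Proposition \ref{pro:ContinuityXi} applied to the noise marginals, and $B = \rho_{\alpha\textrm{-H\"ol}}\bigl(\Theta(\mu_2, x_1, \rw_1), \Theta(\mu_2, x_2, \rw_2)\bigr)$ controlled by the joint continuity of $\Theta_{b,\sigma}$ in $(x, \rw)$ established in Theorem \ref{thm:JointContinuityTheta}. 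Integrating against the coupling and specialising to $\cL_1 = [\xi \times \cL^\rw] \circ \rQ_n^{-1}$ and $\cL_2 = \xi \times \cL^\rw$ bounds the $\bW^{(1)}$ distance by the joint quantization error \eqref{eq:dfn:TheQuantizationRoughRandInit}.

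The main obstacle is keeping the Lipschitz constant uniform in $n$: Proposition \ref{pro:OccMeasPathLipschitz} carries an exponential factor $\exp(\rM_{\beta,p}(\omega))$ whose expectation must remain bounded along the quantized sequence. This is exactly what Proposition \ref{pro:AccumulatedControlTrick} together with the moment-generating function bound of Proposition \ref{pro:CassLittererLyonsMomGen} deliver, placing every $\cL^\rw \circ \rQ_n^{-1}$ inside a single $\cP_K$ with $K$ independent of $n$. Once this uniformity is secured, the logarithmic term in the bound descends from the noise quantization via Theorem \ref{thm:ActualRateCon4QuantizationRough}, and it dominates the polynomial $m^{-1/d}$ contribution from the finite-dimensional initial condition quantization thanks to the scaling $m \approx [\log n]^{(1/2-\alpha)d}$ chosen in Definition \ref{dfn:TheQuantizationRoughRandInit}, which yields the displayed rate after applying the identity $\cW(y) = \log y - \log \cW(y)$ to simplify the inverse Lambert-$\cW$ expression inherited from Equation \eqref{eq:thm:ActualRateCon4Quantization-N}.
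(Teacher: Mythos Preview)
Your proposal is correct and follows essentially the same route as the paper, whose proof simply reads ``Same method as Proposition \ref{pro:IntParticleLaw-TrueLaw} with Equation \eqref{eq:dfn:TheQuantizationRoughRandInit}.'' You have effectively unpacked what that sentence means: extend $\Xi$ to joint laws on $\bR^d \times G\Omega_\alpha(\bR^{d'})$, identify the fixed point with the interacting particle law, invoke the Lipschitz continuity (with the accumulated-variation integrability supplying the uniform $\cP_K$ membership), and read off the rate from Definition \ref{dfn:TheQuantizationRoughRandInit}.
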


\begin{proof}
Same method as Proposition \ref{pro:IntParticleLaw-TrueLaw} with Equation \eqref{eq:dfn:TheQuantizationRoughRandInit}. 
\end{proof}

\subsubsection{Statement for the Support}

Using classical tools, we combine the results of Theorem \ref{thm:UselessSupportTheorem} with \cite{caballero1997composition} for this next Theorem:
%\begin{theorem}
%\label{thm:UselessSupportTheoremRandInit}
%Let $r>1$. Let $\xi \in P_{r}(\bR^e)$. Let $\cL^\rw$ be the law on an Enhanced Brownian motion. Let $b$ and $\sigma$ be defined in Definition \ref{dfn:FixedPointOperator}. Suppose that Assumption \ref{Assumption:CassLyons1} is satisfied. Let $\cL^\rx$ be the law of the McKean Vlasov Equation \eqref{eq:MVRDERandInit}. Then the support of $\cL^\rx$ can be characterised with respect to the Rough Path H\"older metric by
%\begin{equation*}
%%\label{eq:thm:UselessSupportTheorem}
%\supp( \cL^\rx) = \overline{ \Big\{ \rPhi'(h, x): h\in \cH, x\in \supp(\xi) \Big\}}^{\rho_{\alpha-\textrm{H\"ol}; [0,T]}} 
%\end{equation*}
%where $\rPhi'$ is the True Skeleton operator from Definition \ref{dfn:TrueSkeleton1}. 
%\end{theorem}
%
%This theorem is not meaningful and our final Theorem is the culmination of this work:

\begin{theorem}
\label{thm:SupportTheorem1RandInit}
Let $r>1$. Let $\xi\in \cP_{r}(\bR^d)$. Let $\cL^\rw$ be the law of an enhanced Brownian motion. Let $\rQ_{n}$ be the sequence of quantizations obtained in Definition \ref{dfn:TheQuantizationRoughRandInit}. Let $\cL^{\rPhi([\xi\times\cL^\rw] \circ \rQ_{n}^{-1})}$ be the law of the Interacting Particle System driven by the quantization constructed in Definition \ref{dfn:QuantizationMcKeanVlasovRandInit}. Suppose that $b$ and $\sigma$ satisfy Assumption \ref{Ass:Lipschitz:b+sigma}. Then the law of the solution to the McKean Vlasov Equation \eqref{eq:MVRDE} satisfies 
\begin{equation}
\label{eq:thm:SupportTheorem1RandInit}
\supp(\cL^\rx) = \bigcap_{m=1}^\infty \overline{ \bigcup_{n\geq m} \Big\{ \Theta_{b, \sigma}(\cL^{\rPhi([\xi\times\cL^\rw] \circ \rQ_n^{-1})}, x, \rh): h\in \cH, x\in \supp(\xi) \Big\} }^{\rho_{\alpha-\textrm{H\"ol}; [0,T]}}. 
\end{equation}
\end{theorem}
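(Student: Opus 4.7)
The strategy is to mirror the proof of Theorem \ref{thm:SupportTheorem1} after first reducing the random initial condition to the deterministic case via the composition-of-skeletons result of \cite{caballero1997composition}. Concretely, I would first establish the auxiliary identity
\begin{equation}
\label{eq:AuxSupport}
\supp(\cL^\rx) = \overline{\Big\{ \Theta_{b,\sigma}(\cL^\rx, x, \rh) : h\in \cH,\ x\in \supp(\xi) \Big\}}^{\rho_{\alpha-\textrm{H\"ol};[0,T]}},
\end{equation}
which plays the role of Theorem \ref{thm:UselessSupportTheorem} in the random-initial-condition setting. To obtain \eqref{eq:AuxSupport}, apply the cited Caballero theorem with $F(\omega, x)$ the rough-path solution of \eqref{eq:MVRDERandInit} started from the deterministic point $x$, whose uniform (in $x$) skeleton is $G(h, x) = \Theta_{b,\sigma}(\cL^\rx, x, \rh)$. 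The required continuity of $x\mapsto G(h,x)$ is furnished by the joint continuity statement of Theorem \ref{thm:JointContinuityTheta}, while the skeleton of the $\bR^d$-valued random variable $\zeta \sim \xi$ is the inclusion $\supp(\xi)\hookrightarrow\bR^d$. Composition then yields \eqref{eq:AuxSupport}.

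For the forward inclusion $\supp(\cL^\rx) \subset$ RHS of \eqref{eq:thm:SupportTheorem1RandInit}, fix $h\in\cH$, $x\in \supp(\xi)$. By Proposition \ref{pro:IntParticleLaw-TrueLawRandInit} the measures $\cL^{\rPhi([\xi\times\cL^\rw]\circ\rQ_n^{-1})}$ converge to $\cL^\rx$ in $\bW^{(1)}_{\rho_{\alpha-\textrm{H\"ol}}}$, so Theorem \ref{thm:JointContinuityTheta} (continuity in the measure argument) gives
\[
\lim_{n\to\infty}\Theta_{b,\sigma}\big(\cL^{\rPhi([\xi\times\cL^\rw]\circ\rQ_n^{-1})}, x, \rh\big) = \Theta_{b,\sigma}(\cL^\rx, x, \rh).
\]
For every $m\in\bN$ the tail $\{n\geq m\}$ of this sequence lies in the set being closed over, so the limit lies in the $m$-th closed set and hence in the intersection. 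This places every $\Theta_{b,\sigma}(\cL^\rx, x, \rh)$ with $h\in\cH$, $x\in\supp(\xi)$ inside the RHS; closing and invoking \eqref{eq:AuxSupport} gives the desired inclusion.

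For the reverse inclusion, suppose $\ry$ belongs to the intersection. Then there exist $n_k\uparrow\infty$, $h_k\in\cH$ and $x_k\in \supp(\xi)$ such that $\Theta_{b,\sigma}(\cL^{\rPhi([\xi\times\cL^\rw]\circ\rQ_{n_k}^{-1})}, x_k, \rh_k) \to \ry$ in $\rho_{\alpha-\textrm{H\"ol}}$. Since $n_k\to\infty$, the measure argument converges to $\cL^\rx$, uniformly in $(x_k,\rh_k)$ over any bounded set; this is exactly the uniform-continuity input isolated in Theorem \ref{thm:JointContinuityTheta}, which (via a Moore--Osgood style interchange) lets us replace the quantized law by $\cL^\rx$ along the subsequence. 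Hence $\ry$ is also a limit point of $\{\Theta_{b,\sigma}(\cL^\rx, x_k, \rh_k)\}_k = \{\rPhi'(h_k,x_k)\}_k$, and \eqref{eq:AuxSupport} places $\ry\in \supp(\cL^\rx)$.

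The main obstacle I anticipate is verifying the hypotheses of the Caballero composition theorem rigorously enough to obtain \eqref{eq:AuxSupport}: one must ensure the $\bR^d$-valued initial condition admits a suitable (Cameron--Martin--indexed) skeleton parametrizing $\supp(\xi)$, and that the parametrized skeleton map $(h,x)\mapsto \Theta_{b,\sigma}(\cL^\rx, x, \rh)$ satisfies the \emph{uniform} (in $x$) Wong--Zakai and Cameron--Martin implications rather than only pointwise ones. The required uniformity follows from the local Lipschitz estimate in Proposition \ref{pro:OccMeasPathLipschitz} together with Theorem \ref{thm:JointContinuityTheta}, using the $r$-integrability of $\xi\in\cP_r(\bR^d)$ to control tails uniformly. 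Once \eqref{eq:AuxSupport} is in place, the remainder of the argument is a transcription of the proof of Theorem \ref{thm:SupportTheorem1} with the extra variable $x\in\supp(\xi)$ passively carried through.
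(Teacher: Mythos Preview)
Your proposal is correct and follows essentially the same approach as the paper: the paper's proof is a one-line reference to ``proof of Theorem \ref{thm:SupportTheorem1} with Proposition \ref{pro:IntParticleLaw-TrueLawRandInit} and Theorem \ref{thm:JointContinuityTheta}'', relying (as stated just above the theorem) on the composition-of-skeletons result of \cite{caballero1997composition} to obtain the random-initial-condition analogue of Theorem \ref{thm:UselessSupportTheorem}, which is exactly your auxiliary identity \eqref{eq:AuxSupport}. Your write-up is in fact more explicit than the paper's own proof, and the ``main obstacle'' you flag (verifying the uniform-skeleton hypotheses of the Caballero theorem) is precisely the step the paper leaves implicit.
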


\begin{proof}
See proof of Theorem \ref{thm:SupportTheorem1} with Proposition \ref{pro:IntParticleLaw-TrueLawRandInit} and Theorem \ref{thm:JointContinuityTheta}. 
\end{proof}

\section*{Acknowledgements}
In no particular order, the authors thank Professor Sandy Davie, Professor Fran\c cois Delarue, Professor Peter Friz and Dr Mario Maurelli for the helpful discussions. We also thank the participants of the December 2018 10th Oxford-Berlin \emph{Young Researchers Meeting on Applied Stochastic Analysis} for their feedback on the occasion of the first presentation of this work. 

%%%%%%%%%%%%%%%%%%%%%%%%%%%%%%%%%%%%%%%
%%%%%%%%%%%%%%%%%%%%%%%%%%%%%%%%%%%%%%%

\appendix

\section{Rough Path Primer}
\label{Appendix:Primer}

\subsection{Algebraic Material}

Let $\cA$ be a finite alphabet, let $V$ be the associated vector space and denote $T(V)=\oplus_{n=0}^\infty V^{\otimes n}$ be the vector space of free monoids generated by $\cA$ with the shuffle product $\shuffle$. Let $\Delta: T(V) \to T(V) \otimes T(V)$ be the deconcatenation coproduct. Thus $(T(V), \shuffle, \Delta)$ is a commutative unital Hopf algebra with an antipode and canonical grading. 

The characters (also known as Group-like elements) of $T(V)$ to be the elements $g\in G(V)$ such that $\forall u, v\in T(V)$
$$
\langle g, u\shuffle v\rangle = \langle g, u\rangle \langle g, v\rangle. 
$$
$G(V)$ forms a Lie group with Lie Algebra $P(V)$. The diffeomorphic exponential map $\exp_\boxtimes :P(V) \to G(V)$ and its inverse the logarithm map $\log_\boxtimes : G(V) \to P(V)$ defined for $g\in G(V)$ and $h\in P(V)$ by
$$
\exp_\boxtimes (h) = \sum_{i=0}^\infty \frac{h^{\boxtimes i}}{i!} , \quad \log_\boxtimes (g) = \sum_{i=1}^\infty (-1)^{i-1} \frac{(g-\rId)^{\boxtimes i}}{i}. 
$$
Finally, we define $T^M(V)$ to be the quotient space obtained from $T(V)$ by quotienting against the ideal $\oplus _{n=M+1}^\infty$. The Lie algebra $P^M(V)$ is graded, so can be expressed as
$$
P^M(V) = \oplus_{i=1}^M V_i
$$
where $V_{i+1} = [V, V_i]_\boxtimes$ and $V_1 = V$. We define the dilation on $P^M(V)$ to be the linear map $\delta_t:P^M(V) \to P^M(V)$ such that
$$
\delta_t[ h_1 + ... + h_M] = t h_1 + ... + t^M h_M. 
$$
Similarly, the dilation can be extended to the Lie Group $\delta_t: G^M(V) \to G^M(V)$ for $g = \exp_\boxtimes(h_1 + ... + h_M)$ by
$$
\delta_t g = \exp_\boxtimes( t h_1 + ... + t^M h_M). 
$$

A homogeneous norm on a Carnot group is a function $\| \cdot \|_G: G \to \bR^+$ such that for any $g\in G$,  $\|g\|_G = 0$ if and only if $g=\rId$ the unit of $\boxtimes$ and $\| \delta_t g\|_G = |t| \cdot \| g\|_G$. 

As the Lie Algebra $P^M(V)$ is finite dimensional, all homogeneous norms on $G^M$ are equivalent. By considering the collection of homogeneous norms, one can induce a left invariant metric over $G^M(\bR^{d'})$. This is traditionally called the Carnot-Carath\'eodory metric which we denote by $d_{cc}$. Further, the Carnot-Carath\'eodory norm satisfies the additional properties for any $g, g_1, g_2\in G^m(\bR^{d'})$ $\| g_1 \boxtimes g_2\|_{cc} \leq \|g_1\|_{cc} + \|g_2\|_{cc}$ and $\| g\|_{cc} = \| g^{-1} \|_{cc}$. 

Let $\cA^M$ be all the words generated by the Alphabet $\cA$ such that $|A|\leq M$. One example of a homogeneous norm that we work with is
\begin{equation}
\label{eq:HomoNorm}
\| g \|_{G^M} = \sum_{A \in \cA_M} | \langle \log_\boxtimes (g), e_A\rangle |^{1/|A|}. 
\end{equation}

\subsection{Rough Paths}

\begin{definition}
Let $V$ be a vector space. For a path $x\in C^{1-var}([0,T]; V)$, the iterated integrals of $x$ are canonically defined using Young integration. The collection of iterated integrals of the path $x$ is called the truncated Signature of $x$ and is defined as
%$$
%S(x)_{s, t}:= \rId + \sum_{n=1}^\infty \int_{s\leq u_1\leq ... \leq u_n\leq t} dx_{u_1} \otimes ... \otimes dx_{u_n} \in T(V) = \bigoplus_{n=0}^\infty V^{\otimes n}. 
%$$
%In the same way, the truncated Signature defined by its increments
$$
S_M(x)_{s, t}:= \rId + \sum_{n=1}^M \int_{s\leq u_1\leq ... \leq u_n\leq t} dx_{u_1} \otimes ... \otimes dx_{u_n} \in T^M(V) = \bigoplus_{n=0}^M V^{\otimes n}. 
$$
\end{definition}

It is well known that $S_M(x)$ takes values in $G^M(V)$. 

\begin{definition}
For $\alpha\in (0, 1)$ and let $M$ be the largest integer such that $M\alpha<1$. A path $\rx:[0,T] \to G^M(V)$ is called an $\alpha$-H\"older continuous geometric rough paths if
\begin{align}
\nonumber
\langle \rx_{s, t}, e_A\rangle \langle \rx_{s, t}, e_B\rangle = \langle \rx_{s, t}, e_{A} \shuffle e_B \rangle&, 
\quad
\langle \rx_{s, u}, e_A\rangle = \langle \rx_{s, t} \boxtimes \rx_{t, u}, \Delta[e_A] \rangle
\\
\mbox{and} \quad
\sup_{A\in \cA_M} \sup_{s, t\in [0,T]}& \frac{\langle \rx_{s, t}, e_A\rangle }{|t-s|^{\alpha |A|}}< \infty. 
\end{align}

\end{definition}

\begin{definition}
Denote $p=\tfrac{1}{\alpha}$. We define the $\alpha$-H\"older rough path metric
\begin{equation}
\label{eq:HolderDef}
d_\alpha( \rx, \ry) = \| \rx^{-1} \boxtimes \ry\|_\alpha = \sup_{s, t\in[0,T]} \frac{\Big\| \rx_{s,t}^{-1}\boxtimes \ry_{s,t} \Big\|_{cc} }{|t-s|^\alpha}. 
\end{equation}
By quotienting with respect to $\rx_0$, one can make this a norm. We use the convention that $\| \rx \|_{p-var; [0,T]} = \| \rId^{-1}\boxtimes \rx \|_{p-var; [0,T]}$ and $\| \rx \|_{\alpha} = \| \rId^{-1}\boxtimes \rx \|_{\alpha}$. We denote the metric space of $\alpha$-H\"older continuous geometric rough paths to be $G\Omega_\alpha(\bR^d)$. 

Similarly, we define the homogeneous $p$-variation metric $d_{p-var}$ by
\begin{equation}
\label{eq:pVarDef}
d_{p-var; [0,T]}(\rx, \ry):=\| \rx^{-1} \boxtimes \ry\|_{p-var; [0,T]}:=\bigg( \sup_{D=(t_i)} \sum_{i: t_i\in D} \Big\| \rx_{t_i, t_{i+1}}^{-1}\boxtimes \ry_{t_i, t_{i+1}} \Big\|_{cc}^p \bigg)^{\tfrac{1}{p}}. 
\end{equation}

\end{definition}

%\begin{remark}
%It is well known that $G\Omega_\alpha(\bR^d)$ is the closure of the set
%$$
%\Big\{ S_{\lfloor p\rfloor}(x) : x\in C^{1-var}([0,T]; \bR^d)\Big\},
%$$
%with respect to the $\alpha$-H\"older Rough Path metric \eqref{eq:HolderDef}. 
%\end{remark}

When studying rough paths, one can either work with $p$-variation or $\alpha$-H\"older norms. For the most part, authors choose one and stick with it for the entirety of their work. While $p$-variation is slightly more general, $\alpha$-H\"older allows for a wavelet representation in the Banach space which is more favourable for this work. 

It is important to understand that for this paper, we work with both norms. The H\"older norm, being more restrictive, is assumed to be the bound on regularity. However, we are required to work with the $p$-variation in order to establish an integrability condition. 

%It would be an interesting question to study an analogy of Accumulated $p$-variation for the H\"older norm. However, it would be less general than the work of \cite{cass2013integrability} and so be unremarkable. On the other hand, $p$-variation is problematic as a metric used to study the decomposition of measure and subsequent quantization as it does not have an equivalent sequence space norm. 
\begin{definition}
\label{def:control1}
Let $\Delta_T = \{(s, t): 0\leq s\leq t \leq T\}$ denote the two-dimensional simplex. The map $\omega:\Delta_T \to \bR^+$ is a Control if it is a continuous, non negative, super-additive function which vanishes on the diagonal. 
\end{definition}

\begin{example}
Suppose that $\rx$ is a geometric rough path with finite $p$-variation, so that Equation \eqref{eq:pVarDef} is finite. Then $\omega_{\rx, p}(s, t):= \| \rx \|_{p-var; [s, t]}^p$ is a control. 
\end{example}

The Carnot-Carath\'eodory metric as already described takes its structure from the Group $G^M(\bR^{d'})$ and so is homogeneous with respect to the group dilation $\delta_\lambda$. However, there is another metric that takes its structure from the vector space $T^M(\bR^{d'})$. 

For two elements $g_1, g_2 \in T^M(\bR^{d'})$ and $i\in\{1, ..., M\}$ we have the collection of pseudo-metrics
\begin{equation}
\label{eq:pseudometric?}
\rho_i(g_1, g_2) = \sum_{\substack{A\in \cA_M\\ |A|=i}} \Big| \langle g_1, e_A\rangle - \langle g_2, e_A\rangle \Big|. 
\end{equation}
We also have the inhomogeneous Tensor metric
$$
\rho(g_1, g_2) = \max_{i=1, ..., M} \rho_i(g_1, g_2). 
$$

\begin{definition}
\label{dfn:InhomogeneousMetric}
Let $p=\tfrac{1}{\alpha}>2$. For a fixed control $\omega$, we define the inhomogeneous $\omega$-modulus metric to be
\begin{equation}
\label{eq:dfn:InhomogeneousOmega-Var}
\rho_{p-\omega; [0,T]}(\rx, \ry):= |\rx_0 - \ry_0|_{T^{\lfloor p\rfloor}(\bR^{d'})} 
+ \max_{i=1, ..., \lfloor p\rfloor} \sup_{s, t\in[0,T]} \frac{ \rho_i( \rx_{s, t}, \ry_{s, t}) }{\omega(s, t)^{i/p}}. 
\end{equation}

When we additionally have that $\omega(s, t)\leq C|t-s|$ where $C$ is a constant independent of $s, t$, we also have the inhomogeneous $\alpha$-H\"older metric to be
\begin{equation}
\label{eq:dfn:InhomogeneousHol}
\rho_{\alpha-\textrm{H\"ol}; [0,T]}(\rx, \ry):= |\rx_0 - \ry_0|_{T^{\lfloor p\rfloor}(\bR^{d'})} 
+ \max_{i=1, ..., \lfloor p\rfloor} \sup_{s, t\in[0,T]} \frac{ \rho_i( \rx_{s, t}, \ry_{s, t}) }{|t-s|^{\alpha i}}. 
\end{equation}
\end{definition}

The inhomogeneous rough path metrics satisfy the simple relation
\begin{equation}
\label{eq:relationforInhomoMetrics}
\rho_{p-var; [0,T]}(\rx, \ry) \leq \Big( 1\vee \max_{i=1, ..., \lfloor p\rfloor} \omega(0,T)^{i/p}\Big) \rho_{p-\omega; [0, T]}(\rx, \ry)
\end{equation}
by simple manipulation of the standard relation between $p$-variation and $\tfrac{1}{p}$-H\"older regularity, see \cite{friz2010multidimensional}. 

\begin{definition}
\label{definition:SteinNotation}
Let $E$ and $F$ be normed spaces. A map $f:E\to F$ is called $\gamma$-Lipschitz (in the sense of Stein) if $f$ is $\lfloor \gamma\rfloor$ continuously differentiable (in the sense of Fr\'echet) and such that there exists a constant $M<\infty$ such that the supremum norm of the $k^{th}$ derivative for $k=1, ..., \lfloor\gamma\rfloor$ and the $\{\gamma\}$-H\"older norm of its $\lfloor \gamma \rfloor^{th}$ derivative are bounded by $M$. The smallest $M\geq 0$ satisfying this condition is the $\gamma$-Lipschitz norm of $f$, denoted $\|f\|_{\lip^\gamma}$. The space of all such functions is denoted $\lip^\gamma(E, F)$. 

We also emphasise the distinction between $\lip_*^1(E, F)$, the space of functions $f:E\to F$ that are Lipschitz. 
\end{definition}

%\begin{lemma}[Geodesic Approximations]
%The space $G^N(\bR^d)$ is a Geodesic space under the Carnot-Caratheodory distance: For any two points $a, b\in G^N(\bR^d)$, there exists a continuous map $\Upsilon^{a, b}:[0,1]\to G^N(\bR^d)$ such that $\Upsilon^{a, b}(0) = a, \Upsilon^{a, b}(1) = b$  and
%$$
%d_{CC}(\Upsilon^{a, b}(s), \Upsilon^{a, b}(t)) = |t-s| d_{CC}(a, b). 
%$$
%In particular, let $\rx \in C^{p-var}([0,T], G^N(\bR^d))$ and for a partition $D = \{0 =t_0<t_1<t_2<...<t_n = T\}$, let $X^D$ be the path taking values in $\bR^d$ with the property that
%\begin{align*}
%X^D_t =& \rx_t^1 \quad \mbox{for $t\in D$}
%\\
%X^D_t =& \Upsilon^{\rx_{t_i}^1, \rx_{t_{i+1}}^1}\Big( \frac{t-t_i}{t_{i+1} - t_i}\Big)  \quad \mbox{for $t\in (t_i, t_{i+1})$}
%\end{align*}
%Then as $|D|\to 0$, we have
%\begin{equation}
%\label{eq:GeodesicApprox1}
%d_\infty (\rx, S_N(X^D)) \to 0, \qquad \sup_{n\in\bN} \| S_N(X^D) \|_{p-var; [0,T]} \leq 3^{\tfrac{p-1}{p}} \| \rx\|_{p-var; [0,T]}. 
%\end{equation}
%\end{lemma}

\begin{theorem}[\cite{lyons2007extension}]
Let $V=\bigoplus V^j$ be a vector space. 

Let $\alpha<1/2$ such that $\tfrac{1}{\alpha} \notin \bN$ and $M=\lfloor\tfrac{1}{\alpha}\rfloor$. Suppose that $\rx_t^j$ are $\alpha$-H\"older continuous paths taking values in $G^M(V^j)$. Then $\bigoplus_j \rx_t^j$ can be thought of as an $\alpha$-H\"older continuous path taking values in $\bigoplus_j G^M(V^j)$ and there exists an extension $\rx_t$ taking values in $G^M(V)$ that is $\alpha$-H\"older continuous with respect to the Carnot norm on $G^M(V)$. 
\end{theorem}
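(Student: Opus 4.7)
The plan is to prove this by smooth approximation combined with compactness and the Lyons extension estimate. By a simple induction on the number of summands, it suffices to treat the case of two components $V=V^{1}\oplus V^{2}$; the general case then follows by pairing off components successively. So fix two geometric $\alpha$-H\"older rough paths $\rx^{1}\in G^{M}(V^{1})$ and $\rx^{2}\in G^{M}(V^{2})$ with $M=\lfloor 1/\alpha\rfloor$.

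Because each $\rx^{j}$ is a \emph{geometric} (not just weakly geometric) $\alpha$-H\"older rough path, there exist piecewise smooth paths $x^{j,n}\in C^{\infty}([0,T];V^{j})$ such that the truncated signatures $S_{M}(x^{j,n})$ converge to $\rx^{j}$ in the $\alpha'$-H\"older rough path metric $\rho_{\alpha'\textrm{-H\"ol}}$ for every $\alpha'<\alpha$, and moreover such that the H\"older norms $\|x^{j,n}\|_{\alpha}$ remain uniformly bounded; this is the standard density statement for $G\Omega_{\alpha}(V^{j})$. I would then form the combined smooth path $y^{n}_{t}:=x^{1,n}_{t}\oplus x^{2,n}_{t}$ viewed as a path in $V$, and take its canonical Young signature $S_{M}(y^{n})\in G^{M}(V)$, well defined because $y^{n}$ is bounded variation. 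By construction, the restriction of $\langle S_{M}(y^{n}),\cdot\rangle$ to any word $A$ in the letters of $V^{j}$ alone equals $\langle S_{M}(x^{j,n}),e_{A}\rangle$, i.e.\ the diagonal subalphabet components of $S_{M}(y^{n})$ reproduce $\rx^{j,n}:=S_{M}(x^{j,n})$.

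The core estimate is the Lyons extension bound: for any smooth path $y:[0,T]\to V$,
\begin{equation*}
  \bigl\|S_{M}(y)_{s,t}\bigr\|_{cc}\;\leq\;C(M,\alpha)\,\|y\|_{p-\mathrm{var};[s,t]},\qquad p=1/\alpha,
\end{equation*}
which, combined with $\|y^{n}\|_{p-\mathrm{var};[s,t]}\leq\|y^{n}\|_{\alpha}|t-s|^{\alpha}$, yields the uniform bound
\begin{equation*}
  \sup_{n}\,\|S_{M}(y^{n})\|_{\alpha}\;\leq\;C(M,\alpha)\bigl(\sup_{n}\|x^{1,n}\|_{\alpha}+\sup_{n}\|x^{2,n}\|_{\alpha}\bigr)<\infty .
\end{equation*}
From here I would apply Arzel\`a-Ascoli on $([0,T],G^{M}(V))$ using this equicontinuity and pointwise precompactness (the paths all start at the identity) to extract a subsequence $n_{k}$ such that $S_{M}(y^{n_{k}})_{s,t}\to\rx_{s,t}$ uniformly in $(s,t)\in\Delta_{T}$. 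The pointwise limit $\rx$ is group-like (the set $G^{M}(V)$ is closed in $T^{M}(V)$) and satisfies Chen's identity, since these are closed conditions under pointwise convergence, so $\rx$ defines an element of the path space into $G^{M}(V)$.

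The $\alpha$-H\"older regularity of $\rx$ follows from lower semicontinuity of the $\alpha$-H\"older seminorm under pointwise convergence, using the uniform bound above. Finally, the extension property is immediate: for any word $A$ with letters in a single subspace $V^{j}$,
\begin{equation*}
  \langle\rx_{s,t},e_{A}\rangle\;=\;\lim_{k\to\infty}\langle S_{M}(y^{n_{k}})_{s,t},e_{A}\rangle\;=\;\lim_{k\to\infty}\langle S_{M}(x^{j,n_{k}})_{s,t},e_{A}\rangle\;=\;\langle\rx^{j}_{s,t},e_{A}\rangle,
\end{equation*}
so the $V^{j}$-diagonal components of $\rx$ are exactly $\rx^{j}$, and $\rx$ is $\alpha$-H\"older continuous with respect to the Carnot-Carath\'eodory norm on $G^{M}(V)$ as required. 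The main obstacle is controlling the \emph{cross iterated integrals} $\langle S_{M}(y^{n}),e_{A}\rangle$ for mixed words $A$: since $2\alpha<1$ these are not Young-integrable objects in the limit, so uniqueness fails and one must content oneself with \emph{some} extension; the Lyons extension estimate is precisely the tool that provides the uniform H\"older control needed to pass to a limit without a priori Young regularity on the cross terms.
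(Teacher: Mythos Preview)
The paper does not supply its own proof of this statement; it is quoted as a result from Lyons--Victoir \cite{lyons2007extension} and left unproved. So there is no ``paper's proof'' to match. Your task was therefore to produce a valid argument, and the strategy you outline---approximate each $\rx^{j}$ by signatures of smooth paths, combine the smooth paths in $V=V^{1}\oplus V^{2}$, take signatures, and pass to a limit via Arzel\`a--Ascoli---is a natural one. Unfortunately it contains a genuine gap at the step you call the ``Lyons extension bound''.

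The estimate
\[
\bigl\|S_{M}(y)_{s,t}\bigr\|_{cc}\;\leq\;C(M,\alpha)\,\|y\|_{p\text{-var};[s,t]},\qquad p=1/\alpha>2,
\]
is \emph{false} for smooth paths $y$. A standard counterexample: take $y^{\varepsilon}_{t}=\varepsilon\bigl(\cos(t/\varepsilon^{2}),\sin(t/\varepsilon^{2})\bigr)$ on $[0,2\pi]$. The L\'evy area over $[0,2\pi]$ equals $\pi$ for every $\varepsilon$, so $\|S_{2}(y^{\varepsilon})_{0,2\pi}\|_{cc}$ is bounded below uniformly, whereas $\|y^{\varepsilon}\|_{p\text{-var};[0,2\pi]}\approx\varepsilon^{1-2/p}\to 0$ for $p>2$. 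This is precisely the phenomenon that makes rough path theory necessary: for $p\geq 2$ the iterated integrals are \emph{not} controlled by the $p$-variation of the underlying path. Your uniform bound $\sup_{n}\|S_{M}(y^{n})\|_{\alpha}<\infty$ therefore has no justification, and without it the compactness step collapses.

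What actually works is more delicate. In the original Lyons--Victoir argument one does not approximate at all: one constructs the extension level by level, using at each stage a Lipschitz (or merely H\"older) section of the quotient $G^{k+1}(V)\to G^{k}(V)$ to fill in the cross Lie-bracket components while preserving the H\"older modulus. Alternatively, if you insist on approximation, you must choose the smooth paths $x^{j,n}$ to be \emph{sub-Riemannian geodesic} interpolants of $\rx^{j}$ on a dyadic partition; then on each partition interval the length of $y^{n}$ equals $\sum_{j}\|\rx^{j}_{t_{k},t_{k+1}}\|_{cc}\leq C|t_{k+1}-t_{k}|^{\alpha}$, and the required uniform $\alpha$-H\"older bound on $S_{M}(y^{n})$ follows not from a pointwise inequality but from Lyons' \emph{first} extension theorem (control of a multiplicative functional by a superadditive control on its increments). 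Either route requires substantially more than the single inequality you invoke.
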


\subsection{Controlled Rough Path}

A controlled rough path, first introduced in \cite{gubinelli2004controlling}, provides a path that is known to be adequately regular enough to be integrable with respect to a rough path. 

Let $V$ and $U$ be vector spaces and denote by $L(V, U)$ the space of Linear operators from $V$ to $U$. We define $T(V^*, U):= \bigoplus_{n=0}^\infty L\big((V^*)^{\otimes n}, U\big)$ and use the convention that $L\big( (V^*)^{\otimes 0}, U) = U$. As earlier, we are interested in the case where $V=\bR^{d'}$ and $U=\bR^d$. 

Given an element $\rx\in T(V^*)$ and $\bY\in T(V^*, U)$, we naturally obtain $\bY \rx \in U$. Also, in practice we work in the truncated tensor algebra $T^M(V^*, U):= \bigoplus_{n=0}^M L\big((V^*)^{\otimes n}, U\big)$ obtained by quotienting with respect to the ideal $\bigoplus_{n=M+1}^\infty L\big((V^*)^{\otimes n}, U\big)$. 

\begin{definition}
\label{dfn:ControlledRoughPath}
Let $\alpha\in(0, 1/2)$, let $M$ be the smallest integer such that $M\alpha < 1$ and let $\rx\in G\Omega_\alpha(V)$. Let $\cA_v$ be the alphabet of $V$. 

A $\rx$-controlled rough path $\bY:[0,T] \to T^{M-1}(V, U)$ and a remainder term $R: \Delta_T \to T^{M-1}(V, U)$ is any path such that for any word $A$ of the alphabet for $\cA_v$
$$
\langle \bY_{t}, e_A\rangle - \langle \bY_s, \rx_{s, t} \boxtimes e_A \rangle = \langle R_{s, t}, e_A\rangle,
$$
where
$$
\label{eq:dfn:ControlledRoughPath2}
\sup_{t,s\in [0,T]} \frac{|\langle R_{s, t}, e_A\rangle|}{|t-s|^{(M-|A|)\alpha}} < \infty. 
$$
The space of $\rx$-controlled rough paths, denoted $\cD_\rx^{M\alpha}( [0,T]; U)$ is the vector space of all $\rx$-Controlled paths with the norm
$$
\| \bY\|_{\rx, M\alpha} = \sum_{A\in \cA_M\backslash \{\varepsilon\}} \Big\| \langle \bY, e_A\rangle \Big\|_{|A|\alpha-\textrm{H\"ol}; [0,T]} \quad + \Big\| \langle R, e_\varepsilon\rangle \Big\|_{M\alpha-\textrm{H\"ol}; [0,T]}. 
$$
\end{definition}
Given an $\rx$-controlled rough path $\bY$ taking values on $L(V, U)$, we define the integral
$$
\int_0^T Y_t d\rx_t = \lim_{|D| \to 0} \sum_{i: t_i\in D} \Big\langle \bY_{t_i} , \rx_{t_i, t_{i+1}} \Big\rangle_{T^{M-1}\big(V, L(V, U)\big), T^{M}\big(V\big)}
$$
taking values in $U$. 

\begin{definition}
The Shuffle product over $T(V)$ can be represented as two Left and Right Half-shuffle products $e_A \shuffle e_B = e_A \prec e_B + e_A \succ e_B$ that satisfy the identities
\begin{align*}
(e_A \prec e_B) \prec e_C = e_A \prec ( e_B \shuffle e_C), 
\\
(e_A \succ e_B) \prec e_C = e_A \succ ( e_B \prec e_C), 
\\
(e_A \shuffle e_B) \succ e_C = e_A \succ ( e_B \succ e_C). 
\end{align*}
\end{definition}

Using the additional identity $e_A \prec e_B = e_B \succ e_A$, equivalent to commutivity of $\shuffle$, we observe that the Left and Right Half-shuffles satisfy a Left and Right Zinbiel identity. Thus $\succ$ and $\prec$ are sometimes referred to as Paraproducts. For any geometric rough path and any two words $A$ and $B$ we have
\begin{equation}
\label{eq:ZinbielProperty}
\int_s^t \langle \rx_{s, r}, e_A\rangle d \langle \rx_{s, r}, e_B\rangle = \langle \rx_{s, t}, e_{A} \succ e_{B}\rangle = \langle \delta_\succ [ \rx_{s, t}], e_A \otimes e_B \rangle. 
\end{equation}
where $\delta_\succ$ is the Right Half-Unshuffle. Using the Right Half-Unshuffle, we are able to ``stitch'' two controlled rough paths together to obtain an object that will satisfy the Sewing Lemma, providing us with a meaningful way to integrate a Controlled Rough Path with respect to another controlled rough path. 

\begin{theorem}
\label{thm:LiftofControlledPath}
Let $\bY$ and $\bZ$ be $\rx$-controlled rough paths. Then by exploiting Equation \eqref{eq:ZinbielProperty} we obtain
\begin{align*}
&\int_0^T Y_t \otimes dZ_t 
\\
&= \lim_{|D|\to 0} \sum_{i: t_i\in D} Y_{t_i}\otimes Z_{t_i, t_{i+1}} + \Big\langle \Big(\bY_{t_i}-Y_{t_i}\Big) \otimes \Big( \bZ_{t_i} - Z_{t_i} \Big) , \delta_\succ[\rx_{t_i, t_{i+1}}] \Big\rangle. 
\end{align*}
In a similar fashion, we obtain
\begin{align*}
&\int_s^t Y_{s,r} \otimes dZ_r 
\\
&= \lim_{|D|\to 0} \sum_{i: t_i\in D} \Big\langle \Big( \bY_{t_i} - Y_{t_i} \Big) \otimes \Big( \bZ_{t_i} - Z_{t_i} \Big) , \delta_\succ[\rx_{t_i, t_{i+1}}] \Big\rangle. 
\end{align*}

Given an $\rx$-controlled rough path $\bY$, one can extend it to a rough path $\ry$ taking values in $G^M(U)$. Define the path $\ry:[0,T] \to G^M(U)$ by
\begin{align}
\label{eq:thm:LiftofControlledPath}
\ry_{s, t}=& \rId+ \sum_{k=1}^M \lim_{|D|\to 0} \sum_{i: t_i\in D} \Big\langle \big( \bY_{t_i} - Y_{t_i}\big)^{\otimes k}, (\delta_\succ)^{k}[\rx_{t_i, t_{i+1}}] \Big\rangle
\end{align}
where the iterated coproduct $(\delta_\succ)^k: T^M(V^*) \to T^M(V^*)^{\otimes k}$ is defined inductively by
$$
(\delta_\succ)^2 = ( (\delta_\succ) \otimes I) \delta_\succ, \quad (\delta_\succ)^{k+1} = ((\delta_\succ) \otimes I^{\otimes k}) \delta_\succ^k. 
$$
\end{theorem}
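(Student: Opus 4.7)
The plan is to deduce both the integral formulas and the lift $\ry$ from the Gubinelli Sewing Lemma applied level-by-level, with the algebraic identities of the dendriform pair $(\succ,\prec)$ encoding both Chen's relation and the shuffle (geometric) structure.

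First, for the integrals $\int_0^T Y_t\otimes dZ_t$ and $\int_s^t Y_{s,r}\otimes dZ_r$, I would set up the local germ
\[
\Xi_{s,t} := Y_s\otimes Z_{s,t} + \Big\langle (\bY_s - Y_s)\otimes(\bZ_s - Z_s),\, \delta_\succ[\rx_{s,t}]\Big\rangle,
\]
and estimate its Chen-type defect $\Xi_{s,u}-\Xi_{s,t}-\Xi_{t,u}$. Expanding via Chen's identity $\rx_{s,u}=\rx_{s,t}\boxtimes \rx_{t,u}$, the half-coproduct identity that produces $\delta_\succ(\rx_{s,t}\boxtimes\rx_{t,u})$ from Zinbiel structure, and the controlled-path expansions $\bY_t = \bY_s\cdot \rx_{s,t}+R^{\bY}_{s,t}$ and $\bZ_t=\bZ_s\cdot \rx_{s,t}+R^{\bZ}_{s,t}$, the terms producing $|t-s|^{\alpha |A|}$ from $\rx$ pair with those gaining $|t-s|^{\alpha(M-|A|)}$ from $R^{\bY}$ or $R^{\bZ}$. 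Because $M\alpha<1$ is the maximum level and the remainders gain one extra ``slot'' of regularity, the combined Hölder exponent of the defect exceeds one. Sewing then delivers the limits exactly as displayed.

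Next, for the lift to $G^M(U)$ through \eqref{eq:thm:LiftofControlledPath}, I would apply the same argument one level at a time to the germs
\[
\Xi^{(k)}_{s,t} := \Big\langle (\bY_s - Y_s)^{\otimes k},\, (\delta_\succ)^{k}[\rx_{s,t}]\Big\rangle,\qquad k=1,\ldots,M,
\]
obtaining $\ry^{(k)}:\Delta_T\to U^{\otimes k}$ with $k\alpha$-Hölder regularity from Definition~\ref{dfn:ControlledRoughPath}. Chen's relation $\ry_{s,u}=\ry_{s,t}\boxtimes \ry_{t,u}$ is then proved by induction on $k$: at level $m$, I expand the defect of $\Xi^{(m)}$ using the coassociativity relations of the iterated half-coproduct $(\delta_\succ)^m$ together with Chen's identity for $\rx$, and show that every surviving term in the sewing limit either contributes the lower-order cross product $\sum_{k=1}^{m-1} \ry^{(k)}_{s,t}\otimes \ry^{(m-k)}_{t,u}$ (the multiplicative part of the defect) or has combined regularity $>1$ (so vanishes under sewing). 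Geometricity, that is $\ry_{s,t}\in G^M(U)$, follows from the Zinbiel identities $(e_A\prec e_B)\prec e_C=e_A\prec(e_B\shuffle e_C)$ and its companions together with the fact that $\rx$ is geometric: an induction on word length promotes the identity \eqref{eq:ZinbielProperty} from $\rx$ to $\ry$, giving $\langle \ry_{s,t},e_U\rangle\langle\ry_{s,t},e_V\rangle=\langle \ry_{s,t},e_U\shuffle e_V\rangle$ for all words $U,V$.

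The principal obstacle is the bookkeeping in the level-$k$ Sewing step. Because $(\delta_\succ)^k$ is not cocommutative, the Sweedler decomposition of its application to $\rx_{s,t}\boxtimes \rx_{t,u}$ produces many terms that must be matched, by hand, against either lower-order cross products (to be absorbed by Chen's multiplicative correction) or products of $\rx$-components and remainders $R^{\bY}$ whose combined Hölder exponent strictly exceeds $1$. Provided the dendriform identities are tracked systematically through the ordering of the $k$ tensor slots, every unwanted term cancels and the Sewing Lemma produces the required rough path lift.
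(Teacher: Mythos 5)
Your proposal follows essentially the same route as the paper: the identical germ $\Xi_{s,t}=Y_s\otimes Z_{s,t}+\langle(\bY_s-Y_s)\otimes(\bZ_s-Z_s),\delta_\succ[\rx_{s,t}]\rangle$, the Sewing Lemma applied to its Chen defect, the controlled-path remainder identities, and the half-shuffle/Zinbiel coproduct bookkeeping. In fact you supply more detail than the paper does on the level-$k$ germs, the inductive Chen relation, and geometricity of the lift — the paper only carries out the first-level computation and defers the full construction of $\ry$ to a forthcoming preprint.
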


\begin{proof}[Proof of Theorem \ref{thm:LiftofControlledPath}]
The ideas behind this proof are well understood, although to the best of the authors knowledge have not been written using the language of Zinbiel algebras before. 

Firstly, 
$$
\int_s^t Y_r \otimes dZ_r = Y_s \otimes Z_{s, t} + \int_s^t Y_{s, r} \otimes dZ_r
$$
and from the definition of controlled rough paths we have
\begin{align*}
Y_s \otimes Z_{s, t} =& Y_s \otimes \langle \bZ_s, \rx_{s, t}-\rId\rangle + Y_s \otimes \langle R^Z_{s, t}, e_\varepsilon\rangle, 
\\
Y_{s, r} =& \langle \bY_{s}, \rx_{s, r} -\rId\rangle + \langle R^Y_{s, r}, e_\varepsilon\rangle. 
\end{align*}
Thus
\begin{align*}
&\int_s^t Y_{s, r} \otimes dZ_{r} 
\\
&= \Bigg( \Big\langle \bY_s - Y_s, \int_s^t \rx_{s, r} \Big\rangle + \int_s^t \langle R^Y_{s, r}, e_\varepsilon\rangle \Bigg) \otimes \Bigg( \Big\langle \bZ_s - Z_s, d\rx_{s, r} \Big\rangle + \langle dR_{s, r}^Z, e_\varepsilon\rangle \Bigg), 
\\
&=\Bigg\langle \Big( \bY_s-Y_s \Big) \otimes \Big( \bZ_s - Z_s\Big), \int_s^t \rx_{s, r} d\rx_{s, r} \Bigg\rangle + o\Big( |t-s| \Big)
\end{align*}
as $|t-s| \to 0$ where we use the identity from Equation \eqref{eq:ZinbielProperty} and the regularity of Definition \ref{dfn:ControlledRoughPath}. Similarly
$$
\int_s^t Y_r \otimes dZ_r = Y_s \otimes Z_{s, t} + \Big\langle ( \bY_s - Y_s) \otimes ( \bZ_s - Z_s), \delta_\succ[\rx_{s, t}] \Big\rangle + o(|t-s|). 
$$
Motivated by this, we verify the conditions of the Sewing Lemma (see \cite{frizhairer2014}*{Lemma 4.2}) with
$$
\Xi_{s, t}:=Y_s \otimes Z_{s, t} + \Big\langle ( \bY_s- Y_s) \otimes (\bZ_s-Z_s), \delta_\succ[ \rx_{s, t}] \Big\rangle. 
$$
Thus for $s<t<u \in [0,T]$, 
\begin{align*}
\delta \Xi_{s,t,u} =& \Xi_{s, u} - \Xi_{s, t} - \Xi_{t, u} 
\\
=&-Y_{s, t} \otimes Z_{t, u} + \Bigg\langle \Big( (\bY - Y) \otimes (\bZ - Z)\Big)_{s, t}, \delta_{\succ}[ \rx_{t, u}] \Bigg\rangle
\\
&+ \sum_{A, B} \Big((\bY_s-Y_s) \otimes (\bZ_s-Z_s)\Big) [e_A \otimes e_B] \Big\langle \rx_{s, t} \otimes \rx_{t, u}, \overline{\Delta}[ e_A \succ e_B] \Big\rangle
\end{align*}
where $\overline{\Delta}$ is the reduced Coproduct. Next, we substitute in for the increments using the identities
\begin{align*}
\langle \bY_t, e_A\rangle - \langle \bY_s, \rx_{s, t} \boxtimes e_A\rangle = \langle R_{s, t}, e_A\rangle, 
\\
\langle \bZ_t, e_B\rangle - \langle \bZ_s, \rx_{s, t} \boxtimes e_B\rangle = \langle R_{s, t}, e_B\rangle. 
\end{align*}
Next, we use Sweedler notation to represent the identity
$$
\overline{\Delta} [e_A \succ e_B] = \sum_{A', A''} \sum_{B', B''} = e_{A' \shuffle B'} \otimes e_{ A'' \succ B''}. 
$$

Therefore
$$
\sup_{s, t, u\in [0,T]} \frac{\delta \Xi_{s, t,u}}{|u-s|^{M\alpha}} =o\Big(|u-s|\Big).  
$$

The ideas behind this proof are well understood (see \cite{lyons2007differential}*{p.74}) where Y is the solution to a linear rough differential equation, although to the best of the authors' knowledge they have not been written before using the language of Zinbiel algebras and for general controlled rough paths. We refer the reader to the forthcoming preprint \cite{cass2020Instrinsic}, where a proof is given of this result.

\end{proof}

%%%

%\int_{s\leq u_1, < ... < u_{m} \leq t} d\langle\langle\bY_{u_1}, \varepsilon\rangle, e_{a_1}\rangle \otimes ... \otimes d\langle\langle\bY_{u_m}, \varepsilon\rangle, e_{a_m} \rangle
%\\
%=& \int_{s\leq u_1, < ... < u_{m} \leq t}  \langle \bY_s d\rx_{s, u_1} , e_{a_1}\rangle \otimes ... \otimes \langle \bY_s d\rx_{s, u_m} , e_{a_m}\rangle
%\\
%=& \Big\langle \sum_{\substack{l_1. ..., l_m=1 \\ l_1 + ... + l_m \leq \lfloor p \rfloor}}^{\lfloor p \rfloor} \bY_s^{(l_1)} \otimes ... \otimes \bY_s^{(l_m)} \int_{s\leq u_1 \leq...\leq u_m \leq t} d\rx_{s, u_1}^{l_1} \otimes ... \otimes d\rx_{s, u_m}^{l_m}, e_A \Big\rangle
%\\
%&+ o\Big( \|R\|_{\alpha m, [s,t]} \Big)
%As nicely explained in \cite{lyons1998differential}, the domain of integration here is not a simplex but the product of simplexes and can be represented as a union of disjoint simplexes obtained by shuffling. The domain can be characterised by
%
%\begin{alignat*}{8}
%s \leq u_{1, 1} &\leq &u_{1, 2} &\leq &... &\leq& u_{1, l_1-1} &\leq& u_1&
%\\
%&&&&&&&&\rotatebox[origin=c]{-90}{$\leq$}&
%\\
%u_{2, 1} &\leq &u_{2, 2} &\leq &... &\leq& u_{1, l_2-1} &\leq& u_2&
%\\
%&&&&&&&&\rotatebox[origin=c]{-90}{$\leq$}&
%\\
%&&&&&&&& \vdots&
%\\
%u_{m, 1} &\leq &u_{m, 2} &\leq &... &\leq& u_{m, l_m-1} &\leq& u_m &\leq t. 
%\end{alignat*}

%%%

\section{Appendix}
\label{sec:AppendixA}

\begin{proof}[Proof of Lemma \ref{lem:DivisionQuantization+Truncation}]
From Definition \ref{dfn:OptimalQuantizer}, we have
\begin{align*}
\fE_{n,r}(\cL) =& \min_{h_1, ..., h_n\in E} \Bigg( \int_E \min_{i=1, ..., n} \big\| x - h_i\big\|_E^r d\cL(x) \Bigg)^{1/r}
\leq \min_{h_1, ..., h_n\in P_U[E] } \Bigg( \int_E \min_{i=1, ..., n} \big\| x - h_i\big\|_E^r d\cL(x) \Bigg)^{1/r}, 
\\
\leq& 2^{(r-1)/r} \min_{h_1, ..., h_n\in P_U[E] } \Bigg( \int \int_{P_U[E] \times (I-P_U)[E]} \min_{i=1, ..., n} \big\| P_U[x] - h_i\big\|_E^r d\cL\Big(P_U[x]\Big) d\cL\Big((I-P_U)[x]\Big) 
\\
&+ \int_E \big\| (I-P_U)[x] \big\|_E^r d\cL(x) \Bigg)^{1/r}, 
\end{align*}
since by the assumption that $P_U$ is a projection on $\cH$ (rather than $E$), the two laws $\cL\circ (P_U)^{-1}$ and $\cL\circ (I-P_U)^{-1}$ are independent with respect to the joint law $\cL$. Exploiting this, we get
\begin{align*}
\fE_{n,r}(\cL) \leq& 2^{(r-1)/r} \Bigg( \int_{(I-P_U)[E]} \fE_{n,r}(\cL\circ (P_U)^{-1})^r d\cL\Big( (I-P_U)[x]\Big) + \int_{E} \big\| x\big\|_E^r d\cL\Big( (I-P_U)[x]\Big) \Bigg)^{1/r}
\\
\leq& 2^{(r-1)/r} \Bigg( \fE_{n,r}(\cL\circ (P_U)^{-1} ) + \Big( \int_{E} \big\| x - P_U[x]\big\|_E^r d\cL(x) \Big)^{1/r} \Bigg). 
\end{align*}
\end{proof}

\begin{proof}[Proof of Proposition \ref{pro:OptimalKarhunen}]
Define the Covariance Kernel 
$ \cS:C^{\alpha, 0}([0,T]; \bR^{d'})^* \to C^{\alpha, 0}([0,T]; \bR^{d'})$ by 
$$
\cS[ f]_t = \bE\Big[ f(W) \cdot W_t\Big], \quad \cS = i i^*
$$
where $i i^*$ is the Spectral representation of $\cS$. For a Hilbert space $\cH$ and a Banach space $E$, we define the Operator $l$-topology, on the collection of bounded linear operators $i:\cH \to E$ to be
$$
l(i):=\bE\Bigg[ \Big\| \sum_{k\in \bN} i[h_k] \xi_k \Big\|_E^2 \Bigg]^{1/2}
$$
where $(h_k)_{k\in \bN}$ is an orthonormal basis of $\cH$ and $\xi_k$ are i.i.d normal random variables. It is well known, see for example \cite{figiel1979projections}, that the closure in the $l$-topology of the finite rank operators is the compact operators. We wish to find the finite dimensional operator that best approximates the Spectral representation $i$ of the Covariance Kernel $\cS$ of Brownian motion in the $l$-topology. 

We follow the methods of \cite{bay2017karhunen}. Using Theorem \ref{thm:Ciecielski}, we can equivalently think of $\cL^W$ as a law over the Banach spaces of sequences $(W_{pm})_{(p, m)\in \Delta}$ that satisfy
$$
\sup_{(p, m)\in\Delta} 2^{p(\alpha - 1/2)} |W_{pm}| < \infty, \quad \lim_{p\to \infty} 2^{p(\alpha-1/2)} \sup_{m=1, ..., 2^p} |W_{pm}| = 0. 
$$
Equivalently, we think of elements of the dual space $C^{\alpha, 0}([0,T]; \bR^{d'})^*$ as being sequences over $\Delta$ that satisfy
$$
f = (f_{pm})_{(p, m)\in\Delta}, \quad \| f\|_{\alpha-\textrm{H\"ol}, *} = \sum_{(p, m)\in \Delta} 2^{p(1/2-\alpha)} |f_{pm}|<\infty.
$$
It is well known that we work with the operator
$$
i(f) = \sum_{(p, m)\in \Delta} f_{pm} W_{pm}, 
$$
where $W_{pm} = \int_0^T W_{pm}(s) H_{pm}(s) ds$ are independent normally distributed random variables with mean 0 and variance 1. 

Thus 
$$
\cS\Big[(f_{pm})_{(p, m)\in \Delta}\Big](t) = \sum_{(p, m)\in\Delta} f_{pm} G_{pm}(t), \quad f\Big( \cS[f]\Big) = \sum_{(p, m)\in \Delta} |f_{pm}|^2 . 
$$

We wish to maximise this Quadratic form subject to the linear condition
$$
\| f\| = \sum_{(p, m)\in \Lambda} 2^{p(1/2-\alpha)} |f_{pm}|= 1.
$$
By a simple convexity argument, the functionals that attain this maximisation problem will be wavelet evaluation functionals and hence 
$$
\lambda^{(001)} = \sup_{\| f\|_{\alpha-\textrm{H\"ol},*} = 1} f\Big( \cS[f]\Big) = 1 = f^{(001)}\Big( \cS[f^{(001)}]\Big), 
$$
where $f^{(001)} = (f^{(001)}_{pm})_{(p,m)\in \Delta}$ satisfies $f^{(001)}_{00} = e_1$ and $f^{(001)}_{pm}=0$ else. We label $\cS[f^{(001)}](t) = x^{(001)}(t) = G_{00}(t) e_1 \in C^{\alpha, 0}([0,T]; \bR^{d'})$. We define $\cS_{\mu_{001}}[f]:=\lambda^{(001)} f(x^{(001)}) x^{(001)}$ and $\cS_{001}[f] = \cS[f] - \cS_{\mu_{001}}[f]$. 

By construction, we have that the operator $\cS_{\mu_{001}}$ is the Covariance Kernel of the 1-dimensional Gaussian measure that best approximates $\cL^W$ in mean square. Equivalently, the Spectral representation $\cS_{\mu_{001}}=i_{\mu_{001}}i_{\mu_{001}}^*$ yields that $i_{\mu_{001}}$ is the 1-dimensional operator that best approximates $i$ in the $l$-topology. 

By repeating this method, we obtain a sequence of so-called ``Rayleigh coefficients'' and ``Rayleigh Functionals'' parametrised by $(q, n, i)\in \Delta \times \{1, ..., d'\}$ as
$$
\lambda^{(qni)} = 2^{q(2\alpha - 1)}, \quad f^{(qni)} = (f^{(qni)}_{pm})_{(p, m)\in \Delta}, \quad f^{(qni)}_{pm} = \delta_{p, q} \delta_{m, n} e_i, 
$$
and elements $G_{qn}$ that are orthonormal in $\cH$. 

For fixed $N\in \bN$, we obtain the first $d'\cdot 2^N$ elements of these sequences. We construct the projection operator $P_N:C^{\alpha, 0}({[0,T]}, \bR^{d'}) \to C^{\alpha, 0}({[0,T]}, \bR^{d'})$ defined by
$$
P_N[x](t) = \sum_{(q, n)\in \Delta_N}\sum_{i=1}^{d'} f^{(qni)}(x) G_{qn}(t) e_i. 
$$

Next, we decompose the law $\cL^W = \mu_{N} * \cL_N^W$ where $\mu_N = \cL^W\circ P_N^{-1}$ and $\cL^W_N = \cL^W\circ (I-P_N)^{-1}$. $\mu_N$ is a $2^N$-dimensional multivariate Gaussian distribution. $\cL_N^W$ is a Gaussian measure over $C^{\alpha, 0}({[0,T]}, \bR)$ with Kernel $S_N$ that satisfies 
$$
\sup_{f\in C^{\alpha, 0}({[0,T]}, \bR)^*} f\Big( \cS_N[f]\Big)\leq 2^{(N+1)(\alpha - 1/2)}. 
$$

In particular, for a random variable $W$ with law $\cL^W$ we have that  random variable
$$
P_N[W](t) = \sum_{(p, m)\in \Delta_N} W_{pm} G_{pm}(t)
$$
has law $\mu_N$ and
$$
\sup_{f\in C^{\alpha, 0}({[0,T]}, \bR)^*} \bE\Big[ f\Big( W - P_N[W]\Big)^2 \Big] = 2^{(N+1)(2\alpha - 1)}. 
$$
\end{proof}

\begin{proof}[Proof of Theorem \ref{thm:ExtensionTheorem1}]
Let $V=\bR^{d'\times n}$ and $U=\bR^{d\times n}$ with alphabets $\cA$ and $\hat{\cA}$ both with $n$ subalphabets 
\begin{align*}
\cA^j=\Big\{ (i, j): i\in \{1, ..., d'\} \Big\},
\quad
\hat{\cA}^j=&\Big\{ (i, j): i\in \{1, ..., d\} \Big\}. 
\end{align*}
where $j\in \{1, ..., n\}$. Thus all the vector spaces $V^j$ are isomorphic to $\bR^{d'}$ and $U^j$ are isomorphic to $\bR^d$ but each $V^j$ and $U^j$ is distinct and identifyable. As with the normal subgroup constructed in Equation \eqref{eq:NormalSubGroup}, we know the normal subgroup that generates the cosets for the quotient group is
\begin{align*}
I^M\Big( \bR^{d'\times n}\Big) =& \Big\{ h\in T^M\Big(\bR^{d'\times n}\Big): \langle h, e_I\rangle =0, \forall I \mbox{ s.t. } \exists j\in\{1, ..., n\} \mbox{ with } I\in \cA^j \Big\}, 
\\
K^M\Big( \bR^{d'\times n}\Big) =& \exp_\boxtimes\Big( I^M\big( \bR^{d'\times n}\big) \Big). 
\end{align*}
For $s, t\in[0,T]$, let 
$$
\pi_{G^M\big( \bR^{d'\times n}\big) / K^M\big( \bR^{d'\times n} \big)}\Big[ \tilde{\rw}_{s, t} \Big] = \tilde{\rw}_{s, t} \boxtimes K^M\big( \bR^{d'\times n} \big) = \rw_{s,t}. 
$$
By Theorem \ref{thm:LiftofControlledPath}, we know this is equal to
\begin{equation}
\label{eq:thm:ExtensionTheorem1.1}
\rx_{s, t}:= \rId + \lim_{|D|\to 0} \sum_{i: t_i\in D} \Bigg( \sum_{k=1}^M \Big\langle \big( \bX_{t_i} - X_{t_i}\big)^{\otimes k}, (\delta_\succ)^k[ \rw_{t_i, t_{i+1}}] \Big\rangle + B(X_{t_i}) (t_{i+1}-t_{i}) \Bigg),
\end{equation}
and $\bX$ is defined as in Equation \eqref{eq:ControlledXFormula}. It is important to realise that the drift term, the only term that contains the ``measure like'' contributions, is only included in the first level of the signature. Measure dependencies are generally smoother than path dependencies and their higher regularity means they are $o\big( |D|^{1+}\big)$. 

Next for $t_i\in D$, we have
\begin{align*}
(\bX_{t_i} - X_{t_i})^{\otimes k} =& \Big( \Sigma(X_{t_i}) + \Sigma \star \Sigma(X_{t_i}) + ... + \Sigma^{\star (M-1)} (X_{t_i}) \Big)^{\otimes k}
\\
=&\sum_{\substack{l_1, ..., l_k=1\\ l_1+...+l_k\leq M}}^{M-1} \bigotimes_{m=1}^k \Sigma^{\star l_m}(X_{t_i}). 
\end{align*}

Using Definition \ref{dfn:BandSigma} and Lemma \ref{lem:FstarG} we have that there exists $f_j: U^j \to L\Big( (V^j)^{\oplus l_m}, U^j\Big)$ such that
$$
\Sigma^{\star l_m}(X_s) = \diag_{j=1, ..., n}\Big( f_j(\langle X_s, e_{(\cdot, j)} ) \Big). 
$$
Similarly, there exist functions $g_j:U^j \to L\Big( (V^j)^{\oplus (l_1+...+l_k)}, U^j\Big)$ such that
$$
\bigotimes_{m=1}^k \Sigma^{\star l_m}(X_s) = \diag_{j=1, ..., n}\Big( g_j(\langle X_s, e_{(\cdot, j)} ) \Big), 
$$
which is an operator restricted to the subgroup $\oplus{j=1}^n G^M(V^j)$. Thus Equation \eqref{eq:thm:ExtensionTheorem1.1} is dependent on the tensor of rough paths $\rw$ and not on the Extension $\tilde{\rw}$. 
\end{proof}

%%%%%%%%%%%%%%%%%%%%%%%%%%%%%%%%%%%%%%%%%%%%%%%%%%%%%%%%%%%
%%%%%%%%%%%%%%%%%%%%%%%%%%%%%%%%%%%%%%%%%%%%%%%%%%%%%%%%%%%
%%%   BIBLIOGRAPHY
%%%%%%%%%%%%%%%%%%%%%

%\bibliography{../Bibliography.bib} 
\bibliographystyle{plain}

% \bib, bibdiv, biblist are defined by the amsrefs package.
\begin{bibdiv}
\begin{biblist}

\bib{aubin1990stochastic}{article}{
      author={Aubin, Jean-Pierre},
      author={Da~Prato, Giuseppe},
       title={Stochastic viability and invariance},
        date={1990},
        ISSN={0391-173X},
     journal={Ann. Scuola Norm. Sup. Pisa Cl. Sci. (4)},
      volume={17},
      number={4},
       pages={595\ndash 613},
  url={http://www.numdam.org.ezproxy.is.ed.ac.uk/item?id=ASNSP_1990_4_17_4_595_0},
      review={\MR{1093711}},
}

\bib{ben1994holder}{article}{
      author={Ben~Arous, G\'{e}rard},
      author={Gr\u{a}dinaru, Mihai},
      author={Ledoux, Michel},
       title={H\"{o}lder norms and the support theorem for diffusions},
        date={1994},
        ISSN={0246-0203},
     journal={Ann. Inst. H. Poincar\'{e} Probab. Statist.},
      volume={30},
      number={3},
       pages={415\ndash 436},
         url={http://www.numdam.org/item?id=AIHPB_1994__30_3_415_0},
      review={\MR{1288358}},
}

\bib{bay2017karhunen}{article}{
      author={Bay, Xavier},
      author={Croix, Jean-Charles},
       title={Karhunen-{L}o{\`e}ve decomposition of {G}aussian measures on
  {B}anach spaces},
        date={2019},
     journal={Probability and Mathematical Statistics},
      volume={39},
      number={2},
}

\bib{benedetto1998non}{article}{
      author={Benedetto, D.},
      author={Caglioti, E.},
      author={Carrillo, J.~A.},
      author={Pulvirenti, M.},
       title={A non-{M}axwellian steady distribution for one-dimensional
  granular media},
        date={1998},
        ISSN={0022-4715},
     journal={J. Statist. Phys.},
      volume={91},
      number={5-6},
       pages={979\ndash 990},
         url={https://doi.org/10.1023/A:1023032000560},
      review={\MR{1637274}},
}

\bib{2018arXiv180205882B}{article}{
      author={{Bailleul}, I.},
      author={{Catellier}, R.},
      author={{Delarue}, F.},
       title={{Mean field rough differential equations}},
        date={2018-02},
     journal={ArXiv e-prints},
      eprint={1802.05882},
}

\bib{2019arXiv190700578B}{article}{
      author={{Bailleul}, I.},
      author={{Catellier}, R.},
      author={{Delarue}, F.},
       title={{Propagation of Chaos for Mean field rough differential
  equations}},
        date={2019-07},
     journal={ArXiv e-prints},
      eprint={1907.00578},
}

\bib{2019arXiv180205882.2B}{article}{
      author={Bailleul, Isma{\"e}l},
      author={Catellier, R{\'e}mi},
      author={Delarue, Fran{\c{c}}ois},
       title={Solving mean field rough differential equations},
        date={2020},
     journal={Electronic Journal of Probability},
      volume={25},
}

\bib{burger2007aggregation}{article}{
      author={Burger, Martin},
      author={Capasso, Vincenzo},
      author={Morale, Daniela},
       title={On an aggregation model with long and short range interactions},
        date={2007},
        ISSN={1468-1218},
     journal={Nonlinear Anal. Real World Appl.},
      volume={8},
      number={3},
       pages={939\ndash 958},
  url={https://doi-org.ezproxy.is.ed.ac.uk/10.1016/j.nonrwa.2006.04.002},
      review={\MR{2307761}},
}

\bib{bolley2013uniform}{article}{
      author={Bolley, Fran\c{c}ois},
      author={Gentil, Ivan},
      author={Guillin, Arnaud},
       title={Uniform convergence to equilibrium for granular media},
        date={2013},
        ISSN={0003-9527},
     journal={Arch. Ration. Mech. Anal.},
      volume={208},
      number={2},
       pages={429\ndash 445},
         url={https://doi-org.ezproxy.is.ed.ac.uk/10.1007/s00205-012-0599-z},
      review={\MR{3035983}},
}

\bib{bittencourt2013fundamentals}{book}{
      author={Bittencourt, Jos{\'e}},
       title={Fundamentals of plasma physics},
   publisher={Springer Science \& Business Media},
        date={2013},
}

\bib{weisstein2002lambert}{article}{
      author={Borwein, Jonathan~M.},
      author={Lindstrom, Scott~B.},
       title={Meetings with {L}ambert {$\mathcal{W}$} and other special
  functions in optimization and analysis},
        date={2016},
        ISSN={2189-3756},
     journal={Pure Appl. Funct. Anal.},
      volume={1},
      number={3},
       pages={361\ndash 396},
      review={\MR{3622396}},
}

\bib{bogachev1998gaussian}{book}{
      author={Bogachev, Vladimir~I.},
       title={Gaussian measures},
      series={Mathematical Surveys and Monographs},
   publisher={American Mathematical Society, Providence, RI},
        date={1998},
      volume={62},
        ISBN={0-8218-1054-5},
         url={https://doi.org/10.1090/surv/062},
      review={\MR{1642391}},
}

\bib{Bolley2008}{book}{
      author={Bolley, F.},
       title={Separability and completeness for the {W}asserstein distance},
      series={Lecture Notes in Math.},
   publisher={Springer, Berlin},
        date={2008},
      volume={1934},
         url={https://doi.org/10.1007/978-3-540-77913-1_17},
      review={\MR{2483740}},
}

\bib{buckdahn2010another}{article}{
      author={Buckdahn, Rainer},
      author={Quincampoix, Marc},
      author={Rainer, Catherine},
      author={Teichmann, Josef},
       title={Another proof for the equivalence between invariance of closed
  sets with respect to stochastic and deterministic systems},
        date={2010},
        ISSN={0007-4497},
     journal={Bull. Sci. Math.},
      volume={134},
      number={2},
       pages={207\ndash 214},
  url={https://doi-org.ezproxy.is.ed.ac.uk/10.1016/j.bulsci.2007.11.003},
      review={\MR{2592970}},
}

\bib{baldi1992some}{article}{
      author={Baldi, P.},
      author={Roynette, B.},
       title={Some exact equivalents for the {B}rownian motion in {H}\"{o}lder
  norm},
        date={1992},
        ISSN={0178-8051},
     journal={Probab. Theory Related Fields},
      volume={93},
      number={4},
       pages={457\ndash 484},
         url={https://doi.org/10.1007/BF01192717},
      review={\MR{1183887}},
}

\bib{binney2011galactic}{book}{
      author={Binney, James},
      author={Tremaine, Scott},
       title={Galactic dynamics},
   publisher={Princeton university press},
        date={2011},
      volume={20},
}

\bib{CarmonaDelarue2017book1}{book}{
      author={Carmona, Rene},
      author={Delarue, Francois},
       title={Probabilistic theory of mean field games with applications {I}},
     edition={1},
      series={Probability Theory and Stochastic Modelling},
   publisher={Springer International Publishing},
        date={2017},
      volume={84},
}

\bib{CarmonaDelarue2017book2}{book}{
      author={Carmona, Rene},
      author={Delarue, Francois},
       title={Probabilistic theory of mean field games with applications {II}},
     edition={1},
      series={Probability Theory and Stochastic Modelling},
   publisher={Springer International Publishing},
        date={2017},
      volume={84},
}

\bib{cass2020Instrinsic}{article}{
      author={Cass, Tom},
      author={Driver, Bruce},
      author={Ferrucci, Emilio},
      author={Litterer, Christian},
       title={Intrinsic rough paths on banach manifolds},
        date={2020},
     journal={internal preprint},
}

\bib{coghi2018pathwise}{article}{
      author={Coghi, Michele},
      author={Deuschel, Jean-Dominique},
      author={Friz, Peter},
      author={Maurelli, Mario},
       title={Pathwise {M}c{K}ean-{V}lasov theory with additive noise},
        date={2018},
     journal={arXiv preprint arXiv:1812.11773},
}

\bib{carmona2013control}{article}{
      author={Carmona, Ren\'{e}},
      author={Delarue, Fran\c{c}ois},
      author={Lachapelle, Aim\'{e}},
       title={Control of {M}c{K}ean-{V}lasov dynamics versus mean field games},
        date={2013},
        ISSN={1862-9679},
     journal={Math. Financ. Econ.},
      volume={7},
      number={2},
       pages={131\ndash 166},
         url={https://doi-org.ezproxy.is.ed.ac.uk/10.1007/s11579-012-0089-y},
      review={\MR{3045029}},
}

\bib{chouk2018support}{article}{
      author={Chouk, K.},
      author={Friz, P.~K.},
       title={Support theorem for a singular {SPDE}: the case of g{PAM}},
        date={2018},
        ISSN={0246-0203},
     journal={Ann. Inst. Henri Poincar\'{e} Probab. Stat.},
      volume={54},
      number={1},
       pages={202\ndash 219},
         url={https://doi.org/10.1214/16-AIHP800},
      review={\MR{3765886}},
}

\bib{caballero1997composition}{incollection}{
      author={Caballero, Mar\'{\i}a~Emilia},
      author={Fern\'{a}ndez, Bego\~{n}a},
      author={Nualart, David},
       title={Composition of skeletons and support theorems},
        date={1997},
   booktitle={Stochastic differential and difference equations ({G}y\"{o}r,
  1996)},
      series={Progr. Systems Control Theory},
      volume={23},
   publisher={Birkh\"{a}user Boston, Boston, MA},
       pages={21\ndash 33},
      review={\MR{1636824}},
}

\bib{cattiaux2008probabilistic}{article}{
      author={Cattiaux, P.},
      author={Guillin, A.},
      author={Malrieu, F.},
       title={Probabilistic approach for granular media equations in the
  non-uniformly convex case},
        date={2008},
        ISSN={0178-8051},
     journal={Probab. Theory Related Fields},
      volume={140},
      number={1-2},
       pages={19\ndash 40},
         url={https://doi-org.ezproxy.is.ed.ac.uk/10.1007/s00440-007-0056-3},
      review={\MR{2357669}},
}

\bib{cont2018support}{article}{
      author={Cont, Rama},
      author={Kalinin, Alexander},
       title={On the support of solutions to stochastic differential equations
  with path-dependent coefficients},
        date={2019},
     journal={Stochastic processes and their applications},
        note={In Press},
}

\bib{CassLyonsEvolving}{article}{
      author={Cass, Thomas},
      author={Lyons, Terry},
       title={Evolving communities with individual preferences},
        date={2015},
        ISSN={0024-6115},
     journal={Proc. Lond. Math. Soc. (3)},
      volume={110},
      number={1},
       pages={83\ndash 107},
         url={https://doi.org/10.1112/plms/pdu040},
      review={\MR{3299600}},
}

\bib{cass2013integrability}{article}{
      author={Cass, Thomas},
      author={Litterer, Christian},
      author={Lyons, Terry},
       title={Integrability and tail estimates for {G}aussian rough
  differential equations},
        date={2013},
        ISSN={0091-1798},
     journal={Ann. Probab.},
      volume={41},
      number={4},
       pages={3026\ndash 3050},
         url={https://doi.org/10.1214/12-AOP821},
      review={\MR{3112937}},
}

\bib{coghi2019rough}{article}{
      author={Coghi, Michele},
      author={Nilssen, Torstein},
       title={Rough nonlocal diffusions},
        date={2019},
     journal={arXiv preprint arXiv:1905.07270},
}

\bib{conlon2017non}{article}{
      author={Conlon, Joseph~G.},
      author={Schlichting, Andr\'{e}},
       title={A non-local problem for the {F}okker-{P}lanck equation related to
  the {B}ecker-{D}\"{o}ring model},
        date={2019},
        ISSN={1078-0947},
     journal={Discrete Contin. Dyn. Syst.},
      volume={39},
      number={4},
       pages={1821\ndash 1889},
         url={https://doi.org/10.3934/dcds.2019079},
      review={\MR{3927496}},
}

\bib{dereich2003link}{article}{
      author={Dereich, S.},
      author={Fehringer, F.},
      author={Matoussi, A.},
      author={Scheutzow, M.},
       title={On the link between small ball probabilities and the quantization
  problem for {G}aussian measures on {B}anach spaces},
        date={2003},
        ISSN={0894-9840},
     journal={J. Theoret. Probab.},
      volume={16},
      number={1},
       pages={249\ndash 265},
         url={https://doi.org/10.1023/A:1022242924198},
      review={\MR{1956830}},
}

\bib{deuschel2017enhanced}{article}{
      author={Deuschel, Jean-Dominique},
      author={Friz, Peter~K.},
      author={Maurelli, Mario},
      author={Slowik, Martin},
       title={The enhanced {S}anov theorem and propagation of chaos},
        date={2018},
        ISSN={0304-4149},
     journal={Stochastic Process. Appl.},
      volume={128},
      number={7},
       pages={2228\ndash 2269},
         url={https://doi.org/10.1016/j.spa.2017.09.010},
      review={\MR{3804792}},
}

\bib{delarue2015global}{article}{
      author={Delarue, Fran\c{c}ois},
      author={Inglis, James},
      author={Rubenthaler, Sylvain},
      author={Tanr\'{e}, Etienne},
       title={Global solvability of a networked integrate-and-fire model of
  {M}c{K}ean-{V}lasov type},
        date={2015},
        ISSN={1050-5164},
     journal={Ann. Appl. Probab.},
      volume={25},
      number={4},
       pages={2096\ndash 2133},
         url={https://doi.org/10.1214/14-AAP1044},
      review={\MR{3349003}},
}

\bib{dos2019freidlin}{article}{
      author={dos Reis, Gon\c{c}alo},
      author={Salkeld, William},
      author={Tugaut, Julian},
       title={Freidlin-{W}entzell {LDP} in path space for {M}c{K}ean-{V}lasov
  equations and the functional iterated logarithm law},
        date={2019},
        ISSN={1050-5164},
     journal={Ann. Appl. Probab.},
      volume={29},
      number={3},
       pages={1487\ndash 1540},
         url={https://doi.org/10.1214/18-AAP1416},
      review={\MR{3914550}},
}

\bib{dereich2006high}{article}{
      author={Dereich, S.},
      author={Scheutzow, M.},
       title={High-resolution quantization and entropy coding for fractional
  {B}rownian motion},
        date={2006},
        ISSN={1083-6489},
     journal={Electron. J. Probab.},
      volume={11},
       pages={no. 28, 700\ndash 722},
         url={https://doi.org/10.1214/EJP.v11-344},
      review={\MR{2242661}},
}

\bib{frizhairer2014}{book}{
      author={Friz, Peter~K.},
      author={Hairer, Martin},
       title={A course on rough paths},
      series={Universitext},
   publisher={Springer, Cham},
        date={2014},
        ISBN={978-3-319-08331-5; 978-3-319-08332-2},
         url={https://doi.org/10.1007/978-3-319-08332-2},
        note={With an introduction to regularity structures},
      review={\MR{3289027}},
}

\bib{fabian2013functional}{book}{
      author={Fabian, Mari\'{a}n},
      author={Habala, Petr},
      author={H\'{a}jek, Petr},
      author={Montesinos~Santaluc\'{\i}a, Vicente},
      author={Pelant, Jan},
      author={Zizler, V\'{a}clav},
       title={Functional analysis and infinite-dimensional geometry},
      series={CMS Books in Mathematics/Ouvrages de Math\'{e}matiques de la
  SMC},
   publisher={Springer-Verlag, New York},
        date={2001},
      volume={8},
        ISBN={0-387-95219-5},
         url={https://doi.org/10.1007/978-1-4757-3480-5},
      review={\MR{1831176}},
}

\bib{friz2006levy}{article}{
      author={Friz, P.},
      author={Lyons, T.},
      author={Stroock, D.},
       title={L\'{e}vy's area under conditioning},
        date={2006},
        ISSN={0246-0203},
     journal={Ann. Inst. H. Poincar\'{e} Probab. Statist.},
      volume={42},
      number={1},
       pages={89\ndash 101},
         url={https://doi.org/10.1016/j.anihpb.2005.02.003},
      review={\MR{2196973}},
}

\bib{friz2011convergence}{article}{
      author={Friz, Peter},
      author={Riedel, Sebastian},
       title={Convergence rates for the full {B}rownian rough paths with
  applications to limit theorems for stochastic flows},
        date={2011},
        ISSN={0007-4497},
     journal={Bull. Sci. Math.},
      volume={135},
      number={6-7},
       pages={613\ndash 628},
         url={https://doi.org/10.1016/j.bulsci.2011.07.006},
      review={\MR{2838093}},
}

\bib{figiel1979projections}{article}{
      author={Figiel, T.},
      author={Tomczak-Jaegermann, Nicole},
       title={Projections onto {H}ilbertian subspaces of {B}anach spaces},
        date={1979},
        ISSN={0021-2172},
     journal={Israel J. Math.},
      volume={33},
      number={2},
       pages={155\ndash 171},
         url={https://doi.org/10.1007/BF02760556},
      review={\MR{571251}},
}

\bib{FilipovicTappeTeichmann2014}{article}{
      author={Filipovi\'{c}, Damir},
      author={Tappe, Stefan},
      author={Teichmann, Josef},
       title={Invariant manifolds with boundary for jump-diffusions},
        date={2014},
        ISSN={1083-6489},
     journal={Electron. J. Probab.},
      volume={19},
       pages={no. 51, 28},
         url={https://doi.org/10.1214/EJP.v19-2882},
      review={\MR{3227060}},
}

\bib{friz2010differential}{inproceedings}{
      author={Friz, Peter},
      author={Victoir, Nicolas},
       title={Differential equations driven by {G}aussian signals},
        date={2010},
      volume={46},
       pages={369\ndash 413},
         url={https://doi.org/10.1214/09-AIHP202},
      review={\MR{2667703}},
}

\bib{friz2010multidimensional}{book}{
      author={Friz, Peter~K.},
      author={Victoir, Nicolas~B.},
       title={Multidimensional stochastic processes as rough paths},
      series={Cambridge Studies in Advanced Mathematics},
   publisher={Cambridge University Press, Cambridge},
        date={2010},
      volume={120},
        ISBN={978-0-521-87607-0},
         url={https://doi.org/10.1017/CBO9780511845079},
        note={Theory and applications},
      review={\MR{2604669}},
}

\bib{graf2007foundations}{book}{
      author={Graf, Siegfried},
      author={Luschgy, Harald},
       title={Foundations of quantization for probability distributions},
      series={Lecture Notes in Mathematics},
   publisher={Springer-Verlag, Berlin},
        date={2000},
      volume={1730},
        ISBN={3-540-67394-6},
         url={https://doi.org/10.1007/BFb0103945},
      review={\MR{1764176}},
}

\bib{graf2003functional}{article}{
      author={Graf, Siegfried},
      author={Luschgy, Harald},
      author={Pag\`es, Gilles},
       title={Functional quantization and small ball probabilities for
  {G}aussian processes},
        date={2003},
        ISSN={0894-9840},
     journal={J. Theoret. Probab.},
      volume={16},
      number={4},
       pages={1047\ndash 1062 (2004)},
         url={https://doi.org/10.1023/B:JOTP.0000012005.32667.9d},
      review={\MR{2033197}},
}

\bib{gyongy1995approximation}{article}{
      author={Gy\"{o}ngy, Istv\'{a}n},
      author={Nualart, David},
      author={Sanz-Sol\'{e}, Marta},
       title={Approximation and support theorems in modulus spaces},
        date={1995},
        ISSN={0178-8051},
     journal={Probab. Theory Related Fields},
      volume={101},
      number={4},
       pages={495\ndash 509},
         url={https://doi-org.ezproxy.is.ed.ac.uk/10.1007/BF01202782},
      review={\MR{1327223}},
}

\bib{gyongy1990approximation}{article}{
      author={Gy\"{o}ngy, I.},
      author={Pr\"{o}hle, T.},
       title={On the approximation of stochastic differential equation and on
  {S}troock-{V}aradhan's support theorem},
        date={1990},
        ISSN={0898-1221},
     journal={Comput. Math. Appl.},
      volume={19},
      number={1},
       pages={65\ndash 70},
         url={https://doi.org/10.1016/0898-1221(90)90082-U},
      review={\MR{1026782}},
}

\bib{gubinelli2004controlling}{article}{
      author={Gubinelli, M.},
       title={Controlling rough paths},
        date={2004},
        ISSN={0022-1236},
     journal={J. Funct. Anal.},
      volume={216},
      number={1},
       pages={86\ndash 140},
         url={https://doi.org/10.1016/j.jfa.2004.01.002},
      review={\MR{2091358}},
}

\bib{herrmann2013stochastic}{book}{
      author={Herrmann, Samuel},
      author={Imkeller, Peter},
      author={Pavlyukevich, Ilya},
      author={Peithmann, Dierk},
       title={Stochastic resonance},
      series={Mathematical Surveys and Monographs},
   publisher={American Mathematical Society, Providence, RI},
        date={2014},
      volume={194},
        ISBN={978-1-4704-1049-0},
        note={A mathematical approach in the small noise limit},
      review={\MR{3155413}},
}

\bib{hegselmann2002opinion}{article}{
      author={Hegselmann, Rainer},
      author={Krause, Ulrich},
       title={Opinion dynamics and bounded confidence models, analysis, and
  simulation},
        date={2002},
     journal={Journal of artificial societies and social simulation},
      volume={5},
      number={3},
}

\bib{hu2019mean}{article}{
      author={Hu, Kaitong},
      author={Kazeykina, Anna},
      author={Ren, Zhenjie},
       title={Mean-field langevin system, optimal control and deep neural
  networks},
        date={2019},
     journal={arXiv preprint arXiv:1909.07278},
}

\bib{2019arXiv190905526H}{article}{
      author={{Hairer}, Martin},
      author={{Sch{\"o}nbauer}, Philipp},
       title={{The support of singular stochastic PDEs}},
        date={2019Sep},
     journal={arXiv e-prints},
       pages={arXiv:1909.05526},
      eprint={1909.05526},
}

\bib{keller1971model}{article}{
      author={Keller, Evelyn},
      author={Segel, Lee},
       title={Model for chemotaxis},
        date={1971},
     journal={Journal of theoretical biology},
      volume={30},
      number={2},
       pages={225\ndash 234},
}

\bib{Kulik2009}{article}{
      author={Kulik, Alexey~M.},
       title={Exponential ergodicity of the solutions to {SDE}'s with a jump
  noise},
        date={2009},
        ISSN={0304-4149},
     journal={Stochastic Process. Appl.},
      volume={119},
      number={2},
       pages={602\ndash 632},
         url={https://doi.org/10.1016/j.spa.2008.02.006},
      review={\MR{2494006}},
}

\bib{laloe20101}{article}{
      author={Lalo\"{e}, T.},
       title={{$L_1$}-quantization and clustering in {B}anach spaces},
        date={2010},
        ISSN={1066-5307},
     journal={Math. Methods Statist.},
      volume={19},
      number={2},
       pages={136\ndash 150},
         url={https://doi.org/10.3103/S1066530710020031},
      review={\MR{2723146}},
}

\bib{lyons2007differential}{book}{
      author={Lyons, Terry~J},
      author={Caruana, Michael},
      author={L{\'e}vy, Thierry},
       title={Differential equations driven by rough paths},
   publisher={Springer},
        date={2007},
}

\bib{li1999approximation}{article}{
      author={Li, Wenbo~V.},
      author={Linde, Werner},
       title={Approximation, metric entropy and small ball estimates for
  {G}aussian measures},
        date={1999},
        ISSN={0091-1798},
     journal={Ann. Probab.},
      volume={27},
      number={3},
       pages={1556\ndash 1578},
         url={https://doi-org.ezproxy.is.ed.ac.uk/10.1214/aop/1022677459},
      review={\MR{1733160}},
}

\bib{LUSCHGY2002486}{article}{
      author={Luschgy, Harald},
      author={Pag\`es, Gilles},
       title={Functional quantization of {G}aussian processes},
        date={2002},
        ISSN={0022-1236},
     journal={J. Funct. Anal.},
      volume={196},
      number={2},
       pages={486\ndash 531},
         url={https://doi.org/10.1016/S0022-1236(02)00010-1},
      review={\MR{1943099}},
}

\bib{luschgy2006functional}{article}{
      author={Luschgy, Harald},
      author={Pag\`es, Gilles},
       title={Functional quantization of a class of {B}rownian diffusions: a
  constructive approach},
        date={2006},
        ISSN={0304-4149},
     journal={Stochastic Process. Appl.},
      volume={116},
      number={2},
       pages={310\ndash 336},
         url={https://doi.org/10.1016/j.spa.2005.09.003},
      review={\MR{2197980}},
}

\bib{luschgy2008functional}{article}{
      author={Luschgy, Harald},
      author={Pag\`es, Gilles},
       title={Functional quantization rate and mean regularity of processes
  with an application to {L}\'{e}vy processes},
        date={2008},
        ISSN={1050-5164},
     journal={Ann. Appl. Probab.},
      volume={18},
      number={2},
       pages={427\ndash 469},
         url={https://doi.org/10.1214/07-AAP459},
      review={\MR{2398762}},
}

\bib{lyons2002system}{book}{
      author={Lyons, Terry},
      author={Qian, Zhongmin},
       title={System control and rough paths},
   publisher={Oxford University Press},
        date={2002},
}

\bib{ledoux2002large}{article}{
      author={Ledoux, M.},
      author={Qian, Z.},
      author={Zhang, T.},
       title={Large deviations and support theorem for diffusion processes via
  rough paths},
        date={2002},
        ISSN={0304-4149},
     journal={Stochastic Process. Appl.},
      volume={102},
      number={2},
       pages={265\ndash 283},
  url={https://doi-org.ezproxy.is.ed.ac.uk/10.1016/S0304-4149(02)00176-X},
      review={\MR{1935127}},
}

\bib{lyons2007extension}{inproceedings}{
      author={Lyons, Terry},
      author={Victoir, Nicolas},
       title={An extension theorem to rough paths},
        date={2007},
      volume={24},
       pages={835\ndash 847},
         url={https://doi.org/10.1016/j.anihpc.2006.07.004},
      review={\MR{2348055}},
}

\bib{lyons1998differential}{article}{
      author={Lyons, Terry~J.},
       title={Differential equations driven by rough signals},
        date={1998},
        ISSN={0213-2230},
     journal={Rev. Mat. Iberoamericana},
      volume={14},
      number={2},
       pages={215\ndash 310},
         url={https://doi.org/10.4171/RMI/240},
      review={\MR{1654527}},
}

\bib{McKean1966}{article}{
      author={McKean, H.~P., Jr.},
       title={A class of {M}arkov processes associated with nonlinear parabolic
  equations},
        date={1966},
        ISSN={0027-8424},
     journal={Proc. Nat. Acad. Sci. U.S.A.},
      volume={56},
       pages={1907\ndash 1911},
         url={https://doi-org.ezproxy.is.ed.ac.uk/10.1073/pnas.56.6.1907},
      review={\MR{221595}},
}

\bib{millet1994simple}{article}{
      author={Millet, Annie},
      author={Sanz-Sol\'{e}, Marta},
       title={A simple proof of the support theorem for diffusion processes},
        date={1994},
      volume={1583},
       pages={36\ndash 48},
         url={https://doi.org/10.1007/BFb0073832},
      review={\MR{1329099}},
}

\bib{pisier1999volume}{book}{
      author={Pisier, Gilles},
       title={The volume of convex bodies and {B}anach space geometry},
      series={Cambridge Tracts in Mathematics},
   publisher={Cambridge University Press, Cambridge},
        date={1989},
      volume={94},
        ISBN={0-521-36465-5; 0-521-66635-X},
         url={https://doi.org/10.1017/CBO9780511662454},
      review={\MR{1036275}},
}

\bib{pope2001turbulent}{misc}{
      author={Pope, Stephen},
       title={Turbulent flows},
   publisher={IOP Publishing},
        date={2001},
}

\bib{pages2011convergence}{incollection}{
      author={Pag\`es, Gilles},
      author={Sellami, Afef},
       title={Convergence of multi-dimensional quantized {SDE}'s},
        date={2011},
   booktitle={S\'{e}minaire de {P}robabilit\'{e}s {XLIII}},
      series={Lecture Notes in Math.},
      volume={2006},
   publisher={Springer, Berlin},
       pages={269\ndash 307},
         url={https://doi.org/10.1007/978-3-642-15217-7_11},
      review={\MR{2790377}},
}

\bib{riedel2017transportation}{article}{
      author={Riedel, Sebastian},
       title={Transportation-cost inequalities for diffusions driven by
  {G}aussian processes},
        date={2017},
     journal={Electron. J. Probab.},
      volume={22},
       pages={Paper No. 24, 26},
         url={https://doi-org.ezproxy.is.ed.ac.uk/10.1214/17-EJP40},
      review={\MR{3622894}},
}

\bib{stroock1972support}{inproceedings}{
      author={Stroock, Daniel~W.},
      author={Varadhan, S. R.~S.},
       title={On the support of diffusion processes with applications to the
  strong maximum principle},
        date={1972},
   booktitle={Proceedings of the {S}ixth {B}erkeley {S}ymposium on
  {M}athematical {S}tatistics and {P}robability ({U}niv. {C}alifornia,
  {B}erkeley, {C}alif., 1970/1971), {V}ol. {III}: {P}robability theory},
       pages={333\ndash 359},
      review={\MR{0400425}},
}

\bib{zabczyk2000stochastic}{article}{
      author={Zabczyk, Jerzy},
       title={Stochastic invariance and consistency of financial models},
        date={2000},
        ISSN={1120-6330},
     journal={Atti Accad. Naz. Lincei Cl. Sci. Fis. Mat. Natur. Rend. Lincei
  (9) Mat. Appl.},
      volume={11},
      number={2},
       pages={67\ndash 80},
      review={\MR{1797512}},
}

\end{biblist}
\end{bibdiv}

\end{document}